\newcommand\reallywidehat[1]{%
\savestack{\tmpbox}{\stretchto{%
  \scaleto{%
    \scalerel*[\widthof{\ensuremath{#1}}]{\kern.1pt\mathchar"0362\kern.1pt}%
    {\rule{0ex}{\textheight}}
  }{\textheight}%
}{2.4ex}}%
\stackon[-6.9pt]{#1}{\tmpbox}%
}
\newcommand{\baa}[1]{\!\!\begin{array}{c} #1 \end{array}\!\!}
\newcommand{\ppm}[1]{\textcolor{red}{#1}}
\newcommand{\ppmm}[1]{#1}  
\newcommand{\bajm}[1]{\textcolor{blue}{#1}}
\newcommand{\bajmm}[1]{#1}
\newtheorem{theorem}{Theorem}[section]
\newtheorem{corollary}{Corollary}[theorem]
\newtheorem{conjecture}{Conjecture}[theorem]
\newtheorem{lemma}[theorem]{Lemma}
\newtheorem{lem}[theorem]{Lemma}     
\newtheorem{proposition}[theorem]{Proposition}
\theoremstyle{definition}
\newtheorem{remark}[theorem]{Remark}
\newcommand{\beq}{\begin{equation}}
\newcommand{\eq}{\end{equation}}
\newcommand{\ignore}[1]{}
\newcounter{minidef}[section]
\newcommand{\mdef}{\refstepcounter{theorem}       \medskip \noindent ({\bf \thetheorem}) }
\newcounter{minicapt}
\newcommand{\tp}[2]{\begin{tikzpicture}[scale=#1] #2 \end{tikzpicture}} 
\newcommand{\flip}[1]{\reflectbox{\rotatebox[]{180}{#1}}}
\newcommand{\sigone}[1]{\tp{#1}{
\draw (1,0) -- (2,-1); 
\draw (2,0)--(1,-1);
\draw (3,0)--(3,-1);
\draw (4,0)--(4,-1);
}}
\newcommand{\sigtwo}[1]{\tp{#1}{
\draw (1,0) -- (1,-1); 
\draw (2,0)--(3,-1);
\draw (3,0)--(2,-1);
\draw (4,0)--(4,-1);
}}
\newcommand{\onecupp}[8]{ 
\begin{tikzpicture}[scale=0.368]
     \draw (0,0) -- (#1,0) ;        
     \foreach \x in {#4,#5,#6,#7,#8}
          \draw (\x,0) -- (\x,-1);  
     \draw (#2,0) .. controls (#2,-.5) and (#3,-.5) .. (#3,0) ;
 \end{tikzpicture}
 }
\newcommand{\onecupppppo}[6]{ 
\begin{tikzpicture}[scale=0.268]
     \draw (0,0) -- (#1,0) ;        
     \draw (#4,0) .. controls (#4,-1) .. (2,-2);  
     \draw (#5,0) .. controls (#5,-1) .. (3,-2);  
     \draw (#6,0) .. controls (#6,-1) .. (4,-2);  
     \draw (#2,0) .. controls (#2,-.5) and (#3,-.5) .. (#3,0) ;    
 \end{tikzpicture}
 }
\newcommand{\onecuppppp}[6]{ 
\begin{tikzpicture}[scale=0.268]
     \draw (0,0) -- (#1,0) ;        
     \draw (#4,0) .. controls (#4,-1) .. (2,-2);  
     \draw (#5,0) .. controls (#5,-1) .. (3,-2);  
     \draw (#6,0) .. controls (#6,-1) .. (4,-2);  
\draw (2,-2) -- (2,-2.5) ; 
\draw (3,-2) -- (3,-2.5) ; 
\draw (4,-2) -- (4,-2.5) ; 
\draw (1.79,-2.5) -- (4.1,-2.5) -- (4.1,-2.92) -- (1.79,-2.92) -- (1.79,-2.5);   
     \draw (#2,0) .. controls (#2,-.5) and (#3,-.5) .. (#3,0) ;    
 \end{tikzpicture}
 }
\newcommand{\onecupppppx}[6]{ 
\begin{tikzpicture}[scale=0.268]
     \draw (0,0) -- (#1,0) ;        
     \draw (#4,0) .. controls (#4,-1) .. (2,-2);  
     \draw (#5,0) .. controls (#5,-1) .. (3,-2);  
     \draw (#6,0) .. controls (#6,-1) .. (4,-2);  
\draw (2,-2) -- (2,-2.5) ; 
\draw (3,-2) -- (4,-2.5) ; 
\draw (4,-2) -- (3,-2.5) ; 
\draw (1.79,-2.5) -- (4.1,-2.5) -- (4.1,-2.92) -- (1.79,-2.92) -- (1.79,-2.5);   
     \draw (#2,0) .. controls (#2,-.5) and (#3,-.5) .. (#3,0) ;    
 \end{tikzpicture}
 }
\newcommand{\ptarmigan}{Cheby}
\newcommand{\ramping}{ramping}
\newcommand{\Ramping}{Ramping}
\newcommand{\mat}[1]{\left(\begin{array}{#1}}
\newcommand{\tam}{\end{array}\right)}
\newcommand{\PP}{{\mathtt P}}  
\newcommand{\xx}{\alpha}       
\newcommand{\Deltaa}{\Delta}   
\newcommand{\Deltaaa}[2]{\Deltaa^{#1}_{#2}}
\newcommand{\Delt}[2]{\Delta^{#1}_{(#2)}}  
\newcommand{\Deltt}{{\mathtt D}}   
\newcommand{\Delttt}[2]{\Deltt^{#1}_{(#2)}}  
\newcommand{\Sym}{\Sigma}  
\newcommand{\Q}{{\mathbb{Q}}}  
\def\uu{{\mathsf u}} 
\def\SS{\mathcal{S}} 
\def\BB{\mathcal{B}} 
\def\nn{\nonumber}
\def\a{\alpha}
\def\N{\mathbb{N}}
\def\e{\epsilon}
\def\l{\lambda}
\def\K{\mathbb{K}}
\def\C{\mathbb{C}}
\def\Z{\mathbb{Z}}
\def\R{\mathbb{R}}
\def\th{\theta}
\numberwithin{equation}{subsection}
\title{
On semisimplicity criteria and non-semisimple representation theory for 
the Kadar-Yu algebras}
\author{B Morris and P Martin}   
\begin{document}
\maketitle
\begin{abstract}
    The Kadar--Yu algebras are a physically motivated sequence of towers of algebras interpolating between the Brauer algebras and Temperley--Lieb algebras. 
    The complex representation theory of the Brauer and Temperley--Lieb algebras is now fairly well understood, with each connecting in a different way to Kazhdan--Lusztig theory. 
    The semisimple representation theory of the KY algebras is also understood, and thus interpolates, for example, between the double-factorial and Catalan combinatorial realms. However the non-semisimple representation theory has remained largely open, being harder overall than the (already challenging) Brauer case. 
    In this paper we determine generalised Chebyshev-like forms for the determinants of gram matrices 
    of contravariant forms for standard modules. 

    This generalises the root-of-unity paradigm for Temperley--Lieb algebras (and many related algebras); interpolating in various ways between this and the `integral paradigm' for Brauer algebras. 
    
    The standard module gram determinants give 
    a huge amount of information about morphisms between standard modules, making thorough use of the powerful homological machinery of towers of recollement (ToR), with 
    appropriate gram determinants providing the ToR `bootstrap'. As for the Brauer and TL cases the representation theory has a strongly alcove-geometric flavour, but the KY cases guide an intriguing generalisation of the overall geometric framework.  
\end{abstract}
\newpage 
\tableofcontents

\section{Introduction}   \label{ss:intro}

Practical computation for statistical mechanical models is often built around transfer matrix methods (see e.g. \cite{Baxter81,Baxter83,LiebMattis,Martin91,Wannier,Brown67} 
and references therein). 
Transfer matrix methods in turn rely on a mixture of fourier analytic and algebraic methods (aside: the mathematical tendency to consider also the machinery of integrability is not 
universally  
appropriate  
- most physically interesting models are not integrable, while most do benefit from translation symmetry/fourier and spectral/representation theoretic methods). 
Thus it is, for example, that XXZ spin chains, 2d Potts models, six-vertex models and so on have algebras such as Temperley--Lieb algebras \cite{TemperleyLieb71} in their computational machinery. 
More recently motivations such as topological quantum computation have driven interest in topological phases, and statistical mechanical models of systems with boundary doping 
(see e.g. \cite{Fradkin,GuidrySun} and references therein).  
Just as for homogeneous systems, one can consider the algebraic layer in these settings. There are various interesting examples, but here 
in this paper 
we focus on the construction given in \cite{KadarYu} - hence `Kadar--Yu' algebras. 
In this construction there is a tower of algebras $J_l = J_l(\xx)$ for each integer $l \geq -1$;
and then for each value of the parameter $\xx$ taken from the ground field. 
(Here the default ground field will be $\C$.) 
In statistical mechanical terms the index $l$ sets how far the doping will penetrate into the bulk, but controlled topologically rather than metrically. In algebraic terms it controls which subset of a set of Brauer generators are present.

From the representation theory perspective the KY algebras are also intrinsically interesting. We have already mentioned Temperley--Lieb, and the Brauer algebras \cite{Brauer} and partition algebras \cite{Martin91} also arise both as transfer matrix algebras 
(see e.g. \cite{Gainutdinov_2014,Ramgoolam07,Ramgoolam} and references therein)
and objects of direct study in representation theory. 
For example, the non-semisimple representation theories of Temperley--Lieb and Brauer algebras can be seen as controlled by appropriate different cases of Kazhdan--Lusztig theory. The resultant theories are very different in terms of their fundamental invariants, and so the KY algebras provide an opportunity to interpolate between these cases, and hence 
study  
geometrical representation theory itself, as we will see. 

In this paper we 
focus on representation theory. 
A key technical result is Theorem~\ref{thm:main}.
In principle this reduces the computation of 
`bootstrap' 
gram matrices for each tower $J_l$ 
to the computation of one fixed and two initial polynomials per $\l\vdash l+2$, 
all of which are then routine to determine. 
We then illustrate this with a comprehensive set of examples (see \S\ref{ss:examples0}). 
The Theorem and examples raise two key questions: 
firstly how to characterise roots (analogous to the characterisation for classical 
Chebyshev in terms of roots of unity) - see \S\ref{s:roots};
and then how to run the Tower of Recollement programme from this bootstrap to obtain representation theory
- see \S\ref{ss:repthy}. 
For this in particular we need also in principle to determine, in the cases where a `one-cup' standard module {\em has} a submodule (flagged by a vanishing determinant), 
what is the structure of this submodule. 
For the algebras we study here we show, in \S\ref{ss:bootcase0}-\ref{ss:keybootl1}, that 
in almost all cases 
\ppmm{no module can map in} except the no-cup module with the same labelling partition; and then that this 
is a composition factor with multiplicity 1. 

An intriguing question is how to determine not just the bootstrap determinants but all standard determinants. 
Indeed this was one of the initial aims of the paper. 
This is important here 
for various reasons. Firstly 
because these algebras $J_{l,n}$ exhibit a large variety of homological complexities, and so 
provide a good laboratory to understand the passage from gram determinants to Cartan decomposition matrices, 
including homological grading (manifestations of Loewy depth of factors in modules and so on). 
Ab initio the determinants are, collectively, a lot of information, and it is a challenge even to start to present this in an accessible way - this is itself interesting, raising the question of how global limit properties of representation theory unify local linear data in the fibre over the limit.  
In the $l=-1$ (Temperley--Lieb) case there is indeed a striking property that enables one to determine 
not just the bootstrap determinants, but all standard determinants (we review it in \S\ref{ss:gdetdecrolletl-1}). 
This leads us to consider {\em marginal vertex functions} in the general case. 
And indeed one then observes empirically that the presentation of results is hugely simplified. 
And furthermore the results boil down to expressions in terms of the same polynomials as before. 
In \S\ref{ss:gdetdecrollet} we review the $l=-1$ case and summarize the data showing how this generalises. 
\\
(Aside: Since we have the bootstrap determinants, another approach in principle is to reverse-engineer other gram determinants from the representation theory determined via the tower of recollement - potentially linking directly to generalised Kazhdan--Lusztig theory. To achieve the overall aim the ultimate challenge would be to understand the problem approached from {\em both} directions, however we will demote this level of completeness to  a later work.)

\medskip 

Another very intriguing set of questions arises from technical aspects of our computations. 
Key to these computations are properties of Chebyshev series here called `\ramping'. 
These are important for representation theory since they determine properties of the (discrete but rich) subset of parameter values for non-semisimplicity. 
They thus generalise 
(necessarily quite profoundly)
the `root of unity' phenomenon from classical cases. This is a key part of the aimed-for interpolation between Temperley--Lieb-like and Brauer-like representation theories. But the Chebyshev aspect is intrinsically interesting, as we shall see in \S\ref{s:roots}. 

\medskip

\subsection{Glossary of notations and terms}

See Table~\ref{tab:glossary}. 

\newcommand{\Brp}{{\mathcal B}}   

\begin{table}[]
    \centering
    \begin{tabular}{l|l|l}
    notation & short description & where  \\  
     & & defined \\ \hline
    $\Brp(n,m) $ & full set of $(n,m)$ Brauer partitions  &  \ref{de:Brp} \\ 
    $p \mapsto p^*$ & natural map $*:\Brp(n,m) \rightarrow \Brp(m,n)$ (diagram vertical flip) & \ref{de:flip} \\ 
    $J_{l,n}$   &  Kadar--Yu algebra at rank $n$ and height $l$ & \ref{de:Jln} \\  
    $J_l$     &  Kadar--Yu category at height $l$  &   \ref{de:Jln} \\ 
     $\Deltaaa{n}{(p,\l)} $   &  standard module of $J_{l,n}$ for fixed $l$ and index $(p,\l)$ & \ref{de:stdmods} \\
     $ \Gamma(\Delt{n}{p,\l})  $ &  gram matrix of $\Deltaaa{n}{p,\l}$ wrt given basis & \ref{de:gramdet} \\ 
     $\Delttt{(n)}{x}$  & gram determinant of $ \Gamma(\Delt{n}{p,\l})  $ wrt given basis &\ref{de:gramdet} \\ 
     $\Lambda_n$ & set of integer partitions of $n$ &  \ref{de:Lambdan}  \\ 
     $\Lambda_{l,n}$ & index set for standard modules of $J_{l,n}$  & \ref{de:labelset}  \\ 
     $\Lambda_{l,\infty}$ & index set for standard modules in global/large $n$ limit  & \ref{def:indexset}  \\ 
     $\SS_\l$ & Specht module of $\Sym_n$ (for some $n$) with index $\l \in \Lambda_n$  &  \ref{eq:Specht0} \\ 
     $\Sym_n$ & symmetric group of rank $n$\\
     $\uu_{ij}$ & fix $n$. cup (at $\{i,j\}$) element of $\Brp(n,n-2)$  & \ref{de:uu} \\ 
     $\uu_{ij} \uu_{i'j'}^*$ & fix $n$. cup-cap (at $\{i,j\},\{ i',j' \}$) element of $\Brp(n,n)$  & \ref{de:uu} \\ 
    \end{tabular}
    \caption{Glossary of notations 
    } 
    \label{tab:glossary}
\end{table}

\newpage

\section{Preambles}

\subsection{
The algebras $J_{l,n}$ and some key properties}
In this subsection, we give a brief {review}
of the algebras, $J_{l,n}$, constructed in \cite{KadarYu} and recall some properties of these algebras which will be of use in later sections. For a more detailed description of the $J_{l,n}$ we refer readers to the original reference \cite{KadarYu}. 

\mdef   \label{de:Brp}
Fix integers $n,m\geq 0$. 
A Brauer partition of type $(n,m)$ is a pair partition of the set $\{1,\dots, n\}\sqcup \{1',\dots, m'\}$ (we use primed labels to distinguish elements of the two sets). Let $\mathcal{B}(n,m)$ denote the set of all such partitions (observe that $\mathcal{B}(n,m)$ is empty unless $n\equiv m\mod 2$). 

We write $\Brp^{r}(n,m)$ for the subset of  $\Brp^{}(n,m)$ of partitions with at most $r$ pairs of the form $\{i,j'\}$ 
(called propagating lines); and $\Brp^{=r}(n,m)$ for the subset with exactly $r$ propagating lines. 

\mdef   \label{pa:left-right}
A partition, $p \in \mathcal{B}(n,m)$, may be represented by a Brauer diagram as follows. 
A Brauer diagram consists of a rectangular region with unprimed (primed) labels placed on the top (bottom) edge 
increasing from left to right, and inscribed curves joining labels which belong to the same part. Curves in a Brauer diagram only intersect transversely, on their interiors, finitely many times, and do not have points of triple intersection. We frequently abuse notation, representing by a Brauer diagram its underlying partition. 

\mdef  \label{de:flip}
We write $p \mapsto p^*$ for the map $*: \Brp(n,m) \rightarrow \Brp(m,n)$ corresponding to 
vertical flip of a diagram, hence swapping primed for unprimed. Examples:
\medskip 

{\centering
\sigone{.41}  \hspace{.32cm}$ \stackrel{*}{\mapsto}$\hspace{.32cm}  \sigtwo{.41}

\sigtwo{.41}  \hspace{1cm}   \sigone{.41}

\medskip }

Observe in particular that there is a copy of the symmetric group $\Sym_n$ (so far just as a set) in $\Brp(n,n)$, indeed it is precisely $\Brp^{=n}(n,n)$, and here the flip takes an element to its inverse in the group.

\mdef    \label{pa:ground-ring}
Fix a commutative ring $\K$ - by default we will take $\K=\C$. 
Fix $\a \in \K$. Let $Br(n,m)$ denote the free $\K$-module with basis 
$\mathcal{B}(n,m)$. Define the Brauer category $Br=Br(\a)$ to be the triple $(\N_{0}, Br(\_, \_), \circ )$, where for $(p_1,p_2)\in Br(n,m)\times Br(m,l)$, composition is defined by linearly extending the formula
\[p_1\circ p_2 = \a^{d_{p_1,p_2}} p_1\# p_2.\]
Here $p_1\# p_2 \in Br(n,l)$ is the resultant Brauer partition determined by vertically juxtaposing any two diagrams for $p_1$ and $p_2$,
with $p_1$ over $p_2$, 
and $d_{p_1,p_2}$ is the number of closed curves in any such juxtaposition. For brevity, we adopt the multiplicative notation $p_1 \circ p_2=p_1 \, p_2$. 

\mdef \label{de:flip2}
The *-map extends to a contravariant functor $(\_)^*:Br(\a)\to Br(\a)$ with $n^*=n$ for objects, and $(\_)^*:Br(n,m)\to Br(m,n)$ is given by linearly extending the map from \ref{de:flip}.

\mdef 
Note that the category $Br$ has a monoidal structure given by $n\otimes m=n+m$ on objects, and where $p_1\otimes p_2$ is the horizontal juxtaposition of diagrams. 

\mdef The {\em height} of a Brauer diagram is defined to be the maximal height of all points of intersection of embedded curves as in \cite[\S2.2]{KadarYu}. If a diagram has no points of intersection, its height is defined to be $-1$.

\mdef The height of a Brauer partition, $p\in \mathcal{B}(n,m)$, is defined to be the minimal height over all diagrams representing $p$. Write $ht(p)$ for the height of $p$.

\mdef   \label{de:Jln}
Fix an integer $l\geq -1$. Let $J_{l}(n,m)$ denote the free $\K$-module with basis 
$$
\Brp_l(n,m) \; := \; \{ p \in \mathcal{B}(n,m) \ | \ ht(p)\leq l\}    .
$$ 
In \cite[Thm. 3.4]{KadarYu} it is proved that for any $(p_1, p_2)\in \mathcal{B}(n,m)\times \mathcal{B}(m,l)$,  $ht(p_1\# p_2)\leq \max(ht(p_1),ht(p_2))$, and thus 
$J_l:=(\N_0,J_l(\_, \_), \circ )$ defines a subcategory of $Br$ for each $l$. 

Following \ref{de:Brp}, let $J_l^p(n,m)$ denote the submodule of $J_{l}(n,m)$ spanned by diagrams with at most $p$ propagating curves, and let $J_l^{=p}(n,m)$ denote the submodule spanned by those exactly $p$ propagating curves.   

Let $J_{l,n}:=J_{l}(n,n)$ denote the endomorphism algebras of $J_l$, with $J^{p}_{l,n}=J^{p}_{l}(n,n)$ and likewise $J^{=p}_{l,n}=J^{=p}_{l}(n,n)$. 
We refer to the $\K$-algebras $J_{l,n}$ as the Kadar-Yu algebras.

\mdef\label{pa:Jlngens} In \cite{KY2} it is proved that $J_{l,n}$ is generated as an algebra by the usual ``cup cap" Temperley-Lieb generators $e_i=\uu_{i,i+1} \uu_{i,i+1}^*$ for $i=1,\dots, n-1$ (using the notation of \ref{cupbasis}), and the elementary transpositions $s_j=(j \ j+1)$ for $j=1,\dots,l+1$ (\textit{i.e.} $s_j \in \Sigma_{l+2}$).

\begin{remark} When $l=-1$, the Category $J_{-1}$ may be identified with the Temperley-Lieb category, since in this case all pair partitions may be realised by diagrams without intersections. 
\end{remark}

\mdef 
Note that the monoidal product $\otimes$ on category $Br$ does not descend to one on $J_{l}$ (unless $l=-1$) since, for instance, $ht(id_n \otimes p)=ht(p)+n$ provided $ht(p)\geq 0$. However, it will be convenient for our aims to utilise tensor notation. 
For example, given $x\in J_l(a,b)$ we have $x\otimes id_n \in J_l(a+n,b+n)$. 

\medskip 

In what follows, we define two adjunctions crucial in studying the representation theory of  the algebras $J_{l,n}$. 



\mdef Observe there is a natural algebra inclusion $J_{l,n} \hookrightarrow J_{l,n+1}$ given by 
$x\mapsto x\otimes id_1$. We let $\; Res^{J_{l,n+1}}_{J_{l,n}}: \; J_{l,n+1}-\text{mod} \; \; \rightarrow \;\; J_{l,n}-\text{mod}\;$ denote the associated ``restriction" functor, and $\; Ind^{J_{l,n+1}}_{J_{l,n}} \; :\; J_{l,n}-\text{mod}\;\; \rightarrow\;\; J_{l,n+1}-\text{mod}\;$ its adjoint ``induction" functor.

\mdef  \label{de:globloc0}
There is a ``globalisation" functor 
$\; G:\; J_{l,n}-mod \;\; \rightarrow\;\; J_{l,n+2}-mod \;$ given by 
\[ G(M)=J_l(n+2,n)\otimes_{J_{l,n}} M.\]
It has adjoint ``localisation" $\; F:\; J_{l,n+2}-mod \;\; \rightarrow\;\; J_{l,n}-mod \;$ given by
\[ F(M)=J_l(n,n+2)\otimes_{J_{l,n+2}} M.\]
Observe that $F\circ G=id$, since $J_l(n,n+2)\circ J_l(n+2,n)=J_{l,n}$. 
Thus $G$ is an embedding. 
On the other hand 
$$
J_l(n+2,n)\circ J_l(n,n+2)=J_l^n(n+2,n+2)\subsetneq J_{l,n+2}  .
$$
\ppmm{
Note that this is a proper two-sided ideal of $J_{l,n+2}$ and that the quotient 
$J_{l,n+2} / J_l^n(n+2,n+2) \cong \K \Sym_{\min(l+2,n+2)}$. }

\mdef \label{de:labelset} 
The following can be proved for instance by using the functors from $\ref{de:globloc0}$ 
(see \cite{KadarYu}); and we will recall the explicit construction in \ref{de:stdmods}. 
\\
Up to isomorphism,  
standard  
$J_{l,n}$-modules can be indexed by the set
\beq
\Lambda_{l,n} = \{  ( p, \l)\; | \; p \in \N_0, \; 
n-p \in 2\N_0 ,  
\; \l\vdash   \min(p,l+2) \}.\label{eq:simples}
\eq
and for $\xx \in \C^\times$ simple modules are indexed by the same set.

\subsection{On Standard modules for $J_{l,n}$ 
}
\label{ss:sdmods}




In order to fix conventions here we recall the definition of standard modules and their contravariant form. 
For this we will first need to fix conventions for $\Sym_n$ Specht modules, where $\Sym_n$ is the symmetric group. 
Observe that the category of symmetric group $\K$-algebras is a monoidal subcategory of $Br$. 

\newcommand{\nE}{{\mathtt{E}}}  
\newcommand{\nF}{{\mathtt{F}}}  
\newcommand{\nC}{{\mathtt{C}}}  
\newcommand{\cc}{{\mathsf{c}}}

Fix $l,n$. Fix $p$ so that $n-p=2k$ for some $k\geq 0$, and $r \in \{0, 1, ..., min(p,l+2)\}$. 
Let $\cc_\l$ be any element of $\Q\Sym_{r}$ such that the 
$\C\Sym_r$ Specht module $\SS_\l \cong \C\Sym_{r} \cc_\l$
(here $\Sym_0$ is the trivial group). 
Then $\cc_\l \otimes 1_{p-r} \in \Q\Sym_p$ and can be regarded as an element of $J_l(p,p)$ by the obvious inclusion, 
\ppmm{provided that $\Q\subset\K$}. 
%
%
As in \cite{KadarYu}  
\[
\Deltaa^{n}_{(p,\lambda)} \;\; \cong  \;\; J_l(n,p) ( \cc_\l \otimes 1_{p-r}) / J_l^{p-2}(n,p)
\]
where $(p,\lambda) \in\Lambda_{l,n}$ as per \ref{eq:simples}; 
$J_l^{p-2}(p,n)$ is as per \ref{de:Jln}.

It will now be convenient to fix $ \Deltaa^{n}_{(p,\lambda)}$  precisely by choosing $\cc_\l$.

\subsubsection{Specht module conventions recalled}

\mdef \label{pa:Specht}
Let $\nE_n$ denote the usual symmetriser for the symmetric group $\Sym_n$ (the unique idempotent in 
 $\Q\Sym_n$ inducing the trivial module); 
and $\nF_n$ the antisymmetriser. 
We will write $E_n$ and $F_n \in \Z\Sym_n$ for the unnormalised versions. 
Then for 
$\l = (\l_1 , \l_2, ..., \l_m) \vdash n$ define 
\[
\nE_\l = \nE_{\l_1} \otimes \nE_{\l_2} \otimes ... \otimes \nE_{\l_m}
\]
and, with 
$\l' = (\l_1' , \l_2' , ...,\l_{m'}')$ denoting the conjugate partition, define 
\[
\nF_\l = \nF_{\l'_1} \otimes \nF_{\l'_2} \otimes ... \otimes \nF_{\l'_{m'}}
\]
Note that $\nE_\l^* = \nE_\l$ and $\nF_\l^* = \nF_\l$ using $*$ from (\ref{de:flip2}). 
Recall that there is a (shortest) $w' \in \Sym_n$ such that 
\beq  \label{eq:preform}  
E_\l \C \Sym_n  F_\l = \C E_\l w' F_\l
\eq 
and similarly for $F_\l w^{} E_\l$ (with $w' = w^{-1}$). 
Note that these one-dimensionality properties hold true under replacement of $E_\l , F_\l$ with any conjugates 
or scalar multiples. 
Let $C_\l =  F_\l w E_\l $. 

Observe from \eqref{eq:preform} that for any $a \in \C \Sym_n$, there is a unique $f(a) \in \C$ defined by the equation
\[E_\l w' F_\l a F_\l w E_\l = f(a) E_\l w' F_\l w E_\l.\]
Note here that $f$ does depend on the normalisations. 
If we want to work in $\Z\Sym_n$ then $E_\l$ and $F_\l$ should be the unnormalised versions. 

For each choice of normalisations there is a form $(-,- ): \C\Sym_n C_\l \times \C\Sym_n C_\l \rightarrow \C $ 
defined by the equation
\beq (a C_\l)^* b C_\l=C_\l^* a^* b C_\l = ( a C_\l, b C_\l ) E_\l w' F_\l w E_\l, \label{eq:form}\eq
which is symmetric, bilinear, and contravariant with respect to $*$ since $( gaC_\l,bC_\l) = ( aC_\l,g^* bC_\l )$. 

Let  $\SS_\l$ denote the $\C\Sym_n$-module (and indeed left ideal) 
\beq  \label{eq:Specht0}
\SS_\l = \C \Sym_n  F_\l w E_\l   =   \C \Sym_n  C_\l
\eq 
- this is the Specht module (up to isomorphism). Let $d_\l$ denote the hook length of the partition $\l$ so that, in particular, $\dim(\SS_\l)=d_\l$.

There are various isomorphic incarnations. Some are better suited to our aims that others. 
For example 
we can start with the Young module for $\l=(21)$ in the form 
$Y_{21} \cong \C \Sym_3 E'_{21}$ where $E'_{21} =E_{12} := (E_1 \otimes E_2)$. 
(This just means that $F_{21} E'_{21}$ is congruent to $F_{21} w E_{21}$ without needing the $w$.) 
Thus $F_{21} E'_{21}$ induces a left subideal, which is easily seen to be proper. 
Indeed $\C\Sym_3  F_{21} E'_{21} = \C\{ F_{21} E'_{21} , (23) F_{21} E'_{21} \} $. 
The version of the form in terms of this basis is given firstly by 
$ ( F_{21} E'_{21} )^*  F_{21} E'_{21} =  E'_{21}  F_{21}^2 E'_{21}  
          = 2 E'_{21} F_{21} E'_{21}$
and then 
$ ( F_{21} E'_{21} )^*  (23)   F_{21} E'_{21} =  E'_{21}  F_{21} (23) F_{21} E'_{21}  
        =   E'_{21}  (1-(12)) (23) F_{21} E'_{21}   =  E'_{21} F_{21} E'_{21}$ 
since $ E'_{21} (12)(23) F_{21} =0$. 
If we choose outputs normalised by $f(a) E'_{21} F_{21} E'_{21} $ 
then here the gram matrix is $\Gamma$ as below: 
\[
\Gamma = \mat{cccc} 2&1 \\ 1&2 \tam     \hspace{3cm} 
\Gamma_x = \mat{cccc} 2&0 \\ 0&6x^2 \tam 
\]
Note that if we want basis elements to be orthogonal we can replace 
$(23) F_{21} E'_{21} $  with  $(x.1+y.(23)) F_{21} E'_{21} $ with $y=-2x$. 
This gives the second gram matrix above. 
Note that the overall factor of 2 can be removed as a convention; but normalisation here 
requires an extension of the ground ring from $\Z$, for example to $\C$. 

Recall that a basis for the Young module 
$Y_{21}$  
can be written as the set of words: 
211, 121, 112.  
In the simplest version these are $E'_{21}$, $(12)E'_{21} $ and $(32)(12)E'_{21} $ respectively. 
We think of the words 211 and so on graphically as stepping along directions $e_1$ and $e_2$ in the plane. Specifically we orient the axes with $e_1$ pointing up-right and $e_2$ down-right. 
Thus, with a first convenient change of basis we have:  
\[
E'_{21} \mapsto 211 \mapsto 
\tp{.31}{\draw (0,0) -- (1,-1); \draw (1,-1) -- (2,0) -- (3,1);}
\hspace{2.1cm} 
F_{21}E'_{21} \; =\; (1-(12))E'_{21} \; \mapsto 121 
\mapsto \tp{.31}{\draw (0,0) -- (1,1); \draw (1,1) -- (2,0) -- (3,1); \draw [red] (0,0) -- (1,-1) -- (2,0);}  
\]
- adding a box, as it were, in the (12) position (note that the factor $(x1+y(12))$ can 
in principle have any $x,y$ with $y \neq 0$ for independence, the chosen coefficients make the element induce a submodule); 
\[ 
(23)(1-(12))E'_{21} \mapsto 112    
\mapsto \tp{.31}{\draw (0,0) -- (1,1); \draw (1,1) -- (2,2) -- (3,1);}  
\ignore{{
\hspace{1cm} 
(1-(12))E'_{21} \mapsto 121 
\mapsto \tp{.31}{\draw (0,0) -- (1,1); \draw (1,1) -- (2,0) -- (3,1);}  
\hspace{1cm} 
E'_{21} \mapsto 211 \mapsto 
\tp{.31}{\draw (0,0) -- (1,-1); \draw (1,-1) -- (2,0) -- (3,1);}
}}
\]

\mdef   \label{pa:21ortho}
It will be convenient to have a basis for the Specht module that is orthonormal with respect to the form that we will use in \S\ref{ss:gram-mat}. For an example let us consider the basis 
(see also \eqref{eq:21ortho})  
$\{  C_{12} , \frac{1}{\sqrt{3}}(-1.1+2.(23))C_{12} \}$. 
Note from $\Gamma_x$ that this is indeed orthogonal. 

\mdef For a positive integer $n$ and partition $\l\vdash n$ we define an equivalent convention for the Young symmetriser:
\beq \nC_\l := a_\l \, E_\l w' F_\l w E_\l \in \C \Sym_{n},\label{eq:ccl}\eq
with $a_\l \in \Q$ chosen so that $\nC_\l^2=\nC_\l$. Then $\C \Sym_n \nC_\l$ is isomorphic to the Specht module $\C \Sym_n \nC_\l \simeq \SS_\l$. Furthermore, we note that $\nC_\l^*=\nC_\l$. We will use this convention for the remainder of the paper.


\medskip  

\subsubsection{Standard module conventions}

\mdef 
Now fix $l$ and consider $\l \vdash r \leq l+2$. Observe that for $p \geq r$   
we have $\nC_\l \otimes id_{p-r} \in J_{l,p}$, with $\nC_\l$ as in \eqref{eq:ccl}. 
Fixing $p$ define 
\beq   \label{de:cl}
c_\l = \nC_\l \otimes id_{p-r}.
\eq
For example when $p\geq 3$ 
\[
c_{(2,1)} = \frac{1}{6}(e+(1 2))(e-(1 3) )(e+(1  2))\otimes id_{p-3}\in J_{l=1,p}.
\]
using cycle notation for permutations.

\mdef\label{de:stdmods} 
\ppmm{Essentially}
as in \cite{KadarYu} 
\ppmm{(here we require $\Q\subset\K$, whereas \cite{KadarYu} does not)} 
we define the standard modules of $J_{l,n}$ (for fixed $l$) by 
\[
\Deltaa^{n}_{(p,\lambda)} \; := \; J_l(n,p)  c_\l / J_l^{p-2}(n,p)
\]
where $(p,\lambda) \in\Lambda_{l,n}$ as per \ref{eq:simples}; 
$J_l^{p-2}(p,n)$ is as per \ref{de:Jln};
and $c_\l$ is defined as in (\ref{de:cl}). 

\mdef \label{pa:convs}
For any $x\in J_l(m, p)$ with $m\geq p$, we often abuse notation and write simply $x \, c_\l$ for the corresponding element in the module $\Deltaa^{m}_{(p,\l)}$, 
that is, $x \, c_\l$ denotes the class $x \, c_\l+J_{l}^{p-2}(m,p)$. 
In particular, for any $\xi \in \Delta^{n}_{(p,\l)}$ 
and $x\in J_{l}(m,n)$ we have $x \xi \in \Delta^{m}_{(p,\l)}$, since we may write $\xi = y c_\l$ for some $y \in J_l(n,p)$, and thus $x \xi = (xy)c_\l$.

With this notation we observe, for example, that for $(p,\l)\in \Lambda_{l,p}$, 
\beq
\Deltaa^{p}_{(p,\l)}=J_{l,p} c_\l= (\C\Sigma_{r}\nC_\l)\otimes id_{p-r} 
  \stackrel{\C\Sym_r-mod}{\simeq}  \SS_\l
\label{eq:conv}\eq
where $\l\vdash r$. 
The latter is an isomorphism of $\C \Sym_r$-modules.

\mdef \label{pa:spbasis}
Let $\mathcal{B}=\{b_1,\dots,b_{d_\l}\}$ be a basis for the $\Sym_r$ Specht module $\SS_\l$ (thus $\l\vdash r$). By \eqref{eq:conv}, this determines a basis for $\Delta^p_{(p,\l)}$ by $b_i\mapsto b_i \otimes id_{p-r}$; we abuse notation and write simply $b_i$ for the element $b_i\in \Delta^p_{(p,\l)}$ (likewise, for any element $x\in \C \Sym_r$ we write $x$ for the element $x\otimes id_{p-r} \in J_{r-2, p}$, and therefore can make sense of the equation $\C \Sym_{r}\subset J_{r-2,p}$). Thus we may regard a basis for $\SS_\l$ as one for $\Delta^p_{(p,\l)}$ (for any $p$). We will make use of this fact repeatedly in \S\ref{s:onecup0}. 

\mdef \label{pa:diagbas} \bajmm{Following \cite{KadarYu}, we give a basis for a standard module as follows:} Fix $\l \vdash r\leq p$, and let $\mathcal{B}$ be a basis for $\SS_\l$, regarded as one for $\Delta^p_{(p,\l)}$. For any $n\geq p$ with $n\equiv p\mod 2$,  the module $\Deltaa^{n}_{(p,\lambda)}$ has a basis given by
\beq
\{ u\, b_i \ | \ b_i \in \mathcal{B}, \ u \in J_l^{||}(n,p)\}\label{eq:diagbas}
\eq
where $J_l^{||}(n,p)$ is the set of ``half-diagrams" of type $(n,p)$, consisting of those diagrams from $J_l(n,p)$ with $p$ pairwise non-crossing, propagating lines (\textit{cf.} \cite[4.7]{KadarYu} \footnote{Converting from the notation here to that in \cite{KadarYu}, $J^{||}_l(n,p)$ becomes $J_l^{||}(n,p,p)$}).  \bajmm{A half-diagram in $J_l(p+2k,p)$ for $k\geq 0$, is determined by $k$ ``cups" (that is, parts or curves joining unprimed labels at the top of a diagram). As such, we refer to a module $\Deltaa^{n=p+2k}_{(p,\lambda)}$ as a $k$-cup module. We refer to a basis of the form \eqref{eq:diagbas} as a diagram basis. Note that all diagram bases are related by a change of basis (over $\C$) since they depends only on the choice of $\C$-basis for $\SS_\l$.}
\medskip


\subsubsection{Contravariant form on standard modules}

From here until \S\ref{ss:repthy} we take $\K = \C[\xx]$.   

\mdef   \label{de:grammat}
The  form 
$ \langle\_,\_\rangle : \Deltaaa{n}{p,\l} \times \Deltaaa{n}{p,\l} \rightarrow \C[\xx]$ 
is defined by the equation
\beq 
(a c_\l)^* b c_\l=c_\l a^* b c_\l   \;= \;\; \langle a c_\l, b c_\l \rangle c_{\l}.  \label{eq:Gform}
\eq
This form is symmetric, bilinear and contravariant with respect to *, that is, for any $x\in J_{l,n}$ we have $\langle x a c_\l,  b c_\l \rangle=\langle a c_\l, x^* b c_\l \rangle$. 

\mdef 
Notice when $p=n$, we have 
$\Delta^{p}_{p,\l} \stackrel{\C}{\simeq} \SS_{\l}$, 
and the form here may differ from  the form $(\_,\_)$ as per \eqref{eq:form}. 
In particular, observe that $\langle c_\l, c_\l \rangle=1$. 
Furthermore,  since $\SS_\l$ is simple (as a $\Sym_r$-module) over the complex numbers, $\Delta^p_{(p,\l)}$ is simple (as a $J_{r-2,n}$ module) and hence the form $\langle\_,\_\rangle$ is non-degenerate. It makes sense, therefore, to choose a basis for $\SS_\l$ (which we regard as one for $\Delta^p_{(p,\l)}$), which is orthonormal with respect to this form. This is what we will mean, by an orthnormal basis for $\SS_\l$.

\mdef   \label{de:gramdet}
We write $\Gamma(\Delt{n}{p,\lambda})$ 
for the Gram matrix (in a given diagram basis), and,  following \textit{e.g.} \cite{ShBr},
we will write 
\beq
\Delttt{n}{p,\lambda}  \; := \; \det( \Gamma(\Delt{n}{p,\lambda} ))   
\eq 
for the Gram determinant. Since entries of $\Gamma(\Delt{n}{p,\lambda})$ are polynomials in $\a$, it follows that $
\Delttt{n}{p,\lambda}\in \C[\a]$. We sometimes write $
\Delttt{n}{p,\lambda}(\a)$ to emphasise this. By \ref{pa:diagbas}, $\Delttt{n}{p,\lambda}(\a)$ is defined up to a complex scalar.

\mdef   \label{exa:keyn5la21} Consider the $J_{1,5}$-module $\Deltaaa{{5}}{({3},(21))}$. By choosing a basis $\{\nC_{(2,1)}, (2 \ 3) \nC_{(2,1)}\}$ for $\SS_{(2,1)}$, we can give the following diagram basis for this module where the rectangular box ``receiving''  
three propagating lines denotes the idempotent $\nC_{(2,1)}$. 
\begin{align}
&\hspace{-1cm} 
\onecuppppp{6}{1}{2}{3}{4}{5}  \; 
\onecuppppp{6}{1}{3}{2}{4}{5}  \;
\onecuppppp{6}{1}{4}{2}{3}{5}  \;
\onecuppppp{6}{2}{3}{1}{4}{5} \;
\onecuppppp{6}{2}{4}{1}{3}{5}  \;
\onecuppppp{6}{3}{4}{1}{2}{5}  \;
\onecuppppp{6}{4}{5}{1}{2}{3}  
\nn\\
&\hspace{1cm} 
\onecupppppx{6}{1}{2}{3}{4}{5}  \;
\onecupppppx{6}{1}{3}{2}{4}{5}    \;
\onecupppppx{6}{1}{4}{2}{3}{5}  \;
\onecupppppx{6}{2}{3}{1}{4}{5}  \;  
\onecupppppx{6}{2}{4}{1}{3}{5}  \;
\onecupppppx{6}{3}{4}{1}{2}{5}  \;
\onecupppppx{6}{4}{5}{1}{2}{3}  \label{eq:21basis}
\end{align}
The value of the form on two diagram basis elements is determined by applying * to one of the diagrams (flipping it upside down), vertically juxtaposing the diagrams, and computing the associated scalar. Schematically then, we compute 
\begin{align*}&\reflectbox{\rotatebox[]{180}{\onecuppppp{6}{1}{2}{3}{4}{5}}}=\a,
&&\reflectbox{\rotatebox[]{180}{\onecuppppp{6}{1}{2}{3}{4}{5}}}=1.\\[-0.822em]
&\ \onecuppppp{6}{1}{2}{3}{4}{5}
&&\ \onecuppppp{6}{1}{3}{2}{4}{5}\end{align*}

\noindent 
and so on. In particular 
\ignore{{
\bajm{[I am now convinced we actually need to switch to the EFE convention for the symmetriser - this is because the explicit matrices we are writing will be wrong otherwise... for example, when $\l=(2,1)$, computing $\langle (12),e\rangle$ gives 1 in the EFE formalism since $(1 2) E=E$, yet it gives $-1$ in the FE formalism since $(12)F=-F$... For example, using $C_\l$ as written here we find  ]}
\begin{align*}&\reflectbox{\rotatebox[]{180}{\onecuppppp{6}{1}{2}{3}{4}{5}}}=C_{\l}^* (12) C_{\l}=-C_\l^* C_\l \Rightarrow \left\langle \onecuppppp{6}{1}{2}{3}{4}{5} , \onecuppppp{6}{1}{4}{2}{3}{5} \right\rangle =-1
\\[-1.7em]
&\ \onecuppppp{6}{1}{4}{2}{3}{5}\end{align*}
\bajm{[However, this should correspond to the (1,4) entry of the Gram matrix below for $\Deltaaa{{5}}{({3},(21))}$, which is 1... ]}\bajm{[I am now proceeding with the updated convention since this gives the correct matrix!]}\ppm{[agreed.  (I haven't got my head around the diff yet, but your way is not wrong.)]}
\medskip 
}}
\begin{align*}& \reflectbox{\rotatebox[]{180}{\onecuppppp{6}{1}{4}{2}{3}{5}}}=-\tfrac{1}{2},
&&\reflectbox{\rotatebox[]{180}{\onecuppppp{6}{1}{4}{2}{3}{5}}}=-\tfrac{1}{2},
&&\reflectbox{\rotatebox[]{180}{\onecuppppp{6}{1}{4}{2}{3}{5}}}=0,
\\[-0.822em]
& \ \onecupppppx{6}{1}{3}{2}{4}{5}
&&  \ \onecupppppx{6}{2}{4}{1}{3}{5}
&& \ \onecupppppx{6}{2}{3}{1}{4}{5}\\
&\reflectbox{\rotatebox[]{180}{\onecuppppp{6}{1}{4}{2}{3}{5}}}=-\tfrac{1}{2},
&&\reflectbox{\rotatebox[]{180}{\onecuppppp{6}{1}{4}{2}{3}{5}}}=-\tfrac{1}{2}.
\\[-0.822em]
& \ \onecupppppx{6}{3}{4}{1}{2}{5}
&& \ \onecupppppx{6}{4}{5}{1}{2}{3}
\end{align*}
\noindent 
We give the Gram matrix for $\Deltaaa{{5}}{({3},(21))}$ below, where we use the orthonormal basis as per \ref{eq:21ortho} for $\SS_{(2,1)}$ (in the same order as \eqref{eq:21basis} but with $(2 \ 3) \nC_{(2,1)}$ replaced by $\frac{2}{\sqrt{3}}((2 \ 3)+\frac{1}{2}e )\nC_{(2,1)}$): 

\[
\left(
\begin{array}{ccccccc ccccccc}
 \alpha  & 1 & 1 & 1 & 1 & 0 & 0 & 0 & 0 & 0 & 0 & 0 & 0 & 0 \\
 1 & \alpha  & 1 & 1 & 0 & 1 & 0 & 0 & 0 & 0 & 0 & 0 & 0 & 0 \\
 1 & 1 & \alpha  & 0 & 1 & 1 & -\frac{1}{2} & 0 & 0 & 0 & 0 & 0 & 0 & -\frac{\sqrt{3}}{2} \\
 1 & 1 & 0 & \alpha  & 1 & 1 & 0 & 0 & 0 & 0 & 0 & 0 & 0 & 0 \\
 1 & 0 & 1 & 1 & \alpha  & 1 & -\frac{1}{2} & 0 & 0 & 0 & 0 & 0 & 0 & \frac{\sqrt{3}}{2} \\
 0 & 1 & 1 & 1 & 1 & \alpha  & 1 & 0 & 0 & 0 & 0 & 0 & 0 & 0 \\
 0 & 0 & -\frac{1}{2} & 0 & -\frac{1}{2} & 1 & \alpha  & 0 & 0 & \frac{\sqrt{3}}{2} & 0 & \frac{\sqrt{3}}{2} & 0 & 0 
 \\ 
 0 & 0 & 0 & 0 & 0 & 0 & 0 & \alpha  & 1 & -1 & 1 & -1 & 0 & 0 \\
 0 & 0 & 0 & 0 & 0 & 0 & 0 & 1 & \alpha  & 1 & 1 & 0 & -1 & 0 \\
 0 & 0 & 0 & 0 & 0 & 0 & \frac{\sqrt{3}}{2} & -1 & 1 & \alpha  & 0 & 1 & -1 & -\frac{1}{2} \\
 0 & 0 & 0 & 0 & 0 & 0 & 0 & 1 & 1 & 0 & \alpha  & 1 & 1 & 0 \\
 0 & 0 & 0 & 0 & 0 & 0 & \frac{\sqrt{3}}{2} & -1 & 0 & 1 & 1 & \alpha  & 1 & \frac{1}{2} \\
 0 & 0 & 0 & 0 & 0 & 0 & 0 & 0 & -1 & -1 & 1 & 1 & \alpha  & 1 \\
 0 & 0 & -\frac{\sqrt{3}}{2} & 0 & \frac{\sqrt{3}}{2} & 0 & 0 & 0 & 0 & -\frac{1}{2} & 0 & \frac{1}{2} & 1 & \alpha 
\end{array}
\right)
\]
By direct computation, we find:
\begin{align*}\Delttt{5}{3,(21)}&= (\alpha -2)^3 \alpha  (\alpha +2) (\alpha +4)(\alpha ^4-7 \alpha ^2+3)^2,\\
\Delttt{6}{4,(21)}&=(\alpha -2)^3 \alpha  (\alpha +2) (\alpha +4)(\alpha -1)^2 \alpha^2(\alpha +1)^2 \left(\alpha ^2-7\right)^2.
\end{align*}

\ignore{\bajm{[-------------------------I would propose to end \S \ref{ss:sdmods} here-----------------------]}

Gram matrix for $\Deltaaa{7}{(5,(21))}$: 
\[
\left(
\begin{array}{ccccccc|cc|ccccccc|cc}
 \alpha  & 1 & 1 & 1 & 1 & 0 & 0 & 0 & 0 & 0 & 0 & 0 & 0 & 0 & 0 & 0 & 0 & 0 \\
 1 & \alpha  & 1 & 1 & 0 & 1 & 0 & 0 & 0 & 0 & 0 & 0 & 0 & 0 & 0 & 0 & 0 & 0 \\
 1 & 1 & \alpha  & 0 & 1 & 1 & -\frac{1}{2} & 0 & 0 & 0 & 0 & 0 & 0 & 0 & 0 & -\frac{\sqrt{3}}{2} & 0 & 0 \\
 1 & 1 & 0 & \alpha  & 1 & 1 & 0 & 0 & 0 & 0 & 0 & 0 & 0 & 0 & 0 & 0 & 0 & 0 \\
 1 & 0 & 1 & 1 & \alpha  & 1 & -\frac{1}{2} & 0 & 0 & 0 & 0 & 0 & 0 & 0 & 0 & \frac{\sqrt{3}}{2} & 0 & 0 \\
 0 & 1 & 1 & 1 & 1 & \alpha  & 1 & 0 & 0 & 0 & 0 & 0 & 0 & 0 & 0 & 0 & 0 & 0 \\
 0 & 0 & -\frac{1}{2} & 0 & -\frac{1}{2} & 1 & \alpha  & 1 & 0 & 0 & 0 & \frac{\sqrt{3}}{2} & 0 & \frac{\sqrt{3}}{2} & 0 & 0 & 0 & 0 \\ \hline 
 0 & 0 & 0 & 0 & 0 & 0 & 1 & \alpha  & 1 & 0 & 0 & 0 & 0 & 0 & 0 & 0 & 0 & 0 \\
 0 & 0 & 0 & 0 & 0 & 0 & 0 & 1 & \alpha  & 0 & 0 & 0 & 0 & 0 & 0 & 0 & 0 & 0 \\ \hline 
 0 & 0 & 0 & 0 & 0 & 0 & 0 & 0 & 0 & \alpha  & 1 & -1 & 1 & -1 & 0 & 0 & 0 & 0 \\
 0 & 0 & 0 & 0 & 0 & 0 & 0 & 0 & 0 & 1 & \alpha  & 1 & 1 & 0 & -1 & 0 & 0 & 0 \\
 0 & 0 & 0 & 0 & 0 & 0 & \frac{\sqrt{3}}{2} & 0 & 0 & -1 & 1 & \alpha  & 0 & 1 & -1 & -\frac{1}{2} & 0 & 0 \\
 0 & 0 & 0 & 0 & 0 & 0 & 0 & 0 & 0 & 1 & 1 & 0 & \alpha  & 1 & 1 & 0 & 0 & 0 \\
 0 & 0 & 0 & 0 & 0 & 0 & \frac{\sqrt{3}}{2} & 0 & 0 & -1 & 0 & 1 & 1 & \alpha  & 1 & \frac{1}{2} & 0 & 0 \\
 0 & 0 & 0 & 0 & 0 & 0 & 0 & 0 & 0 & 0 & -1 & -1 & 1 & 1 & \alpha  & 1 & 0 & 0 \\ 
 0 & 0 & -\frac{\sqrt{3}}{2} & 0 & \frac{\sqrt{3}}{2} & 0 & 0 & 0 & 0 & 0 & 0 & -\frac{1}{2} & 0 & \frac{1}{2} & 1 & \alpha  & 1 & 0 \\ \hline 
 0 & 0 & 0 & 0 & 0 & 0 & 0 & 0 & 0 & 0 & 0 & 0 & 0 & 0 & 0 & 1 & \alpha  & 1 \\
 0 & 0 & 0 & 0 & 0 & 0 & 0 & 0 & 0 & 0 & 0 & 0 & 0 & 0 & 0 & 0 & 1 & \alpha  \\
\end{array}
\right)\]

Gram matrix for $\Deltaaa{8}{(6,(21))}$:
\[ \hspace{-1cm} 
\left(
\begin{array}{ccccccc|ccc|ccccccc|ccc}
 \alpha  & 1 & 1 & 1 & 1 & 0 & 0 & 0 & 0 & 0 & 0 & 0 & 0 & 0 & 0 & 0 & 0 & 0 & 0 & 0 \\
 1 & \alpha  & 1 & 1 & 0 & 1 & 0 & 0 & 0 & 0 & 0 & 0 & 0 & 0 & 0 & 0 & 0 & 0 & 0 & 0 \\
 1 & 1 & \alpha  & 0 & 1 & 1 & -\frac{1}{2} & 0 & 0 & 0 & 0 & 0 & 0 & 0 & 0 & 0 & -\frac{\sqrt{3}}{2} & 0 & 0 & 0 \\
 1 & 1 & 0 & \alpha  & 1 & 1 & 0 & 0 & 0 & 0 & 0 & 0 & 0 & 0 & 0 & 0 & 0 & 0 & 0 & 0 \\
 1 & 0 & 1 & 1 & \alpha  & 1 & -\frac{1}{2} & 0 & 0 & 0 & 0 & 0 & 0 & 0 & 0 & 0 & \frac{\sqrt{3}}{2} & 0 & 0 & 0 \\
 0 & 1 & 1 & 1 & 1 & \alpha  & 1 & 0 & 0 & 0 & 0 & 0 & 0 & 0 & 0 & 0 & 0 & 0 & 0 & 0 \\
 0 & 0 & -\frac{1}{2} & 0 & -\frac{1}{2} & 1 & \alpha  & 1 & 0 & 0 & 0 & 0 & \frac{\sqrt{3}}{2} & 0 & \frac{\sqrt{3}}{2} & 0 & 0 & 0 & 0 & 0 
 \\  \hline  
 0 & 0 & 0 & 0 & 0 & 0 & 1 & \alpha  & 1 & 0 & 0 & 0 & 0 & 0 & 0 & 0 & 0 & 0 & 0 & 0 
 \\
 0 & 0 & 0 & 0 & 0 & 0 & 0 & 1 & \alpha  & 1 & 0 & 0 & 0 & 0 & 0 & 0 & 0 & 0 & 0 & 0 
 \\
 0 & 0 & 0 & 0 & 0 & 0 & 0 & 0 & 1 & \alpha  & 0 & 0 & 0 & 0 & 0 & 0 & 0 & 0 & 0 & 0 
 \\  \hline 
 0 & 0 & 0 & 0 & 0 & 0 & 0 & 0 & 0 & 0 & \alpha  & 1 & -1 & 1 & -1 & 0 & 0 & 0 & 0 & 0 \\
 0 & 0 & 0 & 0 & 0 & 0 & 0 & 0 & 0 & 0 & 1 & \alpha  & 1 & 1 & 0 & -1 & 0 & 0 & 0 & 0 \\
 0 & 0 & 0 & 0 & 0 & 0 & \frac{\sqrt{3}}{2} & 0 & 0 & 0 & -1 & 1 & \alpha  & 0 & 1 & -1 & -\frac{1}{2} & 0 & 0 & 0 \\
 0 & 0 & 0 & 0 & 0 & 0 & 0 & 0 & 0 & 0 & 1 & 1 & 0 & \alpha  & 1 & 1 & 0 & 0 & 0 & 0 \\
 0 & 0 & 0 & 0 & 0 & 0 & \frac{\sqrt{3}}{2} & 0 & 0 & 0 & -1 & 0 & 1 & 1 & \alpha  & 1 & \frac{1}{2} & 0 & 0 & 0 \\
 0 & 0 & 0 & 0 & 0 & 0 & 0 & 0 & 0 & 0 & 0 & -1 & -1 & 1 & 1 & \alpha  & 1 & 0 & 0 & 0 \\
 0 & 0 & -\frac{\sqrt{3}}{2} & 0 & \frac{\sqrt{3}}{2} & 0 & 0 & 0 & 0 & 0 & 0 & 0 & -\frac{1}{2} & 0 & \frac{1}{2} & 1 & \alpha  & 1 & 0 & 0 
 \\ \hline 
 0 & 0 & 0 & 0 & 0 & 0 & 0 & 0 & 0 & 0 & 0 & 0 & 0 & 0 & 0 & 0 & 1 & \alpha  & 1 & 0 \\
 0 & 0 & 0 & 0 & 0 & 0 & 0 & 0 & 0 & 0 & 0 & 0 & 0 & 0 & 0 & 0 & 0 & 1 & \alpha  & 1 \\
 0 & 0 & 0 & 0 & 0 & 0 & 0 & 0 & 0 & 0 & 0 & 0 & 0 & 0 & 0 & 0 & 0 & 0 & 1 & \alpha  \\
\end{array}
\right)
\]

Gram matrix for $\Deltaaa{9}{(7,(21))}$:
\[
\hspace{-.2in}
\left(
\begin{array}{ccccccc|cccc|ccccccc|cccc}
 \alpha  & 1 & 1 & 1 & 1 & 0 & 0 & 0 & 0 & 0 & 0 & 0 & 0 & 0 & 0 & 0 & 0 & 0 & 0 & 0 & 0 & 0 \\
 1 & \alpha  & 1 & 1 & 0 & 1 & 0 & 0 & 0 & 0 & 0 & 0 & 0 & 0 & 0 & 0 & 0 & 0 & 0 & 0 & 0 & 0 \\
 1 & 1 & \alpha  & 0 & 1 & 1 & -\frac{1}{2} & 0 & 0 & 0 & 0 & 0 & 0 & 0 & 0 & 0 & 0 & -\frac{\sqrt{3}}{2} & 0 & 0 & 0 & 0 \\
 1 & 1 & 0 & \alpha  & 1 & 1 & 0 & 0 & 0 & 0 & 0 & 0 & 0 & 0 & 0 & 0 & 0 & 0 & 0 & 0 & 0 & 0 \\
 1 & 0 & 1 & 1 & \alpha  & 1 & -\frac{1}{2} & 0 & 0 & 0 & 0 & 0 & 0 & 0 & 0 & 0 & 0 & \frac{\sqrt{3}}{2} & 0 & 0 & 0 & 0 \\
 0 & 1 & 1 & 1 & 1 & \alpha  & 1 & 0 & 0 & 0 & 0 & 0 & 0 & 0 & 0 & 0 & 0 & 0 & 0 & 0 & 0 & 0 \\
 0 & 0 & -\frac{1}{2} & 0 & -\frac{1}{2} & 1 & \alpha  & 1 & 0 & 0 & 0 & 0 & 0 & \frac{\sqrt{3}}{2} & 0 & \frac{\sqrt{3}}{2} & 0 & 0 & 0 & 0 & 0 & 0 
 \\  \hline 
 0 & 0 & 0 & 0 & 0 & 0 & 1 & \alpha  & 1 & 0 & 0 & 0 & 0 & 0 & 0 & 0 & 0 & 0 & 0 & 0 & 0 & 0 \\
 0 & 0 & 0 & 0 & 0 & 0 & 0 & 1 & \alpha  & 1 & 0 & 0 & 0 & 0 & 0 & 0 & 0 & 0 & 0 & 0 & 0 & 0 \\
 0 & 0 & 0 & 0 & 0 & 0 & 0 & 0 & 1 & \alpha  & 1 & 0 & 0 & 0 & 0 & 0 & 0 & 0 & 0 & 0 & 0 & 0 \\
 0 & 0 & 0 & 0 & 0 & 0 & 0 & 0 & 0 & 1 & \alpha  & 0 & 0 & 0 & 0 & 0 & 0 & 0 & 0 & 0 & 0 & 0 
 \\ \hline 
 0 & 0 & 0 & 0 & 0 & 0 & 0 & 0 & 0 & 0 & 0 & \alpha  & 1 & -1 & 1 & -1 & 0 & 0 & 0 & 0 & 0 & 0 \\
 0 & 0 & 0 & 0 & 0 & 0 & 0 & 0 & 0 & 0 & 0 & 1 & \alpha  & 1 & 1 & 0 & -1 & 0 & 0 & 0 & 0 & 0 \\
 0 & 0 & 0 & 0 & 0 & 0 & \frac{\sqrt{3}}{2} & 0 & 0 & 0 & 0 & -1 & 1 & \alpha  & 0 & 1 & -1 & -\frac{1}{2} & 0 & 0 & 0 & 0 \\
 0 & 0 & 0 & 0 & 0 & 0 & 0 & 0 & 0 & 0 & 0 & 1 & 1 & 0 & \alpha  & 1 & 1 & 0 & 0 & 0 & 0 & 0 \\
 0 & 0 & 0 & 0 & 0 & 0 & \frac{\sqrt{3}}{2} & 0 & 0 & 0 & 0 & -1 & 0 & 1 & 1 & \alpha  & 1 & \frac{1}{2} & 0 & 0 & 0 & 0 \\
 0 & 0 & 0 & 0 & 0 & 0 & 0 & 0 & 0 & 0 & 0 & 0 & -1 & -1 & 1 & 1 & \alpha  & 1 & 0 & 0 & 0 & 0 \\
 0 & 0 & -\frac{\sqrt{3}}{2} & 0 & \frac{\sqrt{3}}{2} & 0 & 0 & 0 & 0 & 0 & 0 & 0 & 0 & -\frac{1}{2} & 0 & \frac{1}{2} & 1 & \alpha  & 1 & 0 & 0 & 0 
 \\ \hline  
 0 & 0 & 0 & 0 & 0 & 0 & 0 & 0 & 0 & 0 & 0 & 0 & 0 & 0 & 0 & 0 & 0 & 1 & \alpha  & 1 & 0 & 0 \\
 0 & 0 & 0 & 0 & 0 & 0 & 0 & 0 & 0 & 0 & 0 & 0 & 0 & 0 & 0 & 0 & 0 & 0 & 1 & \alpha  & 1 & 0 \\
 0 & 0 & 0 & 0 & 0 & 0 & 0 & 0 & 0 & 0 & 0 & 0 & 0 & 0 & 0 & 0 & 0 & 0 & 0 & 1 & \alpha  & 1 \\
 0 & 0 & 0 & 0 & 0 & 0 & 0 & 0 & 0 & 0 & 0 & 0 & 0 & 0 & 0 & 0 & 0 & 0 & 0 & 0 & 1 & \alpha  \\
\end{array}
\right)\]

\bajm{[I want to write some determinants here too]}
\ignore{{   
\[\left(
\begin{array}{cccccccccccccccccccccc}
 \alpha  & 1 & 1 & 1 & 1 & 0 & 0 & 0 & 0 & 0 & 0 & 0 & 0 & 0 & 0 & 0 & 0 & 0 & 0 & 0 & 0 & 0 \\
 1 & \alpha  & 1 & 1 & 0 & 1 & 0 & 0 & 0 & 0 & 0 & 0 & 0 & 0 & 0 & 0 & 0 & 0 & 0 & 0 & 0 & 0 \\
 1 & 1 & \alpha  & 0 & 1 & 1 & -\frac{1}{2} & 0 & 0 & 0 & 0 & 0 & 0 & 0 & 0 & 0 & 0 & -\frac{\sqrt{3}}{2} & 0 & 0 & 0 & 0 \\
 1 & 1 & 0 & \alpha  & 1 & 1 & 0 & 0 & 0 & 0 & 0 & 0 & 0 & 0 & 0 & 0 & 0 & 0 & 0 & 0 & 0 & 0 \\
 1 & 0 & 1 & 1 & \alpha  & 1 & -\frac{1}{2} & 0 & 0 & 0 & 0 & 0 & 0 & 0 & 0 & 0 & 0 & \frac{\sqrt{3}}{2} & 0 & 0 & 0 & 0 \\
 0 & 1 & 1 & 1 & 1 & \alpha  & 1 & 0 & 0 & 0 & 0 & 0 & 0 & 0 & 0 & 0 & 0 & 0 & 0 & 0 & 0 & 0 \\
 0 & 0 & -\frac{1}{2} & 0 & -\frac{1}{2} & 1 & \alpha  & 1 & 0 & 0 & 0 & 0 & 0 & \frac{\sqrt{3}}{2} & 0 & \frac{\sqrt{3}}{2} & 0 & 0 & 0 & 0 & 0 & 0 \\
 0 & 0 & 0 & 0 & 0 & 0 & 1 & \alpha  & 1 & 0 & 0 & 0 & 0 & 0 & 0 & 0 & 0 & 0 & 0 & 0 & 0 & 0 \\
 0 & 0 & 0 & 0 & 0 & 0 & 0 & 1 & \alpha  & 1 & 0 & 0 & 0 & 0 & 0 & 0 & 0 & 0 & 0 & 0 & 0 & 0 \\
 0 & 0 & 0 & 0 & 0 & 0 & 0 & 0 & 1 & \alpha  & 1 & 0 & 0 & 0 & 0 & 0 & 0 & 0 & 0 & 0 & 0 & 0 \\
 0 & 0 & 0 & 0 & 0 & 0 & 0 & 0 & 0 & 1 & \alpha  & 0 & 0 & 0 & 0 & 0 & 0 & 0 & 0 & 0 & 0 & 0 \\
 0 & 0 & 0 & 0 & 0 & 0 & 0 & 0 & 0 & 0 & 0 & \alpha  & 1 & -1 & 1 & -1 & 0 & 0 & 0 & 0 & 0 & 0 \\
 0 & 0 & 0 & 0 & 0 & 0 & 0 & 0 & 0 & 0 & 0 & 1 & \alpha  & 1 & 1 & 0 & -1 & 0 & 0 & 0 & 0 & 0 \\
 0 & 0 & 0 & 0 & 0 & 0 & \frac{\sqrt{3}}{2} & 0 & 0 & 0 & 0 & -1 & 1 & \alpha  & 0 & 1 & -1 & -\frac{1}{2} & 0 & 0 & 0 & 0 \\
 0 & 0 & 0 & 0 & 0 & 0 & 0 & 0 & 0 & 0 & 0 & 1 & 1 & 0 & \alpha  & 1 & 1 & 0 & 0 & 0 & 0 & 0 \\
 0 & 0 & 0 & 0 & 0 & 0 & \frac{\sqrt{3}}{2} & 0 & 0 & 0 & 0 & -1 & 0 & 1 & 1 & \alpha  & 1 & \frac{1}{2} & 0 & 0 & 0 & 0 \\
 0 & 0 & 0 & 0 & 0 & 0 & 0 & 0 & 0 & 0 & 0 & 0 & -1 & -1 & 1 & 1 & \alpha  & 1 & 0 & 0 & 0 & 0 \\
 0 & 0 & -\frac{\sqrt{3}}{2} & 0 & \frac{\sqrt{3}}{2} & 0 & 0 & 0 & 0 & 0 & 0 & 0 & 0 & -\frac{1}{2} & 0 & \frac{1}{2} & 1 & \alpha  & 1 & 0 & 0 & 0 \\
 0 & 0 & 0 & 0 & 0 & 0 & 0 & 0 & 0 & 0 & 0 & 0 & 0 & 0 & 0 & 0 & 0 & 1 & \alpha  & 1 & 0 & 0 \\
 0 & 0 & 0 & 0 & 0 & 0 & 0 & 0 & 0 & 0 & 0 & 0 & 0 & 0 & 0 & 0 & 0 & 0 & 1 & \alpha  & 1 & 0 \\
 0 & 0 & 0 & 0 & 0 & 0 & 0 & 0 & 0 & 0 & 0 & 0 & 0 & 0 & 0 & 0 & 0 & 0 & 0 & 1 & \alpha  & 1 \\
 0 & 0 & 0 & 0 & 0 & 0 & 0 & 0 & 0 & 0 & 0 & 0 & 0 & 0 & 0 & 0 & 0 & 0 & 0 & 0 & 1 & \alpha  \\
\end{array}
\right)\]
}}}

\subsection{On Rollet graphs  }

\newcommand{\Roll}{{\mathcal{R}}}  
Recall that a tower of recollement \cite{ToR} 
is a sequence of 
inclusions of algebras $A_n \hookrightarrow A_{n+1}$ together with a sequence of idempotent inclusions $e A_{n+j} e \cong A_n$ for some $j$ - typically $j=2$ - so that there is a  global limit of representation categories - 
in our case 
along the functors from (\ref{de:globloc0}). (Recall also that this setup arises in various contexts. Originally it is the algebra analogue of the thermodynamic limit in corresponding statistical mechanical models; it is also an image of Lie geometric representation theory on the dual side when there is a Schur--Weyl duality involving a Lie structure; and it can also be seen as a non-semisimple generalisation of a Jones Basic Construction.) 
The global limit partitions into $j$ components - here indeed 2 components.  

In this section we recall the Rollet graphs of our towers of algebras, and their combinatorial connection 
to $\Deltaa$-modules. 

\mdef   \label{de:Lambdan}
Let us write $\Lambda_n$ for the set of integer partitions of $n$. 
(Which set thus indexes the complex-irreducible representations of $S_n$ up to isomorphism.)

\newcommand{\Lambdax}{\ppm{\Lambda}} 

\mdef   \label{def:indexset}
The usual \cite{KadarYu} index set for standard modules of $J_{l,n}$ is 
\[
\Lambda_{l,n} = \{  ( p, \l)\; | \; p \in \N_0, \; 
n-p \in 2\N_0 ,  
\; \l\vdash   \min(p,l+2) \}
\]
Observe that $\Lambda_{l,n} \hookrightarrow \Lambda_{l,n+2}$. 
In particular, for large enough $n$ we have 
$$
\Lambda_{l,n+2} \; = \; \Lambda_{l,n} \cup \left(\Lambda_{l+2}\times \{n+2\}\right)  .
$$
We define $\Lambda_{l,\infty}=\cup_{n=1}^\infty \Lambda_{l,n}$. 

\mdef  Our algebras are non-semisimple in general (and indeed in the cases we are interested in). But each forms a Brauer modular system with the triangle of commutative rings (integral - field of fractions - modular) taken as follows. 
The integral ring is complex polynomials in indeterminate $\alpha$ (this can be lifted to consider integral polynomials and a double modularity, but we will stay with the complex case here). The field of fractions is the rational \bajmm{functions}, $\C(\a)$. 
And the modular ring is the complex field obtained from the integral by evaluating $\alpha$ at a given complex number. 
We note that our algebras are semisimple over the field of fractions; and hence generically. Thus in particular the (integrally defined) $\Delta$-modules are generically simple; and are a basis for the Grothendieck group in general. 
In this sense our tower has a Bratteli diagram for restriction of $\Delta$-modules 
(indicating filtration/character factors rather than direct summands in general).

\mdef   \label{de:Rollet} 
Recall \cite{ToR} that a tower of recollement (ToR) is a tower of algebras related both by inclusion and by idempotent localisation; and that the corresponding Rollet graph is the projection of the Bratelli diagram  with respect to the localisation. 
Let us write $\Roll_{l}$ for the Rollet graph (projection of  Bratelli diagram)  of the tower of algebras $J_{l,-} = ( J_{l,n})_{n \in \N_0}$ with respect to the usual idempotent localisation from (\ref{de:globloc0}).

The vertex set of the graph $\Roll_l$ is the set of pairs $(p,\l)  \in \Lambda_{l,\infty}$. 
There is an edge $(p,\l)-(p', \l')$ if and only if (up to interchanging the pairs) $p'=p+1$, and either $\l'$ can be obtained from $\l$ by adding a box, or $\l'=\l$.


\medskip 

The `Rollet diagram' is simply the Rollet graph drawn with vertex weights 
$p$ 
increasing 
from left to right; and then writing simply $\l$ for $(p,\l)$ (since the $p$ can now be recovered from context). 

\newcommand{\arm}{arm}
\newcommand{\hip}{shoulder}

\mdef \label{pa:rollarms}
It will be convenient to refer to the subgraph that agrees with the Young graph as the {\em head} of the graph; 
the remainder of the graph as the {\em \arm s} of the graph, 
thus a $\l$-arm for each $\l\vdash l+2$;
and the last vertices in the head - that are the beginnings of the arms - as the {\em \hip s} of the graph.

\mdef Example. 
The  
Rollet diagrams  
$\Roll_{-1}$ for the tower $J_{-1,n}$,  
$\Roll_1$ for the tower $J_{1,n}$ 
and
$\Roll_2$ for the tower $J_{2,n}$ 
are given in Fig.\ref{fig:R2}

\newcommand{\boxx}{\!\!\!\!\qed\;}  

\begin{figure}
$$
\emptyset - \boxx -\boxx -\boxx - \boxx - \boxx - ...
$$
\vspace{.91cm}

\includegraphics[width=9cm]{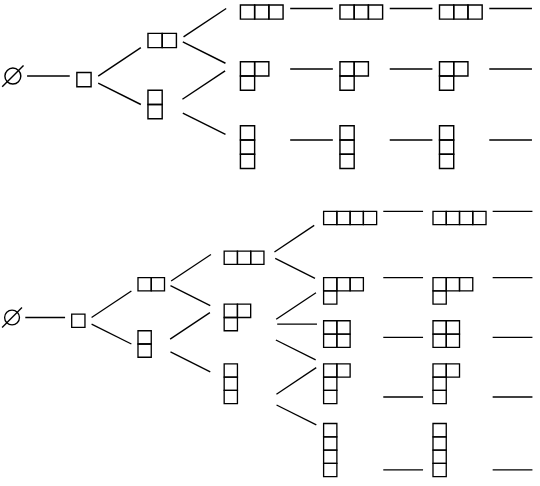}

\caption{\label{fig:R2}  The Rollet graphs/diagrams  $\Roll_{-1}$, 
$\Roll_1$ and   $\Roll_2$.  
(See \ref{ss:gdetl0} for $\Roll_0$.)
\\
In $\Roll_2$, for example, the head of the graph is the first five columns; 
with the \hip\ being the 5th column; and columns five and above being the \arm s. 
}
\end{figure} 



\medskip \medskip 

The Rollet graph encodes standard module restriction rules - specifically filtration factors. 
Thus in particular we have the following. 

\mdef   \label{pr:walks0}
The dimension of the standard module of $J_{l,n}$ with label 
$v=(p,\lambda)$ is given by the number of walks of length $n$ from $\emptyset$ to $v$ 
on the graph $\Roll_l$.

\mdef 
\ignore{{ 
For completeness (and also to facilitate the making of a useful point) let us include the case 
$\Roll_{-1}$, 
for $J_{-1,n}$:

\medskip 

\beq 
\emptyset - \boxx -\boxx -\boxx - ...
\eq 
\medskip

\noindent 
}}
In the $l=-1$ case, $p$ determines $(p,\lambda)$, so we can use just $p$ for the label.
Walks of length $n$ from 0 (i.e. $(0,\emptyset)$) to $p$  (i.e. $(p,\boxx)$)
on this $\Roll_{-1}$ graph can be used to give a basis for the corresponding 
Temperley--Lieb standard module (indeed there are multiple distinct useful such bases). 
From this, or directly, we can give a corresponding matrix representation from which, in turn, we can start to see the role of alcove geometry in this setting (alcove geometry, particularly type affine-$A_1$ --- affine reflection groups on the line, arises naturally in Lie representation theory and hence can be considered to arise here via Schur--Weyl duality, but we can also understand it directly). 

The construction is as follows  \cite{Martin91}. 
First we consider our graph as embedded in $\R$ so that the vertices are a subset of the integral points:
then walks on the embedded graph from 0 to $\l$ are put in correspondence with bases (in various ways - see \S\ref{ss:gdTL}) for the $\l$ standard module. 
In a suitable bases there is a decoupling of walks exceeding a geometrical bound when $\xx$ is the root of a Chebyshev polynomial - a submodule, and hence giving a map in from another standard module, say $\l'$. The map in can be organised as the label $\l'$ being the image of $\l$ under the action of a suitable realisation (depending on $\xx$) of the affine reflection group. See \S\ref{ss:repthy} for visualisations. 



\ignore{{
\mdef  \ppm{[DELETE this para, or at least delete ref to C1, for now.]}
Later, in \S\ref{ss:walkbases0}, 
we will describe bases for $\Deltaa$-modules in terms of walks on Rollet graphs,
and hence more `geometrical' than diagram bases. And hence potentially better suited to extending the corresponding known bases in case $l=-1$.  
}}



\subsubsection{Gram-determinant-decorated Rollet graph - preliminaries}
As noted in the Introduction (see 
\S\ref{ss:intro}),  
our first main aim is 
to determine the fundamental invariants of Kadar--Yu representation theory, i.e. the Cartan decomposition matrices. 
This is not easy!%
\footnote{(In the 20th century, Brauer posed the corresponding problem for the symmetric groups over fields of finite characteristic and then, coming to suspect that this problem is impossible in general, he posed the problem for the Brauer algebras over the complex field - a problem which  then remained open for over 50 years, and was only solved in the 21st century. And, viewed holistically, the Kadar-Yu problem is certainly harder.)} 
So we have some intermediate aims. 
In particular, partial data on decomposition matrices is given by morphisms from costandard to standard modules. 
And partial data on these is given 
by the corresponding gram matrices 
$\Gamma(\Deltaa^{n}_{(\lambda,p)})$
of contravariant forms (in some suitable basis). 
And finally just by the determinants:
\beq
 \Delttt{n}{p,\lambda}  \; := \;  \det(\Gamma(\Deltaa^{n}_{(\lambda,p)}))\label{eq:detnot}
\eq

Even the gram determinants represent a huge swathe of complex data, raising the practical problem 
of how to organise and present what is known. 

\medskip 


\newcommand{\V}{{\mathcal V}}
\newcommand{\RollV}{\Roll^{{\mathcal V}}}
\newcommand{\RollD}{\Roll^{{\Deltaa}}}

In general a `gram-det-decorated' Rollet graph 
$\RollD_l$ 
is a Rollet graph $\Roll_l$ 
as in \ref{de:Rollet}
with the fibre of gram determinants attached to each vertex. 
And we shall include results presented in this way.
But this is quite information heavy. 
We will see from the $l=-1$ case that there the data may be simplified enormously by only attaching (for given~$l$)
the fibre of `marginal vertex functions' defined as follows. 
The marginal vertex function is 
%
\beq  \label{eq:MVF1}
\V^{\bajmm{n}}_{(p.\lambda)} \; \bajmm{:}= \; 
\frac{ \Delttt{(n=p+2m)}{p,\l} }{ \prod_{{v\; adjacent\; \atop to \; (p,\lambda)\; in\;\Roll_l }} \Delttt{(n-1)}{v}}
\eq 
- recall that $p$ is the number of propagating lines.  
In the new decorated graph the fibre of determinants is replaced by this $\V$ as $m$ varies - if indeed it depends on $m$.
\ppmm{So we will also include some results presented in this way.} 
Let us write $\RollV_l$ for this particular decorated Rollet graph. 

\mdef   \label{de:Roll-etc}
Finally, and even less information-heavy, we may also present a certain section through $\RollV_l$,
corresponding to fixed $m$ 
(fixed number of `cups' in the diagrams of the diagram basis) - which we denote by $\Roll^{(m)}_l$.

The corresponding section through $\RollD_l$ is denoted $\Roll^{m}_l$. 

\medskip 

Observe that if we can determine the marginal vertex functions then we can determine the \bajmm{G}ram determinants by a recursion. 
So a next step is to study the marginal functions. 
In order to give them in closed form it will be convenient to use Chebyshev polynomials. 
And in particular this brings us to the `bootstrap' one-cup cases. 
So in 
\S\ref{ss:onecup0} and in particular in 
\S\ref{ss:cheby0} we collect the manipulations that we will need. 

Then in \S\ref{ss:e-next!} we will address the general case. 

\section{Gram-determinants of one cup standard modules}  \label{s:onecup0}

In this section, we consider the gram determinants of the $J_{l,n}$ standard modules $\Deltaa_{(n-2,\l)}^{n}$ for $n\geq l+4$. The main result is Theorem \ref{thm:main} in \S\ref{ss:non1dsp} which allows us to obtain closed forms for $\Deltaa_{(n-2,\l)}^{n}$ as a series in $n$ by computing the first two cases (\textit{i.e.} $n=l+4,l+5$); these cases uniquely determine a Chebyshev series of polynomials, $P^{(\l)}_{n}$, with which we can express each determinant. We apply this theorem to compute closed forms of this determinant for all $\l\vdash l+2$ with $l\leq 2$ and additionally in the cases $\l=(l+1,1), (l+1,1)^T$ for $l\leq 7$. The polynomials $P^{(\l)}_{n}$ exhibit some remarkable properties which we begin to document in \S\ref{ss:examples0}. We study the distribution of roots of these polynomials in \S\ref{s:roots}.

\subsection{On Chebyshev polynomials (for gram determinants)}  \label{ss:cheby0}

\bajmm{We say that a series of functions, $P=\{ P_n\}_{n\in \Z}$, is a ``Chebyshev series" if it satisfies the Chebyshev recursion:}
\beq \label{eq:Cheby0} 
P_{n+1}(x) = x P_{n}(x) +P_{n-1} (x)   ,
\eq  
(see e.g. \cite[\S6.3.3]{Martin91} and references therein; cf. also e.g. \cite{KarlinStudden}).
This means of course that the series is 
determined in principle by any two successive values, $P_i$ and $P_{i+1}$. 
Classically these would be with $i=0$, as we will recall shortly. 
Two series $P,P'$ are `parallel' if $P_n = P'_{n+i}$ for some $i$. 


\mdef  \label{pr:cheb1}
Observe that if any two successive functions $P_i$ and $P_{i+1}$ are (integral) polynomial then all $P_n$ are. If in addition $\deg(P_{i+1})=\deg(P_i) + 1$ \ppmm{for some $i$,} then $\deg(P_{k+1})=\deg(P_{k}) + 1$ for all $k>i$ 
and further, if in addition
$P_{i+1}$ is monic 
then so is $P_k$ for all $k>i$. 

\mdef We call a Chebyshev series of polynomials $P$ ``reduced" if there is no common non-trivial (\textit{i.e.} degree $>0$) polynomial factor 
among all $P_i$ (equivalently, among any two successive 
terms). 
\\ 
Any polynomial series $P$ is a (polynomial) multiple 
of a reduced one $P'$; $P'$ can be obtained from $P$ by removing a maximal common factor among two successive terms. 


\mdef  \label{de:Chebyab}
Let us write $\PP^{a,b} $ for the Chebyshev series with initial conditions 
$\PP^{a,b}_0 = a$ and $\PP^{a,b}_1 = b$.  

We may abuse notation slightly and, provided $i$ is fixed, write $\PP^{a,b} $ 
for the parallel series that starts with $\PP^{a,b}_i =a$. 

As we will see, explicit usage of the $\PP^{a,b}$ notation is often cumbersome in practice for general series (since $a$ may be a lengthy expression), and we will largely restrict the notation to abstract manipulations. Instead we use more specific notations for interesting cases, as in \S\ref{ss:onecup0}.

\mdef 
The classical initial conditions are $P_0(x) =0$ and $P_1(x) =1$ giving then 
$P_2(x) =x $; $P_3(x) = x^2-1$; $P_4(x) = x(x^2 -2)$; and so on. In our $\PP^{a,b}$ notation this classical series is $\PP^{0,1}$. 

\mdef 
For convenience we set $P^U:=\PP^{0,1}$ since this series is related to the classical series of ``U-type" Chebyshev polynomials, $U$, which can be defined by the identity
\beq U_n(\cos(\th))\sin(\th)= \sin((n+1)\th),\label{eq:trig}\eq
by $P^U_n(x)=U_{n-1}(x/2)$. The polynomials $U_n$ and $P^U_n$ are well defined for $n<0$ with the above formulae still holding here; indeed, one can use these to determine
\beq P^U_{-n}(x)=-P^U_{n}(x),\label{eq:negcheb} \eq
for any $n\in \Z$. 

\mdef   \label{pr:Cbasis} Note that $\{ P^U_1(x),P^U_2(x),\dots \}$ is a $\C$-basis for the polynomial ring $\C[x]$.

\ignore{{ 
\newcommand{\ptarmigan}{Cheby}
\newcommand{\ramping}{ramping}
\newcommand{\Ramping}{Ramping}
}} 

\subsubsection{The \Ramping\ Property}


Here we discuss a property that Chebyshev series may 
achieve eventually. 

\newcommand{\rampsfrom}{ramps from}   

\mdef\label{def:chebycond} 
Let $P$ be a 
Chebyshev series. For some integer $N$, we say that the series $P$ 
has the ``\ramping\ property 
for $n\geq N$'' if it is polynomial and reduced, and if, furthermore, for $n\geq N$, $P_n$ is monic and $\deg(P_{n+1})=\deg(P_{n})+1$. 
Such a series is uniquely determined by $N$ and the ratio $P_{N+1}(x)/P_{N}(x)$, a rational function.


\mdef \label{pa:chebslv} 
Suppose that a series of polynomials $P$ has the \ramping\ property for $n\geq N$, and suppose that $d=\deg(P_N)$. 
Then there exist unique coefficients $a_k, b_i, b'_j \in \C$, 
such that
\begin{align*}
    P_N(x)&=\sum_{k=-(d+1)}^{d+1} a_{k} P^U_{k}(x)=\sum_{i=1}^{d+1} b_i P^U_{i}(x),\\
    P_{N+1}(x)&=\sum_{k=-(d+1)}^{d+1} a_{k} P^U_{k+1}(x)=\sum_{j=1}^{d+2} b'_j P^U_{j}(x).\end{align*}
The $a_k$ can be determined as follows: first solve for the coefficients $b_i$ and $b_j'$ - these are unique by \ref{pr:Cbasis}. Then by using \eqref{eq:negcheb} we obtain the following full rank linear system relating the $a_k$ to the $b_i$ and $b_j'$:
\begin{align*}
    b_k&=a_k-a_{-k}, & b'_{d+2}&=a_{d+1}, & b'_{d+1}&=a_{d}, &b'_{k'}&=a_{k'-1}-a_{-(k'+1)},
\end{align*}
for $k=1,\dots,d+1$ and $k'=1,\dots, d$. In particular, monicity of $P_N$ and $P_{N+1}$ imply that $a_{d+1}=1$ and $a_{-(d+1)}=0$.

Since the relation \eqref{eq:Cheby0} is linear, it thus follows that for any $l\in \Z$, we have
\beq 
P_{N+l}(x)=\sum_{k=-(d+1)}^{d+1} a_{k} P^U_{k+l}(x)=P^U_{d+1+l}(x)+\sum_{k=-d}^{d} a_{k} P^U_{k+l}(x). 
\eq

\ignore{
\mdef 
If we start with $P'_2(x)=2x$ and $P'_3(x) = (x-1)(x+2) \;$ 
(see e.g. \cite[\S5.2]{ShBr})
then  $P'_4(x) = x(x^2+x-4)$,
\beq   \label{eq:l=0+}
P'_5(x) = x^4 + x^3 -5x^2 -x +2, \qquad 
P'_6(x) =  x(x-1)(x^3 +2x^2 -4x -6)   \qquad 
\eq  
and so on. 
A better notation is perhaps $P'_n(x) = P^{(2)}_n(x)$, 
since this sequence is related to the $\lambda = (2)$ branch - as we will explain shortly. 
\\ \ppm{
Suppose now we start with $P''_2(x) = 2$ and $P''_3(x) = x-1$  
- a better notation here is $P''_n(x)  = P^{(1^2)}_n(x)$.
Then ... }

\mdef  \label{pa:Cheby1}
Here are tables of values for various initial conditions 
(we say `initial' conditions, but such Chebyshev sequences are determined by any two successive terms; other than for $P$, the starting point for $n$ values is not yet locked down):
\[
\begin{array}{c|c|c|ccc}
n\setminus  & P_n(x)        & P'_n =P^{(2)}_n      & P''_n = P^{(1^2)}_n \\
2      &  x                 & 2x               & 2 \\
3      & x^2 -1             & (x-1)(x+2)        & x-1  \\
4      & x(x^2-2)           &  x(x^2+x-4)      & (x+1)(x-2)  \\ 
5      & (x^2-x-1)(x^2+x-1) & x^4+x^3-5x^2 -x +2  & (x^3-x^2-3x+1)  \\
6       & x(x^2-1)(x^2-3)   & x(x-1)(x^3+2x^2-4x-6) & (x-1)(x^3-4x-2) 
\end{array}
\]
and continuing (with some initial conditions suggested by $l=1$, cf. \ref{pa:l1}):
\[ \hspace{-.861cm} 
\begin{array}{c|c|c|ccc}
n\setminus  & P^{(3)}_n             & P^{(21)}_n      &  P^{(1^3)}_n \\
1    &    &  3(x^2-1) & \\
2      &  3(x+2)             & x(x^2-4)               & 3 \\
3      &    x(x+4)            & (x^4-7x^2+3)         & x-2  \\
4      & (x+1)(x^2+3x-6)        &  x(x^2-1)(x^2-7)      & (x+1)(x-3)  \\ 
5      & x(x^3+4x^2-4x-10)        & (x^6-9x^4+14x^2-3)  & (x^3-2x^2-4x+2)  \\
6       & (x^5+4x^4-5x^3-14x^2+3x+6) & 
x \left(x^{6}-10 x^{4}+22 x^{2}-10\right)
& (x^4-2x^3-5x^2+4x+3) \\ 
7   &  \dots  & 
\left(x^2 -1\right) \left(x^{6}-10 x^{4}+21 x^{2}-3\right)
& \dots 
\end{array}
\]


\mdef 
In general there is a Chebyshev sequence for each `branch' for each $l$, 
hence for each $\lambda\vdash l+2$, 
thus a sequence $\PP^\lambda$ 
\ppm{defined specifically by ...
}

The first perspective on this is that the gram matrices on a branch are `related' by the Chebyshev recursion. This is true on the nose if we look at the slice through the Bratelli diagram corresponding to 1-cup modules $\Deltaaa{n}{(n-2,\lambda)}$ with $\lambda\in \{ (3), (1^3) \}$. 
... }

\subsection{Gram Determinants of One Cup Standard Modules}\label{ss:onecup0}
Here we consider the Gram determinants of the standard modules $\Deltaa_{(p, \l)}^{n}$ for $n=p+2$. 

\subsubsection{The one-cup gram matrix}    \label{ss:gram-mat}


As in \cite[Prop. 4.22]{KadarYu}, we may construct a basis for 
the left module 
$\Deltaaa{n}{(p, \l)}$ 
for given $l$ 
from a basis for the corresponding Specht module, $\mathcal{S}^{\l}$, and the set of half diagrams $J^{||}_{l}(n,p)$ (\textit{cf.} \ref{de:stdmods}). 
Indeed this holds for any $p$, but if $p=n-2$ as here then the latter half diagrams are completely determined by the position of a single ``cup" hence our referring to the $\Deltaa_{(p, \l)}^{n}$ as ``one cup modules". 

\mdef   \label{de:uu} 
For fixed $l$ and $n$, write $\uu_{ij}$, with $i<j$ for the half diagram $\uu_{ij}\in J_l^{||}(n,n-2)$ 
with a cup joining the sites $i$ and $j$.  
For example, when $n=7$ we have 
\[
\uu_{24} =  \onecupp{8}{2}{4}{1}{3}{5}{6}{7}      \hspace{.31cm} \in J_{1}^{||}(7,5). 
\]
Note that if $j=i+1$ then $ht(\uu_{ij})=-1$, otherwise $ht(\uu_{ij})=j-3$.


\newcommand{\BBB}{{\mathsf B}}  

\mdef 
Write $\BBB_{(l,n)}$ as shorthand for the set of half diagrams $J_{l}(n,n-2)$. One has that 
\beq
\BBB_{(l,n)}=\{\uu_{12},\uu_{23},\dots,\uu_{n-1,n}\}\cup \left(\bigcup_{k=3}^{\min{n,l+3}} \BBB_k\right), \label{cupbasis}
\eq
where $\BBB_k:=\{\uu_{j,k} \ | \ j=1,\dots, k-2\}$. 
Observe that $\BBB_k$ is precisely those one-cup half-diagrams with height $k-3$. 

\mdef  \label{pa:cuprec}
Observe for $n\geq l+3$, we have that $\BBB_{(l,n+1)}= \BBB_{(l,n)}\cup \{\uu_{n,n+1}\}$.

\mdef 
Fix $l$ and $\l \vdash l+2$. When $n\geq l+4$, consider a basis $\mathcal{B}=\{b_1,\dots, b_{d_{\l}}\}$ for 
$\mathcal{S}_{\l}$ regarded as one for $\Delta_{(n,\l)}^{n}$ (\textit{c.f.} \ref{pa:spbasis})
with $b_i=x_i c_{\l}$ for some $x_i\in \C \Sym_{l+2}\subset J_{l,n}$ and $c_{\l}$ as per \eqref{de:cl}. Then a basis for $\Deltaaa{n}{(n-2,\l)}{}$ can be obtained as 
\beq
\BBB^n_{(n-2,\l)} \; := \; \{ \uu_{jk}b_i \ | \ \uu_{jk} \in \BBB_{(l,n)}, \ b_i \in \mathcal{B}  \}. \label{eq:basis} \eq

\mdef \label{pa:gramrels}
For $n\geq l+4$, we now consider the form $\langle\_,\_\rangle$ on $\Deltaa^{n}_{(n-2,\l)}$. The value of the form $\langle \uu_{jk} b_i , \uu_{j'k'} b_{i'} \rangle$ is determined by the equation
 \[
 c_{\l }   x_{i'}^* \uu_{j'k'}^* \uu_{jk} x_i  c_{\l}= \langle b_i \uu_{jk}, b_{i'} \uu_{j'k'} \rangle c_{\l}.
\]
There are three cases to consider, namely $|\{j,k,j',k'\}|=2,3,4$. In the first case $ \uu_{j'k'}^*\uu_{jk}=\a e$, in the second $\uu_{j'k'}^* \uu_{jk} =\sigma$ for some $\sigma\in \Sym_{l+2}$. In the last case,  $\uu_{j'k'}^* \uu_{jk} $ has $p-2$ propagating lines. We thus find
\[ \langle  \uu_{jk} b_i ,\uu_{j'k'} b_{i'}  \rangle =\begin{cases}
    \a \langle x_i, x_{i'}\rangle, & |\{i,j,i',j'\}|=2,\\
    \langle \sigma x_i, x_{i'}\rangle, & |\{i,j,i',j'\}|=3, \text{ where } \sigma=\uu_{j'k'}^*\uu_{jk} ,\\
    0, & \text{otherwise.}
\end{cases} 
\]
where $\langle\_,\_\rangle$ also here denotes the form on $\Delta^{n}_{(n-2,\l)} c_{\l} \simeq \SS_\l$ (\textit{c.f} \ref{de:stdmods}). Passing to an orthonormal basis for $\SS_\l$, with respect to this form, we observe that $\a$ only appears on the leading diagonal of the matrix. In particular, $\Gamma(\Delta^{n}_{(n-2,\l)})$ now has the form
\beq \Gamma(\Delta^{n}_{(n-2,\l)})(\a)=\a \, \text{I} +\Gamma(\Delta^{n}_{(n-2,\l)})', \label{eq:adep} \eq
where $\Gamma(\Delta^{n}_{(n-2,\l)})'=\Gamma(\Delta^{n}_{(n-2,\l)})|_{\a=0}$ is a real symmetric matrix. As observed for example in \cite{KY2}, this implies that the form $\langle \_ , \_\rangle$ is non-degenerate for $\a$ away from the real line.

For $l+3<j<n$ basis elements of the form $\uu_{j,j+1} b_i $ 
only have non-zero pairing with those of the form $\uu_{j',j'+1} b_{i'}$ for $j'=j,j\pm 1$, where we may compute
\beq \langle \uu_{j,j+1} b_i  , \uu_{j',j'+1}b_{i'} \rangle = \delta_{ii'}(\a \delta_{jj'}+\delta_{j+1,j'}+\delta_{j-1,j'}). \label{cheby} \eq

Equation \eqref{cheby} is the source of an important property of our Gram matrices $\Gamma(\Deltaa^{n}_{(p,\lambda)})$. 

\subsubsection{When $\SS_\l$ is one dimensional}\label{ss:1dspecht}

The underlying theory for this case is treated in \cite{KadarYu,ShBr}. 
This section provides a review in preparation for the general case in \S\ref{ss:non1dsp}, 
and examples extended beyond those previously computed. 

\medskip 

Suppose firstly 
then, 
that $\SS_\l$ is one dimensional, \textit{i.e.} $\l=(p)$ or $(1^p)$, and so $\Deltaa_{(p,\l)}^{n}$ has basis 
$\{ \uu_{ij} c_{\l} \}_{ij}$ with $\uu_{ij}\in B_{(n,l)}$. 
Furthermore, it is naturally divided into two ``regimes" whereon the form $\langle \_ , \_ \rangle$ displays distinct behaviour; the first we term the ``head" consisting of basis elements $ \uu_{ij} c_{\l}$ with $j\leq l+4$, and the second the ``tail" consisting of those same elements with $j>l+4$. The gram matrix restricted to the head is a dense matrix, whereas for the tail it is tridiagonal (when appropriately ordered). Furthermore, the pairing between elements of the head and the tail only has one non-zero case, namely $\langle \uu_{l+3,l+4} c_{\l}, \uu_{l+4,l+5} c_{\l} \rangle=1$.

\mdef 
To illustrate this behaviour, we give the gram matrix for the $J_{1,13}$-module $\Deltaa^{(13)}_{(11,\lambda)}$, for $\l =(3)$ or $\l=(1^3)$ 
(by taking $\e=1$ or $\e=-1$, respectively)
below:
\beq \left(
\begin{array}{ccc ccc c| cccccccc}
 \alpha  & 1 & \e & 1 & \e & 0 & 0 & 0 & 0 & 0 & 0 & 0 & 0 & 0 & 0 \\
 1 & \alpha  & 1 & 1 & 0 & \e & 0 & 0 & 0 & 0 & 0 & 0 & 0 & 0 & 0 \\
 \e & 1 & \alpha  & 0 & 1 & \e & 1 & 0 & 0 & 0 & 0 & 0 & 0 & 0 & 0 \\
 1 & 1 & 0 & \alpha  & 1 & 1 & 0 & 0 & 0 & 0 & 0 & 0 & 0 & 0 & 0 \\
 \e & 0 & 1 & 1 & \alpha  & 1 & \e & 0 & 0 & 0 & 0 & 0 & 0 & 0 & 0 \\
 0 & \e & \e & 1 & 1 & \alpha  & 1 & 0 & 0 & 0 & 0 & 0 & 0 & 0 & 0 \\
 0 & 0 & 1 & 0 & \e & 1 & \alpha  & 1 & 0 & 0 & 0 & 0 & 0 & 0 & 0 \\\hline
 0 & 0 & 0 & 0 & 0 & 0 & 1 & \alpha  & 1 & 0 & 0 & 0 & 0 & 0 & 0 \\
 0 & 0 & 0 & 0 & 0 & 0 & 0 & 1 & \alpha  & 1 & 0 & 0 & 0 & 0 & 0 \\
 0 & 0 & 0 & 0 & 0 & 0 & 0 & 0 & 1 & \alpha  & 1 & 0 & 0 & 0 & 0 \\
 0 & 0 & 0 & 0 & 0 & 0 & 0 & 0 & 0 & 1 & \alpha  & 1 & 0 & 0 & 0 \\
 0 & 0 & 0 & 0 & 0 & 0 & 0 & 0 & 0 & 0 & 1 & \alpha  & 1 & 0 & 0 \\
 0 & 0 & 0 & 0 & 0 & 0 & 0 & 0 & 0 & 0 & 0 & 1 & \alpha  & 1 & 0 \\
 0 & 0 & 0 & 0 & 0 & 0 & 0 & 0 & 0 & 0 & 0 & 0 & 1 & \alpha  & 1 \\
 0 & 0 & 0 & 0 & 0 & 0 & 0 & 0 & 0 & 0 & 0 & 0 & 0 & 1 & \alpha  \\
\end{array}
\right).\eq
The basis $\uu_{ij} c_{\l}$ is ordered lexicographically, 
thus
\[
\hspace{-.5cm}
\onecupp{8}{1}{2}{3}{4}{5}{6}{7} ,
\onecupp{8}{1}{3}{2}{4}{5}{6}{7} ,  
\onecupp{8}{1}{4}{2}{3}{5}{6}{7} ,  
\onecupp{8}{2}{3}{1}{4}{5}{6}{7} , \cdots
\]
The lines in the matrix are separating the head and the tail.  

\mdef 
More generally, for any $n>l+4$, and $\l=(l+2), (1^{l+2})$, by ordering the basis in the same way the matrix $\Gamma(\Deltaa^{n+1}_{(n-1,\lambda)})$ has the form
\beq
\begin{pNiceArray}{cccc|cc}
\Block{4-4}<\large>{\Gamma(\Deltaa^{n-1}_{(n-3,\lambda)})} &&&& 0&0\\
&&&& \vdots & \vdots\\
&&&& 0 & 0 \\
&&&& 1 &0\\ \hline
0 & \dots &0 &1 & \a &1 \\
0 & \dots &0 &0 & 1 &\a \\
\end{pNiceArray}.
\eq


To compute the determinant $\Delttt{(n+1)}{n-1,\lambda}$ for such $\l$ then,
we can expand along the last row obtaining the recursive formula
\begin{align}\Delttt{(n+1)}{n-1,\lambda}(\a)&= \a  \ \left| \ \begin{NiceArray}{cccc|c}
\Block{4-4}<\large>{\Gamma(\Deltaa^{(n-1)}_{(n-3,\lambda)})} &&&& 0\\
&&&& \vdots \\
&&&& 0  \\
&&&& 1 \\ \hline
0 & \dots &0 &1 & \a 
\end{NiceArray}\right| \ - \  \left| \ \begin{NiceArray}{cccc|c}
\Block{4-4}<\large>{\Gamma(\Deltaa^{(n-1)}_{(n-3,\lambda)})} &&&& 0\\
&&&& \vdots \\
&&&& 0  \\
&&&& 0 \\ \hline
0 & \dots &0 &1  &1 \\
\end{NiceArray}\right | \nn \\ 
&=\a\, \Delttt{(n)}{n-2,\lambda}(\a)-\Delttt{(n-1)}{n-3,\lambda}(\a).\label{eq:detrec1}
\end{align}
That is, the determinants, $\Delttt{(n)}{n-2,\lambda}$, satisfy the Chebyshev recursion in $n$ \eqref{eq:Cheby0}. In other words, from \S\ref{ss:cheby0} we have
\[
\Delttt{(n)}{n-2,\lambda}   = \PP^{a,b}_n
\]
with initial conditions $a=\Delttt{(l+4)}{l+2,\lambda}$ and $b=\Delttt{(l+5)}{l+3,\lambda}$ at $n=l+4$ and $n=l+5$ respectively. 
We will apply this in \S\ref{ss:examples0} to give explicit formulas for all Gram determinants in the cases $\l=(l+2), (1^{l+2})$ for $0\leq l \leq 12$.

\ignore{\subsubsection{\ppm{(temporary sec. marker)} Applications and examples}
\bajm{[I agree this may be more natural]}
\ppm{[could move this subsec. to after Theorem to avoid inverted referencing. not sure.]} \ppm{[-could even integrate into the main such section now?]}

\mdef \label{pa:1dgdets}
For $l=0$  
direct computation yields
\begin{align*}
\Delttt{(4)}{2,(2)}&=(\alpha -1) \alpha  \left(\alpha ^2+\alpha -4\right),& \Delttt{(5)}{3,(2)}&=(\alpha -1) \left(\alpha ^4+\alpha ^3-5 \alpha ^2-\alpha +2\right),\\
    \Delttt{(4)}{2,(1^2)}&=(\alpha -2) (\alpha -1) (\alpha +1) (\alpha +2),&
   \Delttt{(5)}{3,(1^2)}&=(\alpha -1) (\alpha +2) \left(\alpha ^3-\alpha ^2-3 \alpha +1\right),
\end{align*}  
For $l=1$, we consider the row and column partitions $\l$ here. 
In these cases, direct computation yields
\begin{align*}
    \Delttt{(5)}{3,(3)}&=(\alpha -2)^2 \alpha ^2 (\alpha +1) \left(\alpha ^2+3 \alpha -6\right),& \Delttt{(6)}{4,(3)}&=(\alpha -2)^2 \alpha ^3 \left(\alpha ^3+4 \alpha ^2-4 \alpha -10\right),\\
    \Delttt{(5)}{3,(1^3)}&=(\alpha -3) (\alpha -2)^2 (\alpha +1) (\alpha +2)^3,&
    \Delttt{(6)}{4,(1^3)}&=(\alpha -2)^2 (\alpha +2)^3 \left(\alpha ^3-2 \alpha ^2-4 \alpha +2\right).
\end{align*}
Thus, using the notation of Theorem \ref{thm:main}, 
\ppm{[-at the mo this is a forward ref!]}
we have here $\Delttt{(n)}{n-2,\l}=C^{(\l)}(\a) P^{(\l)}_{n}(\a)$ where
\begin{align*}
    C^{(2)}(\a)&=(\a-1), &    C^{(1^2)}(\a)&=(\a-1)(\a+2),\\
    C^{(3)}(\a)&=(\a-2)^2\a^2, &    C^{(1^3)}(\a)&=(\a-2)^2(\a+2)^3,
\end{align*}
and the sequences $P_n^{(\l)}(\a)$ are completely determined by the Chebyshev recursion, and the ``initial conditions":
\begin{align*}
    P_4^{(2)}(\a)&=\a(\a^2+\a-4), &        P_5^{(2)}(\a)&=\a^4+\a^3-5\a^2-\a+2,\\
    P_4^{(1^2)}(\a)&=(\a-2)(\a+1), &        P_5^{(1^2)}(\a)&=\alpha ^3-\alpha ^2-3 \alpha +1,\\
    P_5^{(3)}(\a)&=(\alpha +1) \left(\alpha ^2+3 \alpha -6\right), &        P_6^{(3)}(\a)&=\alpha  \left(\alpha ^3+4 \alpha ^2-4 \alpha -10\right),\\
    P_5^{(1^3)}(\a)&=(\alpha +1) \left(\a-3\right), &        P_6^{(1^3)}(\a)&=  \left(\alpha ^3-2 \alpha ^2-4 \alpha +2\right).
\end{align*}

\mdef   \label{pa:able}
By making use of \ref{pa:chebslv} we may write each polynomial $P_n^{(\l)}$ explicitly. 
We do this in more generality; we are able to compute 
\ppm{[-let's reword to show the proof.]}\bajm{[what this means?]}\ppm{[ah I see, more generality does not mean generality. ok I'll just think of a wording a bit different from ``are able to''.]}
(also in the notation of Theorem \ref{thm:main}) 
\ppm{[-forward ref.]}
for $0\leq l\leq 12$, that
\begin{align*}
    C^{(l+2)}(\a)&=(\a+(l-1))^{l+1}(\a-2)^{\frac{1}{2}l(l+3)},\\
    C^{(1^{l+2})}(\a)&=(\a-(l+1))^{l+1}(\a+2)^{\frac{1}{2}(l+2)(l+1)},
\end{align*}
and
\begin{align*}P^{(l+2)}_{l+4+k}&=P_{k+4}^U(\a)+(3 l+1) P_{k+3}^U(\a)+
\left(2 l^2-l-2\right)P_{k+2}^U(\a)\\
&\quad+(l+1)((2l-1)P^U_{k+1}(\a)-P^U_{k}(\a)),\\
P^{(1^{l+2})}_{l+4+k}&=P_{k+3}^U(\a)-(l+1)(P_{k+2}^U(\a)+P_{k+1}^U(\a)).\end{align*}
Observe in each case:
\begin{align}
    P^{(l+2)}_{l+4}&=(\alpha +l) \left(\alpha ^2+(2l+1)\alpha -2(l+1)\right), \label{eq:row0}\\
    P^{(1^{l+2})}_{l+4}&=(\alpha +1) (\alpha- (l+2) ). \label{eq:col0}
\end{align}}

\subsubsection{On Gram determinants $\Delttt{(n)}{n-2,\l}$ 
for general $\l$
}\label{ss:non1dsp}

When $\SS_\l$ is not (necessarily) one dimensional we may proceed as follows. 
In this section we give a formal factorisation of the gram determinants. 
In \S\ref{ss:examples0} we determine the factors.

\mdef 
Assume that $\BB=\{b_1,\dots, b_{d_\l}\}$ is an orthonormal basis for 
$\SS_\l$, regarded as one for $\Delta^n_{(n,\l)}$ (\textit{cf.} \ref{pa:spbasis}),
with respect to our contravariant form $\langle \_ , \_ \rangle$. Observe that this requires that we are working over a splitting field for the symmetric group. And in fact we will go further and work here over $\C$. More precisely we are working over $\C[\xx]$ so we cannot, for example, take an orthonormal basis for $\Deltaa^{n}_{(p,\l)}$.

The formula \eqref{cheby} indicates that we can divide our basis for $\Deltaa^{n}_{(p,\l)}$ into a head and a tail for each basis element $b_i \in \BB$, with similar behaviour. 
In this case, there may be some non-zero values of the form between elements belonging to distinct heads. 
For example, when $\l=(2,1)$, by choosing the basis 
\beq   \label{eq:21ortho}
{\mathcal B}_{(21)} \; 
   = \;\;\{b_1=c_{\l}, \; b_2=\frac{2}{\sqrt{3}}((23)+\frac{1}{2} e )c_{\l}\}
\eq  
for $\SS_\l$, and ordering the 
$ \uu_{j,k} b_i$ 
in lexicographic order now with respect to the triple $(i,j,k)$, we obtain the 
expression for the gram matrix $\Gamma(\Deltaa^{8}_{(6,\l)})$ 
in Fig.\ref{fig:n9la21}.

\begin{figure} \vspace{-.43in} 
\input{tex/grammn9la21}
    \caption{Gram matrix  $\Gamma(\Deltaa^{8}_{(6,(2,1))}) $ 
    \label{fig:n9la21}}
\vspace{-.0251in} 
\end{figure}

\medskip  

\newcommand{\B}[2]{{\mathsf B}^{#1}_{#2}}

We can again obtain inductive formulae for determinants by expanding along rows corresponding to the end of a tail. We proceed as follows.

\mdef \label{par:bmn}
For $\l$ any partition, and $\bm{n}=(n_1,n_2,\dots,n_{d_{\l}})$ a $d_\l$-tuple of integers with $n_k \geq l+4$, we introduce the set 
\[
\BBB^{(\l)}_{(l,\bm{n})}:=
   \{ b_i \uu_{jk} \ | \ i= 1,\dots, d_{\l}, \ \ \uu_{jk}\in \BBB_{(l,n_i)}\}.
\]
We may regard $\BBB^{(\l)}_{(l,\bm{n})}$ as a subset of the basis $\BBB^{n}_{(n-2,\l)}$
for $\Delta^{n}_{(p,\l)}$, where $n=\max(\bm{n})$. Indeed 
\[
\BBB^{(\l)}_{(l,\bm{n}=(n,n,...,n))}   \;  = \;    \BBB^{n}_{(n-2,\l)}  
.\]
As such, we may consider the matrix of the form $\langle\_,\_\rangle$ on $ \B{(\lambda)}{(l,\bm{n})} \times \B{(\lambda)}{(l,\bm{n})}$
- denote this matrix by 
\[
\Gamma_{(l,\bm{n})}^{(\l)}   :=  
  \langle\_,\_\rangle|_{  \B{\lambda}{l,\bm{n}} \times \B{\lambda}{l,\bm{n}} }  . 
\] 
This matrix contains the head corresponding to each $b_i\in \mathcal{B}$, but in general can have tails of differing length for each such $b_i$. 

By expanding the determinant along the row at the end of any tail (assuming said tail is long enough), we can verify that the multisequence of determinants $\det(\Gamma_{l,\bm{n}}^{(\l)})$, satisfies the Chebyshev recursion \eqref{eq:Cheby0}, in each component of $\bm{n}$. More precisely, we have proven 

\begin{proposition} \label{prop:detrec} Fix $l$ and let $\lambda\vdash l+2$.
Let $\bm{n}=(n_1,\dots,n_{d_\l})$ be as before and let $e_k\in \Z^{d_{\l}}$ denote the elementary vector. Then we have
\beq
\det(\Gamma_{l,\bm{n}+2e_k}^{(\l)})=\a \det(\Gamma_{l,\bm{n}+e_k}^{(\l)})-\det(\Gamma_{l,\bm{n}}^{(\l)}).
\label{eq:detrec2}
\eq 
\qed\end{proposition}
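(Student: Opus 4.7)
The plan is to prove \eqref{eq:detrec2} by a two-step Laplace expansion, exploiting the fact that the basis elements newly added in going from $\bm{n}$ to $\bm{n}+2e_k$ lie deep in the tail, where the form is governed by \eqref{cheby}.

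First I would observe that, by \ref{pa:cuprec} and the hypothesis $n_k\geq l+4$, the set $\BBB^{(\l)}_{(l,\bm{n}+2e_k)}$ contains exactly two basis vectors not already present in $\BBB^{(\l)}_{(l,\bm{n})}$, namely $\uu_{n_k,n_k+1}b_k$ and $\uu_{n_k+1,n_k+2}b_k$; and the first of these is already present in $\BBB^{(\l)}_{(l,\bm{n}+e_k)}$. Both have cup index strictly exceeding $l+3$, so each lies in the tail regime in which \eqref{cheby} applies. I would then reorder the basis so that these two vectors occupy the last two positions of the matrix, with $\uu_{n_k+1,n_k+2}b_k$ last. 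Since $\BB$ is orthonormal for $\langle\_,\_\rangle$ on $\SS_\l$, equation \eqref{cheby} forces the final row and column of $\Gamma_{l,\bm{n}+2e_k}^{(\l)}$ to be zero except for the diagonal entry $\a$ and the single entry $1$ at the penultimate position (the would-be neighbour $\uu_{n_k+2,n_k+3}b_k$ is not present in the basis, and all other candidate partners are either in the head or carry a different $b_i$, hence give $0$).

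Second, I would Laplace-expand $\det(\Gamma_{l,\bm{n}+2e_k}^{(\l)})$ along this final row. Only two terms survive: the diagonal term contributes $\a\cdot\det(\Gamma_{l,\bm{n}+e_k}^{(\l)})$, since removing the last row and column returns exactly the Gram matrix for $\BBB^{(\l)}_{(l,\bm{n}+e_k)}$; the off-diagonal $1$ contributes, up to sign, the minor $M'$ obtained by removing the final row and the penultimate column. I would then repeat the argument on $M'$: its last column consists of the pairings of $\uu_{n_k+1,n_k+2}b_k$ with all remaining vectors, which by the same analysis vanish except for the single $1$ coming from $\uu_{n_k,n_k+1}b_k$. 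Expanding $M'$ along this column collapses it to the Gram matrix of $\BBB^{(\l)}_{(l,\bm{n})}$, so $\det(M')=\det(\Gamma_{l,\bm{n}}^{(\l)})$.

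Finally I would track the cofactor signs in the two successive expansions; the two $(-1)$ factors from placing the off-diagonal entries combine so that the minor contributes with an overall minus sign, yielding
\[
\det(\Gamma_{l,\bm{n}+2e_k}^{(\l)})=\a\,\det(\Gamma_{l,\bm{n}+e_k}^{(\l)})-\det(\Gamma_{l,\bm{n}}^{(\l)}),
\]
which is \eqref{eq:detrec2}. The only real obstacle is the bookkeeping of row/column orderings when the two new elements are removed in two different positions, and the checking of the sign conventions in the iterated cofactor expansion; structurally the argument is the direct extension of the scalar recursion \eqref{eq:detrec1} to the vector index $\bm{n}$, made possible because the tail-tail interactions are controlled by the Kronecker factor $\delta_{ii'}$ in \eqref{cheby}.
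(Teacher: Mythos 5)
Your proposal is correct and is essentially the paper's own argument: the paper proves Proposition~\ref{prop:detrec} by exactly this expansion along the row at the end of the tail, generalising the displayed computation \eqref{eq:detrec1} from the one-dimensional Specht case, with \eqref{cheby} (and the $\delta_{ii'}$ factor from orthonormality of $\BB$) guaranteeing that the final row reduces to $(0,\dots,0,1,\a)$. Your version simply spells out the two-step cofactor expansion and sign bookkeeping that the paper leaves implicit.
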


That is to say, by (\ref{de:Chebyab}), 
\beq   \label{eq:detGamP}
\det(\Gamma_{l,\bm{n}}^{(\l)})=  \PP^{a,b}_{n_k} 
\eq 
for some $a,b$ 
which in general, may depend on the basis element $b_k$, and the $d_\l-1$ integers $\bm{n}_{\setminus k}$ where 
\beq  \label{eq:de-nminusk} 
\bm{n}_{\setminus k} := (n_1,n_2,...,n_{k-1},n_{k+1},...,n_{d_\lambda}).
\eq
Specifically 
\begin{align*}
a &= \det\left(\Gamma_{l,(n_1,...,n_{k-1},l+4,n_{k+1},...,n_{d_\lambda})}^{(\l)}\right),\\
b &= \det\left(\Gamma_{l,(n_1,...,n_{k-1},l+5,n_{k+1},...,n_{d_\lambda})}^{(\l)}\right).\end{align*}
It follows from \eqref{eq:adep} that $a$ and $b$ are monic polynomials and satisfy $\deg(b)=\deg(a)+1$. Therefore, $\PP^{a,b}$ can be written as 
\beq  \label{eq:PcP}
\PP^{a,b}
\; = \;  c \; \PP^{a',b'} 
\eq 
where $a$ and $b$ have the maximal (monic) common factor $c$, and $a'=a/c$ and $b'=b/c$. For $c$ chosen as such, the series $\PP^{a',b'}$ then has the ramping property for $n\geq l+4$ (\textit{cf.} \ref{def:chebycond}).


\medskip \medskip 

The rest of \S\ref{ss:non1dsp} will be devoted to proving the following result, 
which shows that every one-cup determinant is determined by a Chebyshev recurrence from two in low rank (which are in turn amenable to direct computation, as we will illustrate in \S\ref{ss:examples0}):

\begin{theorem} \label{thm:main} 
Fix $l$. For any $\l\vdash l+2$, there exists a monic polynomial $C^{(\l)}$, and a series of polynomials $P_{n}^{(\l)}$ with the ramping property for $n\geq l+4$ (\textit{cf.} \ref{def:chebycond}), such that for $n\geq l+4$ 
(and hence in the $\l$-arm of the rollet graph $\Roll_l$ \textit{cf.} \ref{pa:rollarms})    
\beq 
\Delttt{n}{n-2,\l}(\a) =   C^{(\l)}(\a) (P_{n}^{(\l)}(\a))^{d_{\l}},
\label{eq:detform}
\eq
up to an overall complex scalar, 
\ppmm{where $d_\l =\dim(\SS_\l)$}.
\end{theorem}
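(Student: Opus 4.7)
The plan is to exploit the block structure of $\Gamma^{(\l)}_{l,\mathbf{n}}$ in conjunction with Proposition~\ref{prop:detrec}. Order the basis of $\Deltaaa{n}{(n-2,\l)}$ so the fixed-size ``head'' elements (cups $\uu_{jk}$ with $k\le l+4$ paired with each orthonormal Specht basis vector) come first, followed by the $d_\l$ ``tails'' (one per $b_k$, of length $n_k-l-4$). In this ordering, by \ref{pa:gramrels} and \eqref{cheby}, $\Gamma^{(\l)}_{l,\mathbf{n}}$ takes block form with a fixed head $H$ (depending only on $l,\l$) top-left, a block-diagonal tail $T=\bigoplus_{k=1}^{d_\l} T_{n_k}$ of tridiagonal Chebyshev blocks bottom-right, and a highly sparse coupling $A$ having exactly one nonzero entry per tail, linking the head position $r_k$ of $\uu_{l+3,l+4}b_k$ to the first tail entry for $b_k$ (since tails for distinct $b_k$ do not pair, nor do heads pair with any tail except at this single interface).

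Applying the Schur complement gives
\[
\det(\Gamma^{(\l)}_{l,\mathbf{n}})
 \;=\; \prod_k \det(T_{n_k})\,\det\!\Bigl(H-\sum_k\tau_k E_{r_k,r_k}\Bigr),\qquad
 \tau_k:=\det(T_{n_k-1})/\det(T_{n_k}).
\]
Because each perturbation $\tau_k E_{r_k,r_k}$ has rank one at a distinct diagonal position, multilinear expansion yields
\[
\det\!\Bigl(H-\sum_k\tau_k E_{r_k,r_k}\Bigr)\;=\;\sum_{S\subseteq[d_\l]}(-1)^{|S|}\Bigl(\prod_{k\in S}\tau_k\Bigr)h_S,
\]
where $h_S:=\det(H_{\hat S})$ denotes the minor of $H$ with rows and columns $\{r_k:k\in S\}$ deleted.

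The crux is establishing the structural identity $h_S=C_0\cdot a^{d_\l-|S|}(-b)^{|S|}$ for some universal $a,b\in\C[\a]$ and $C_0\in\C[\a]$: that $h_S$ depends only on $|S|$ and that $h_{|S|=s}$ forms a geometric progression in $s$. The first property follows from the $\Sigma_{d_\l}$-symmetry of $\det(\Gamma^{(\l)}_{l,\mathbf{n}})$ under permutation of $\mathbf{n}$: relabeling the orthonormal Specht basis is an orthogonal change of basis, so the full determinant is symmetric in its $d_\l$ arguments, and treating $\det(T_{n_k}),\det(T_{n_k-1})$ as independent for varying $\mathbf{n}$ forces the multilinear coefficients to satisfy $h_S=h_{\pi(S)}$ for every permutation~$\pi$. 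The geometric progression is subtler and relies on the fact that, in the ``cup $\otimes$ Specht'' block decomposition, $H=\sum_\sigma X_\sigma\otimes\pi_\l(\sigma)$ for scalar matrices $X_\sigma$ on the diagrammatic head, and the positions $\{r_k\}$ comprise precisely the Specht fibre over the single cup diagram $\uu_{l+3,l+4}$; the rank-one tail attachments therefore interact with $H$ through a common isotypic subspace, producing a uniform ratio $-b/a$ independent of~$k$.

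Granting this identity, one obtains
\[
\det(\Gamma^{(\l)}_{l,\mathbf{n}})\;=\;C_0 \prod_k\bigl(a\det(T_{n_k})+b\det(T_{n_k-1})\bigr).
\]
Evaluating at $\mathbf{n}=(n,\ldots,n)$ and defining $P_n^{(\l)}(\a):=a\det(T_n)+b\det(T_{n-1})$ (after pulling out any common polynomial factor between successive terms into $C^{(\l)}(\a)$) yields the claimed factorization $\Delttt{n}{n-2,\l}=C^{(\l)}(P_n^{(\l)})^{d_\l}$. The ramping property of $P_n^{(\l)}$ follows because $\det(T_n)$ is monic of degree $n-l-4$ in $\a$, so after reduction $P_n^{(\l)}$ is monic with $\deg P_{n+1}^{(\l)}=\deg P_n^{(\l)}+1$ for $n\ge l+4$, and it inherits the Chebyshev recursion automatically from Proposition~\ref{prop:detrec}. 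The main obstacle is the geometric progression of the $h_S$: this amounts to a representation-theoretic factorization of the head minors through the Specht module $\SS_\l$, and constitutes the principal technical content of the argument.
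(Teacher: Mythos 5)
Your reduction is correct as far as it goes, and it is a genuinely different route from the paper's: the block structure (fixed dense head $H$, tridiagonal Chebyshev tails, a single interface entry per tail at the position $r_k$ of $\uu_{l+3,l+4}\,b_k$) is right, the Schur complement and the multilinear expansion over subsets $S$ are right, and the target identity $h_S=C_0\,a^{d_\l-|S|}(-b)^{|S|}$ is indeed equivalent to the multivariable product formula of Proposition~\ref{prop:main}, which specialises at $\bm{n}=(n,\dots,n)$ to the theorem. So you have faithfully repackaged the theorem as a single statement about minors of the fixed head matrix. The difficulty is that you have not proved that statement, and the two ingredients you offer for it do not hold up.

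First, the symmetry claim. Permuting an orthonormal basis $\{b_1,\dots,b_{d_\l}\}$ of $\SS_\l$ is an orthogonal map of $\SS_\l$, but it is \emph{not} an isometry of the contravariant form restricted to the mixed-length subspaces $\B{(\l)}{(l,\bm{n})}$: the off-diagonal head entries are matrix coefficients $\langle\sigma b_i,b_{i'}\rangle$ of specific, non-central elements $\sigma\in\Sym_{l+2}$, and these are not preserved by relabelling the basis (in general $\langle\sigma b_1,b_1\rangle\neq\langle\sigma b_2,b_2\rangle$). Hence the symmetry of $\det(\Gamma^{(\l)}_{(l,\bm{n})})$ in $\bm{n}$ — equivalently, that $h_S$ depends only on $|S|$ — is not a formal consequence of orthonormality; it is precisely the content of Lemma~\ref{lem:pl}, i.e.\ a theorem requiring proof. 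Second, the geometric-progression step, which you yourself flag as the principal obstacle, is the whole difficulty: even the case $|S|=1$ (equality of the $(r_k,r_k)$-cofactors of $H$ over $k$) is obtained in the paper only by constructing the distinguished vector $\xi_\l$ of Lemma~\ref{lem:xi} (in effect $H^{-1}e_{r_1}$ cleared of denominators) and transporting it to the other fibre positions by the intertwiners $x_k\,c_\l\otimes \mathrm{id}_2$ of Lemma~\ref{lem:niceelt}; and the passage from $|S|=1$ to general $S$ is not automatic either — the paper gets it from the divisibility/contradiction argument of Proposition~\ref{prop:main}, using that the series $P^{(\l)}$ is reduced. Your appeal to ``a common isotypic subspace'' does not substitute for either step. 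As it stands, your argument establishes only the separate Chebyshev recursion in each $n_k$, which is already Proposition~\ref{prop:detrec}, and not the factorisation \eqref{eq:detform}.
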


Firstly, we will make use of the following Lemma.

\begin{lemma}\label{lem:xi} 
Fix $l$. For each $\l\vdash l+2$, there exists a unique (up to a polynomial factor) 
element $\xi_{\l} \in \Delta^{(l+4)}_{(l+2,\l)}$ with coefficients in $\C[\a]$, such that 
\beq
\uu_{ij}^* \, \xi_{\l} = 0, \text{ for } (i,j)\neq (l+3,l+4), \qquad 
    \uu_{l+3,l+4}^*\, \xi_{\l}  \ppmm{\;=\; D(\xx)} c_\l.  \label{eq:van}
\eq  
\ppmm{for some scalar $D(\xx)\in \C[\a]$.} 
Here $\uu_{ij}\in B_{(l,l+4)}$ as per \eqref{cupbasis}, $(\_)^*$ is as in \ref{de:flip2} and $c_\l \in J_{l}(l+2,l+2)$ is as in \ref{de:cl} (\textit{cf.} also \ref{ss:keybootl1}).
\end{lemma}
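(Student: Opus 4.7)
The plan is to reformulate the conditions as a linear system for the coordinates of $\xi_\l$ in a suitable diagram basis, and then solve it explicitly via the adjugate of the one-cup gram matrix. First, since $\langle c_\l, c_\l\rangle = 1$ (see \ref{de:grammat}) and $\langle\_,\_\rangle$ is non-degenerate on $\SS_\l$, extend $c_\l$ by Gram--Schmidt to an orthonormal basis $\{b_1 = c_\l, b_2, \ldots, b_{d_\l}\}$ of $\SS_\l$; by \ref{pa:diagbas} the set $\{\uu_{ij} b_s : \uu_{ij} \in \BBB_{(l, l+4)},\ 1 \leq s \leq d_\l\}$ is then a diagram basis for $\Deltaaa{l+4}{(l+2, \l)}$. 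Writing $\xi = \sum_{(i,j), s} c_{ij, s}\, \uu_{ij} b_s$ with unknown coefficients $c_{ij,s} \in \C[\a]$, contravariance of the form gives $\langle b_s, \uu_{ij}^* \xi\rangle = \langle \uu_{ij} b_s, \xi\rangle$; expanding $\uu_{ij}^* \xi \in \SS_\l$ in the orthonormal basis then shows that the vanishing $\uu_{ij}^* \xi = 0$ is equivalent to $\langle \uu_{ij} b_s, \xi\rangle = 0$ for every $s$, while $\uu_{l+3, l+4}^* \xi = D(\a)\, c_\l$ is equivalent to $\langle \uu_{l+3, l+4} b_s, \xi\rangle = D(\a)\, \delta_{s,1}$. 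Packaging all of these conditions into a single matrix equation yields
\beq
\Gamma(\Deltaaa{l+4}{(l+2, \l)})\, \vec{c} \;=\; D(\a)\, \vec{e},
\eq
where $\vec{e}$ is the standard basis vector indexed by the pair $(\uu_{l+3, l+4}, b_1)$.

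By \ref{pa:gramrels}, $\Gamma(\Deltaaa{l+4}{(l+2, \l)}) = \a\, I + \Gamma'$ with $\Gamma'$ a constant matrix, so $\det \Gamma$ is monic of degree $M := \dim \Deltaaa{l+4}{(l+2, \l)}$ in $\a$ and in particular a nonzero element of $\C[\a]$; hence $\Gamma$ is invertible over $\C(\a)$. Existence now follows by taking $D(\a) := \det \Gamma = \Delttt{l+4}{l+2, \l}(\a)$ and $\vec{c} := \operatorname{adj}(\Gamma)\, \vec{e}$, whose entries automatically lie in $\C[\a]$ since the adjugate is polynomial in the entries of $\Gamma$. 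For uniqueness, any other polynomial solution $(\vec{c}\,', D')$ satisfies $\Gamma(D'\vec{c} - D\vec{c}\,') = 0$, and injectivity of $\Gamma$ as a $\C[\a]$-linear map (since $\det\Gamma \neq 0$) forces $D'\vec{c} = D\vec{c}\,'$; an elementary content/gcd argument on the pair $(D, D')$ then shows that the associated element $\xi'_\l$ is a $\C[\a]$-multiple of $\xi_\l$, which is exactly the claimed uniqueness up to a polynomial factor.

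I do not anticipate a deep obstacle here: once the setup is in place, the argument reduces to standard Cramer/adjugate reasoning, and the two key inputs --- the identification of $\uu_{ij}^* \xi = 0$ with the vanishing of the gram pairings $\langle \uu_{ij} b_s, \xi\rangle$, and the non-vanishing of $\det \Gamma$ in $\C[\a]$ via $\Gamma = \a I + \Gamma'$ --- are both already essentially established in \S\ref{ss:gram-mat}. The only real care required is in the bookkeeping around the orthonormal basis with $b_1 = c_\l$: this is what allows the inhomogeneous target $D\, c_\l \in \SS_\l$ to be expressed as the single standard basis vector $D\, \vec{e}$ on the right-hand side, and it also matches the special role of the distinguished ``end-of-head'' generator $\uu_{l+3, l+4}$ (cf. \S\ref{ss:1dspecht}) which becomes the link to the tridiagonal tail at higher ranks.
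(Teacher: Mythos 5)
Your proposal is correct and follows essentially the same route as the paper: both reduce the conditions to the linear system $\Gamma(\Delta^{(l+4)}_{(l+2,\l)})\,[\xi_\l] = D(\a)\,e_{l+3,l+4,1}$ via contravariance of the form in an orthonormal Specht basis with $b_1=c_\l$, and both invoke generic invertibility of the gram matrix from $\Gamma = \a I + \Gamma'$. The only cosmetic difference is that you produce polynomial coefficients directly via the adjugate and $D(\a)=\det\Gamma$, where the paper solves over $\C(\a)$ and clears denominators afterwards.
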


\begin{proof} 
Set $\Delta=\Delta^{(l+4)}_{(l+2,\l)}$ for this proof. 
And let $\BB=\{b_1,b_2,\dots, b_{d_\l}\}$ be an orthonormal (with respect to $\langle\_,\_\rangle$) basis for $\SS_\l$, 
\ppmm{and hence for $\Deltaaa{n}{n,\l}$}, 
such that 
$b_1= \nC_\l \in \C \Sym_{l+2}$.  

For any $\xi \in \Delta$, the expression 
$\uu_{ij}^* \, \xi \in \Delta^{(l+2)}_{(l+2,\l)}$. 
The elements $\uu_{ij}^* \xi$ can thus be expressed  
uniquely as a linear combination of the $b_k$. That is,  $\uu_{ij}^* \, \xi=\sum a_k b_k$ with coefficients $a_i$ determined by
\[  
a_k  =  \langle b_k, \uu_{ij}^* \, \xi \rangle  =   \langle \uu_{ij} b_k, \xi \rangle. 
\]
\ppmm{where the first equality uses the form for $\Deltaaa{n}{n,\l}$ and the second then uses the form for $\Deltaaa{n}{n-2,\l}$}.
Expanding $\xi_{}$ in the basis $\BBB^n_{n-2,\l}$ from \eqref{eq:basis}:
$\xi_{}=\sum_{i,j,k} [\xi_{}]_{i,j,k} \, \uu_{i,j} b_k$.  
Thus, \ppmm{comparing with (\ref{de:grammat}),}
equations \eqref{eq:van} 
may be written in component form as
\[ 
\Gamma(\Delta) \cdot [\xi_{\l}] = \; \ppmm{D(\a)} \;  e_{l+3,l+4,1},  \tag{$\star$} \]
where $e_{l+3,l+4,1}$ is the elementary column vector 
(with entries being $0$, except in the position corresponding to the basis element $\uu_{l+3,l+4} b_1$, where it is $1$). 

Note, for example from  \eqref{eq:adep}, 
that the matrix $
\Gamma(\Delta)$ is generically invertible (i.e. considering $\a$ as a formal parameter and working over the field of rational functions $\C(\a)$). Thus the equation $(\star)$ has a unique solution 
for any factor \ppmm{$D(\a)$}, 
with coefficients $[\xi_{\l}]_{i,j,k}\in \C(\a)$. 
Multiplying $\xi_\l$ by a common denominator  
of the coefficients, if necessary, we 
simply vary the factor. Hence we 
obtain the desired $\xi_\l$, obeying $\uu_{l+3,l+4}^* \, \xi_l=D(\a) c_\l$. 
\end{proof}


\mdef 
\ppmm{Passing to the ground field $\C$ by specialising $\xx$ to}
a complex number then, we note that it may be the case that $\xi_\l$ also satisfies $\uu_{l+3,l+4}^* \, \xi_l=0$ (\textit{i.e.} when $\a$ was a root of the denominator $D(\a)$).

\mdef\label{niceelt} 
Let $\xi_{\l} \in \Delta^{(l+4)}_{(l+2,\l)}$ be as per Lemma~\ref{lem:xi} 
\ppmm{ - i.e. in the first one-cup standard module in the $\l$-arm.} 
And let
$\BB=\{b_1,b_2,\dots, b_{d_{\l}}\}$ be an orthonormal basis for $\SS_\l$, with $b_k=x_k \nC_\l$ and $x_k\in \C S_{l+2}$.
For $k =1,2,...,d_\l$ 
define 
\beq 
\xi_{\l}^{(b_k)}:= (x_k \nC_\l \otimes \text{id}_2) \, \xi_\l \in \Delta^{(l+4)}_{(l+2,\l)}, 
\eq
regarding $\nC_\l$ as an element of $\C \Sym_{l+2}\subset J_{l,l+2}$. We now claim the $\xi_{\l}^{(b_k)}$ obey the following: 

\begin{lemma}\label{lem:niceelt} Fix $l$, let $\l \vdash l+2$. Let $\BB$ and $\xi_{\l}^{(b_k)} \in \Delta^{(l+4)}_{(l+2,\l)} $ be as per \ref{niceelt}. Then $\xi_{\l}^{(b_k)}$ has coefficients in $\C[\a]$ and satisfies:
\begin{enumerate}
    \item The coefficient of the basis vector $\uu_{l+3,l+4} \, b_{k'}$ in $\xi_{\l}^{(b_k)}$ is zero unless $k'=k$. Furthermore, the coefficient of $\uu_{l+3,l+4} \, b_{k}$ is independent of $b_k$.
    \item  The form $\langle \xi_{\l}^{(b_k)}, \uu_{i,j} b_{k'}  \rangle $ vanishes unless $(i,j)=(l+3,l+4)$ and $k'=k$. Furthermore, $\langle \xi_{\l}^{(b_k)}, \uu_{l+3,l+4} b_{k}  \rangle $ is independent of $k$.
\end{enumerate}
\end{lemma}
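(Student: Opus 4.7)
The plan is to expand both claims directly, exploiting three ingredients: (i) the defining vanishing relations of $\xi_\l$ from Lemma~\ref{lem:xi}, (ii) the fact that the multiplier $x_k\nC_\l\otimes\text{id}_2$ is supported on the first $l+2$ strands, and (iii) orthonormality of $\BB$ (with $b_1=\nC_\l$), which via \eqref{eq:Gform} yields the collapse identities $\nC_\l x_r^*x_s\nC_\l=\delta_{rs}\nC_\l$ and $\nC_\l x_r\nC_\l=\delta_{r,1}\nC_\l$.

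The key diagrammatic step is to show that for any $w\in\C\Sym_{l+2}$ one has $(w\otimes\text{id}_2)\uu_{l+3,l+4}=\uu_{l+3,l+4}\cdot w$, whereas for $(i,j)\neq(l+3,l+4)$ the product $(w\otimes\text{id}_2)\uu_{ij}$ is a $\C$-linear combination of one-cup half-diagrams $\uu_{i'j'}\tau$ with $(i',j')\neq(l+3,l+4)$; the same statement holds for $\nC_\l x_k^*\otimes\text{id}_2$. Writing $w$ as a combination of permutations $\sigma$, each summand $(\sigma\otimes\text{id}_2)\uu_{ij}$ is a one-cup diagram with cup at $\{\tilde\sigma(i),\tilde\sigma(j)\}$, where $\tilde\sigma$ extends $\sigma$ by identity on $\{l+3,l+4\}$. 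Hence the cup ends up at $\{l+3,l+4\}$ if and only if it was there to begin with.

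For claim (1), expand $\xi_\l=\sum\beta_{(i,j),r}\uu_{ij}b_r$ (with coefficients $\beta\in\C[\a]$) and apply $x_k\nC_\l\otimes\text{id}_2$ termwise. By the diagrammatic step, only terms with $(i,j)=(l+3,l+4)$ can contribute to the coefficient of $\uu_{l+3,l+4}b_{k'}$; these give $(x_k\nC_\l\otimes\text{id}_2)\uu_{l+3,l+4}b_r=\uu_{l+3,l+4}\,x_k\nC_\l x_r\nC_\l=\delta_{r,1}\uu_{l+3,l+4}b_k$ using the second collapse identity. Hence the sought coefficient equals $\beta_{(l+3,l+4),1}\delta_{k',k}$, vanishing unless $k'=k$ and then independent of $k$; polynomiality is inherited from $\xi_\l$. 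For claim (2), move the multiplier across the form by contravariance: $\langle\xi_\l^{(b_k)},\uu_{ij}b_{k'}\rangle=\langle\xi_\l,(\nC_\l x_k^*\otimes\text{id}_2)\uu_{ij}b_{k'}\rangle$. When $(i,j)\neq(l+3,l+4)$ the diagrammatic step rewrites this as a sum of pairings $\langle\xi_\l,\uu_{i'j'}z\rangle=\langle\uu_{i'j'}^*\xi_\l,z\rangle$ with $(i',j')\neq(l+3,l+4)$, each zero by Lemma~\ref{lem:xi}. When $(i,j)=(l+3,l+4)$, commutation and the first collapse identity reduce the argument to $\delta_{kk'}\uu_{l+3,l+4}b_1$, with pairing $\delta_{kk'}\langle\uu_{l+3,l+4}^*\xi_\l,b_1\rangle=\delta_{kk'}D(\a)$, independent of $k$.

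The only genuine obstacle is the diagrammatic step: one must check carefully that left- or right-multiplying $\uu_{ij}$ by any element of $\C\Sym_{l+2}\otimes\text{id}_2$ cannot migrate the cup onto $\{l+3,l+4\}$ unless it was already there. This reduces to the observation that such multipliers are $\C$-combinations of permutations fixing $\{l+3,l+4\}$ pointwise, and permutations neither create nor destroy cups; once this is established cleanly the rest is bookkeeping with the two collapse identities.
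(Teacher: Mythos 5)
Your proposal is correct and takes essentially the same route as the paper's proof: the same observation that left multiplication by $\C\Sym_{l+2}\otimes \mathrm{id}_2$ cannot move a cup onto (or off of) position $\{l+3,l+4\}$ (the paper's ``pulling the cup/cap through each permutation''), combined with the same orthonormality collapse identities $\nC_\l x_{r}^* x_{s}\nC_\l=\delta_{rs}\nC_\l$ and $\nC_\l x_r \nC_\l=\delta_{r,1}\nC_\l$. The only cosmetic difference is that in part 2 you transfer the multiplier across the form by contravariance, whereas the paper computes $\uu_{i,j}^*(x_k\nC_\l\otimes \mathrm{id}_2)\,\xi_\l$ directly; these are the same calculation seen from the two sides of the pairing.
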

\begin{proof} 
Firstly, since $\xi_\l$ had coefficients in $\C[\a]$ and since $x_i, \nC_\l \in \C \Sym_{l+2}$, it follows that $\xi_\l^{(b_k)}$ has coefficients in $\C[\a]$. 

For any cup $\uu_{i,j}$ with $j<l+4$, the composite $(x_i \nC_\l  \otimes \text{id}_2)\uu_{i,j}$ can be written as a combination of the form $\sum_{i'<j'<l+4} \uu_{i',j'} x_{i',j'}$ with $x_{i',j'}\in \C\Sym_{l+2}$, by ``pulling" the cup up through each permutation. Therefore, the only contribution to basis vectors of the form  $\uu_{l+3,l+4} b_{k''}$ in $\xi_\l^{(b_k)}$ comes from basis vectors of the form $\uu_{l+3,l+4} b_{k'}$ in $\xi_\l$. Computing
\begin{align*}(x_k \nC_\l  \otimes \text{id}_2) \uu_{l+3,l+4} b_{k'}&=\uu_{l+3,l+4}(x_k \nC_\l x_{k'} c_{\l})\\
&=\langle e c_\l, b_{k'}\rangle \  \uu_{l+3,l+4}x_k \nC_{\l} c_{\l}\\
&=\langle e c_{\l}, b_{k'}\rangle \ \uu_{l+3,l+4}b_k,\end{align*}
(using the fact that $\nC_{\l} c_{\l}=c_{\l}$) it follows that the coefficient of $\uu_{l+3,l+4} \, b_{k''}$ (in $\xi_\l^{(b_k)}$) is 0 unless $k''=k$, in which case we have
\[ \left[\xi_\l^{(b_k)}\right]_{l+3,l+4,k}= \sum_{k'} \left[\xi_\l \right]_{l+3,l+4,k'} \langle e c_\l ,b_{k'}\rangle,\]
which is manifestly independent of $k$, proving part 1 of the Lemma.

To prove part 2, we first note that for any $i<j<l+4$, $\uu_{i,j}^* (x_k \nC_\l\otimes \text{id}_2)$ can be written as a combination $\sum_{i'<j'<l+4} x_{i',j'}\uu^*_{i',j'}$ with  $x_{i',j'}\in \C \Sym_{l+2}$ (by ``pulling" a cap down through each permutation). Therefore, by \eqref{eq:van} it follows that for any such $i<j$ we have $\uu_{i,j}^*\xi_{\l}^{(b_k)}=0$ and in particular $\langle \uu_{i,j} b_{k'}, \xi_{\l}^{(b_k)}   \rangle =0$ for any $k'$. When $(i,j)=(l+3,l+4)$, we have
\begin{align*}
    c_{\l} x_{k'}^* \uu_{l+3,l+4}^* \xi_{\l}^{(b_k)}&= 
    c_{\l} x_{k'}^* \uu_{l+3,l+4}^* (x_k \nC_\l \otimes \text{id}_2) \, \xi_\l,\\
    &=(c_\l \,  x_{k'}^* \, x_k \, \nC_\l) \uu_{l+3,l+4}^* \, \xi_\l\\
    &=\langle b_{k'} , b_k\rangle \nC_\l \uu_{l+3,l+4}^* \, \xi_\l=\delta_{k,k'}D(\a) c_\l, 
\end{align*}
where we have used orthogonality of the $b_k$, $\nC_\l c_\l=c_\l$, and $\uu_{l+3,l+4}^* \, \xi_\l= D(\a) c_\l$ (as per \ref{lem:xi}) in the last line. We can thus read off that 
\[
\langle \uu_{l+3,l+4} b_{k'}, \xi_{\l}^{(b_k)}  \rangle =\delta_{k,k'} D(\a),
\]
which completes the proof since $D(\a)\in \C[\a]$ is manifestly independent of $k$. 
\end{proof}

We now apply this Lemma to prove the following:

\begin{lemma}\label{lem:pl} Fix $l$, let $\l \vdash l+2$. Let $\BB$ be an orthonormal basis for $\SS_\l$, let $\bm{n}=(n_1,\dots, n_{d_\l})$ be a $d_\l$-tuple of integers with $n_k\geq l+4$ for all $k$, and let $\Gamma_{(l,\bm{n})}^{(\l)}$ be as per \ref{par:bmn}. Then for each $k=1,\dots, d_\l$, there exists a monic polynomial $C^{(\l,\bm{n}_{\setminus k})}$ and a series of polynomials $P^{(\l)}_{n}$ with the ramping property for $n\geq l+4$ (\textit{cf.} \ref{def:chebycond}), such that 
\[ 
\text{det}(\Gamma_{(l,\bm{n})}^{(\l)}) \;=\; C^{(\l,\bm{n}_{\setminus k})}(\a) \; P^{(\l)}_{n_k}(\a),
\]
where $\bm{n}_{\setminus k}$ is as per \eqref{eq:de-nminusk}. In particular, $P^{(\l)}_{n_k}$ (and hence $C^{(\l,\bm{n}_{\setminus k})}$) is independent of the basis vector $b_k\in \BB$. Furthermore, $P^{(\l)}_{n_k}$ does not depend on the integers $\bm{n}_{\setminus k}$.
\end{lemma}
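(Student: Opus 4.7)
The plan is to combine Proposition~\ref{prop:detrec} (Chebyshev recursion in each coordinate) with the uniqueness of the ramping decomposition~\eqref{eq:PcP}, and then to upgrade this to the two independence claims: independence from $b_k$ by a permutation symmetry of the orthonormal basis $\BB$, and independence from $\bm{n}_{\setminus k}$ by an induction on $\sum_{j\neq k}(n_j-(l+4))$. Lemma~\ref{lem:niceelt} will enter by exhibiting canonical basis-independent scalars $D(\a),E(\a)$ that certify the $b_k$-invariance of the polynomial data in the base case.

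First I would observe that, by Proposition~\ref{prop:detrec}, for fixed $\bm{n}_{\setminus k}$ the sequence $n_k\mapsto \det(\Gamma_{(l,\bm{n})}^{(\lambda)})$ is a Chebyshev series in $n_k$. Since $\Gamma_{(l,\bm{n})}^{(\lambda)}=\a I+\Gamma'$ with $\Gamma'$ independent of $\a$ (by~\eqref{eq:adep}) and $|\BBB^{(\lambda)}_{(l,\bm{n})}|$ grows by exactly one with each unit increase of $n_k$ (\emph{cf.}~\ref{pa:cuprec}), the two initial values at $n_k=l+4$ and $n_k=l+5$ are monic polynomials in $\a$ whose degrees differ by exactly one. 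Consequently~\eqref{eq:PcP} gives a unique factorisation $c(\a)\cdot Q_{n_k}(\a)$ with $c$ a monic polynomial and $Q$ having the ramping property for $n_k\geq l+4$; set $C^{(\lambda,\bm{n}_{\setminus k})}(\a):=c(\a)$ and $P^{(\lambda)}_{n_k}(\a):=Q_{n_k}(\a)$. It then remains to show that $P^{(\lambda)}_{n_k}$ is independent of $b_k$ and of $\bm{n}_{\setminus k}$.

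For independence of $\bm{n}_{\setminus k}$ I would induct on $N:=\sum_{j\neq k}(n_j-(l+4))$, the base case $N=0$ serving to define $P^{(\lambda)}_{n_k}$. For the inductive step, pick $j\neq k$ with $n_j>l+4$ and apply Proposition~\ref{prop:detrec} in coordinate $j$:
\[\det(\Gamma_{(l,\bm{n})}^{(\lambda)})\;=\;\a\,\det(\Gamma_{(l,\bm{n}-e_j)}^{(\lambda)})\;-\;\det(\Gamma_{(l,\bm{n}-2e_j)}^{(\lambda)}).\]
By the induction hypothesis each summand on the right equals $C'_i(\a)\,P^{(\lambda)}_{n_k}(\a)$ with $C'_i\in\C[\a]$ independent of $n_k$, so the left-hand side factors as $(\a C'_1-C'_2)\,P^{(\lambda)}_{n_k}$. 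Since $P^{(\lambda)}$ is ramping (hence reduced, in the sense of~\ref{def:chebycond}), the consecutive terms $P^{(\lambda)}_{l+4},P^{(\lambda)}_{l+5}$ are coprime, so $(\a C'_1-C'_2)$ is precisely $C^{(\lambda,\bm{n}_{\setminus k})}$ and the ramping part at $\bm{n}$ is once more $P^{(\lambda)}_{n_k}$. Independence of $b_k$ follows by basis-permutation symmetry: swapping $b_k\leftrightarrow b_{k'}$ and simultaneously the coordinates $n_k\leftrightarrow n_{k'}$ conjugates $\Gamma_{(l,\bm{n})}^{(\lambda)}$ by a permutation matrix and hence preserves $\det$, so the Chebyshev series in $n_k$ and in $n_{k'}$ share the same ramping reduction; combined with the $\bm{n}_{\setminus k}$-independence just established, this yields full $b_k$-invariance. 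Lemma~\ref{lem:niceelt} makes the base-case identification clean: the scalars $D(\a)$ and $E(\a)$ extracted from $\xi_\lambda^{(b_k)}$ are manifestly independent of $k$, so the non-trivial polynomial content contributed at the base case is intrinsically basis-vector independent.

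The main obstacle will be ensuring that the induction preserves $P^{(\lambda)}_{n_k}$ \emph{exactly}: that $\a C'_1-C'_2$ neither absorbs extra copies of $P^{(\lambda)}_{n_k}$ into $C^{(\lambda,\bm{n}_{\setminus k})}$, nor loses factors of it. The reducedness built into the ramping property resolves the first concern, and the degree bookkeeping coming from~\eqref{eq:adep}---namely that $\deg_\a\det(\Gamma_{(l,\bm{n})}^{(\lambda)})$ grows by exactly one per unit increase of $n_k$---resolves the second, since it forces $\deg_\a C^{(\lambda,\bm{n}_{\setminus k})}$ to be independent of $n_k$ and matches the slope of $\deg_\a P^{(\lambda)}_{n_k}$ along the ramping direction.
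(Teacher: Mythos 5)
Your skeleton --- Proposition \ref{prop:detrec} plus the uniqueness of the reduced ramping factorisation \eqref{eq:PcP}, reducing everything to the ratio of the two initial values --- is the same as the paper's, but both of your mechanisms for the two independence claims have genuine gaps, and the missing ingredient in each case is exactly the row-reduction computation the paper performs with the elements $\xi_\l^{(b_k)}$ of Lemma \ref{lem:niceelt}. The induction on $N=\sum_{j\neq k}(n_j-(l+4))$ breaks down precisely where the real content lies: the recursion of Proposition \ref{prop:detrec} relates three consecutive values whose smallest coordinate must still be $\geq l+4$, so your step $\det(\Gamma_{(l,\bm{n})})=\a\det(\Gamma_{(l,\bm{n}-e_j)})-\det(\Gamma_{(l,\bm{n}-2e_j)})$ requires $n_j\geq l+6$. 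When $n_j=l+5$ the tuple $\bm{n}-2e_j$ has $j$-th entry $l+3$ and the recursion is unavailable; there is no relation expressing the $n_j=l+5$ determinant in terms of the $n_j=l+4$ one alone, since it is an independent initial condition of the Chebyshev series in direction $j$. Every configuration with some $n_j=l+5$ is therefore an unproved base case, and this is not a boundary technicality: comparing the determinant at $(\dots,l+4,\dots)$ with the one at $(\dots,l+5,\dots)$, for arbitrary values of the remaining coordinates, \emph{is} the problem. The paper resolves it by showing, via Lemma \ref{lem:niceelt}, that the combination $\sum_{i,j,h}[\xi_\l^{(b_{d_\l})}]_{i,j,h}R_{\uu_{i,j}b_h}$ of rows collapses the last row of $\Gamma^{(\l)}_{(l,(\bm{n'},l+5))}$, yielding
\[
\langle \xi^{(b_{d_\l})}_{\l}, \uu_{l+3,l+4} b_{d_{\l}}\rangle \,\det\bigl(\Gamma^{(\l)}_{(l,(\bm{n'},l+5))}\bigr)=\Bigl(\a\,\langle \xi^{(b_{d_\l})}_{\l}, \uu_{l+3,l+4} b_{d_{\l}}\rangle-\bigl[\xi^{(b_{d_\l})}_{\l}\bigr]_{l+3,l+4,d_\l}\Bigr)\det\bigl(\Gamma^{(\l)}_{(l,(\bm{n'},l+4))}\bigr),
\]
so the initial ratio is a fixed rational function manifestly independent of $\bm{n'}$.

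The second gap is the claimed permutation symmetry for $b_k$-independence. The matrix $\Gamma^{(\l)}_{(l,\bm{n})}$ is the Gram matrix of the set $\{\uu_{ij}b_h\}$ in which the tail length attached to $b_h$ is governed by $n_h$, and its head entries are values $\langle \sigma x_h, x_{h'}\rangle$ depending on the actual vectors $b_h$, not merely on their indices; an orthonormal basis of $\SS_\l$ carries no canonical symmetry permuting its elements. Swapping $b_k\leftrightarrow b_{k'}$ together with $n_k\leftrightarrow n_{k'}$ produces the Gram matrix of a genuinely different set of vectors, not a permutation conjugate of the original, so $\det$ is not preserved by any formal argument. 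The $k$-independence must instead be read off from the two scalars in the displayed ratio, and the fact that $\langle \xi^{(b_k)}_\l,\uu_{l+3,l+4}b_k\rangle$ and $[\xi^{(b_k)}_\l]_{l+3,l+4,k}$ do not depend on $k$ is the substantive content of Lemma \ref{lem:niceelt}, proved there by pulling cups through $x_k\nC_\l\otimes \text{id}_2$ and using orthonormality. Your closing appeal to that lemma points in the right direction, but as written you use it only to decorate the base case rather than to supply the two computations that actually carry the proof.
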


\begin{proof} 
Monicity of the determinant $\text{det}(\Gamma_{(l,\bm{n})}^{(\l)})$, its degree, and the recursion established in Prop.\ref{prop:detrec}, imply that for each such $k$, we may write   
    \[
    \text{det}(\Gamma_{(l,\bm{n})}^{(\l)})=C^{(b_k,\l,\bm{n}_{\setminus k})}(\a) P^{(b_k,\l, \bm{n}_{\setminus k})}_{n_k}(\a),
    \]
where $C^{(b_k,\l,\bm{n}_{\setminus k})}(\a)$ is a monic polynomial and $P^{(b_k,\l, \bm{n}_{\setminus k})}_{n}(\a)$ is a series of polynomials with the ramping property for $n\geq l+4$. In general, $P^{(b_k, \l,\bm{n}_{\setminus k})}_{n_k}(\a)$ may depend on $b_k$ and on the integers $\bm{n}_{\setminus k}$; the crux of this proof is thus to show otherwise. To do this, it is sufficient to verify that for any $k=1,\dots, d_\l$ and choice of integers $\bm{n}_{\setminus k}$, the sequence has the same ``initial conditions", that is,
\[
P^{(b_k,\bm{n}_{\setminus k})}_{l+4}(\a)=a(\a), \qquad P^{(b_k,\bm{n}_{\setminus k})}_{l+5}(\a)=b(\a),  
\]
for polynomials $a,b$ independent of $b_k$ and $\bm{n}_{\setminus k}$. Furthermore, by \ref{def:chebycond} it suffices to demonstrate the ratio 
\beq
\frac{P^{(b_k,\bm{n}_{\setminus k})}_{l+4}(\a)}{P^{(b_k,\bm{n}_{\setminus k})}_{l+5}(\a)}=\frac{a(\a)}{b(\a)}=\frac{\det(\Gamma^{(\l)}_{(l,(n_1,\dots, n_{k-1},l+4,n_{k+1},\dots, n_{d_\l}))})}{\det(\Gamma^{(\l)}_{(l,(n_1,\dots, n_{k-1},l+5,n_{k+1},\dots, n_{d_\l}))})},\label{eq:ratio}
\eq
is independent of $b_k$ and $\bm{n}_{\setminus k}$.

Suppose, by reordering $\BB$ if necessary, that $k=d_\l$. Now for a $(d_\l-1)$-tuple $\bm{n'}=(n'_1,\dots,n'_{d_\l-1})$, with $n'_{k'}\geq l+4$ for all $k'$, consider the matrix $\Gamma:=\Gamma^{(\l)}_{(l,(\bm{n'},l+5))}$. By the computation \eqref{cheby}, it has the form 
\beq
\Gamma=\Gamma^{(\l)}_{(l,(\bm{n'},l+5))}=
\begin{pNiceArray}{cccc|c}
\Block{4-4}<\Large>{\Gamma^{(\l)}_{(l,(\bm{n'},l+4))}} &&&& 0\\
&&&& \vdots\\
&&&& 0 \\
&&&& 1 \\ \hline
0 & \dots &0 &1 & \a \\
\end{pNiceArray}.\label{eq:matlastrow}
\eq

By Lemma \ref{lem:niceelt} part 2, it follows that the following linear combination of the rows of the Gram-matrix $\Gamma(\Delta^{(l+4)}_{(l+2,\l)} )=\Gamma^{(\l)}_{(l,\bm{l'})}$ (where $\bm{l'}$ is the $d_\l$-tuple with all entries being $l+4$) yields the row vector
\begin{align*}
    \sum_{i,j,k} \left[\xi^{(b_{d_\l})}_{\l}\right]_{i,j,k} R_{\uu_{i,j} b_k}&=  \langle \xi^{(b_{d_\l})}_{\l}, \uu_{l+3,l+4} \, b_{d_{\l}}\rangle \, e_{l+3,l+4, d_{\l}}\\
    &=\begin{pmatrix}
    0 &\dots & 0 & \langle \xi^{(b_{d_\l})}_{\l}, \uu_{l+3,l+4} \, b_{d_{\l}}\rangle
\end{pmatrix}
\end{align*}
where $e_{i,j,k}$ denotes the elementary vector. Furthermore, by \ref{lem:niceelt} part 1 this combination does not include the rows $R_{\uu_{l+3,l+4} b_{k}}$ for $k<d_{\l}$. Since the row $R'_{\uu_{i,j} b_k}$ with $j<l+4$ in the matrix $\Gamma$ can be obtained from the corresponding row $R_{\uu_{i,j} b_k}$ in the Gram-matrix $\Gamma(\Delta^{(l+4)}_{(l+2,\l)})$, by putting a zero in any column labelled by an element of any tail (that is, an element of the form $\uu_{i,j} b_k$ with $j>l+4$), it follows that the same combination of the corresponding rows of the matrix $\Gamma$ yields the row vector
\[ \begin{pmatrix}
    0 &\dots & 0 & \langle \xi^{(b_{d_\l})}_{\l}, \uu_{l+3,l+4} b_{d_{\l}}\rangle & \left[\xi^{(b_{d_\l})}_{\l}\right]_{l+3,l+4,d_{\l}}
\end{pmatrix}.\]
Therefore, after multipling the last row of $\Gamma$ by $\langle \xi^{(b_{d_\l})}_{\l}, \uu_{l+3,l+4} b_{d_{\l}}\rangle$, we can apply a sequence  of row reductions to exhibit the equality
\begin{align*}
    \langle \xi^{(b_{d_\l})}_{\l}, \uu_{l+3,l+4} b_{d_{\l}}\rangle \det(\Gamma) &=
\begin{vNiceArray}{cccc|c}
\Block{4-4}<\Large>{\Gamma^{(\l)}_{(l,(\bm{n'},l+4))}} &&&& 0\\
&&&& \vdots\\
&&&& 0 \\
&&&& 1 \\ \hline
0 & \dots &0 & \langle \xi^{(b_{d_\l})}_{\l}, \uu_{l+3,l+4} b_{d_{\l}}\rangle & \a \, \langle \xi^{(b_{d_\l})}_{\l}, \uu_{l+3,l+4} b_{d_{\l}}\rangle\\
\end{vNiceArray}\\
&=\begin{vNiceArray}{cccc|c}
\Block{4-4}<\Large>{\Gamma^{(\l)}_{(l,(\bm{n'},l+4))}} &&&& 0\\
&&&& \vdots\\
&&&& 0 \\
&&&& 1 \\ \hline
0 & \dots &0 & 0 & \a \, \langle \xi^{(b_{d_\l})}_{\l}, \uu_{l+3,l+4} b_{d_{\l}}\rangle-\left[\xi^{(b_{d_\l})}_{\l}\right]_{l+3,l+4,d_{\l}}\\
\end{vNiceArray}\\
&=\left( \a \, \langle \xi^{(b_{d_\l})}_{\l}, \uu_{l+3,l+4} b_{d_{\l}}\rangle-\left[\xi^{(b_{d_\l})}_{\l}\right]_{l+3,l+4,d_{\l}} \right)\det(\Gamma^{(\l)}_{(l,(\bm{n'},l+4))}).
\end{align*}
Comparing with \eqref{eq:ratio}, we now obtain
\[\frac{P^{(b_{d_{\l}},\bm{n'})}_{l+4}(\a)}{P^{(b_{d_\l},\bm{n'})}_{l+5}(\a)}=\frac{\langle \xi^{(b_{d_\l})}_{\l}, \uu_{l+3,l+4} b_{d_{\l}}\rangle}{\left( \a \, \langle \xi^{(b_{d_\l})}_{\l}, \uu_{l+3,l+4} b_{d_{\l}}\rangle-\left[\xi^{(b_{d_\l})}_{\l}\right]_{l+3,l+4,d_{\l}} \right)},\]
which is manifestly independent of $\bm{n'}$, and independent of $b_{d_{\l}}$ by Lemma \ref{lem:niceelt}. This completes the proof. \end{proof}

We now apply this Lemma to prove:

\begin{proposition}\label{prop:main}
    Fix $l$, let $\l \vdash l+2$. Let $\BB$ be an orthonormal basis for $\SS_\l$, let $\bm{n}=(n_1,\dots, n_{d_\l})$ be a $d_\l$-tuple of integers $n_k\geq l+4$ and let $\Gamma_{(l,\bm{n})}^{(\l)}$ be as per \ref{par:bmn}. There exists a monic polynomial $C^{(\l)}$ and a series of polynomials $P^{(\l)}_{n}$ with the ramping property for $n\geq l+4$ (\textit{c.f} \ref{def:chebycond}), such that 
\[ \text{det}(\Gamma_{(l,\bm{n})}^{(\l)})=C^{(\l)}(\a) \prod_{k=1}^{d_{\l}} P^{(\l)}_{n_k}(\a).\]\end{proposition}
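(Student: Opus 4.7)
The plan is to iterate Lemma \ref{lem:pl}, extracting one factor $P^{(\lambda)}_{n_k}$ at a time for $k=1,\dots,d_\lambda$. First I would apply Lemma \ref{lem:pl} with $k=1$ to write
\[
\det(\Gamma^{(\lambda)}_{(l,\bm{n})}) \;=\; C^{(\lambda,\bm{n}_{\setminus 1})}(\alpha)\,P^{(\lambda)}_{n_1}(\alpha),
\]
where $C^{(\lambda,\bm{n}_{\setminus 1})}$ is a monic polynomial independent of $n_1$. The goal is then to repeat the argument with the coefficient $C^{(\lambda,\bm{n}_{\setminus 1})}$ in place of the determinant, using the variable $n_2$, then $n_3$, and so on.

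The key observation is that $C^{(\lambda,\bm{n}_{\setminus 1})}(\alpha)$ inherits a Chebyshev recursion in each $n_j$ with $j\geq 2$: since $P^{(\lambda)}_{n_1}(\alpha)$ does not depend on $n_j$, dividing the recursion of Proposition~\ref{prop:detrec} through by $P^{(\lambda)}_{n_1}(\alpha)$ gives
\[
C^{(\lambda,\bm{n}_{\setminus 1}+2e_{j-1})}(\alpha) \;=\; \alpha\,C^{(\lambda,\bm{n}_{\setminus 1}+e_{j-1})}(\alpha) \;-\; C^{(\lambda,\bm{n}_{\setminus 1})}(\alpha).
\]
Monicity of $C^{(\lambda,\bm{n}_{\setminus 1})}$ and a degree count (the $n_j$-degree of $\det(\Gamma^{(\lambda)}_{(l,\bm{n})})$ increases by $1$ when $n_j\mapsto n_j+1$, cf.\ \eqref{eq:adep} and \ref{pa:cuprec}, and none of this degree is absorbed by $P^{(\lambda)}_{n_1}$) ensure the ramping property in $n_j$ for $n_j\geq l+4$. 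Arguing as in Lemma~\ref{lem:pl} we may therefore factor $C^{(\lambda,\bm{n}_{\setminus 1})}(\alpha) = C^{(\lambda,\bm{n}_{\setminus\{1,2\}})}(\alpha)\,\widetilde{P}_{n_2}(\alpha)$ for some ramping Chebyshev series $\widetilde{P}$. To identify $\widetilde{P}$ with $P^{(\lambda)}$, note that by \ref{def:chebycond} a reduced ramping series is determined by the ratio of two successive terms, and
\[
\frac{C^{(\lambda,(l+4,n_3,\dots,n_{d_\lambda}))}(\alpha)}{C^{(\lambda,(l+5,n_3,\dots,n_{d_\lambda}))}(\alpha)} \;=\; \frac{\det(\Gamma^{(\lambda)}_{(l,(n_1,l+4,n_3,\dots))})}{\det(\Gamma^{(\lambda)}_{(l,(n_1,l+5,n_3,\dots))})} \;=\; \frac{P^{(\lambda)}_{l+4}(\alpha)}{P^{(\lambda)}_{l+5}(\alpha)},
\]
by Lemma~\ref{lem:pl} applied with $k=2$. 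Hence $\widetilde{P}=P^{(\lambda)}$, as required.

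Iterating this procedure for $k=3,\dots,d_\lambda$ peels off one factor $P^{(\lambda)}_{n_k}$ at each step, yielding $\det(\Gamma^{(\lambda)}_{(l,\bm{n})})=C^{(\lambda)}(\alpha)\prod_{k=1}^{d_\lambda}P^{(\lambda)}_{n_k}(\alpha)$, with $C^{(\lambda)}$ monic (by monicity of the determinant and of each factor) and independent of $\bm{n}$ (at each stage the coefficient depends only on the still-unfactored entries, and after all have been extracted nothing remains). The main obstacle is not the iteration itself but the uniqueness step: although each individual $P^{(\lambda)}_{n_k}$ divides the determinant by Lemma~\ref{lem:pl}, their non-successive terms need not be coprime in $\C[\alpha]$, so one cannot simply conclude multiplicativity of divisibility. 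The ramping-property/ratio-uniqueness argument above circumvents this by forcing the Chebyshev series extracted at each stage to be literally $P^{(\lambda)}$ rather than merely a polynomial multiple of it.
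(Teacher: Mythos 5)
Your proof is correct, but it takes a genuinely different route from the paper's. The paper argues by contradiction: assuming $\prod_k P^{(\l)}_{n_k}$ fails to divide the determinant, it takes a minimal offending index $d$, writes $\det(\Gamma_{(l,\bm{n})}^{(\l)})=A(\a)B(\a)C(\a)D(\a)$ with $D$ the non-trivial ``overlap'' between $P^{(\l)}_{n_d}$ and the product of the earlier factors, shifts $n_d$ by one using Lemma~\ref{lem:pl}, and concludes that $D$ divides both $P^{(\l)}_{n_d}$ and $P^{(\l)}_{n_d+1}$ --- contradicting reducedness of the series. You instead extract the factors one at a time: the quotient $\det(\Gamma_{(l,\bm{n})}^{(\l)})/\prod_{k<j}P^{(\l)}_{n_k}$ is a monic polynomial sequence in $n_j$ satisfying the Chebyshev recursion with degree growth one, so the decomposition \eqref{eq:PcP} applies to it, and the ratio of its two initial terms (which equals the ratio of the corresponding determinants, hence $P^{(\l)}_{l+4}/P^{(\l)}_{l+5}$ by Lemma~\ref{lem:pl}) pins the reduced part down as $P^{(\l)}$ itself via the uniqueness statement in \ref{def:chebycond}. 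Both arguments ultimately rest on the same two ingredients --- Lemma~\ref{lem:pl} and coprimality of successive terms of a reduced series --- but deploy the latter differently: the paper uses it to kill the overlap $D$, while you use it (through the monic-coprime normal form of the ratio $a'/b'$) to identify the quotient series outright. Your version has the mild advantage of being constructive and of delivering the independence of the cofactor from the already-extracted $n_k$ at each stage, rather than recovering it at the end by comparing factorisations; the paper's version avoids having to re-verify monicity and degree growth for the successive quotients. One point worth making explicit if you write this up: at stage $j$ you need the quotient to be a \emph{polynomial}, which is supplied by the conclusion of stage $j-1$, so the induction must be stated carefully with that as part of the inductive hypothesis.
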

\begin{proof} By Lemma \ref{lem:pl}, it follows that each $P^{(\l)}_{n_k}(\a)$ divides $\text{det}(\Gamma_{(l,\bm{n})}^{(\l)})$. It suffices to prove that $\prod_{k=1}^{d_{\l}} P^{(\l)}_{n_k}(\a)$ divides $\text{det}(\Gamma_{(l,\bm{n})}^{(\l)})$ since then, comparing with the factorisation from Lemma \ref{lem:pl} yields
\[ C^{(\l)}(\a) \prod_{k\neq k' } P^{(\l)}_{n_k}(\a)= C^{(\l,\bm{n}_{\setminus k'})}(\a), \]
for each $k'=1,\dots, d_{\l}$ which shows $C^{(\l)}(\a)$ is independent of all the integers $n_{k'}$. 

Suppose for a contradiction that for some $\bm{n}$, $\prod_{k=1}^{d_{\l}} P^{(\l)}_{n_k}(\a)$ does not divide $\text{det}(\Gamma_{(l,\bm{n})}^{(\l)})$. By reordering the basis $\BB$ is necessary, it follows there is some integer $2\leq d \leq d_{\l}$ such that $\prod_{k=1}^{d-1} P^{(\l)}_{n_k}(\a)$ divides $\text{det}(\Gamma_{(l,\bm{n'})}^{(\l)})$ for any $d_\l$-tuple $\bm{n'}$ which agrees with $\bm{n}$ in the first $(d-1)$-entries, yet $\prod_{k=1}^{d} P^{(\l)}_{n_k}(\a)$ does not divide $\text{det}(\Gamma_{(l,\bm{n})}^{(\l)})$. Since $\prod_{k=1}^{d-1} P^{(\l)}_{n_k}(\a)$ divides $\text{det}(\Gamma_{(l,\bm{n})}^{(\l)})$ by assumption and since $P^{(\l)}_{n_{d}}(\a)$ must divide it, it follows these two factors must ``overlap" non-trivially. That is, we must be able to write
\[ \text{det}(\Gamma_{(l,\bm{n})}^{(\l)})= A(\a) B(\a) C(\a) D(\a),\]
where $D(\a)$ has positive degree and does not divide $A(\a)$, such that $B(\a) D(\a)= P^{(\l)}_{n_{d}}(\a)$ and $C(\a) D(\a)= \prod_{k=1}^{d-1} P^{(\l)}_{n_k}(\a)$. But then lemma \ref{lem:pl} implies that   
\[\text{det}(\Gamma_{(l,\bm{n}+e_{d})}^{(\l)})= A(\a) C(\a)  P^{(\l)}_{n_{d}+1}(\a). \]
Since $C(\a)D(\a)$ must divide $\text{det}(\Gamma_{(l,\bm{n}+e_{d})}^{(\l)})$, yet $D$ does not divide $A$, it follows that $D$ must divide $P^{(\l)}_{n_{d}+1}$. The Chebyshev recursion then implies that $D$ is a non-trivial common factor of the sequence $P^{(\l)}_{n}$ which is a contradiction.\end{proof}

We are now able to prove Theorem \ref{thm:main} as a specialisation of Prop. \ref{prop:main}:

\begin{proof}[Proof of Theorem \ref{thm:main}] For any $n\geq l+4$, taking $\bm{n}$ to be the $d_{\l}$-tuple, $\bm{n}=(n,n,\dots, n)$, yields
\[\Delttt{(n)}{n-2,\l}=\text{det}(\Gamma_{(l,\bm{n})}^{(\l)})=C^{(\l)}(\a) (P^{(\l)}_{n}(\a))^{d_{\l}},\]
as desired. \end{proof}

\subsection{Applications of Theorem~\ref{thm:main}: \ppmm{polynomials,} and examples}   \label{ss:examples0}

In this subsection we apply Theorem \ref{thm:main} to obtain closed forms for gram determinants of the $J_{l,n}$-modules, $\Delt{n}{n-2,\l}$ with $n\geq l+4$, for all cases when $l\leq 2$, by direct computation of low rank (namely $n=l+4,l+5$) cases. 
We also study the cases $\l=(l+2), (1^{l+2}), (l+1,1)\vdash l+2$ and $\l=(l+1,1)^T\vdash l+2$ for $l>0$. We begin by giving examples for $\l$ such that $\SS_\l$ is 1-dimensional.

\mdef \label{pa:1dgdets}
For $l=0$  
direct computation yields
\begin{align*}
\Delttt{(4)}{2,(2)}&=(\alpha -1) \alpha  \left(\alpha ^2+\alpha -4\right),& \Delttt{(5)}{3,(2)}&=(\alpha -1) \left(\alpha ^4+\alpha ^3-5 \alpha ^2-\alpha +2\right),\\
    \Delttt{(4)}{2,(1^2)}&=(\alpha -2) (\alpha -1) (\alpha +1) (\alpha +2),&
   \Delttt{(5)}{3,(1^2)}&=(\alpha -1) (\alpha +2) \left(\alpha ^3-\alpha ^2-3 \alpha +1\right),
\end{align*}  
For $l=1$, we consider the row and column partitions $\l$ here. 
In these cases, direct computation yields
\begin{align*}
    \Delttt{(5)}{3,(3)}&=(\alpha -2)^2 \alpha ^2 (\alpha +1) \left(\alpha ^2+3 \alpha -6\right),& \Delttt{(6)}{4,(3)}&=(\alpha -2)^2 \alpha ^3 \left(\alpha ^3+4 \alpha ^2-4 \alpha -10\right),\\
    \Delttt{(5)}{3,(1^3)}&=(\alpha -3) (\alpha -2)^2 (\alpha +1) (\alpha +2)^3,&
    \Delttt{(6)}{4,(1^3)}&=(\alpha -2)^2 (\alpha +2)^3 \left(\alpha ^3-2 \alpha ^2-4 \alpha +2\right).
\end{align*}
Thus, using the notation of Theorem \ref{thm:main}, 
we have here $\Delttt{(n)}{n-2,\l}=C^{(\l)}(\a) P^{(\l)}_{n}(\a)$ where
\begin{align*}
    C^{(2)}(\a)&=(\a-1), &    C^{(1^2)}(\a)&=(\a-1)(\a+2),\\
    C^{(3)}(\a)&=(\a-2)^2\a^2, &    C^{(1^3)}(\a)&=(\a-2)^2(\a+2)^3,
\end{align*}
and the sequences $P_n^{(\l)}(\a)$ are completely determined by the Chebyshev recursion, and the ``initial conditions":
\begin{align*}
    P_4^{(2)}(\a)&=\a(\a^2+\a-4), &        P_5^{(2)}(\a)&=\a^4+\a^3-5\a^2-\a+2,\\
    P_4^{(1^2)}(\a)&=(\a-2)(\a+1), &        P_5^{(1^2)}(\a)&=\alpha ^3-\alpha ^2-3 \alpha +1,\\
    P_5^{(3)}(\a)&=(\alpha +1) \left(\alpha ^2+3 \alpha -6\right), &        P_6^{(3)}(\a)&=\alpha  \left(\alpha ^3+4 \alpha ^2-4 \alpha -10\right),\\
    P_5^{(1^3)}(\a)&=(\alpha +1) \left(\a-3\right), &        P_6^{(1^3)}(\a)&=  \left(\alpha ^3-2 \alpha ^2-4 \alpha +2\right).
\end{align*}
By making use of \ref{pa:chebslv} we may write each polynomial $P_n^{(\l)}$ explicitly. We address this in this next paragraph, where we treat more cases with a general formula.

\mdef   \label{pa:able} \bajmm{By direct computation of Gram matrices $\Delttt{(n)}{n-2,\l}$ for $\l=(l+2)$ and $(1^{l+2})$, when $-1 \leq l \leq 12$ and $n=l+4,l+5$, we have 
(using the notation of Theorem \ref{thm:main})  }
\begin{align}
    C^{(l'+2)}(\a)&=(\a+(l'-1))^{l+1}(\a-2)^{\frac{1}{2}l'(l'+3)},\label{eq:crow}\\
    C^{(1^{l+2})}(\a)&=(\a-(l+1))^{l+1}(\a+2)^{\frac{1}{2}(l+2)(l+1)},\label{eq:ccol}
\end{align}
and
\begin{align}P^{(l'+2)}_{l'+4+k}&=P_{k+4}^U(\a)+(3 l'+1) P_{k+3}^U(\a)+
\left(2 l'^2-l'-2\right)P_{k+2}^U(\a)\nn\\
&\quad+(l'+1)((2l'-1)P^U_{k+1}(\a)-P^U_{k}(\a)),\label{eq:prow}\\
P^{(1^{l+2})}_{l+4+k}&=P_{k+3}^U(\a)-(l+1)(P_{k+2}^U(\a)+P_{k+1}^U(\a)).\label{eq:pcol}\end{align}
These formulas are verified for $-1\leq l\leq 12$ and $0\leq l'\leq 12$.

\begin{conjecture} Formulae \eqref{eq:crow},\eqref{eq:ccol}, \eqref{eq:prow} and \eqref{eq:pcol} hold for all $l\geq -1$ and $l'\geq 0$. 
\end{conjecture}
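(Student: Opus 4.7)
Since $d_\l = 1$ when $\l = (l+2)$ or $\l = (1^{l+2})$, Theorem~\ref{thm:main} applies, giving $\Delttt{(n)}{n-2,\l}(\a) = C^{(\l)}(\a)\,P^{(\l)}_n(\a)$ for $n\geq l+4$. By \ref{def:chebycond} and \ref{pa:chebslv}, the ramping series $P^{(\l)}$ is uniquely determined by its values at $n=l+4$ and $n=l+5$, and the $P^U$-expansions \eqref{eq:prow}, \eqref{eq:pcol} are simply the output of the recipe in \ref{pa:chebslv} applied to those two initial polynomials. So the conjecture reduces to computing $\Delttt{(l+4)}{l+2,\l}$ and $\Delttt{(l+5)}{l+3,\l}$ in closed form and matching them with the conjectured $C^{(\l)}(\a)\,P^{(\l)}_{l+4}(\a)$ and $C^{(\l)}(\a)\,P^{(\l)}_{l+5}(\a)$.

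The approach I would take is to block-diagonalise $\Gamma(\Deltaaa{n}{n-2,\l}) = \a I + \Gamma'$ using the natural $\Sym_{l+2}$-action on the first $l+2$ sites, which preserves $\BBB_{(l,n)}$ setwise and commutes with $*$ (and hence with the form). The basis $\BBB_{(l,n)}$ splits into $\Sym_{l+2}$-orbits: (i) cups $u_{ij}$ with $i,j\leq l+2$; (ii) mixed cups $u_{i,l+3}$ with $i\leq l+2$; and (iii) singletons $\{u_{k,k+1}\}$ for $k=l+3,\dots,n-1$. The Johnson-scheme decomposition applied to (i), together with the permutation decompositions of (ii) and (iii), yields (for $l\geq 2$) isotypic multiplicities $n-l-1$, $2$ and $1$ for the $\Sym_{l+2}$-irreducibles $\SS_{(l+2)}$, $\SS_{(l+1,1)}$, $\SS_{(l,2)}$ respectively, with the obvious degenerate adjustments when $l\in\{-1,0,1\}$. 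By Schur's lemma, $\Gamma'$ acts as a matrix of scalars on each isotypic: a single scalar $\rho$ on $\SS_{(l,2)}$, a $2 \times 2$ matrix $M_{(l+1,1)}$ on $\SS_{(l+1,1)}$, and an $(n-l-1) \times (n-l-1)$ matrix $M_{(l+2)}$ on the trivial. The trivial block has a dense $3 \times 3$ head (coupling the copies of the trivial coming from orbits (i), (ii), and the first singleton $\{u_{l+3,l+4}\}$) plus a tridiagonal Chebyshev tail from the nearest-neighbour couplings $\langle u_{k,k+1},u_{k+1,k+2}\rangle = 1$ in \eqref{cheby}; so its determinant is exactly $P^{(\l)}_n$ up to an overall scalar, while the $\SS_{(l+1,1)}$ and $\SS_{(l,2)}$ contributions are independent of $n$ and jointly constitute $C^{(\l)}$:
\[
\Delttt{(n)}{n-2,\l}(\a) \;=\; (\a+\rho)^{\dim\SS_{(l,2)}}\,\det(\a I + M_{(l+1,1)})^{\dim\SS_{(l+1,1)}}\,\det(\a I + M_{(l+2)}).
\]

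Matching the conjectured formulas then reduces to computing $\rho$, the $2 \times 2$ matrix $M_{(l+1,1)}$, and (at $n=l+4,l+5$) the entries of $M_{(l+2)}$ by evaluating $\Gamma'$ against well-chosen harmonic vectors in each isotypic. For $\l=(l+2)$ the trivial character collapses every permutation factor from \ref{pa:gramrels} to $+1$, and the entries of $\Gamma'$ are just $0/1$ combinatorial indicators of overlap; I anticipate $\rho=2$ and $\mathrm{spec}(M_{(l+1,1)}) = \{-(l-1),\,2\}$, which combined via $(\a-2)^{l+1}(\a-2)^{\dim\SS_{(l,2)}} = (\a-2)^{l(l+3)/2}$ yields \eqref{eq:crow}. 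For $\l=(1^{l+2})$ the sign character twists each entry, and I expect $\rho=-2$ together with $\mathrm{spec}(M_{(l+1,1)}) = \{l+1,\,-2\}$, giving \eqref{eq:ccol}. Computing the $3 \times 3$ and $4 \times 4$ determinants of $M_{(l+2)}$ at $n=l+4$ and $n=l+5$ then produces $P^{(\l)}_{l+4}$ and $P^{(\l)}_{l+5}$ matching \eqref{eq:prow}, \eqref{eq:pcol} at $k=0,1$; the Chebyshev recursion, together with the ramping property already established in Theorem~\ref{thm:main}, extends this to all $n\geq l+4$. The main obstacle will be the careful sign bookkeeping for $\l=(1^{l+2})$, together with the explicit computation of the off-diagonal entries of $M_{(l+2)}$ between orbits (i), (ii) and $\{u_{l+3,l+4}\}$ on the trivial isotypic; these enter $P^{(\l)}_{l+4}$ nontrivially and must conspire with the $2 \times 2$ block $M_{(l+1,1)}$ and the scalar $\rho$ to reproduce exactly the conjectured $P^U$-expansion.
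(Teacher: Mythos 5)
First, note that the paper does not prove this statement: it is stated as a conjecture, supported only by direct computation for $-1\leq l\leq 12$ and $0\leq l'\leq 12$ (see \ref{pa:able}). So your proposal is an attempt at something the paper leaves open, and the isotypic-decomposition strategy is a genuinely promising route that goes beyond what the paper does. The orbit bookkeeping is correct: the cup basis does split under $\Sym_{l+2}$ into the $\binom{l+2}{2}$ internal cups, the $l+2$ mixed cups $\uu_{i,l+3}$, and the $n-l-3$ fixed singletons $\uu_{k,k+1}$ with $k\geq l+3$, giving isotypic multiplicities $n-l-1$, $2$, $1$ for $\SS_{(l+2)},\SS_{(l+1,1)},\SS_{(l,2)}$ (suitably twisted by sign in the column case), and the degree count $(n-l-1)+2(l+1)+\dim\SS_{(l,2)}=n+\tfrac{1}{2}l(l+3)$ matches $\deg\bigl(C^{(\l)}P^{(\l)}_n\bigr)$ in both cases.

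However, as written the argument has concrete gaps. (1) Every number that actually decides the conjecture --- the scalar $\rho$, the spectrum of the $2\times 2$ block, and the head entries of the trivial block at $n=l+4,l+5$ --- is ``anticipated'' rather than computed, and your anticipated signs are internally inconsistent: with $\Gamma=\a I+\Gamma'$ and the factorisation $(\a+\rho)^{\dim\SS_{(l,2)}}$, matching $(\a-2)^{\frac{1}{2}l'(l'+3)}$ in \eqref{eq:crow} forces $\rho=-2$ and spectrum $\{l'-1,-2\}$ for the $2\times2$ block, not $\rho=2$ and $\{-(l'-1),2\}$ as you state (and symmetrically for \eqref{eq:ccol}). (2) Your claim that the trivial-isotypic block determinant ``is exactly $P^{(\l)}_n$ up to an overall scalar'' fails a degree count in the column case: that block has size $n-l-1$ while $P^{(1^{l+2})}_n$ has degree $n-l-2$ by \eqref{eq:pcol}, so the block determinant must carry an additional linear factor, which must be shown to be $n$-independent (it is the extra $(\a+2)$ needed to reconcile $(l+1)+\dim\SS_{(l,2)}=\tfrac{1}{2}l(l+3)$ with the exponent $\tfrac{1}{2}(l+1)(l+2)$ in \eqref{eq:ccol}). (3) The decomposition itself needs justification: $\sigma\uu_{ij}=\uu_{\sigma(i)\sigma(j)}\tau_\sigma$ only up to an induced permutation $\tau_\sigma$ on the bottom strands, and one must check that $\tau_\sigma$ is absorbed by $c_\l$ with the correct character value so that the form genuinely restricts block-diagonally to the isotypics; this is plausible for the trivial and sign representations but is exactly where the sign bookkeeping you defer lives. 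Until the $3\times3$ and $4\times4$ head determinants are actually evaluated in closed form in $l$ and shown to reproduce \eqref{eq:prow} and \eqref{eq:pcol} at $k=0,1$, the statement remains a conjecture.
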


\mdef By direct computation, using formulae \eqref{eq:prow} and \eqref{eq:pcol} (including $l,l'>12$) we have:
\begin{align}
    P^{(l'+2)}_{l'+4}&=(\alpha +l') \left(\alpha ^2+(2l'+1)\alpha -2(l'+1)\right), \label{eq:row0}\\
    P^{(1^{l+2})}_{l+4}&=(\alpha +1) (\alpha- (l+2) ). \label{eq:col0}
\end{align}

\mdef \label{pa:21dets}
 As we have seen in \S\ref{ss:sdmods}, for $l=1$ we have gram determinants 
\begin{align*}
    \Delttt{(5)}{3,(2,1) }&= (\alpha -2)^3 \alpha  (\alpha +2) (\alpha +4)(\alpha ^4-7 \alpha ^2+3)^2,\\
    \Delttt{(6)}{4,(2,1) }&= (\alpha -2)^3 \alpha  (\alpha +2) (\alpha +4)(\alpha -1)^2 \alpha^2(\alpha +1)^2 \left(\alpha ^2-7\right)^2.
\end{align*} 
Comparing with \eqref{eq:detform} from the Theorem, \bajmm{for $n\geq 5$,} 
we have
\[
\Delttt{(n)}{n-2,(2,1) }= C^{(2,1)}(\a) (P^{(2,1)}_n(\a))^2, \quad 
\]
Since $P^\l$ is reduced we deduce
\[
C^{(2,1)}(\a)= (\alpha -2)^3 \alpha  (\alpha +2) (\alpha +4),  
\]
as the largest common factor of the two determinants. Hence we deduce
$P^{(2,1)}_n=\PP^{a,b}_n$ with initial conditions (at $n=5=l+4$ and $n=6=l+5$)
\[
a=\alpha ^4-7 \alpha ^2+3, \qquad b=(\alpha -1) \alpha  (\alpha +1) \left(\alpha ^2-7\right).
\]
Using \ref{pa:chebslv} we can determine
\begin{align*}
    P^{(2,1)}_n&=P^U_{n}(\a)-4P^U_{n-2}(\a)-4P^U_{n-4}(\a)-2 P^U_{n-6}(\a).
\end{align*}

\mdef 
Similarly, for $l=2$, direct computation \ppmm{of the low rank gram determinants and applications of the Theorem \ref{thm:main}} yields
\begin{align*}
    C^{(3,1)}(\a)&=(\alpha -3)^5 (\alpha -2)^2 (\alpha -1)^3 (\alpha +1)^6 (\alpha +4) (\alpha +6),\\
    C^{(2,2)}(\a)&=(\alpha -3)^3 (\alpha -1)^6 (\alpha +1)^3 (\alpha +4)^4,\\
    C^{(2,1,1)}(\a)&=(\alpha -3)^6 (\alpha -1)^3 (\alpha +1)^5 (\alpha +2)^3 (\alpha +4)^4,\end{align*}
and
\begin{align*}
    P^{(3,1)}_{n}(\a)&=P^U_{n}(\a)+2 P^U_{n-1}(\a)-9 P^U_{n-2}(\a)\\
    &\quad-4 P^U_{n-5}(\a)- P^U_{n-6}(\a)-2 P^U_{n-7}(\a)-3 P^U_{n-8},\\
    P^{(2,2)}_{n}(\a)&=P^U_{n-2}(\a)-2 P^U_{n-3}(\a)-5 P^U_{n-4}(\a)+6 P^U_{n-5}(\a)-3 P^U_{n-6}(\a), \\
    P^{(2,1,1)}_{n}(\a)&=P^U_{n-1}(\a)-2 P^U_{n-2}(\a)-6 P^U_{n-3}(\a)\\
    &\quad +6 P^U_{n-4}(\a) -2 P^U_{n-5}(\a)+2 P^U_{n-6}(\a)-3 P^U_{n-7}(\a).
\end{align*}

\mdef For $2<l\leq 7$, direct computation of the Gram determinants $\Delttt{(n)}{n-2,\l}$ with $\l=(l+1,1), (l+1,1)^T$ and $n=l+4,l=5$, and application of Theorem \ref{thm:main} yields
\begin{align}
    C^{(4,1)}(\a)&=(\alpha -3)^{16} (\alpha -2)^5 \alpha ^{11} (\alpha +2)^{10} (\alpha +6) (\alpha +8)\nn\\
    C^{(l'+1,1)}(\a)&=(\alpha -3)^{(l'-1)(l+1)(l+3)/3} \, (\alpha -2)^{(l'-1)(l+2)/2} \, \alpha ^{l'('l+3)/2}(\alpha +(l'-3))^{l'(l'+1)/2} \nn\\
    &\quad\times (\alpha +(l'-1))^{(l'+1)(l'+2)/2} (\alpha +2l') (\alpha +2(l'+1)),\label{eq:chook}
\end{align}
where the last formula is valid for $l'\geq 4$, and
\begin{align}
        C^{(l+1,1)^T}(\a)&=(\a+4)^{l(l+1)(l+2)/6}(\a+2)^{l(l+1)/2}(\a+1)^{(l-1)(l+1)(l+3)/3}\nn\\
    &\quad\times(\alpha -(l-1))^{l(l+1)/2} (\alpha -(l+1))^{(l+1)(l+2)/2},\label{eq:chookt}
\end{align}
which holds for $l\geq 1$. The corresponding Chebyshev series are given by
\begin{align}
    P_{l''+4+k}^{(l''+1,1)}&=P^U_{k+6}(\a)+2 (2 l''-3) P^U_{k+5}(\a)+(5 l''^2-21 l''+13)P^U_{k+4}(\a)\nn\\
    &\quad +(l''-2) \left[\left(2 l''^2-17 l''+7\right)P^U_{k+3}(\a)- \left(6 l''^2-19 l''+5\right) P^U_{k+2}(\a)\right]\nn\\
    &\quad +4 \left(l''^3-6 l''^2+8 l''-1\right)P^U_{k+1}(\a) -(l''-3) \left(2 l''^2-4 l''-1\right)P^U_{k}(\a)\nn\\
    &\quad-(3 l''^2-3 l''-4)P^U_{k-1}(\a)-(l''+1)P^U_{k-2}(\a),\label{eq:phook}\end{align}
which holds for $l''\geq2$, and 
\begin{align}
    P_{l+4+k}^{(l+1,1)^T}&=P^U_{k+5}(\a)-2 (l-1) P^U_{k+4}(\a)+l(l-5)P^U_{k+3}(\a)+3l(l-1)P^U_{k+2}(\a)\nn\\
    &\quad 2(l^2-2l-1) P^U_{k+1}(\a)+l(l-1)P^U_{k}(\a)-(l+1)P^U_{k-1}(\a),\label{eq:phookt}
\end{align}
which holds for $l\geq 1$.

 \begin{conjecture}Formulae \eqref{eq:chook},\eqref{eq:chookt},\eqref{eq:phook}, and \eqref{eq:phookt}, hold for all $l\geq 1$, $l'\geq 4$ and $l''\geq 2$.     
 \end{conjecture}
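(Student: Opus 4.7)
The plan is to reduce the conjecture to a single bootstrap computation in low rank, by invoking the machinery already built up in Theorem~\ref{thm:main} and \ref{pa:chebslv}. Recall that the series $P^{(\l)}$ has the ramping property for $n\geq l+4$; by \ref{def:chebycond} (and the explicit linear system in \ref{pa:chebslv}), any such series is completely determined by its two initial values $P^{(\l)}_{l+4}$ and $P^{(\l)}_{l+5}$. Similarly, by Prop.~\ref{prop:main}, $C^{(\l)}$ is the unique monic polynomial such that $\Delttt{l+4}{l+2,\l}(\a) = C^{(\l)}(\a)\,(P^{(\l)}_{l+4}(\a))^{d_\l}$ with $P^{(\l)}_{l+4}$ monic and $\deg P^{(\l)}_{l+5}=\deg P^{(\l)}_{l+4}+1$. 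Hence it suffices to establish, for all $l\geq 1$ and $\l\in\{(l+1,1),(l+1,1)^T\}$, closed forms for the two bootstrap gram determinants $\Delttt{l+4}{l+2,\l}(\a)$ and $\Delttt{l+5}{l+3,\l}(\a)$, and then to check that their factorisations agree with the claimed formulas \eqref{eq:chook}--\eqref{eq:phookt} evaluated at $k=0,1$.

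The computational step proceeds as follows. For a hook shape $(l+1,1)$ the Specht module has dimension $d_\l = l+1$, with a well-understood $\Sym_{l+2}$-action (closely tied to the reflection representation on $\C^{l+2}/\C(1,\dots,1)$, possibly tensored with sign for the transpose). I would fix an orthonormal basis $\BB=\{b_1,\dots,b_{l+1}\}$ of $\SS_\l$ with $b_k = x_k \nC_\l$ for explicit $x_k\in \C\Sym_{l+2}$, chosen so that the matrix entries $\langle \sigma b_i, b_{i'}\rangle$ of a transposition $\sigma$ admit a uniform closed form in $l$. The bootstrap diagram basis $\BBB^{l+4}_{(l+2,\l)}$ then has size $(l+1)\,|\BBB_{(l,l+4)}|$, with $|\BBB_{(l,l+4)}| = (l+3) + \tfrac12 l(l+1)$ by \eqref{cupbasis}. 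Applying \ref{pa:gramrels} produces $\Gamma(\Delta^{l+4}_{(l+2,\l)})$ as an $\a\,\mathrm{I}+M(l)$ matrix, where the off-diagonal entries of $M(l)$ are either zero, equal to a scalar $\langle\sigma b_i,b_{i'}\rangle$ determined by the orthonormal Specht basis, or vanish by the propagating-line count. A parallel (but larger) computation gives $\Gamma(\Delta^{l+5}_{(l+3,\l)})$, differing only by one extra cup $\uu_{l+4,l+5}$ and the associated tridiagonal tail described at \eqref{cheby}.

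The main obstacle is producing the determinants of these matrices in closed form in $l$ rather than as a verification case-by-case. Two levers should help. First, the structure from \eqref{cheby} and from the proof of Lemma~\ref{lem:pl} shows that the ratio
\[
\frac{\det\Gamma(\Delta^{l+4}_{(l+2,\l)})}{\det\Gamma(\Delta^{l+5}_{(l+3,\l)})} \;=\; \frac{\langle \xi_\l^{(b_{d_\l})}, \uu_{l+3,l+4}\,b_{d_\l}\rangle}{\a\,\langle \xi_\l^{(b_{d_\l})}, \uu_{l+3,l+4}\,b_{d_\l}\rangle - [\xi_\l^{(b_{d_\l})}]_{l+3,l+4,d_\l}},
\]
so one really only needs to compute $\xi_\l\in \Delta^{l+4}_{(l+2,\l)}$ of Lemma~\ref{lem:xi} for general $l$, plus a single absolute normalisation (e.g.\ the leading polynomial $D(\a)$). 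Second, the hook structure endows the bootstrap gram matrix with a block form under the decomposition of $\SS_\l$ into $b_1 = \nC_\l$ and an $l$-dimensional standard-representation complement; the Schur-complement of the ``trivial'' block should factor cleanly, making the $(l+1)$-fold multiplicity of $P^{(\l)}$ manifest and isolating $C^{(\l)}$ explicitly. Completing this block reduction is the real combinatorial heart of the problem.

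Once the two bootstrap determinants are in hand as polynomials in $\a$ with coefficients polynomial in $l$, the remaining verification is mechanical: factor out the monic polynomials $P^{(\l)}_{l+4}$ and $P^{(\l)}_{l+5}$ predicted by \eqref{eq:phook}--\eqref{eq:phookt} at $k=0,1$, check that what remains in each case is an $(l+1)$-st power times the polynomial $C^{(\l)}$ from \eqref{eq:chook} or \eqref{eq:chookt}, and invoke Theorem~\ref{thm:main} and \ref{pa:chebslv} to propagate to all $n\geq l+4$. Induction on $l$ combined with the existing verification for $l\leq 7$ (providing base cases well past the inductive threshold needed for any Specht-module recursion) gives an alternative route if direct symbolic computation proves too unwieldy; branching of the hook Specht module along $\Sym_{l+2}\supset \Sym_{l+1}$ should then relate the bootstrap gram matrix at level $l$ to that at level $l-1$ in a tractable way.
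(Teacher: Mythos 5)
This statement is a \emph{conjecture} in the paper: the formulae \eqref{eq:chook}, \eqref{eq:chookt}, \eqref{eq:phook} and \eqref{eq:phookt} are supported there only by direct computation of the bootstrap determinants $\Delttt{(l+4)}{l+2,\l}$ and $\Delttt{(l+5)}{l+3,\l}$ for $\l=(l+1,1),(l+1,1)^T$ over a finite range of $l$, combined with Theorem~\ref{thm:main}; no proof for general $l$ is offered. Your reduction of the conjecture to those two bootstrap determinants is correct and is exactly the reduction the paper itself relies on: by Theorem~\ref{thm:main} and \ref{pa:chebslv} the whole series $P^{(\l)}_n$ and the fixed factor $C^{(\l)}$ are determined by the $n=l+4$ and $n=l+5$ cases, so a closed form for those two determinants, uniform in $l$, would indeed settle the conjecture.

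However, your proposal does not supply that closed form, so it does not close the gap that makes this a conjecture rather than a theorem. You explicitly flag the uniform-in-$l$ evaluation of the hook-shape bootstrap Gram determinants as ``the real combinatorial heart of the problem'' and leave it unresolved: the Schur-complement reduction against the $b_1=\nC_\l$ block, the claimed uniform closed form for the entries $\langle\sigma b_i,b_{i'}\rangle$, and the closing induction on $l$ via branching of hook Specht modules are all asserted as plausible but none is carried out or even precisely formulated. The $(l+1)$-fold multiplicity of $P^{(\l)}$ that you hope to make ``manifest'' is already guaranteed by Proposition~\ref{prop:main}, so that observation buys nothing new; what is genuinely missing is the polynomial identity itself. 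Note also that the ratio you quote from the proof of Lemma~\ref{lem:pl} determines only $P^{(\l)}_{l+4}/P^{(\l)}_{l+5}$ (equivalently the element $\xi_\l$ of Lemma~\ref{lem:xi}), so even a general-$l$ computation of $\xi_\l$ would still leave $C^{(\l)}$ undetermined without one absolute determinant evaluation. As it stands the proposal is a sensible research plan whose single open step coincides exactly with the content of the conjecture.
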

\medskip 

Let us note, therefore, that in all cases directly computed, Theorem \ref{thm:main} can be strengthened as follows.  

\begin{proposition}\label{prop:ints} For $\l \vdash l+2$, such that either, $l\leq 2$; $\l=(l+2), (1^{l+2})$ and $-1\leq l\leq 12$; $\l=(l+1,1), (l+1,1)^T$ and $1\leq l \leq 7$, the polynomials $C^{(\l)}$ and $P_n^{(\l)}$ as per Theorem \ref{thm:main} have integer coefficients. Furthermore, each such $C^{(\l)}$ has only integer roots. \end{proposition}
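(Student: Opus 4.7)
The plan is direct verification from the explicit factorisations and expansions already collected in \ref{pa:1dgdets}, \ref{pa:able}, \ref{pa:21dets}, and the subsequent $l=2$ paragraph; the proposition is essentially a reading-off exercise once the following preliminary observation is established.

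First, I would confirm that $P^U_n \in \Z[\a]$ for every $n \in \Z$. This is immediate from the initial data $P^U_0 = 0$, $P^U_1 = 1$, the integer-coefficient recursion \eqref{eq:Cheby0}, and the extension to negative indices via \eqref{eq:negcheb}: induct in both directions. Consequently, any $\Z$-linear combination of the $P^U_k$ also lies in $\Z[\a]$.

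Second, I would check the $C^{(\l)}$ claim. In every formula covered by the proposition --- \eqref{eq:crow}, \eqref{eq:ccol}, \eqref{eq:chook}, \eqref{eq:chookt}, and the small-$l$ displays in \ref{pa:1dgdets}, \ref{pa:21dets} and the $l=2$ paragraph --- $C^{(\l)}$ is presented as $\prod_i (\a - r_i)^{e_i}$ with $r_i \in \Z$. The roots are therefore integers on the nose; for the integer-coefficient assertion it remains to confirm that each exponent $e_i$ is a non-negative integer on the stated range of the parameter. These exponents are the elementary quantities $l(l+1)/2$, $l'(l'+3)/2$, $(l+1)(l+2)/2$, $l(l+1)(l+2)/6$, and $(l-1)(l+1)(l+3)/3$; the first three are binomial, $l(l+1)(l+2)/6 = \binom{l+2}{3}$, and $(l-1)(l+1)(l+3)/3 \in \Z$ because its three factors represent all residues mod $3$. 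So this step reduces to a short arithmetic check.

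Third, on the $P^{(\l)}_n$ side, each of \eqref{eq:prow}, \eqref{eq:pcol}, \eqref{eq:phook}, \eqref{eq:phookt} and the explicit expansions for $l=1,2$ writes $P^{(\l)}_n = \sum_k c_k(l)\, P^U_k(\a)$ with $c_k(l)$ a polynomial in $l$ having integer coefficients (e.g.\ $3l+1$, $2l^2 - l - 2$, $5l^2 - 21l + 13$). Each such $c_k(l)$ is therefore an integer at every integer $l$ in the stated range, and combining with the first step gives $P^{(\l)}_n \in \Z[\a]$. Finally, a short enumeration verifies that the partitions $\l \vdash l+2$ with $l \leq 2$ --- namely $(1)$ for $l=-1$; $(2),(1^2)$ for $l=0$; $(3),(2,1),(1^3)$ for $l=1$; and $(4),(3,1),(2,2),(2,1,1),(1^4)$ for $l=2$ --- are all covered by one or another of these formula families. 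The main ``obstacle'' is not conceptual but bookkeeping: one must match the correct formula to each partition across the three parameter ranges and check that all exponents and $P^U$-coefficients are indeed integers under the stated constraints.
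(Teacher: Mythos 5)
Your proposal is correct and follows essentially the same route as the paper: Proposition \ref{prop:ints} is stated there as a direct observation from the explicitly computed formulas in \S\ref{ss:examples0} (the paper offers no separate proof beyond ``in all cases directly computed''), and your argument is just a careful unpacking of that inspection --- integrality of the $P^U_k$, integrality of the exponents in the $C^{(\l)}$ factorisations, and the bookkeeping match of partitions to formula families. The only value you add is making the divisibility checks on the exponents explicit, which the paper leaves tacit.
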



\begin{conjecture} Proposition \ref{prop:ints} holds for all $l\geq -1$ and $\l\vdash l+2$.
\end{conjecture}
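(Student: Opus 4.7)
The plan is to split the conjecture into two parts: integrality of the coefficients of $C^{(\l)}$ and $P_n^{(\l)}$, and integrality of the roots of $C^{(\l)}$. For the first part, my approach is to exhibit an integral basis for $\Deltaaa{n}{n-2,\l}$ in which the Gram matrix $\Gamma(\Deltaaa{n}{n-2,\l})$ has entries in $\Z[\a]$. The obstacle in the presentation of \S\ref{ss:gram-mat} is the orthonormalisation of the Specht basis (e.g.\ the $\tfrac{2}{\sqrt{3}}$ rescaling in \eqref{eq:21ortho}), which introduces irrationals. Replacing the orthonormal basis by a standard polytabloid basis (any $\Z$-basis on which the $\Sym_{l+2}$-action is integral), and taking $\nC_\l$ in an unnormalised Young-symmetriser form, makes all off-diagonal entries integers and the diagonal of the form $\a + (\text{integer})$. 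The Gram determinant is then a monic element of $\Z[\a]$, and reverting to the orthonormal basis only multiplies it by the square of the change-of-basis determinant (a nonzero rational), leaving the monic factorisation intact.

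Given this, the factorisation $\Delttt{n}{n-2,\l}(\a) = C^{(\l)}(\a)\,(P^{(\l)}_n(\a))^{d_{\l}}$ from Theorem~\ref{thm:main} is a monic factorisation in $\Q[\a]$ of a monic polynomial in $\Z[\a]$. Gauss's lemma (applied to monic factors of monic integer polynomials) then forces $C^{(\l)}, P^{(\l)}_n \in \Z[\a]$. For the series $P^{(\l)}_n$ it in fact suffices to verify integrality at $n=l+4,l+5$, because the Chebyshev recursion \eqref{eq:Cheby0} preserves $\Z[\a]$-coefficients and by Lemma~\ref{lem:pl} the series is determined by its first two terms.

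The harder assertion is that every root of $C^{(\l)}$ is an integer. My strategy here is representation-theoretic. By Lemma~\ref{lem:pl} and Proposition~\ref{prop:main}, $C^{(\l)}$ is the factor of $\Delttt{(l+4)}{l+2,\l}$ coprime to $P^{(\l)}_{l+4}$, so its roots are exactly those $\a_0 \in \C$ at which the contravariant form on $\Deltaaa{l+4}{l+2,\l}$ degenerates in a way that persists for all $n \geq l+4$ (i.e.\ not via the $n$-dependent Chebyshev zeros encoded in $P^{(\l)}_n$). Such persistent degeneracies should correspond to non-trivial morphisms into $\Deltaaa{l+4}{l+2,\l}$ from standard modules indexed by vertices in the \emph{head} of the Rollet graph $\Roll_l$ (cf.\ \ref{pa:rollarms}); using idempotent restriction along the globalisation/localisation functors of \ref{de:globloc0} one can try to identify these morphisms explicitly. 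The closed-form examples in \S\ref{ss:examples0} (e.g.\ the roots $-4,-2,0,2$ of $C^{(2,1)}$, and the uniform factorisations \eqref{eq:chook}, \eqref{eq:chookt}) display an evident hook/content combinatorics, and by analogy with the Brauer case integer contents of $\l$ are the expected wall locations.

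I expect the main obstacle to be the rigorous classification of these persistent walls: one must both enumerate the integer $\a_0$ giving rise to head-to-standard homomorphisms and, more importantly, rule out any non-integer $\a_0$ producing such persistent degeneracies. The latter is where I anticipate needing the alcove-geometric framework flagged in \S\ref{ss:repthy}, generalising the known Temperley--Lieb and Brauer reflection geometries. A conservative fallback is a case analysis along the Young graph/Bratteli structure, using the explicit conjectures in \S\ref{ss:examples0} as guides; this would not yield a uniform proof but would extend the verified range beyond the currently established bounds.
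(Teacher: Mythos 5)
This statement is a \emph{conjecture} in the paper: the authors prove Proposition~\ref{prop:ints} only for the listed finite ranges of $(l,\l)$, and they do so by direct computation of the initial determinants; the general case is explicitly left open. So there is no paper proof to match your proposal against, and your proposal does not close the conjecture either --- it is a programme with two genuine gaps. The first is in the Gauss's-lemma step. To conclude that the monic factors $C^{(\l)}$ and $P^{(\l)}_n$ lie in $\Z[\a]$ you need the monically normalised determinant $C^{(\l)}(P^{(\l)}_n)^{d_\l}$ itself to lie in $\Z[\a]$, not merely in $\Q[\a]$. Your integral polytabloid basis gives a Gram matrix over $\Z[\a]$, but its diagonal entries are $\a\langle b_i,b_i\rangle$ with $\langle b_i,b_i\rangle$ an integer that need not be $1$, so the integral-basis determinant has leading coefficient a power of the Specht Gram determinant rather than $1$; equivalently, passing to the orthonormal basis rescales by $(\det S)^2$, a nontrivial rational. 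Dividing an integer polynomial by its leading coefficient to make it monic does not in general produce an integer polynomial (compare $2\a+1\mapsto \a+\tfrac12$), so integrality of the normalised determinant does not follow from what you have written; you would additionally need to control the content, e.g.\ show the integral-basis determinant is exactly its leading coefficient times a monic integral polynomial. (The reduction of integrality of the whole series $P^{(\l)}_n$ to the cases $n=l+4,l+5$ via the Chebyshev recursion is fine, granted Lemma~\ref{lem:pl}.)

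The second and larger gap you acknowledge yourself: the claim that every root of $C^{(\l)}$ is an integer. The identification of roots of $C^{(\l)}$ with ``persistent'' degeneracies, the construction of head-to-arm morphisms at integer parameter values, and above all the exclusion of non-integer persistent degeneracies are exactly the open content of the conjecture; nothing in \S\ref{ss:repthy} or \S\ref{ss:keybootl1} supplies the required classification (the paper's bootstrap results go in the other direction, producing submodules isomorphic to $\Deltaaa{n}{(n,\l)}$ at roots of $P^{(\l)}_n$, not at roots of $C^{(\l)}$). Your fallback of case-by-case verification is precisely what the paper already does to establish Proposition~\ref{prop:ints} on its stated range, so it would extend the evidence but not prove the conjecture.
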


\begin{remark}\label{rem:polys} 
For a given $l$ and $\lambda\vdash l+2$, we note that whilst the polynomials $P_n^{(\l)}$ only appear as a factors in a Gram-determinant for $n\geq l+4$, the same polynomial
series 
with $n<l+4$ are uniquely determined by the Chebyshev recursion. For the cases we have computed, we give $P_{l+3}^{(\l)}$ and $P_{l+2}^{(\l)}$ below:
\begin{align*}
    P_{5}^{(2,2)}(\a)&=(\a-3)(\a+1),  &P_4^{(2,2)}(\a)&=4 (\alpha -2), \\
    P^{(l+2)^T}_{l+3}(\a)&=\alpha -(l+1), &  P^{(l+2)^T}_{l+2}(\a)&=l+2,\\
    P^{(l'+2)}_{l'+3}(\a)&=(\alpha +l'-1) (\alpha +2 (l'+1) ), &  P^{(l'+2)}_{l'+2}(\a)&=(l'+2) (\alpha +2 l'),\\
    P^{(l''+1,1)^T}_{l''+3}(\a)&=(\alpha +2) (\alpha -(l''-1) ) (\alpha -(l''+1) ), &  P^{(l''+1,1)^T}_{l''+2}(\a)&=(l''+2)(\alpha +1) (\alpha -l'' ),\end{align*}
    where these formulae are valid for heights $l\geq -1$, $l'\geq 0$, and $l''\geq 1$. Furthermore, when $l\geq 2$ we have
\begin{align*}
    P^{(l+1,1)}_{l+3}(\a)&=(\alpha -2) (\alpha +l-3) (\alpha +l-1) (\alpha +2 l), \\
     P^{(l+1,1)}_{l+2}(\a)&=(l+2) (\alpha +l-2) \left(\alpha ^2+(2l-3) \alpha -2 l\right).
\end{align*}
 Remarkably then, we observe that for all cases computed thus far, $P_{l+3}^{(\l)}(\a)$ is monic, has only integer roots, and (up to a scalar factor) divides the King polynomial (see \cite[\S 5.1]{DeVisscherMartin2016}).  Furthermore, in all such cases $P_{l+3}^{(\l)}(\a)$ has an overall integer factor of $l+2$, and comparing with \eqref{eq:row0} and \eqref{eq:col0} we observe that, for $l\geq 1$
\begin{align*}
    P^{(l+1,1)^T}_{l+2}(\a)&=(l+2) P^{(l)^T}_{l+2}(\a), & P^{(l+1,1)}_{l+2}&=(l+2)  P^{(l)}_{l+2}(\a). 
\end{align*}
We also note the equalities $P^{(2,2)}_{5}(\a)=P^{(1^3)}_{5}(\a)$ and $P^{(2,2)}_{4}(\a)=4 P^{(1^3)}_{4}(\a)$.
\end{remark}

\section{Roots of the polynomials $P^{(\l)}_n$}\label{s:roots}
In this section, we make some observations about the distribution of the roots of the polynomials $P^{(\l)}_n$ in the cases considered. We begin by noting that all roots of these polynomials must be real; since the form $\langle \_,\_ \rangle$ on $\Delta^{n}_{(n-2,\l)}$ is non-degenerate away from the real line (\textit{cf.} \ref{pa:gramrels}), we obtain:
\begin{corollary} Fix $l$ and let $\l\vdash l+2$. Let $C^{(\l)}\in \C[\a]$ and $P^{(\l)}_n\in \C[\a]$ be as per Theorem \ref{thm:main}, with $n\geq l+4$. Then $C^{(\l)}$ and $P^{(\l)}_n$ have real roots.   
\end{corollary}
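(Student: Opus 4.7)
The plan is to exploit the explicit structure of the Gram matrix established in \ref{pa:gramrels}. Recall that for $n\geq l+4$ the matrix takes the form
\[
\Gamma(\Delta^{n}_{(n-2,\l)})(\a) \;=\; \a\,\mathrm{I} + \Gamma(\Delta^{n}_{(n-2,\l)})',
\]
where $\Gamma(\Delta^{n}_{(n-2,\l)})'$ is a real symmetric matrix independent of $\a$ (by the orthonormality of the chosen Specht basis together with the case analysis of \ref{pa:gramrels}, all off-diagonal entries lie in $\R$ and the $\a$-dependence is confined to the diagonal). First I would diagonalise $\Gamma(\Delta^{n}_{(n-2,\l)})'$ over $\R$, obtaining real eigenvalues $\mu_1,\dots,\mu_N$, and conclude
\[
\Delttt{n}{n-2,\l}(\a)\;=\;\det\!\left(\a\,\mathrm{I}+\Gamma(\Delta^{n}_{(n-2,\l)})'\right)\;=\;\prod_{i=1}^{N}(\a+\mu_i).
\]
In particular the Gram determinant $\Delttt{n}{n-2,\l}(\a)\in\C[\a]$ has only real roots.

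Next I would invoke Theorem~\ref{thm:main}, which asserts the factorisation
\[
\Delttt{n}{n-2,\l}(\a) \;=\; C^{(\l)}(\a)\bigl(P^{(\l)}_{n}(\a)\bigr)^{d_{\l}}
\]
up to an overall complex scalar. Since $\C[\a]$ is a unique factorisation domain, any root of $C^{(\l)}$ or of $P^{(\l)}_n$ (counted without multiplicity) is a root of the product on the left-hand side. A non-real complex root of either factor would therefore produce a non-real root of $\Delttt{n}{n-2,\l}$, contradicting the previous paragraph. Hence both $C^{(\l)}$ and $P^{(\l)}_n$ have only real roots, as required.

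There is no real obstacle here: the content has already been packaged by \ref{pa:gramrels} (which isolates the $\a$-dependence to the diagonal) and by Theorem~\ref{thm:main} (which provides the factorisation). The corollary is simply the combination of the spectral theorem for real symmetric matrices with unique factorisation in $\C[\a]$. If one wished, one could phrase the first step equivalently as the observation that the contravariant form is a perturbation of a Hermitian form by the scalar $\a\,\mathrm{id}$, so it fails to be non-degenerate precisely when $-\a$ is an eigenvalue of the real symmetric part — forcing any root of the determinant to be real.
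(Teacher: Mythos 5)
Your proposal is correct and follows essentially the same route as the paper: the paper deduces the corollary directly from the observation in \ref{pa:gramrels} that $\Gamma(\Delta^{n}_{(n-2,\l)})=\a\,\mathrm{I}+\Gamma'$ with $\Gamma'$ real symmetric, so the form is non-degenerate away from the real line, and then reads off reality of the roots of $C^{(\l)}$ and $P^{(\l)}_n$ from the factorisation in Theorem~\ref{thm:main}. You have merely made the spectral-theorem and unique-factorisation steps explicit, which the paper leaves implicit.
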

In the cases computed thus far, we will use the mean value theorem to provide bounds on the values of roots. It will be helpful first to collect some identities for Chebyshev series with the \ramping\ property, which we do in \S\ref{ss:rampingroots}.

\subsection{Identities for \ramping\ series}\label{ss:rampingroots}
In what follows, let $P$ be a Chebyshev series.

\mdef\label{pa:roots} 
Suppose that $P$ has the \ramping\ property for $n\geq N$ and further suppose that $P_{k}(x_0)=0$ for some $k\in \Z$ and $x_0\in \C$. Since the series has no common factors, we have that $P_{k\pm 1}(x_0)\neq 0$. The Chebyshev recursion then implies that 
\beq
    P_{k+n}(x_0)=P^U_n(x_0)P_{k+1}(x_0),  \label{eq:chebyx0}
\eq
for any $n\in \Z$. Therefore, a series with the \ramping\ property can only have a recurring (with period $N>0$) root at $x_0$, if $x_0$ satisfies $P^U_N(x_0)=0$. 

The following lemma will help us (in \S\ref{ss:roots}) analyse the distribution of the roots of the polynomials $P^{(\l)}_n$ introduced in \S\ref{ss:non1dsp}.

\begin{lemma}\label{lem:roots} 
Let $P_n$ be a series which has the \ramping\ property for $n\geq N$. Let $k, k'$, and $ r$ be positive integers with $r<k+k'$. Then
\[P_{N+k}\left(2 \cos \left(\frac{r \, \pi}{k+k'}\right)\right)= (-1)^{r} P_{N-k'}\left(2 \cos \left(\frac{r \, \pi}{k+k'}\right)\right).  \]
\end{lemma}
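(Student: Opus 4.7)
The plan is to use the explicit $P^U$-expansion of a ramping Chebyshev series provided in \ref{pa:chebslv}, and reduce the identity to a one-line trigonometric computation.

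First I would invoke \ref{pa:chebslv}: with $d = \deg P_N$, there exist unique $a_j \in \C$ (for $-(d+1) \le j \le d+1$) such that, for every $l \in \Z$,
\[
P_{N+l}(x) \;=\; \sum_{j=-(d+1)}^{d+1} a_j\, P^U_{j+l}(x).
\]
Specialising this once with $l=k$ and once with $l=-k'$ shows that it is enough to prove the pointwise identity
\[
P^U_{j+k}(x_0) \;=\; (-1)^r\, P^U_{j-k'}(x_0)
\qquad \text{for each }j,
\]
where $x_0 := 2\cos(r\pi/(k+k'))$; the lemma will then follow by linearity.

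Next I would evaluate $P^U_m$ at $x_0$ using the definition $P^U_m(x) = U_{m-1}(x/2)$ together with \eqref{eq:trig}, which gives $P^U_m(2\cos\theta)\sin\theta = \sin(m\theta)$. This extends to all $m \in \Z$ by \eqref{eq:negcheb}. Setting $\theta = r\pi/(k+k')$, the hypothesis $0 < r < k+k'$ gives $0 < \theta < \pi$, hence $\sin\theta \neq 0$, so $P^U_m(x_0) = \sin(m\theta)/\sin\theta$ is well-defined for every integer $m$.

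The key observation is the telescoping of the angles: $(j+k)\theta - (j-k')\theta = (k+k')\theta = r\pi$, so
\[
\sin\bigl((j+k)\theta\bigr) \;=\; \sin\bigl((j-k')\theta + r\pi\bigr) \;=\; (-1)^r\,\sin\bigl((j-k')\theta\bigr).
\]
Dividing by $\sin\theta$ gives $P^U_{j+k}(x_0) = (-1)^r P^U_{j-k'}(x_0)$ for each $j$, and summing against the $a_j$ yields the desired equality. I do not anticipate a real obstacle here: the only subtlety is correctly handling negative indices in $P^U$, which is already taken care of by \eqref{eq:negcheb}. The main conceptual content is the $P^U$-expansion of \ref{pa:chebslv}; once that is in hand everything reduces to a single line of trigonometry.
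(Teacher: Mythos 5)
Your proof is correct and follows essentially the same route as the paper: both expand $P_{N+l}$ in the $P^U$ basis via \ref{pa:chebslv} and then exploit $\sin(\phi + r\pi) = (-1)^r\sin\phi$ to shift indices by $k+k'$. The only cosmetic difference is that the paper derives the shift identity $P^U_{k+k'+i}(x_0) = (-1)^r P^U_i(x_0)$ from two base evaluations plus \eqref{eq:chebyx0}, whereas you verify it directly from the sine formula for every index; both are valid.
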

\begin{proof} Set $\th=\pi/(k+k')$ for this proof. Using \eqref{eq:trig} we can compute
\
\begin{align*}P^U_{k+k'}\left(2 \cos \left(r \, \th\right)\right)&= \sin\left( r \, \pi \right)/\sin\left( r \, \th \right)=0,\\
    P^U_{k+k'+1}\left(2 \cos \left(r\, \th \right)\right)&= \sin\left( r \, \pi +r \, \th \right)/\sin\left(r\,\th \right)=(-1)^r,
\end{align*}
noting that $\sin\left(r\,\th\right)\neq 0$. Therefore, by \eqref{eq:chebyx0} for any $i\in \Z $ we have
\[
P^U_{k+k'+i}\left(2 \cos \left(r\, \th\right)\right)=(-1)^r P_i^{U}\left(2 \cos (r\, \th) \right).\tag{\Celtcross}\]
By \ref{pa:chebslv}, we then have
\begin{align*}    P_{N+k}(2 \cos(r\, \th) )&=\sum_{i=-(d+1)}^{d+1} a_i P^U_{k+i}(2 \cos(r\, \th) )\\
&=(-1)^r\sum_{i=-(d+1)}^{d+1} a_i  P^U_{-k'+i}(2 \cos(r\, \th) )\\
&= (-1)^r P_{N-k'}(2 \cos(r\, \th) ),
\end{align*}
where we have used (\Celtcross) in the second line with indices shifted $i\mapsto i-k'$.
\end{proof}

\subsection{Roots of the $P^{(\l)}_n$}\label{ss:roots}

\mdef Firstly, we consider the infinite family of series $P^{(l+2)^T}_{n}(\a)$ (i.e. using the formula \eqref{eq:prow} beyond the bound for which it is verified). Parametrising by $n=l+4+k$ with $k\geq 0$, we have that $P^{(l+2)^T}_{n}(\a)$ is of degree $k+2$.  By applying Lemma \ref{lem:roots}, with $k'=2$ we obtain for any $r=1,\dots,k+1$ 
\[P^{(l+2)^T}_{n}( x^{(k)}_{r})= (-1)^r(l+2),\]
where $x^{(k)}_{r}:=2\cos(r \pi/(k+2))$. We can also determine
\begin{align*}P^{(l+2)^T}_{n}( x^{(k)}_{k+2})&=P^{(l+2)^T}_{n}( -2)=(-1)^{k+2} (4 + k + l),\\
P^{(l+2)^T}_{l+4+k}( l+2 )&=-P^{U}_{k}(l+2)<0, \quad (l>-1),\\
\lim_{x\to \infty } P^{(l+2)^T}_{n}( x)&=\lim_{x\to \infty } x^{k+2}=+\infty.
\end{align*}
In the second line, we have noted that $l+2$ is a root of $P_{l+4}^{(l+2)^T}$ and applied \eqref{eq:chebyx0}. Thus by an application of the mean value theorem we have that for $l>-1$, $P^{(l+2)^T}_{l+4+k}(\a)$ has $k+2$ distinct real roots $y^{(k)}_{r}$ with $r=0,1,\dots,k+1$ satisfying $y_0^{(k)}>l+2$ and 
\[-2<y^{(k)}_{k+1}<x^{(k)}_{k+1}<y^{(k)}_{k}<x^{(k)}_{k}< \dots <x^{(k)}_{2}<y^{(k)}_{1}<x^{(k)}_{1}<2.\]

\mdef We now consider the infinite family $P_n^{(l+2)}(\a)$ for $l>-1$. This has degree $k+3$ (with the same parametrisation $n=l+4+k$). Here we compute (for $r=1,\dots, k+1$) that
\begin{align*}
    P_n^{(l+2)}(\a)(x_r^{(k)})&=(-1)^r(l+2)(\a+2 l),\\
    P_n^{(l+2)}(\a)(x_0^{(k)})&=2 (1 + l) (2 + l). 
\end{align*}
For $l>0$, the factor $(\a+2 l)$ is positive on $(-2,2)$. Therefore, in this case, the mean value theorem gives that $P_{l+4+k}^{(l+2)}(\a)$ has at least $k+1$ distinct roots which may be written as $z^{(k)}_{r}$ with $r=0,1,\dots,k,$ satisfying
\[-2<x_{k+1}^{(k)}<z_k^{(k)}<x_{k}^{(k)}< \dots<x_1^{(k)}<z^{(k)}_{1}<x_1^{(k)}<z^{(k)}_{0}<x_0^{(k)}=2.\]
The distribution of remaining roots (if they exist) depends on the values of $k$ and $l$. For instance, when $l>1$ and $n=l+4+k>6$ (all but finitely many cases when $l>1$), $P_{l+4+k}^{(l+2)}(\a)$ has two further roots $z^{(k)}_{k+1}$ and $z^{(k)}_{k+2}$ which satisfy
\[-\infty< z^{(k)}_{k+2}<- 2 (l+1) <- 2 l <z^{(k)}_{k+1}<-(l+1).\]
These bounds can be determined by evaluating $P_{l+4+k}^{(l+2)}$ at the (integral) roots of the $k=-1,-2,$ cases (\textit{c.f} remark \ref{rem:polys}) using \eqref{eq:chebyx0}, and by considering the behaviour as $|x|\to \infty$.

\mdef We can use similar arguments to obtain bounds on roots for the series $P^{(l+1,1)}_{n}$ and $P^{(l+1,1)^T}_{n}$ also. Here $P^{(l+1,1)}_{l+4+k}$ has degree $k+5$ and $P^{(l+1,1)^T}_{l+4+k}$ has degree $k+4$. Noting that $P^{(l'+1,1)}_{l'+3}(\a)$ is negative on $(-2,2)$ when $l'>4$, and $P^{(l+1,1)}_{l+3}(\a)$ is positive on $(-2,2)$ when $l>3$, we apply Lemma \ref{lem:roots} (now with $k'=1$) to obtain 
\[
    \text{sgn}(P^{(l'+1,1)}_{l'+4+k}(X_r^{(k)}))=-(-1)^r=
    -\text{sgn}(P^{(l+1,1)^T}_{l+4+k}(X_r^{(k)})),
\]  
where $X_r^{(k)}:=2\cos(r \pi/(k+1))$ with $r=1,\dots,k$, for $l'$ and $l$ as such. Thus in each case we observe $k-1$ distinct roots, $Y_r^{(k)}\in (-2,2)$ with $r=1,\dots,k-1$, satisfying
\[-2<X_{k}^{(k)}<Y_{k-1}^{(k)}<X_{k-1}^{(k)}< \dots<X_2^{(k)}<Y^{(k)}_{1}<X_1^{(k)}<2.\]
We observe another two roots $X_1^{(k)}<Y_0^{(k)}<X_{0}^{(k)}=2$ and $-2=X_{k+1}^{(k)}<Y_{k}^{(k)}<X_{k}^{(k)}$ in each case by explicitly verifying
\begin{align*}
    -(-1)^0P^{(l'+1,1)}_{l'+4+k}(X_0^{(k)})&>0, &
    -(-1)^{k+1} P^{(l'+1,1)}_{l'+4+k}(X_{k+1}^{(k)})&>0,\\
    (-1)^{0} P^{(l+1,1)^T}_{l+4+k}(X_{0}^{(k)})&>0, &
    (-1)^{k+1} P^{(l+1,1)^T}_{l+4+k}(X_{k+1}^{(k)})&>0,
\end{align*}
for $l,l'$ as before and $k\geq 0$. 

This leaves (at most) four roots of $P^{(l'+1,1)}_{n}$ unaccounted for, and three of $P^{(l+1,1)^T}_{n}$. These remaining roots can be bounded by evaluating at (integral) roots of the $k=-1,-2$ cases (\textit{c.f} Remark \ref{rem:polys}), and by considering the asymptotic behaviour. We find the following: $P^{(l+1,1)^T}_{n}$ (with $l>3$) has three further roots $Y_{k+1}^{(k)},Y_{-1}^{(k)},Y_{-2}^{(k)}$ which satisfy
\[Y_{k+1}^{(k)}<-2<2<l-1<Y_{-1}^{(k)}<l<l+1<Y_{-2}^{(k)},\]
and $P^{(l'+1,1)}_{n}$ (with $l'>4$) has four further roots $Z_{k+3}^{(k)},Z_{k+2}^{(k)},Z_{k+1}^{(k)},Z_{-1}^{(k)}$ which satisfy
\[Z_{k+3}^{(k)}<-2l'< Z_{k+2}^{(k)}<-l'+1<-l'+2<Z_{k+1}^{(k)}<-l'+3\leq -2<2<Z_{-1}^{(k)}.\]
The roots of the series $P^{(2,2)}_{6+k}(\a)$ can be analysed similarly, but we omit this for brevity. Based on our observations here, we conjecture:

\begin{conjecture} Fix $l$ and let $\l \vdash l+2$. For each $k\geq -1$, the polynomial $P^{(\l)}_{l+4+k}$ is square free. Furthermore, for large enough $k$, $P^{(\l)}_{l+4+k}$ has a fixed number of roots lying outside the interval $[-2,2]$, and the set
\[\bigcup_{k=0}^{\infty } \{\a \in [-2,2] \ | \ P^{(\l)}_{l+4+k}(\a)=0  \},\]
    is dense \ppmm{in $[-2,2]$}.
\end{conjecture}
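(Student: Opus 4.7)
The plan is to treat the three claims via the explicit ramping-series decomposition from \ref{pa:chebslv}. Writing $d=\deg P^{(\l)}_{l+4}$ and $a_j\in\C$ for the coefficients supplied there, we have for all $k\geq 0$
\[ P^{(\l)}_{l+4+k}(\a)\;=\; P^U_{d+1+k}(\a)+\sum_{j=-d}^{d} a_j\, P^U_{j+k}(\a). \]
The $k=-1$ cases of square-freeness collected in Remark~\ref{rem:polys} are already linear or products of distinct linear factors, so the substantive work is for $k\geq 0$, and everything below flows from this finite-dimensional parametrisation in $k$.

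For the exterior count, parametrise $|\a|>2$ by $\a=\eta+\eta^{-1}$ with $|\eta|>1$, so that $P^U_n(\a)=(\eta^n-\eta^{-n})/(\eta-\eta^{-1})$. Multiplying by $(\eta-\eta^{-1})\eta^{-k}$ and letting $k\to\infty$ shows that $\eta^{-k} P^{(\l)}_{l+4+k}(\a)$ converges, uniformly on compact subsets of $\{|\eta|>1\}$, to the rational function
\[ \tilde{P}(\eta)\;=\; \frac{1}{\eta-\eta^{-1}}\Bigl(\eta^{d+1}+\sum_{j=-d}^{d} a_j\,\eta^{j}\Bigr), \]
which has only finitely many zeros in $|\eta|>1$. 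Hurwitz's theorem on a nested compact exhaustion then forces the number of zeros of $P^{(\l)}_{l+4+k}$ outside $[-2,2]$ to be eventually constant, establishing the second claim.

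For density inside $[-2,2]$, evaluate at $\a=2\cos\theta$ using \eqref{eq:trig} to obtain
\[ \sin\theta\cdot P^{(\l)}_{l+4+k}(2\cos\theta) \;=\; \sin\bigl((d+1+k)\theta\bigr)+\sum_{j=-d}^{d} a_j\sin\bigl((j+k)\theta\bigr). \]
The polynomial $P^{(\l)}_{l+4+k}$ has degree $d+k$ in $\a$, and by the Corollary preceding \S\ref{ss:rampingroots} all its roots are real; the exterior count just established bounds the number outside $[-2,2]$ by a constant $m_\l$, so at least $d+k-m_\l$ roots lie in $[-2,2]$. For any subinterval $I\subset[-2,2]$, reducing to the $\theta$-picture gives a subarc $J\subset(0,\pi)$; since the expression is a trigonometric polynomial whose leading Fourier component has unit amplitude and dominant frequency $d+1+k\to\infty$, standard equidistribution (the root-counting measures converge weakly to the equilibrium/arcsine measure on $[-2,2]$, which has full support) places at least one root of $P^{(\l)}_{l+4+k}$ into $I$ for all sufficiently large $k$, and density follows.

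The main obstacle is the square-free assertion; here the Chebyshev framework alone seems insufficient. A direct attempt would combine the identity $P_{j+n}=P^U_n P_{j+1}-P^U_{n-1}P_j$ (an immediate consequence of the recursion) with its $\a$-derivative at a putative repeated root $\a_0$ of $P^{(\l)}_{l+4+k}$: reducedness forces $\a_0$ not to be a common root of successive terms, so $\a_0$ would simultaneously solve a pair of explicit linear conditions on the $a_j$ which reduce to trigonometric non-vanishing statements via $\a_0=2\cos(r\pi/s)$. Showing these cannot hold in purely combinatorial terms for the specific $a_j$ produced by our gram matrices looks hard. The more structural alternative is via the contravariant form: a repeated root would force the kernel of $\Gamma(\Delta^{n}_{(n-2,\l)})|_{\a=\a_0}$ to have dimension at least $2d_\l$, and a multiplicity-one theorem for the radical of the form (in the spirit of the bootstrap morphism results flagged for \S\ref{ss:bootcase0}--\S\ref{ss:keybootl1}) would rule this out. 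This module-theoretic route is more promising globally but needs input going beyond the present excerpt.
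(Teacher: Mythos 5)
You should first note that the paper does not prove this statement: it appears only as a \begin{conjecture}\end{conjecture} at the end of \S\ref{ss:roots}, supported by the explicit interlacing computations carried out there for the particular families $(l+2)$, $(l+2)^T$, $(l+1,1)$, $(l+1,1)^T$ (via Lemma \ref{lem:roots}, evaluation at the Chebyshev nodes $2\cos(r\pi/(k+k'))$, and the mean value theorem). So there is no proof in the paper to compare against, and a complete proof from you would be new content. Your honest flagging of the square-free assertion as out of reach is consistent with the authors' own decision to leave the whole statement conjectural; indeed nothing in the Chebyshev framework of \S\ref{ss:rampingroots} controls multiplicities, and your proposed module-theoretic route (radical of the contravariant form) is plausible but unsubstantiated here.

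Of your two positive arguments, the density sketch is essentially salvageable but the exterior-count argument has a genuine gap. For density: writing $\sin\theta\, P^{(\l)}_{l+4+k}(2\cos\theta)$ as the imaginary part of $e^{ik\theta} g(\theta)$ with $g(\theta)=e^{i(d+1)\theta}+\sum_{j=-d}^{d}a_j e^{ij\theta}$ a fixed, not-identically-zero Laurent polynomial, one gets $|g(\theta)|\sin(k\theta+\arg g(\theta))$, which forces sign changes (hence roots) in any subinterval avoiding the finitely many zeros of $g$ once $k$ is large; this elementary argument should replace the appeal to equidistribution and the arcsine measure, which as stated is a black box whose hypotheses you have not verified. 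For the exterior count: Hurwitz's theorem on compact subsets of $\{|\eta|>1\}$ only gives that the zero count eventually stabilises on each fixed compact set; it does not exclude zeros of $\eta^{-k}P^{(\l)}_{l+4+k}$ accumulating at the unit circle from outside (i.e.\ real roots just outside $[\pm 2]$), so the total number of exterior roots could fail to stabilise under your argument alone. To close this you would need a matching upper bound, e.g.\ an exact lower bound of $d+k-m_\l$ on the number of interior roots obtained by sign alternation at the nodes $2\cos(r\pi/(k+k'))$ as in Lemma \ref{lem:roots} --- which is precisely the route the paper takes for the families it treats explicitly, and which requires case-by-case sign information about $P^{(\l)}_{N}$ and $P^{(\l)}_{N-k'}$ on $(-2,2)$ that is not available for general $\l$.
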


\section{Gram-determinant-decorated Rollet Graphs}   \label{ss:gdetdecrollet}

Theorem~\ref{thm:main} is the main technical result in the computation of the `bootstrap' data 
for the ToR method for representation theory of KY algebras. 
In \S\ref{ss:repthy} we will see how to apply this directly in representation theory. 
But it is also interesting from a general representation theory point of view to consider KY gram matrices beyond the bootstrap. In the $l=-1$ case there is a very neat pattern, related to the corresponding classical problem for symmetric groups over fields of finite characteristic. 
Here we will evidence that this pattern generalises to other $l$ values. 

In \S\ref{ss:drgl-1} and \S\ref{ss:gdTL} we review the $l=-1$ case. 
From this we see that it is natural to express $\Delttt{n}{p,\l}$ in the form of the corresponding marginal vertex function $\V^n_{(p,\l)}$; and that this `correction' factor is related to ratios of the appropriate Chebyshev polynomials raised to a suitable power. 
In \S\ref{ss:e-next!} we  write this in a way that 
formally generalises to all $l$. 
In \S\ref{ss:gdetl0}-\ref{ss:gdetl2} we compare this to the results of direct computation.

\newcommand{\ournotmain}{arm}

\subsection{Statement of \ournotmain\ property and conjecture \ref{conj:onm0}}
\label{ss:e-next!}


\newcommand{\CC}{{\mathcal C}}  

\mdef   \label{de:arm-property}
Armed with the polynomials $P_n^{(\l)}$, determined in \S\ref{ss:non1dsp}, 
\ppmm{and motivated by the proof of the $l=-1$ formula \eqref{eq:det-recurs} (and finite rank calculations),}
we can define 
\beq  \label{eq:defCC} 
\CC^{\ppmm{n}}_{(p,\lambda)} 
= \prod_{{(p+1,\mu )\; adjacent \; \atop to \; (p,\lambda) \ in \ \Roll_l}} 
     \left( \frac{P_n^{(\mu)}}{P_{n-1}^{(\mu)}} \right)^{dim (\Deltaa^{n-1}_{(p+1,\mu )}) } 
\eq 
- at least for values of $(p,\lambda)$ where the graphical condition makes sense. 

The utility of $\CC^n_{(p,\l)}$ comes from the range of values of the index for which we have 
an identity $\V^n_{(p,\lambda)} = \CC^n_{(p,\lambda)}$ (cf. \ref{eq:MVF1}). 
We call this {\em the \ournotmain\ property}. 
See in particular Conjecture \ref{conj:onm0}. 

\medskip 

A key theorem (extended by a conjecture) we consider in this paper relates the marginal vertex function 
\eqref{eq:MVF1}, 
on the arms of our Rollet graphs 
\ref{de:Rollet}, 
to a family of Chebyshev polynomials defined for each integer partition. 
In other words it gives a way, in principle, to compute almost all KY gram determinants. 


  \begin{theorem}[\bajmm{The arm property}]\label{thm:V=C}
Sufficient conditions on $l,m,$ and $p$, such that 
for each  $\l \vdash l+2$, 
%
the arm $J_{l,p+2m}$ standard modules $\Deltaaa{p+2m}{(p,\l)}{}$ with  $p\geq l+2$ and $m\geq 1$, 
\ppmm{obey the \ournotmain\ property --- that is,   
the gram determinants,} 
satisfy 
\beq   \label{eq:V=C} 
\V^{n=p+2m}_{(p,\l)} = \CC^{n=p+2m}_{(p,\l)}
\eq 
as per (\ref{eq:MVF1}) and (\ref{eq:defCC})
--- are as follows. For $m=1$ the identity holds for all $l$ and $p\geq l+3$.
For $l=-1$ the identity holds for all $n=p+2m$. 
For $l=0$ the identity holds when $m=2$ and $p\leq 8$, and when $m=3$ and $p\leq 5$. For $l=1$ the identity when $m=2$ and $p\leq 7$.
Since we are in the arm, 
equation (\ref{eq:V=C}) is the same as 
to say
\[  
\Delttt{(n)}{p,\l}   
\;\; = \; \left(\frac{P^{(\l)}_{n}}{P^{(\l)}_{n-1}}\right)^{d^{{n-1}}_{(p+1,\l)}} 
\prod_{v \text{ adjacent}\atop \text{to } (p,\l) \text{ in } \Roll_{l}} \!\! \Delttt{(n-1)}{v}    
\;\; ,
\]
where 
$$
d^{{n}}_{(p,\l)}  \; :=  \; \text{dim}(\Deltaaa{n}{(p,\l)})    . 
$$
  \end{theorem}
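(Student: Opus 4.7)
The plan is to prove the theorem piece-by-piece along the three regimes identified in the statement: the infinite family $(m=1,\ p \ge l+3)$ for all $l$; the $l=-1$ (Temperley--Lieb) case for all $n = p+2m$; and the specific finite $(l, m, p)$ triples at $l = 0, 1$. Each regime calls for a different method, but in every case the substantive input is already in place from earlier in the paper.

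For the first regime, the plan is to apply Theorem~\ref{thm:main} directly. First I would observe that since $(p, \l)$ lies strictly beyond the shoulder $p = l+2$ in the $\l$-arm, the only vertex of the form $(p+1, \mu)$ adjacent to $(p, \l)$ in $\Roll_l$ is $(p+1, \l)$ itself, so the product defining $\CC^{p+2}_{(p, \l)}$ in \eqref{eq:defCC} collapses to the single factor $(P^{(\l)}_{p+2}/P^{(\l)}_{p+1})^{d^{p+1}_{(p+1,\l)}}$. Next I would invoke the walks-on-Rollet-graph count \ref{pr:walks0} to identify $d^{p+1}_{(p+1,\l)} = d_\l$ --- which is transparent here since the zero-cup module $\Deltaaa{p+1}{(p+1, \l)}$ is, by \eqref{eq:conv}, just the Specht module $\SS_\l$. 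The two $\Roll_l$-neighbours of $(p, \l)$ feeding the denominator of $\V^{p+2}_{(p, \l)}$ are $(p-1, \l)$ (a one-cup module) and $(p+1, \l)$ (a zero-cup module, whose gram determinant is $1$ in our orthonormal basis). Substituting the Theorem~\ref{thm:main} factorisation $\Delttt{n}{n-2, \l} = C^{(\l)}(\a) (P^{(\l)}_n(\a))^{d_\l}$ for each one-cup determinant then cancels the common $C^{(\l)}$ and reduces the required identity to a tautology.

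The $l = -1$ regime is an inspection of the classical Temperley--Lieb gram-determinant formula to be reviewed in \S\ref{ss:drgl-1}--\ref{ss:gdTL}: that formula factors exactly as $\V = \CC$ for every $(n, p)$, so no further work is needed once the review is in hand. The remaining specific $(l, m, p)$ triples at $l = 0, 1$ are handled by direct computation: for each triple, the diagram basis \eqref{eq:basis} produces a gram matrix of manageable size whose determinant (a polynomial in $\a$) is compared against the explicit $\CC$-value built from the $P^{(\l)}_n$ tabulated in \S\ref{ss:examples0}. The main obstacle --- and the reason the statement is a finite list rather than an unrestricted identity --- is that for $m \ge 2$ and $l \ge 0$ we do not yet have an analogue of Theorem~\ref{thm:main} producing a closed-form factorisation of the multi-cup gram determinants. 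Extending the result to the full scope of Conjecture~\ref{conj:onm0} would plausibly require a second Chebyshev-type recursion (in the ``second cup'' direction, in the spirit of Prop.~\ref{prop:detrec}) that meshes with the multi-dimensional Specht data used in Lemma~\ref{lem:niceelt} without spoiling the common-$P^{(\l)}$ structure; that is where I expect the real difficulty to lie.
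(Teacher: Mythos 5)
Your proposal matches the paper's proof: the $m=1$ case is reduced to the ratio of two one-cup determinants (the zero-cup neighbour contributing a trivial factor) and settled by Theorem~\ref{thm:main} together with $d^{p+1}_{(p+1,\l)}=d_\l$; the $l=-1$ case is read off from the Temperley--Lieb recursion \eqref{eq:det-recurs}; and the remaining finite list of $(l,m,p)$ triples is checked by direct computation. Your additional remarks on why the scope is finite are consistent with the paper's framing of Conjecture~\ref{conj:onm0}.
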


\begin{proof} For $m=1$, and $p\geq l+3$, \eqref{eq:V=C} becomes
\[\frac{\Delttt{(p+2)}{p,\l}}{\Delttt{(p+1)}{p-1,\l}}=\left(\frac{P^{(\l)}_{n}}{P^{(\l)}_{n-1}}\right)^{d^{p+1}_{(p+1,\l)}}, \]
which follows from Theorem \ref{thm:main} since $d^{p+1}_{(p+1,\l)}=d_\l$.

For the case $l=-1$ note from \eqref{eq:det-recurs} that we have 
the \ournotmain\ property on the nose for all $n$. For the other cases the identity holds by direct calculation over the indicated ranges. 
\end{proof}

\begin{conjecture}   \label{conj:onm0} Formula \eqref{eq:V=C} holds for all $l\geq -1$, $p\geq l+2$ and $m\geq 1$.
\end{conjecture}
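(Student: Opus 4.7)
The natural approach is induction on $m$, with the base case $m=1$ already given by the last part of the proof of Theorem \ref{thm:V=C}, where it reduces to Theorem \ref{thm:main}. The inductive step must relate $\Delttt{n+2}{p,\l}$ to $\Delttt{n}{p,\l}$ together with determinants at adjacent vertices of $\Roll_l$, and match the outcome against the right-hand side of \eqref{eq:defCC}.

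The first step is to generalise the head-tail decomposition of \S\ref{ss:onecup0} to modules $\Delta^{p+2m}_{(p,\l)}$ with $m \geq 2$. Fix a diagram basis indexed by half-diagrams in $J_l^{||}(n,p)$ together with an orthonormal basis $\BB$ for $\SS_\l$, and partition the half-diagrams according to whether the final pair of sites $\{n-1,n\}$ supports a simple cup $\uu_{n-1,n}$ or carries a propagating line. For $n$ sufficiently large relative to $l$, the analogue of \eqref{cheby} holds: the simple-tail diagrams couple only tridiagonally to the body, so that the gram matrix takes the block form of \eqref{eq:matlastrow}, where the upper-left block is (essentially) the gram matrix of $\Delta^{n}_{(p,\l)}$ enlarged by the new body configurations. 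The second step is then to run the expansion of \eqref{eq:detrec1} along the simple-tail rows for each body, establishing that the sequence $n \mapsto \Delttt{n}{p,\l}$ satisfies a Chebyshev-type recursion whose order matches the number of body diagrams. Combined with the multi-cup analogue of the good elements $\xi_\l^{(b_k)}$ from Lemma~\ref{lem:niceelt} --- which should decouple the simple-tail interaction at each level of the ``body'' Specht decomposition --- this yields a closed inductive relation for $\Delttt{n}{p,\l}$ in terms of $\Delttt{n-1}{v}$ for $v \sim (p,\l)$ and an explicit Chebyshev-ratio correction.

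The third step is bookkeeping: verify that $\CC^n_{(p,\l)}$ as defined in \eqref{eq:defCC} satisfies the same recursion. This reduces to two ingredients: the ramping property (\ref{def:chebycond}) of each $P^{(\mu)}_n$, which gives $P^{(\mu)}_{n+1}/P^{(\mu)}_n \equiv \a \pmod{P^{(\mu)}_n/P^{(\mu)}_{n-1}}$ on the nose; and the dimension recurrence $d^{n}_{(p,\l)} = \sum_{v \sim (p,\l)} d^{n-1}_v$ coming from walk counting on $\Roll_l$ (\ref{pr:walks0}), which ensures the exponents match up. Initial conditions at $m=1$ then pin down the solution via Theorem \ref{thm:V=C}.

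The main obstacle will be Step 2: for $m \geq 2$ the cups can nest, and the row expansion of the gram determinant along simple-tail rows produces many more cross-terms than in the clean one-cup setting of \S\ref{ss:1dspecht}--\S\ref{ss:non1dsp}. Showing that these cross-terms organise themselves so that the resulting determinant depends on the body only through the factors $\Delttt{n-1}{v}$ at adjacent Rollet vertices --- rather than through some finer local data --- is the heart of the difficulty, and is what makes Theorem \ref{thm:main} \emph{sufficient} as the sole bootstrap input. A possible alternative route, avoiding this combinatorics, is to invoke the tower-of-recollement machinery directly: since $F \circ G = \mathrm{id}$ and the globalisation functor $G$ relates $\Delta^{n}_{(p,\l)}$ and $\Delta^{n+2}_{(p,\l)}$, one may try to compute the effect of $G$ on gram matrices up to explicit scalars arising from the cup generators, and thereby lift the $m=1$ identity through the tower. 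Either route will ultimately reduce the problem to the already-proven $m=1$ case and the Chebyshev identities of \S\ref{ss:rampingroots}.
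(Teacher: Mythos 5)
The statement you are addressing is left as an open conjecture in the paper: there is no proof of Conjecture~\ref{conj:onm0} to compare against. What the paper actually establishes is Theorem~\ref{thm:V=C}, which verifies the identity \eqref{eq:V=C} only in special cases: the $m=1$ case (where it is an immediate restatement of Theorem~\ref{thm:main}, since $d^{p+1}_{(p+1,\l)}=d_\l$), the $l=-1$ case (via the Temperley--Lieb recursion \eqref{eq:det-recurs}, proved in the appendix by the generic-structure-theorem/orthonormal-walk-basis method, tracking renormalisation factors $[h]^2/([h+1][h-1])$ per diamond), and a handful of finite ranges for $l=0,1$ checked by direct computation. Your submission is likewise not a proof --- it is a strategy outline that explicitly defers its own Step~2 --- so it neither closes the conjecture nor contradicts anything in the paper.

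That said, two concrete points about the sketch. First, your Step~1 understates the combinatorial problem: for $m\geq 2$ the half-diagrams in $J_l^{||}(n,p)$ do not split cleanly into ``simple cup at $\{n-1,n\}$'' versus ``propagating line at $n$''; the site $n$ may be joined by a cup to any admissible $i<n-1$, and the remaining $m-1$ cups give the ``body'' the structure of a full $(m-1)$-cup basis rather than a single head. The tridiagonal tail structure \eqref{cheby} and the block form \eqref{eq:matlastrow} are genuinely one-cup phenomena, so the claimed generalisation of the row-expansion argument is exactly where the conjecture is hard --- which you acknowledge, but acknowledging the obstacle is not the same as a reduction. Second, your framing as ``induction on $m$'' is not quite the right shape: in \eqref{eq:V=C} the adjacent-vertex determinants $\Delttt{(n-1)}{v}$ are $(m-1)$-cup modules when $v=(p+1,\mu)$ but $m$-cup modules when $v=(p-1,\mu)$, so any induction must run simultaneously over $n$ and $p$ (or over the partial order on $\Lambda_{l,\infty}$), with the head of the Rollet graph --- where the paper itself shows the identity \emph{fails} without correction factors (see the computations of $\CC^6_{(2,(2))}$ versus $\V^6_{(2,(2))}$ in \S\ref{ss:DRG1}) --- supplying nontrivial boundary data. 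Your Step~3 bookkeeping (the dimension recurrence from \ref{pr:walks0} and the Chebyshev relation) is sound as far as it goes, and your instinct that the $l=-1$ proof is the template to generalise matches the paper's own stated programme; but as written the proposal does not advance beyond what the paper already records as open.
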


\ignore{\begin{conjecture}   \label{conj:onm01}
Fix integers $l\geq -1$ and $n \geq l+2$. For each integer partition $\l \vdash l+2$, there exists 
\ppm{[-well we have already had a stab at saying what they are. if we retract that, this is non-empty but weaker. And the $\CC$ formulation would need adjustment.]}
a family of monic, integral, polynomials $P^{(\l)}_k$ for $k\geq l+2$ satisfying the Chebyshev recursion {as in (\ref{eq:Cheby0})}  
such that, the gram determinants of the $J_{l,n}$ standard modules, $\Delta^{(n)}_{(p,\l)}$ with  $p\geq l+2$, satisfy 
\[
\V^{n}_{(p,\l)} = \CC^{n}_{(p,\l)}
\]
as in (\ref{eq:MVF1}) and (\ref{eq:defCC}), 
that is to say
\[
\frac{|\Delta^{(n)}_{(p,\l)}|}{\prod_{v \text{ adjacent}\atop \text{to } (p,\l) \text{ in } \Roll_{l}} |\Delta^{(n-1)}_{v}|}
= \left(\frac{P^{(\l)}_{p+1}}{P^{(\l)}_{p}}\right)^{d^{\ppm{n-1}}_{(p+1,\l)}},
\]
\ppm{[-so far I don't see this directly agreeing with above! - just something for me to check. I guess the point is the $n\geq l+2$ assumption, but so far this doesn't seem like the right way of putting it to me.]}\bajm{[Actually the $n\geq l+2$ assumption is no assumption at all, the $p\geq l+2$ is the point, this ensures we are on the tails of the rollet which is the only place the conjecture we originally had holds]}
where 
$d^{\ppm{n-1}}_{(p+1,\l)}=\text{dim}(\Delta^{(n-1)}_{(p+1,\l)})$.
\ppm{[-RHS should depend on same things as LHS (unless really indep.).]}

\end{conjecture}}

\subsection{On proof of proven part}   \label{ss:gdetdecrolletl-1}


\subsubsection{Decorated Rollet Graph, for $l=-1$}  \label{ss:drgl-1}

See Fig.\ref{fig:gramdets0312} for the Rollet graph $\Roll_{-1}$ augmented with its fibres of standard gram determinants.
And 
(for the impatient reader, jump ahead to)
see Fig.\ref{fig:gramdets03} for a schematic for the representation theory interpretation. 

\begin{figure} 
\input{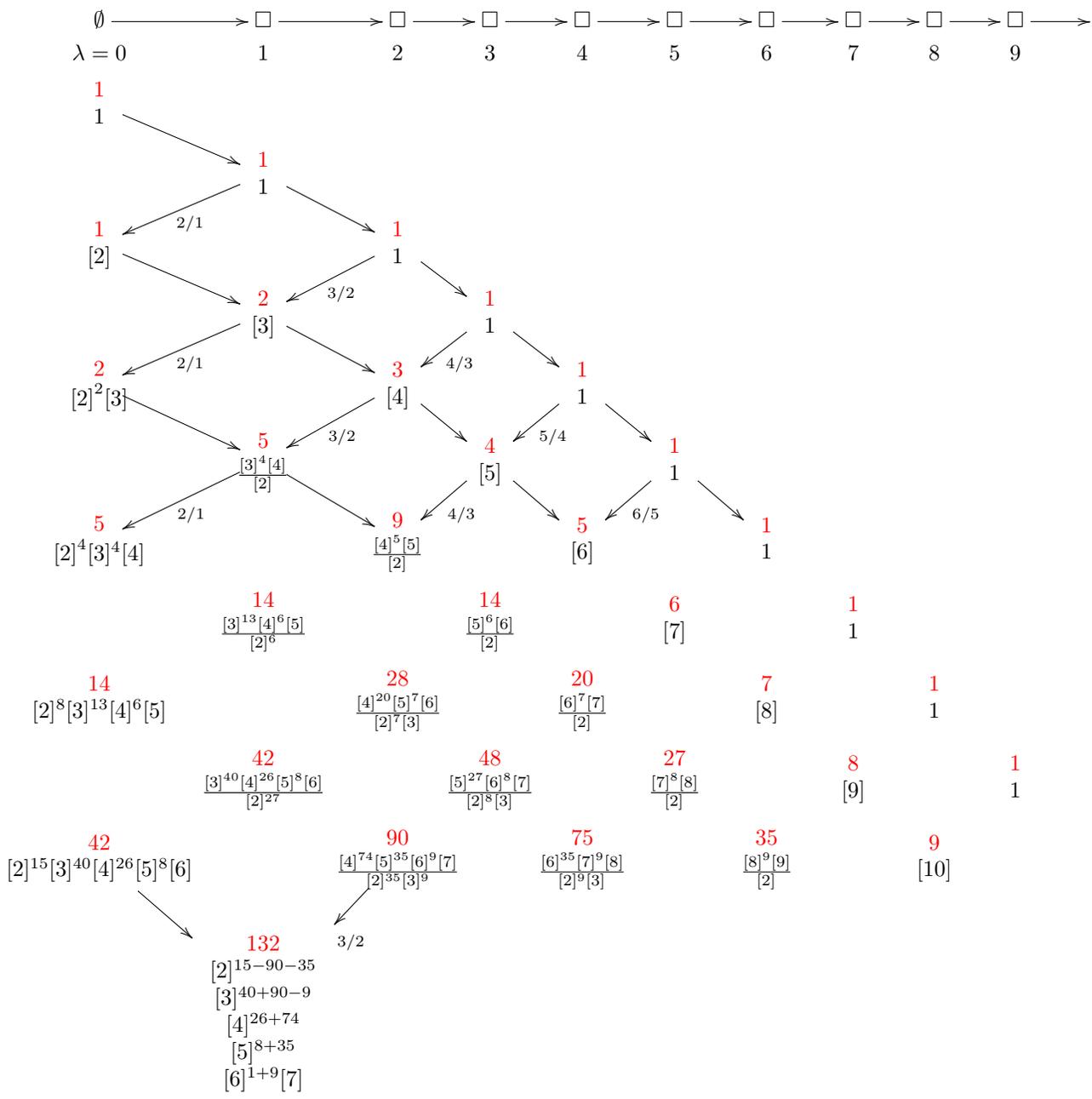}
\caption{The $l=-1$ Rollet graph (top) augmented with many standard gram determinants (parametrised as in Fig \ref{fig:gramdets03}) --- organised with increasing rank $n$ going downwards, each under their fixed standard module label. Edges amongst these fibres of nodes indicate schematically how they are computed from \cite{Martin91}. \label{fig:gramdets0312}}
\end{figure}

\ignore{{
\begin{figure} 
\input{tex/gramdets03}
\caption{The $l=-1$ Rollet graph augmented with many standard gram determinants --- 
and curved arrows indicating standard module morphisms in case $[5]=0$. 
\label{fig:gramdets03}}
\end{figure}
}}

Figure~\ref{fig:gramdets0312} is already quite dense. Let us use this case to illustrate how the information can be presented more concisely using the marginal vertex function.

\mdef \label{th:not-main--1}
For neatness set $d_i = \dim(\Delta_{(i,(1))}^{(n-1)})$. 
Then  $\RollV_{-1}$ is given by  
\[
\begin{tikzcd}[row sep=tiny,column sep=tiny]
    \a^{d_1}\ar[r,dash]&\left(\frac{(\alpha -1) (\alpha +1)}{\alpha }\right)^{d_2} \ar[r,dash] &
    \left(\frac{\alpha  \left(\alpha ^2-2\right)}{(\alpha -1) (\alpha +1)}\right)^{d_3}\ar[r,dash] &
    \left(\frac{\left(\alpha ^2-\alpha -1\right) \left(\alpha ^2+\alpha -1\right)}{\alpha  \left(\alpha^2 -2\right)}\right)^{d_4} \ar[r,dash] &
    \left(\frac{  
    \alpha  (\alpha^2 -1) 
    \left(\alpha ^2-3\right)}{\left(\alpha ^2-\alpha -1\right) \left(\alpha ^2+\alpha -1\right)}\right)^{d_5} \ar[r,dash] &
    \dots
\end{tikzcd}
\]
- the idea is that this captures each fibre complete:
each $d_i$ requires an $n$ as input, so the expression gives the fibre as $n$ varies. 
That is, to recover the fibre innards one plugs in $n$ and $p$ to work out $m$.  

A proof can be found in, for example \cite[Ch.10]{paulsonlinelecturenotes}, 
and we include the relevant extract here in \S\ref{ss:gdTL}.  
In this setting (classical) Chebyshev polynomials arise as part of the geometry of Young diagrams (hook lengths and so on - the machinery of the quantisation of Young's normal forms), or equivalently of walks on the Bratelli diagram. 

(While there are certainly other possible methods, and it is far from clear if the strategy there offers the best chance of a lifting to other $l$,
this method is certainly suggestive of a generalisation, as we will see.)

\mdef  
For example the fixed $m = (n-i)/2$ slice with $m=1$ is: 
\ignore{{
Let us decorate a rollet graph 
\ppm{[it feels like more words are needed here!? for the moment I am confused by what is here. doesn't seem quite right to me. Is this intended to be $\RollV_{-1}$, or $\Roll^{(m)}_{-1}$ for some $m$, or something else?]}
as per the $l=-1$ case
}}
\[
\begin{tikzcd}[row sep=tiny,column sep=tiny]
    \a^2\ar[r,dash]&\frac{(\alpha -1)^3 (\alpha +1)^3}{\alpha ^3}\ar[r,dash]&\frac{\alpha ^4 \left(\alpha ^2-2\right)^4}{(\alpha -1)^4 (\alpha +1)^4}\ar[r,dash]&\frac{\left(\alpha ^2-\alpha -1\right)^5 \left(\alpha ^2+\alpha -1\right)^5}{\alpha ^5 \left(\alpha ^2-2\right)^5}\ar[r,dash]&\frac{(\alpha -1)^6 \alpha ^6 (\alpha +1)^6 \left(\alpha ^2-3\right)^6}{\left(\alpha ^2-\alpha -1\right)^6 \left(\alpha ^2+\alpha -1\right)^6}\ar[r,dash]&\dots
\end{tikzcd}
\]

\subsection{Examples} 

\subsubsection{Decorated Rollet Graphs for $l=0$}   \label{ss:gdetl0}

Here we provide some examples of decorated Rollet graphs for $l=0$ to verify Theorem \ref{thm:V=C}. Firstly, recall the undecorated Rollet graph for $l=0$ is as follows:
\[
\begin{tikzcd}[row sep=tiny,column sep=tiny]&&(2)\ar[r,dash]&(2)\ar[r,dash]&(2)\ar[r,dash]&\dots\\
    \emptyset\ar[r,dash]&(1)\ar[ur,dash]\ar[dr,dash]\\
    &&(1^2)\ar[r,dash]&(1^2)\ar[r,dash]&(1^2)\ar[r,dash]&\dots
\end{tikzcd}
\]

\mdef   \label{pa:l0OEIS}
We will be using the standard module dimensions again. 
The table of standard module dimensions for $l=0$ starts as in Fig.\ref{fig:l=0Brat}. 

\begin{figure} 
\[
\xymatrix@R=-2pt@C=7pt{
1 \ar@[red][ddddrr] \\ \mbox{ } \vspace{.1in} \; \\ \\ \\ 
&& 1 \ar[dddll] \ar[ddrr] \ar[ddddr] \\ \\ 
 &&&& 1 \ar@[red][ddddddll] \ar@[red][dddddrr] \\
1 \ar@[red][dddddrr] \\
 &&& 1 \ar@[red][ddddl] \ar@[red][dddddrr] \\ \\ \\ 
 &&&&&&1 \ar[ddddrr] \ar[ddddll] \\
 &&3 \ar[ddddll] \ar[dddrr] \ar[dddddr] \\ 
 &&&&&1 \ar[ddddrr] \ar[ddddll] \\ \\ 
 &&&&4 \ar@[red][dddddll] \ar@[red][ddddrr] &&&& 1 \ar@[red][ddddrr] \ar@[red][ddddll] \\
 3 \ar@[red][ddddrr] \\
 &&& 4 \ar@[red][dddl] \ar@[red][ddddrr] &&&& 1 \ar@[red][ddddrr] \ar@[red][ddddll] \\ 
 \\ 
 &&&&&&5 \ar[ddddrr] \ar[ddddll] &&&& 1 \ar[ddddrr] \ar[ddddll] \\
&& 11 \ar[ddddll] \ar[dddrr] \ar[dddddr] \\  
&&&&& 5 \ar[ddddrr] \ar[ddddll] &&&& 1 \ar[ddddrr] \ar[ddddll] \\ 
\\ 
&&&& 16 \ar@[red][dddddll] \ar@[red][ddddrr] &&&& 6 \ar@[red][ddddrr] \ar@[red][ddddll] &&&& 1 \ar@[red][ddddrr] \ar@[red][ddddll] \\
11 \ar@[red][ddddrr] \\
&&& 16 \ar@[red][dddl] \ar@[red][ddddrr] &&&& 6 \ar@[red][ddddrr] \ar@[red][ddddll] &&&& 1 \ar@[red][ddddrr] \ar@[red][ddddll] \\ 
\\ 
 &&&&&& 22 \ar[ddddrr] \ar[ddddll] &&&& 7 \ar[ddddrr] \ar[ddddll] &&&& 1 \ar[ddddll] \ar[ddddrr] \\
&& 43 \ar[ddddll] \ar[dddrr] \ar[dddddr] \\  
&&&&&  22 \ar[ddddrr] \ar[ddddll] &&&& 7 \ar[ddddrr] \ar[ddddll] &&&& 1 \ar[ddddll] \ar[ddddrr] \\ 
\\ 
&&&& 65 &&&& 29 &&&& 8 &&&& 1 \\
43 \\
&&& 65 &&&& 29 &&&& 8 &&&& 1 \\
}
\]
\caption{ Full Bratelli diagram for $l=0$. Edges from an even $n$ to an odd $n$ layer are coloured red, just to aid visibility of layers. \label{fig:l=0Brat} }
\end{figure}
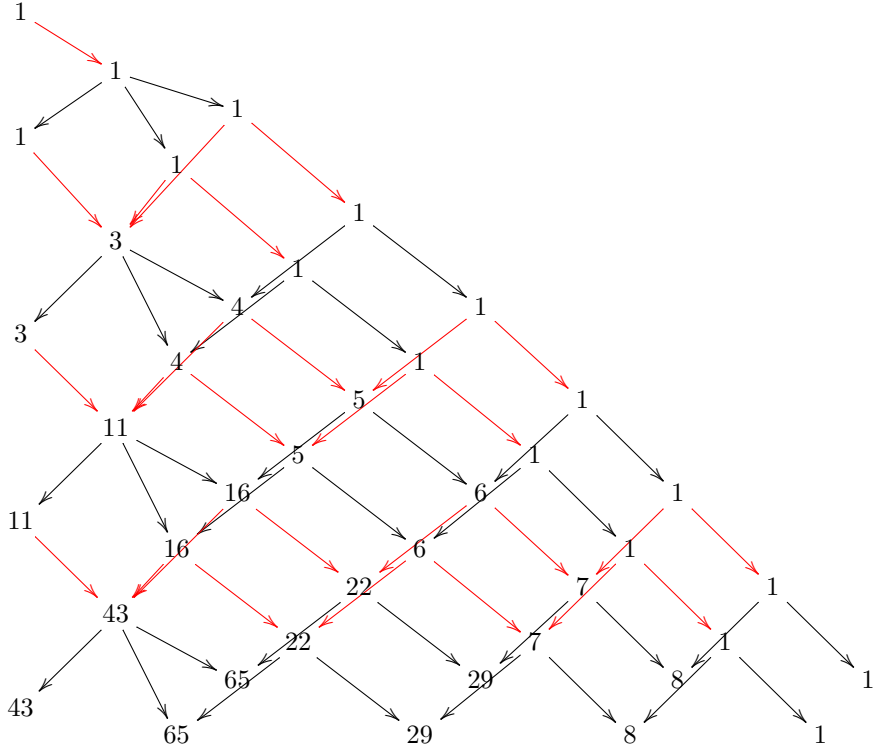

\begin{figure}
\input{tex/Bratelli-l0-gramdets}
    \caption{Full Bratelli for $l=0$ giving gram determinants (with module dimensions below in red). 
    The top row indicates the conjectured ($n$-independent up to dimensions) edge factors. }
    \label{fig:Bratelli-l0-gramdets}
\end{figure}

\mdef 
The full Bratelli diagram for $l=0$ giving gram determinants (with dimensions below in red) 
starts as in Fig.\ref{fig:Bratelli-l0-gramdets}. 
    The very top row in the figure indicates the conjectured ($n$-independent up to dimensions giving exponents) edge factors. 

\begin{proposition}\label{pr:1cupgram}
The 1-cup gram-det-decorated graph $\Roll^1_0$ (as in \ref{de:Roll-etc}) is   
\[  \hspace{-1.5cm} 
\begin{tikzcd}[row sep=tiny,column sep=tiny]
&&  \overbrace{\alpha (\alpha-1)(\alpha^2 +x-4)}^{\Delttt{4}{2,(2)} \;=\; C^{(2)}P^{(2)}_4} \ar[r,dash] &
  \overbrace{(\alpha-1)(\alpha^4 +\alpha^3 -5x^2 -x+2)}^{C^{(2)}P^{(2)}_{5}} \ar[r,dash] &
  C^{(2)}P^{(2)}_{n=6} \ar[r,dash] & \dots
  \\
    \alpha \ar[r,dash] &
    \overbrace{
    (\alpha-1)^2 (\alpha+2)   }^{\Delttt{3}{1,(1)}  }   \ar[ur,dash]\ar[dr,dash]
    \\
    &&\underbrace{(\alpha^2 -1)(\alpha^2 -4)}_{C^{(1^2)}P^{(1^2)}_4} \ar[r,dash] &
    \underbrace{(\alpha-1)(\alpha+2)(\alpha^3 -\alpha^2 -3\alpha+1)}_{C^{(1^2)}P^{(1^2)}_5} \ar[r,dash] &
    C^{(1^2)}P^{(1^2)}_{n=6} \ar[r,dash] & \dots
\end{tikzcd}
\]
\end{proposition}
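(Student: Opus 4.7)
The plan is to decompose the decorated graph $\Roll^1_0$ into three pieces and treat each by a result (or direct computation) already established: the head vertices $\emptyset$ (at rank $n=2$) and $(1)$ (at rank $n=3$), and the two arms labelled by $\lambda = (2)$ and $\lambda = (1^2)$ spanning ranks $n \geq 4$. Since $l=0$ means $\lambda \vdash l+2 = 2$ is either $(2)$ or $(1^2)$, and both Specht modules $\SS_{(2)}$ and $\SS_{(1^2)}$ are one-dimensional (trivial and sign of $\Sym_2$), we have $d_\lambda = 1$ throughout, which keeps bookkeeping simple.

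For the arm portion (all vertices $(n-2,\lambda)$ with $n \geq l+4 = 4$), I would invoke Theorem~\ref{thm:main} directly: it gives $\Delttt{n}{n-2,\lambda}(\alpha) = C^{(\lambda)}(\alpha)\, P^{(\lambda)}_n(\alpha)$. The explicit polynomials $C^{(2)}, C^{(1^2)}, P^{(2)}_n, P^{(1^2)}_n$ are exactly those collected in \ref{pa:1dgdets}, obtained by evaluating $\Delttt{4}{2,\lambda}$ and $\Delttt{5}{3,\lambda}$ by hand and propagating via the Chebyshev recursion \eqref{eq:detrec1}. Reading off the arm entries in the displayed graph against those formulae is then just a matter of checking the first two terms match and noting that both sides satisfy the same Chebyshev recursion.

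For the two head vertices, Theorem~\ref{thm:main} does not apply (they lie below the threshold $n \geq l+4$), so these are handled by direct gram-matrix computation using the framework of \S\ref{ss:gram-mat}. At $\emptyset$ ($n=2$): the module $\Deltaa^2_{(0,\emptyset)}$ has the unique half-diagram $\uu_{12} \in J_0^{||}(2,0)$ as basis, and $\uu_{12}^* \uu_{12} = \alpha \, e$ gives $\Delttt{2}{0,\emptyset} = \alpha$. At $(1)$ ($n=3$): the admissible half-diagrams are $\uu_{12}, \uu_{13}, \uu_{23}$ (noting $\uu_{13}$ has height $0 \leq l$ so survives), giving a $3 \times 3$ matrix whose off-diagonal entries are the single-line permutations produced by $\uu_{ij}^* \uu_{kl}$ and whose diagonal is $\alpha$; the determinant is then computed directly to be $(\alpha-1)^2(\alpha+2)$.

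The main work is genuinely routine; the substantive content is already in Theorem~\ref{thm:main} and the calculations of \ref{pa:1dgdets}. The only mildly delicate point is the head vertex $(1)$, which sits at the junction of both arms and has rank below where the ramping Chebyshev pattern has stabilised, so one cannot simply read it off from $P^{(\lambda)}_{n}$ with $n=3$; the proposition's value $(\alpha-1)^2(\alpha+2)$ must be verified by direct matrix computation rather than inferred from the theorem. Once that is done, the consistency of the two arm formulae as they meet the shoulder is automatic, since neither $P^{(2)}_n$ nor $P^{(1^2)}_n$ is being used below $n = l+4 = 4$.
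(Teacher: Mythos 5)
Your proposal is correct and follows essentially the same route as the paper's own proof: the $p=0,1$ head vertices and the first two vertices of each arm are handled by direct computation of the gram matrices, and the remaining arm entries follow from Theorem~\ref{thm:main} together with the Chebyshev recursion, using $d_\lambda=1$ for both $\lambda=(2)$ and $\lambda=(1^2)$. Your explicit verification of the $3\times 3$ matrix at the vertex $(1)$ and the observation that the theorem cannot be applied below $n=l+4$ are consistent with what the paper does.
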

\begin{proof}
    The $p=0,1$ vertices are by direct calculation. Also the first two arm vertices in each arm. And thereafter we may use 
\ref{eq:Cheby0} and in particular \ref{thm:main} and so on as explained above, 
noting that $\dim(\Deltaaa{n}{(n,\l)}) = 1$ here. 
\end{proof}



\begin{proposition}\label{pr:l0}
(I) The graph $\Roll^{(1)}_0 $ 
(in which the $(p,\l)$ vertex is $\V^{p+2}_{(p,\l)}$)
is given by      
\[ \hspace{-1.5cm} 
\begin{tikzcd}[row sep=0em,column sep=0.5em]
&&
\overbrace{
\tfrac{\alpha^{} \left(\alpha^2+\alpha -4\right)^{}}{(\alpha -1)^{} (\alpha +2)^{}} }^{ \frac{P^{(2)}_{4}}{P^{(2)}_{3}} }  \ar[r,dash]
  &
  \overbrace{
  \tfrac{\left(\alpha^4+\alpha^3-5 \alpha^2-\alpha +2\right)^{}}{\alpha^{} \left(\alpha ^2+\alpha -4\right)^{}}}^{ \V^{5}_{(3,(2))} = \frac{P^{(2)}_{5}}{P^{(2)}_{4}}} \ar[r,dash]
  &
  \overbrace{
  \tfrac{(\alpha -1)^{} \alpha^{} \left(\alpha ^3+2 \alpha ^2-4 \alpha -6\right)^{}}{\left(\alpha ^4+\alpha ^3-5 \alpha ^2-\alpha +2\right)^{}} }^{\frac{P^{(2)}_{6}}{P^{(2)}_{5}} \ppm{}}   \ar[r,dash]&
  \dots
\\
\alpha^{}\ar[r,dash] &
  \overbrace{
 (\alpha-1)^{} \tfrac{(\alpha-1)^{} (\alpha+2)^{}}{\alpha^{}}    }^{ \propto \frac{P^{(2)}_{3}}{P^{(2)}_{2}} \frac{P^{(1^2)}_{3}}{P^{(1^2)}_{2}}  }    \ar[ur,dash]\ar[dr,dash]
    \\
    &&
    \underbrace{
    \tfrac{(\alpha -2)^{} (\alpha +1)^{}}{(\alpha -1)^{}}  }_{ \frac{P^{(1^2)}_{4}}{P^{(1^2)}_{3}}  }  \ar[r,dash]
      &
      \underbrace{
    \tfrac{\left(\alpha ^3-\alpha ^2-3 \alpha +1\right)^{}}{(\alpha -2)^{} (\alpha +1)^{}} }_{\frac{P^{(1^2)}_{5}}{P^{(1^2)}_{4}}} \ar[r,dash]
    &
    \underbrace{
    \tfrac{(\alpha -1)^{} \left(\alpha ^3-4 \alpha -2\right)^{}}{\left(\alpha ^3-\alpha ^2-3 \alpha +1\right)^{}} }_{\frac{P^{(1^2)}_{\ppmm{6}}}{P^{(1^2)}_{5}}} \ar[r,dash]&\dots
\end{tikzcd}
\]
%
(II) The graph $\Roll^{(2)}_0 $ \ppmm{(i.e. for $m=2$,  
so $n=4$ for vertex $\emptyset$; 
$n=5$ for vertex $(1)$; 
$n=6$ for $(2,(2))$ and $(2,(1^2))$; 
$n=7$ for $(3,(2))$ and $(3,(1^2))$; 
and so on)} 
starts (up to $p=8$):  
\[  \hspace{-1.5cm} 
\begin{tikzcd}[row sep=0em,column sep=0.5em]
&&\frac{\alpha ^5 \left(\alpha ^2+\alpha -4\right)^5}{(\alpha -1)^5 (\alpha +2)^5}\ar[r,dash]
  &
  \overbrace{
\tfrac{\left(\alpha ^4+\alpha ^3-5 \alpha ^2-\alpha +2\right)^6}{\alpha ^6 \left(\alpha ^2+\alpha -4\right)^6} 
 }^{\V^7_{(3,(2))} = \left( \frac{ P^{(2)}_{5}}{P^{(2)}_{4}} \right)^{d^6_{(4,(2))}} }  \ar[r,dash]
  &
  \frac{(\alpha -1)^7 \alpha ^7 \left(\alpha ^3+2 \alpha ^2-4 \alpha -6\right)^7}{\left(\alpha ^4+\alpha ^3-5 \alpha ^2-\alpha +2\right)^7}\ar[r,dash]
  & \dots
  \\
    \alpha^3\ar[r,dash]&\frac{(\alpha -1)^8 (\alpha +2)^4}{\alpha ^4}\ar[ur,dash]\ar[dr,dash]
    \\
    &&
    \underbrace{
    \tfrac{(\alpha -2)^5 (\alpha +1)^5}{(\alpha -1)^5}   }_{ \left( \frac{ P^{(1^2)}_{4}}{P^{(1^2)}_{3}} \right)^{d^5_{(3,(1^2))}} }   \ar[r,dash]
      &
      \frac{\left(\alpha ^3-\alpha ^2-3 \alpha +1\right)^6}{(\alpha -2)^6 (\alpha +1)^6}\ar[r,dash]
      &
      \frac{(\alpha -1)^7 \left(\alpha ^3-4 \alpha -2\right)^7}{\left(\alpha ^3-\alpha ^2-3 \alpha +1\right)^7}\ar[r,dash]
      & \dots
\end{tikzcd}
\]
(III)
The graph $\Roll^{(3)}_0$ starts (up to $p=5$):
\[\begin{tikzcd}[row sep=0em,column sep=0.5em]&&\frac{\alpha ^{22} \left(\alpha ^2+\alpha -4\right)^{22}}{(\alpha -1)^{22} (\alpha +2)^{22}}\ar[r,dash]&\frac{\left(\alpha ^4+\alpha ^3-5 \alpha ^2-\alpha +2\right)^{29}}{\alpha ^{29} \left(\alpha ^2+\alpha -4\right)^{29}}\ar[r,dash]&\frac{(\alpha -1)^{37} \alpha ^{37} \left(\alpha ^3+2 \alpha ^2-4 \alpha -6\right)^{37}}{\left(\alpha ^4+\alpha ^3-5 \alpha ^2-\alpha +2\right)^{37}}\ar[r,dash]&\dots\\
    \alpha^{11}\ar[r,dash]&\frac{(\alpha -1)^{32} (\alpha +2)^{16}}{\alpha ^{16}}\ar[ur,dash]\ar[dr,dash]\\
    &&\frac{(\alpha -2)^{22} (\alpha +1)^{22}}{(\alpha -1)^{22}}\ar[r,dash]&\frac{\left(\alpha ^3-\alpha ^2-3 \alpha +1\right)^{29}}{(\alpha -2)^{29} (\alpha +1)^{29}}\ar[r,dash]&\frac{(\alpha -1)^{37} \left(\alpha ^3-4 \alpha -2\right)^{37}}{\left(\alpha ^3-\alpha ^2-3 \alpha +1\right)^{37}}\ar[r,dash]&\dots
\end{tikzcd}\]
\end{proposition}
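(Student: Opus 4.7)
The plan is a vertex-by-vertex verification that divides into two cases: arm vertices, which are covered directly by Theorem~\ref{thm:V=C}, and the handful of head and shoulder vertices, which must be treated by direct computation. I would begin with the arm vertices $(p,\l)$ with $\l\in\{(2),(1^2)\}$ and $p\geq 3$ for $m=1$, $3\leq p\leq 8$ for $m=2$, and $3\leq p\leq 5$ for $m=3$ --- precisely the ranges in which $\V^{n}_{(p,\l)}=\CC^{n}_{(p,\l)}$ is proven for $l=0$. In $\Roll_0$ every arm vertex has a unique neighbour $(p+1,\mu)$ with $\mu=\l$, so the product in \eqref{eq:defCC} collapses to the single factor $(P^{(\l)}_n/P^{(\l)}_{n-1})^{d^{n-1}_{(p+1,\l)}}$ with $n=p+2m$. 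Substituting the explicit Chebyshev polynomials $P^{(2)}_n$ and $P^{(1^2)}_n$ from \ref{pa:1dgdets} (extended to $n<4$ via Remark~\ref{rem:polys} if needed) and reading the dimensions $d^{n-1}_{(p+1,\l)}$ from Fig.~\ref{fig:l=0Brat} immediately produces the displayed entries for all arm vertices.

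The remaining vertices --- the base $(0,\emptyset)$, the junction $(1,(1))$, and the two shoulders $(2,(2))$ and $(2,(1^2))$ --- fall outside the hypothesis $p\geq l+3=3$ of Theorem~\ref{thm:V=C} in the $m=1$ case and so must be computed directly from \eqref{eq:MVF1}. The numerator $\Delttt{n}{p,\l}$ comes from Proposition~\ref{pr:1cupgram} at the shoulders (where it reads $C^{(\l)}P^{(\l)}_4$ for $m=1$) and by direct low-rank computation at the head. The denominator factors come from the same sources together with the identity $\Delttt{n}{n,\l}=1$ whenever $\dim\SS_\l=1$ (since $\Delta^{n}_{(n,\l)}\cong\SS_\l$ with $\langle c_\l,c_\l\rangle=1$), which handles $\Delttt{2}{2,(2)}$ and $\Delttt{2}{2,(1^2)}$. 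For parts~(II) and (III), the head and shoulder values involve larger-$n$ versions of these same determinants, which I would extract either from Fig.~\ref{fig:Bratelli-l0-gramdets} or by one step of the Chebyshev recursion \eqref{eq:detrec1} applied to the one-cup formula of Proposition~\ref{pr:1cupgram}.

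The only genuine subtlety is that gram determinants are defined only up to nonzero complex scalar (see \ref{de:gramdet}), so every displayed equality is understood up to such a factor; this is exactly what the $\propto$ symbol in the junction-vertex annotation of Part~(I) is flagging. The main obstacle is therefore not mathematical but bookkeeping: for each section $\Roll^{(m)}_0$ one must track that the exponent attached to $P^{(\l)}_{n}/P^{(\l)}_{n-1}$ at vertex $(p,\l)$ is precisely $d^{p+2m-1}_{(p+1,\l)}$, and that the head-vertex direct computations use the values of $n$ shifted correctly with $m$. Every ingredient --- polynomials, dimensions, low-rank determinants --- is already tabulated in the earlier sections, so the verification reduces to a finite list of routine ratio computations.
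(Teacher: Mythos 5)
Your treatment of Part (I) is essentially the paper's: the paper proves (I) by citing Proposition~\ref{pr:1cupgram}, i.e.\ exactly the ratio computation $\V^{p+2}_{(p,\l)}=\Delttt{(p+2)}{p,\l}/\Delttt{(p+1)}{p-1,\l}=P^{(\l)}_{p+2}/P^{(\l)}_{p+1}$ on the arms (using $\Delttt{(p+1)}{p+1,\l}=1$), plus the low-rank head values. That part is correct.

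For Parts (II) and (III) your argument is circular. You propose to cover the arm vertices in the ranges $3\leq p\leq 8$ ($m=2$) and $3\leq p\leq 5$ ($m=3$) by appealing to Theorem~\ref{thm:V=C}. But the proof of Theorem~\ref{thm:V=C} establishes those very cases only ``by direct calculation over the indicated ranges'' --- and the direct calculation in question is Proposition~\ref{pr:l0} itself (the text introducing \S\ref{ss:gdetl0} says these decorated graphs are provided \emph{to verify} Theorem~\ref{thm:V=C}, and the analogous $l=1$ computation in \ref{pa:l1} is explicitly described as completing the remaining cases of that theorem). Only the $m=1$ and $l=-1$ cases of Theorem~\ref{thm:V=C} have an independent proof (via Theorem~\ref{thm:main} and \eqref{eq:det-recurs} respectively), so you cannot use the theorem to derive the $m=2,3$ entries. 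The paper's own proof of (II) and (III) is simply ``by direct calculation'': one must actually compute the two-cup and three-cup gram determinants $\Delttt{(p+4)}{p,\l}$ and $\Delttt{(p+6)}{p,\l}$ over the stated ranges and form the ratios \eqref{eq:MVF1}.

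A second, related error: you suggest obtaining the larger-$n$ head and shoulder determinants ``by one step of the Chebyshev recursion \eqref{eq:detrec1} applied to the one-cup formula of Proposition~\ref{pr:1cupgram}.'' The recursion \eqref{eq:detrec1} relates $\Delttt{(n+1)}{n-1,\l}$, $\Delttt{(n)}{n-2,\l}$ and $\Delttt{(n-1)}{n-3,\l}$ --- it runs entirely within the one-cup family and cannot produce a two-cup determinant such as $\Delttt{(6)}{2,(2)}$ or $\Delttt{(4)}{0,\emptyset}$ from one-cup data. A proven recursion across cup numbers is precisely what is missing in general: that is the content of the still-open Conjecture~\ref{conj:onm0}. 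So these values, too, must come from direct computation (which is what Fig.~\ref{fig:Bratelli-l0-gramdets} records).
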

\noindent 
{\em Proof}. 
(I) follows from \ref{pr:1cupgram}. 
\ppmm{(II,III) So far the Prop. is to be understood only to apply for the cases explicitly shown. Thus proof is by direct calculation.  \qed}

\ignore{\mdef
\bajm{[delete from here?]} 
\ppm{
Observe from Fig.\ref{fig:l=0Brat}, 
and cf. (\ref{eq:l=0+}), 
that the exponents follow (essentially) the exact same rule as in the $l=-1$ case, in (\ref{th:not-main--1}). 
The fine tuning is as follows. Firstly note that vertex $(1)$ is, in a sense, a new kind of case - since trivalent - and has non-trivial `edge factors' from two edges ... 
}

From this we have a conjecture for all $n$. 

\mdef 
The next steps (which can be done in either order!) will be:
\\
$\bullet\;$ Prove it.
\\
$\bullet\;$ Interrogate for the structure of standard modules - see \S\ref{ss:repthy}. 
\\
Then:
\\
$\bullet\;$  Lift to $l=1$ etc.!

\pagebreak

\mdef \bajm{[keep or not?]}
Let us have a look at $n=7$ row of determinants:
\[ \hspace{-2cm} 
\begin{tikzcd}
&
\baa{
(\alpha -2) (\alpha -1)^{10} \\ (\alpha +1) (\alpha +2) \\\left(\alpha ^3+2 \alpha ^2-4 \alpha -6\right) \\\left(\alpha ^4+\alpha ^3-5 \alpha ^2-\alpha +2\right)^7  \hspace{-1cm} \\ \ppm{(3,(2))}\\\bajm{22}
}    \hspace{-1cm} 
&
\baa{(\alpha -1) \\
\left(\alpha ^6+\alpha ^5-7 \alpha ^4-3 \alpha ^3+11 \alpha ^2+\alpha -2\right)\\\ppm{(5,(2))}\\\bajm{7}
}
&
\baa{1\\\ppm{(7,(2))}\\\bajm{1}
}
\\
\baa{
(\alpha -2)^8 (\alpha -1)^{60} (\alpha +1)^8\\ (\alpha +2)^{30} \left(\alpha ^2+\alpha -4\right)^8 \\\left(\alpha ^3-\alpha ^2-3 \alpha +1\right) \\\left(\alpha ^4+\alpha ^3-5 \alpha ^2-\alpha +2\right)\\\ppm{(1,(1))}\\\bajm{43}
}  \hspace{-1cm} 
\\
&
\baa{(\alpha -1)^{10} (\alpha +2)^8 \\ \left(\alpha ^2+\alpha -4\right)\\ \left(\alpha ^3-4 \alpha -2\right) \\ \left(\alpha ^3-\alpha ^2-3 \alpha +1\right)^7 \\ \ppm{(3,(1^2))}\\\bajm{22}
}  \hspace{-1cm} 
&
\baa{(\alpha -1) (\alpha +1) (\alpha +2)\\
\left(\alpha ^4-2 \alpha ^3-3 \alpha ^2+6 \alpha -1\right)\\\ppm{(5,(1^2))}\\\bajm{7}
}
&
\baa{1\\\ppm{(7,(1^2))}\\\bajm{1}
}
\end{tikzcd}\]
How to account for the factor $\left(\alpha ^2+\alpha -4\right)^8$ in $\Delttt{7}{1,(1)}$? 
The answer is now in Fig.\ref{fig:standardmapshenri}. 

\medskip }

\subsubsection{Decorated Rollet Graph, for $l=1$} \label{ss:DRG1}
Here we provide some examples of decorated Rollet graphs for $l=1$ to verify Theorem \ref{thm:V=C}. Firstly, recall the undecorated Rollet graph for $l=1$ is as follows:
\[
\begin{tikzcd}[column sep=0.6em,row sep=0em]
&&&(3)\ar[dash,r]&(3)\ar[dash,r]&(3)\ar[dash,r]&{}\\
&&(2)\arrow[dash,ur]\arrow[dash,dr]\\
\emptyset\arrow[dash,r] & (1)\arrow[dash,ur]\arrow[dash,dr] &&(2,1)\ar[dash,r]&(2,1)\ar[dash,r]&(2,1)\ar[dash,r]&{}\\
&&(1^2)\arrow[dash,ur]\arrow[dash,dr]\\
&&&(1^3)\ar[dash,r]&(1^3)\ar[dash,r]&(1^3)\ar[dash,r]&{}
\end{tikzcd}
\]
\ignore{Each node, then, labels an infinite family of $J_{1,n}$-modules for $n=p,p+2,p+4,\dots$. 
In the case of $n=p$, the corresponding module can simply be regarded as the 
\ppmm{image of the}
Specht module $\mathcal{S}^{\lambda}$, with any non-fully propagating diagram from $J_{l,n}$ acting as 0. 
For $n=p+2k$, we sometimes refer to the corresponding module as a $k$-cup module since a set of basis elements for the $J_{l,n}$-module can be described by pairs consisting of basis element for $\mathcal{S}^{\lambda}$, together with a half-diagram of height $\leq l$ with $k$-cups (\textit{cf.} \cite[Theorem something]{KadarYu})
\ppm{[-fix me!]}. 
We include the case $k=0$ in this terminology.

Now let us decorate this Rollet graph with the gram determinants of the $k$-cup modules (for $k>0$, since the $k=0$ case is not interesting \ppm{[LOL! you mean its a simple rep?]} over $\C$). 

\mdef 
\ppm{[explain a bit more!? this chunk seems to have been pulled from nowhere? linear combinations just correspond to tweaking the initial conditions, right?... Can this, then, be dragged to (or explained in terms of) the 2nd Cheby section, \S\ref{s:roots}?]}
To give some of the determinants here 
we use the following notation:
\begin{align*}
    F^{(3)}_{n}(\a)&=U_{n-2}(\a/2)+4U_{n-3}(\a/2)-U_{n-4}(\a/2)-2U_{n-5}(\a/2)-2U_{n-6}(\a/2)\\
    F^{(2,1)}_{n}(\a)&=U_{n-1}(\a/2)-4U_{n-3}(\a/2)-4U_{n-5}(\a/2)-2U_{n-7}(\a/2)\\
    F^{(1^3)}_{n}(\a)&=U_{n-3}(\a/2)-2U_{n-4}(\a/2)-2U_{n-5}(\a/2)
\end{align*}
\ppm{where $U_n(x)$ is as defined in \ref{where??} above.}

These polynomials (seem to  \ppm{[-need to deal with this somehow. ideally facts kept well away from speculations - in different sections.]}) satisfy the following:
\begin{itemize}
    \item $F^{(3)}_{n}(\a)$ is monic of degree $n-2$. It has $n-3$ roots of the form $z_k^{(n)}=2\cos (\th_k)$ for $\frac{(k-1)\pi }{n-3}<\th_k<\frac{k\pi }{n-3}$, for $k=1,\dots, n-3$, and one root of the form $z_0^{(n)}=-2 \cosh(\th_0)$ for $\th_0\in \R_{>0}$. Furthermore, $\lim_{n\to \infty} z_0^{(n)}$ seems to exist
    $\lim_{n\to \infty} z_0^{(n)}\approx -4.39356$.
    \item $F^{(1^3)}_{n}(\a)$ is monic of degree $n-3$. It has $n-4$ roots of the form $y_k^{(n)}=2\cos (\phi_k)$ for $\frac{k\pi }{n-3}<\phi_k<\frac{(k+1)\pi }{n-3}$, for $k=1,\dots, n-4$, and one root of the form $y_0^{(n)}=2 \cosh(\phi_0)$ for $\phi_0\in \R_{>0}$. Furthermore, $\lim_{n\to \infty} y_0^{(n)}$ seems to exist and $\lim_{n\to \infty} y_0^{(n)}\approx 3.09808$.
    \item $F^{(2,1)}_{n}(\a)$ is monic of degree $n-1$. It has $n-3$ roots of the form $x_k^{(n)}=2\cos (\psi_k)$ for $\frac{(2k-1)\pi }{2(n-2)}<\psi_k<\frac{(2k+1)\pi }{2(n-2)}$, for $k=1,\dots, n-3$, and two roots of the form $x_{0,\pm}^{(n)}=\pm 2 \cosh(\psi_0)$ for $\psi_0\in \R_{>0}$. Furthermore, $\lim_{n\to \infty} x_{0,+}^{(n)}$ seems to exist and $\lim_{n\to \infty} x_{0,+}^{(n)}\approx 2.66529$.
\end{itemize}
With this in mind, we proceed. 

\mdef 
The Rollet graph \ppm{[which one? gram? yes]} is given on the next page. 
We fill this graph with as much as we know for now - the $1$-cup gram det and the 2-cup one above this:
\ppm{[darn it, I hoped we'd dealt with the landscape problem. not sure yet what's achievable here. LOL, I guess this one is just even wider than before.]}}

\begin{landscape}
\begin{figure}
    \label{fig:placeholder}\[
\begin{tikzcd}[column sep=0.6em,row sep=0em]
&&&\begin{smallmatrix} \a (\a-2)^{8} (\a+2)^4(\a+4)\\
\times (\a^3+4\a^2-4\a-10){(\a^4-7\a^2+3)^2}\\
\times \left[(\a-2)^2\a^2(\a+1)(\a^2+3 \a-6)\right]^9
\\[0.5em] \hline  
    (\a-2)^2\a^2(\a+1)(\a^2+3 \a-6)
\end{smallmatrix}\ar[dash,r]&\dots\ar[dash,r]&
\begin{smallmatrix}(\a+4)(\a+2)^4{(\a^4-7\a^2+3)^2}\\\times (\a-2)^8 \left[P^{(3)}_{p+3}(\a)\right]\\  \times\left[\a^2(\a-2)^2 \left[P^{(3)}_{p+2}(\a)\right]\right]^{p+6} \\[0.5em] \hline 
    \a^2(\a-2)^2 \left[P^{(3)}_{p+2}(\a)\right]
\end{smallmatrix}\ar[dash,r]&{}\\
&&\begin{smallmatrix}
    (\a-2)^{8} (\a+1)(\a+4) (\a+2)^{4}\\
    \times(\a^2+3\a-6){(\a^4-7\a^2+3)^2}\\
    \times \left[\a^3 (\a-2)^2 (\a+4)\right]^8\\[0.5em] \hline 
    \a^3(\a-2)^2(\a+4)
\end{smallmatrix}\arrow[dash,ur]\arrow[dash,dr]\\
\begin{smallmatrix}\a^3 \,(\a-1)^2  (\a+2)\\[0.5em] \hline 
\a\end{smallmatrix} \arrow[dash,r] & \begin{smallmatrix}(\a+4)(\a-2)^5(\a+2)^3\\\times \left[ (\a-1)^{2} (\a+2) \right]^7  \\[0.5em] \hline 
    (\a-1)^2(\a+2)
\end{smallmatrix}\arrow[dash,ur]\arrow[dash,dr] &&\begin{smallmatrix}(\alpha -3) (\alpha -2)^{42} (\alpha -1)^2  \alpha ^{12}\\ 
\times(\alpha +1)^4 (\alpha +2)^{17}(\alpha +4)^{12} \left(\alpha ^2-7\right)^2\\
\times\left(\alpha ^2+3 \alpha -6\right) \left(\alpha ^4-7 \alpha ^2+3\right)^{20}\\[0.5em] \hline
    (\alpha -2)^3 \alpha  (\alpha +2) (\alpha +4) {\left(\alpha ^4-7 \alpha ^2+3\right)^2}
\end{smallmatrix}\ar[dash,r]&\dots\ar[dash,r]&\begin{smallmatrix}
\frac{1}{\a^6}
    \left[\Delttt{(p+2)}{p,(2,1)}(\a)\right]^{p+6}
    \Delttt{(p+3)}{p+1,(2,1)}(\a)\Delttt{(4)}{2,(1^2)}(\a)\\
    \times \Delttt{(4)}{2,(2)}(\a)\Delttt{(5)}{3,(1^3))}(\a)\Delttt{(5)}{3,(3)}(\a)\Delttt{(5)}{3,(2,1)}(\a)\\[0.5em] \hline
    (\a-2)^3\a (\a+2)(\a+4) \left[P^{(2,1)}_{p+2}(\a)\right]^2
\end{smallmatrix}\ar[dash,r]&{}\\
&&\begin{smallmatrix}(\a-2)^{7}(\a+2)^{4}(\a+4)^2(\a+1)\\
\times\a (\a-3){(\a^4-7\a^2+3)^2}\\
\times \left[(\a-2)^3(\a+2)^3\right]^8\\[0.5em] \hline
    (\a-2)^3(\a+2)^3
\end{smallmatrix}\arrow[dash,ur]\arrow[dash,dr]\\
&&&\begin{smallmatrix}\a (\a-2)^{7} (\a+2)^{4}  (\a+4)^2 \\
\times(\a^3-2\a^2-4\a+2){(\a^4-7\a^2+3)^2}\\
\times \left[(\a-2)^2(\a-3 )(\a+1)(\a+2)^3\right]^9 \\[0.5em] \hline
    (\a-2)^2(\a-3 )(\a+1)(\a+2)^3
\end{smallmatrix}\ar[dash,r]&\dots\ar[dash,r]&
    \begin{smallmatrix} \a (\a+4)^2  {(\a^4-7\a^2+3)^2}\\
    \times (\a-2)^7(\a+2)^4 \left[P^{(1^3)}_{p+3}(\a)\right] \\
    \times \left[(\a-2)^2(\a+2 )^3 \left[P^{(1^3)}_{p+2}(\a)\right]\right]^{p+6}\\[0.5em] \hline
        (\a-2)^2(\a+2 )^3\left[ P^{(1^3)}_{p+2}(\a)\right]
    \end{smallmatrix}\ar[dash,r]&{}
\end{tikzcd}\]    
    \caption{Gram decorated rollet graph for $l=1$. We place two non-trivial determinants on each vertex (i.e. the cases $n=p+2,p+4$ on vertex $(p,\l)$). This general form of two cup determinants in the $\l$-arms for the row and column partitions is verified up to $p=7$.}
\end{figure}
\end{landscape}

\ignore{\bajm{[delete from here ***]}
Of course we know from the structure theorem for the Brauer algebra \cite{Martin09} that all the roots in the head part of the graph are integral  in these low ranks 
(indeed we know the gram determinants in the same way) until $n$ is large enough to have standard modules in the \arm s of the graph. 
Thus in the 1-cup Rollet section all the roots are integral until we reach the \hip: 
\ppm{[Caption this figure?:]}
\[
\begin{tikzcd}[column sep=0.6em,row sep=0em]
&&&\begin{smallmatrix} 
    (2)^2(0)^2(-1)\bajm{\dots}
\end{smallmatrix}\ar[dash,r]&\dots\ar[dash,r]&\begin{smallmatrix}
    (0)^2(2)^2 \left[F^{(3)}_{p+2}(\a)\right]
\end{smallmatrix}\ar[dash,r]&{}\\
&&\begin{smallmatrix}
    (0)^3(2)^2(-4)
\end{smallmatrix}\arrow[dash,ur]\arrow[dash,dr]\\
\begin{smallmatrix}(0)^1\end{smallmatrix} \arrow[dash,r] & \begin{smallmatrix}    (1)^2(-2)
\end{smallmatrix}\arrow[dash,ur]\arrow[dash,dr] &&\begin{smallmatrix}
    (2)^3 (0)  (-2) (-4) \bajm{\dots}
\end{smallmatrix}\ar[dash,r]&\dots\ar[dash,r]&\begin{smallmatrix}
    (-2)^3 (0) (-2)(-4) \left[F^{(2,1)}_{p+2}(\a)\right]^2
\end{smallmatrix}\ar[dash,r]&{}\\
&&\begin{smallmatrix}    (2)^3(-2)^3
\end{smallmatrix}\arrow[dash,ur]\arrow[dash,dr]\\
&&&\begin{smallmatrix}    (2)^2(3 )(-1)(-2)^3
\end{smallmatrix}\ar[dash,r]&\dots\ar[dash,r]&
    \begin{smallmatrix}         (2)^2(-2 )^3\left[ F^{(1^3)}_{p+2}(\a)\right]
    \end{smallmatrix}\ar[dash,r]&{}
\end{tikzcd}\] 

Let $\left|\Deltaaa{n}{(p,\l)}\right|$ denote the gram-det of the $J_{1,n}$-module labelled by the pair $(p,\l)$. We seem to have \ppm{[-time to adjust this phrasing.]} observed the following factorisation rules for $2$-cup determinants in terms of $1$-cup determinants:
\begin{align*}
 \left|\Delta_{(p,(1^3))}^{(p+4)}\right|&=\frac{1}{\a^3}
    \left|\Delta_{(p,(1^3))}^{(p+2)}\right|^{p+6}
    \left|\Delta_{(p+1,(1^3))}^{(p+3)}\right|
    \left|\Delta_{(2,(2))}^{(4)}\right|
    \left|\Delta_{(3,(2,1))}^{(5)}\right|\\
 \left|\Delta_{(p,(3))}^{(p+4)}\right|&=\frac{1}{\a^3}
    \left|\Delta_{(p,(3))}^{(p+2)}\right|^{p+6}
    \left|\Delta_{(p+1,(3))}^{(p+3)}\right|
    \left|\Delta_{(2,(1^2))}^{(4)}\right|
    \left|\Delta_{(3,(2,1))}^{(5)}\right|\\
 \left|\Delta_{(p,(2,1))}^{(p+4)}\right|&=\frac{1}{\a^6}
    \left|\Delta_{(p,(2,1))}^{(p+2)}\right|^{p+6}
    \left|\Delta_{(p+1,(2,1))}^{(p+3)}\right|
    \left|\Delta_{(2,(1^2))}^{(4)}\right|\left|\Delta_{(2,(2))}^{(4)}\right|\\
    &\quad \times\left|\Delta_{(3,(1^3))}^{(5)}\right|\left|\Delta_{(3,(3))}^{(5)}\right|   
    \left|\Delta_{(3,(2,1))}^{(5)}\right|\\
\end{align*}

\bajm{[to here ***]}}

\mdef \label{pa:l1}
We now include 
(by direct computation)
the \ppmm{section $\Roll^{(2)}_{1}$ through the }
decorated Rollet graph $\RollV_1$ as per the $l=0$ case in \ref{pr:l0} II. This computation completes the cases remaining in Theorem \ref{thm:V=C} (the $p=7$ case is omitted since it did not fit in the margins).
\[
\hspace{-4.51cm} 
\begin{tikzcd}[column sep=0.6em,row sep=0em]
&&&\frac{(\alpha +1)^8 \left(\alpha ^2+3 \alpha -6\right)^8}{\alpha ^8 (\alpha +4)^8}\ar[dash,r]
   &
   \overbrace{
   \tfrac{\alpha ^9 \left(\alpha ^3+4 \alpha ^2-4 \alpha -10\right)^9}{(\alpha +1)^9 \left(\alpha ^2+3 \alpha -6\right)^9}   }^{ \left( \frac{ P^{(3)}_{\ppmm{6}}}{P^{(3)}_{5}} \right)^9}   \ar[dash,r]
   &
   \overbrace{
   \tfrac{\left(\alpha ^5+4 \alpha ^4-5 \alpha ^3-14 \alpha ^2+3 \alpha +6\right)^{10}}{\alpha ^{10} \left(\alpha ^3+4 \alpha ^2-4 \alpha -10\right)^{10}}   }^{  \left( \frac{ P^{(3)}_{\ppmm{7}}}{P^{(3)}_{6}} \right)^{10}  }   \ar[dash,r]
   &\dots
   \\
&&\frac{(\alpha -2)^{14} \alpha ^{14}}{(\alpha -1)^{14}} \frac{\a^7(\alpha +4)^7}{(\alpha +2)^7}\arrow[dash,ur]\arrow[dash,dr]
\\
\alpha ^3\arrow[dash,r] & (\alpha -1)^{6}\frac{(\alpha -1)^{6} (\alpha +2)^6}{\alpha ^6}\arrow[dash,ur]\arrow[dash,dr] 
  &&
  \underbrace{
  \tfrac{\left(\alpha ^4-7 \alpha ^2+3\right)^{16}}{(\alpha -2)^{16} \alpha ^{16} (\alpha +2)^{16}} }_{  \left( \frac{ P^{(21)}_{\ppmm{5}}}{P^{(21)}_{4}} \right)^{16}     } 
    \ar[dash,r]
  &
  \frac{(\alpha -1)^{18} \alpha ^{18} (\alpha +1)^{18} \left(\alpha ^2-7\right)^{18}}{\left(\alpha ^4-7 \alpha ^2+3\right)^{18}}\ar[dash,r]&\frac{\left(\alpha ^6-9 \alpha ^4+14 \alpha ^2-3\right)^{20}}{(\alpha -1)^{20} \alpha ^{20} (\alpha +1)^{20} \left(\alpha ^2-7\right)^{20}}\ar[dash,r]&\dots
\\
&&(\alpha -2)^{7}\frac{(\alpha -2)^{14} (\alpha +2)^{14}}{(\alpha -1)^{14}}\arrow[dash,ur]\arrow[dash,dr]
\\
&&&
  \underbrace{
  \tfrac{(\alpha -3)^8 (\alpha +1)^8}{(\alpha -2)^8} }_{ \left( \frac{ P^{(1^3)}_{\ppmm{5}}}{P^{(1^3)}_{4}} \right)^{8}    }     \ar[dash,r]
  &
  \frac{\left(\alpha ^3-2 \alpha ^2-4 \alpha +2\right)^9}{(\alpha -3)^9 (\alpha +1)^9}\ar[dash,r]&\frac{\left(\alpha ^4-2 \alpha ^3-5 \alpha ^2+4 \alpha +3\right)^{10}}{\left(\alpha ^3-2 \alpha ^2-4 \alpha +2\right)^{10}}\ar[dash,r]&\dots
\end{tikzcd}
\]    
Where the $P_{n}^{(\l)}$ are as computed in \S\ref{ss:examples0} (in particular, \textit{cf.} \ref{pa:1dgdets} and \ref{pa:21dets}).

Observe that
\[
\CC^{6}_{(2,(2))} = 
\left( \frac{ P^{(21)}_{\ppmm{4}}}{P^{(21)}_{3}} \right)^{14} 
\left( \frac{ P^{(3)}_{\ppmm{4}}}{P^{(3)}_{3}} \right)^{7} 
=  \left( \frac{\xx(\xx^2 -4)}{3(\xx^2 -1)}  \right)^{14} 
\left( \frac{\xx (\xx+4)}{3(\xx+2)}  \right)^{7} 
\;\propto \; (\xx+2)^{14} \; \V^6_{(2,(2))}
\]
\[
\CC^{6}_{(2,(1^2))} =
\left( \frac{ P^{(21)}_{\ppmm{4}}}{P^{(21)}_{3}} \right)^{14} 
\left( \frac{ P^{(1^3)}_{\ppmm{4}}}{P^{(1^3)}_{3}} \right)^{7} 
=  \left( \frac{\xx(\xx^2 -4)}{3(\xx^2 -1)}  \right)^{14} 
\left( \frac{ (\xx-2)}{3}  \right)^{7} 
\;\propto  \; \xx^{14} \; \V^6_{(2,(1^2))}
\]
so the \ournotmain\ conjecture narrowly fails here to extend into the head.

\subsubsection{Decorated Rollet Graph, for $l=2$}   \label{ss:gdetl2}

We now continue the same process for $l=2$.
The (undecorated) Rollet graph in integer-partition (as opposed to Young diagram) notation is:
\[
\begin{tikzcd}[column sep=0.6em,row sep=0em]
&&&&(4)\ar[dash,r]&(4)\ar[dash,r]&(4)\ar[dash,r]&\dots\\
&&&(3)\ar[dash,ur]\ar[dash,dr]\\
&&(2)\arrow[dash,ur]\arrow[dash,dr]&& (3,1)\ar[dash,r]&(3,1)\ar[dash,r]&(3,1)\ar[dash,r]&\dots\\
\emptyset\arrow[dash,r] & (1)\arrow[dash,ur]\arrow[dash,dr] &&(2,1)\ar[dash,ur]\ar[dash,dr]\ar[dash,r]&(2,2)\ar[dash,r]&(2,2)\ar[dash,r]&(2,2)\ar[dash,r]&\dots\\
&&(1^2)\arrow[dash,ur]\arrow[dash,dr]&&(2,1^2)\ar[dash,r]&(2,1^2)\ar[dash,r]&(2,1^2)\ar[dash,r]&\dots\\
&&&(1^3)\ar[dash,dr]\ar[dash,ur]\\
&&&&(1^4)\ar[dash,r]&(1^4)\ar[dash,r]&(1^4)\ar[dash,r]&\dots
\end{tikzcd}
\]
In Figure \ref{fig:l2} we give the section $\Roll_2^{(2)}$ in all cases so far determined by direct computation. 
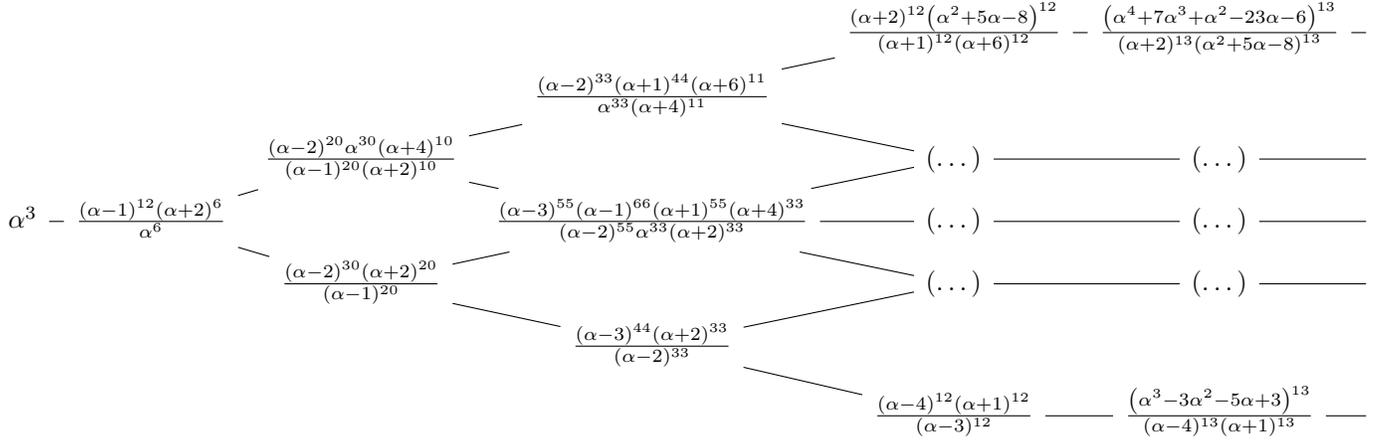
\begin{figure}
\[
\hspace{-3.51cm} \begin{tikzcd}[column sep=0.6em,row sep=0em]
&&&&\frac{(\alpha +2)^{12} \left(\alpha ^2+5 \alpha -8\right)^{12}}{(\alpha +1)^{12} (\alpha +6)^{12}}\ar[dash,r]&\frac{\left(\alpha ^4+7 \alpha ^3+\alpha ^2-23 \alpha -6\right)^{13}}{(\alpha +2)^{13} \left(\alpha ^2+5 \alpha -8\right)^{13}}\ar[dash,r]&{}\\
&&&\frac{(\alpha -2)^{33} (\alpha +1)^{44} (\alpha +6)^{11}}{\alpha ^{33} (\alpha +4)^{11}}\ar[dash,ur]\ar[dash,dr]\\
&&\frac{(\alpha -2)^{20} \alpha ^{30} (\alpha +4)^{10}}{(\alpha -1)^{20} (\alpha +2)^{10}}\arrow[dash,ur]\arrow[dash,dr]&& (\dots)\ar[dash,r]&(\dots)\ar[dash,r]&{}\\
\alpha ^3 \arrow[dash,r] & \frac{(\alpha -1)^{12} (\alpha +2)^6}{\alpha ^6} \arrow[dash,ur]\arrow[dash,dr] &&\frac{ (\alpha -3)^{55} (\alpha -1)^{66} (\alpha +1)^{55} (\alpha +4)^{33}}{ (\alpha -2)^{55} \alpha ^{33} (\alpha +2)^{33}}\ar[dash,ur]\ar[dash,dr]\ar[dash,r]&(\dots)\ar[dash,r]&(\dots)\ar[dash,r]&{}\\
&&\frac{(\alpha -2)^{30} (\alpha +2)^{20}}{(\alpha -1)^{20}}\arrow[dash,ur]\arrow[dash,dr]&&(\dots)\ar[dash,r]&(\dots)\ar[dash,r]&{}\\
&&&\frac{(\alpha -3)^{44} (\alpha +2)^{33}}{(\alpha -2)^{33}}\ar[dash,dr]\ar[dash,ur]\\
&&&&\frac{(\alpha -4)^{12} (\alpha +1)^{12}}{(\alpha -3)^{12}}\ar[dash,r]&\frac{\left(\alpha ^3-3 \alpha ^2-5 \alpha +3\right)^{13}}{(\alpha -4)^{13} (\alpha +1)^{13}}\ar[dash,r]&{}
\end{tikzcd}
\]
\caption{The section, $\Roll_2^{(2)}$, of the decorated Rollet graph, $\RollV_2$, in all cases so far computed. Any entry marked $(\dots)$ is so far not determined by direct computation.
\label{fig:l2}}
\end{figure}
 
Observe that
\begin{align*}
\CC^{7}_{(3,(3))} = 
\left( \frac{ P^{(31)}_{\ppmm{5}}}{P^{(31)}_{4}} \right)^{33} 
\left( \frac{ P^{(4)}_{\ppmm{5}}}{P^{(4)}_{4}} \right)^{11} 
&=  \left( \frac{(\alpha -2) (\alpha -1) (\alpha +1) (\alpha +4)}{4 \alpha  \left(\alpha ^2+\alpha -4\right)}  \right)^{33} 
\left( \frac{(\alpha +1) (\alpha +6)}{4 (\alpha +4)}\right)^{11} 
\\
&\propto \; \frac{\left(\alpha ^2+\alpha -4\right)^{33}}{(\alpha -1)^{33} (\alpha +4)^{33}} \; \V^7_{(3,(3))}
\end{align*}
\begin{align*}
\CC^{7}_{(3,(2,1))} = 
\left( \frac{ P^{(31)}_{\ppmm{5}}}{P^{(31)}_{3}} \right)^{33} 
\left( \frac{ P^{(4)}_{\ppmm{5}}}{P^{(4)}_{4}} \right)^{11} 
&= \left( \frac{(\alpha -2) (\alpha -1) (\alpha +1) (\alpha +4)}{4 \alpha  \left(\alpha ^2+\alpha -4\right)}  \right)^{33}\\
&\quad \times \left(\frac{(\alpha -3) (\alpha -1) (\alpha +2)}{4 (\alpha -2) (\alpha +1)}\right)^{33}\left(\frac{(\alpha -3) (\alpha +1)}{4 (\alpha -2)}\right)^{22}\\ 
\\
&\propto \; \frac{ (\alpha +1)^{33} \left(\alpha ^2+\alpha -4\right)^{33}}{(\alpha -2)^{33} (\alpha +2)^{66}} \; \V^7_{(3,(2,1))}
\end{align*}
\begin{align*}
\CC^{7}_{(3,(1^3))} = 
\left( \frac{ P^{(31)^T}_{\ppmm{5}}}{P^{(31)^T}_{4}} \right)^{33} 
\left( \frac{ P^{(4)^T}_{\ppmm{5}}}{P^{(4)^T}_{4}} \right)^{11} 
&=  \left( \frac{(\alpha -3) (\alpha -1) (\alpha +2)}{4 (\alpha -2) (\alpha +1)}  \right)^{33} 
\left( \frac{\alpha -3}{4}\right)^{11} 
\\
&\propto \; \frac{(\a+1)^{33}}{(\a-1)^{33}} \; \V^7_{(3,(1^3))}
\end{align*}
which again suggests the \ournotmain\ conjecture can be modified suitable to extend into the head.

\newpage

\newcommand{\rb}[1]{\raisebox{.685cm}{$\; #1 \;$}}

\section{Representation theory}  \label{ss:repthy}

By construction, if the gram matrix of a standard module is singular - i.e. of submaximal rank - then the module map to the corresponding co-standard is of submaximal rank and the standard module has a submodule, and hence there is a morphism in from some other standard module.
By quasiheredity (or globalisation) \cite{KadarYu} the possible maps in are restricted.
In particular in the one-cup cases the possibilities are very restricted, and we 
can identify the module mapping in. 
More generally this identification requires some detective work. 
There is the general {\em tower of recollement} scheme, or we can investigate examples piecemeal. Both approaches are illuminating, and we will work through some cases here. 

\subsection{Generalities}

For each $l$, we can in principle apply the tower of recollement method. 
For $l\geq 0$ 
the tower of recollement method requires an inital `bootstrap' calculation, 
here of morphisms into `1-cup' modules - necessarily from no-cup modules, by globalisation. 


\subsubsection{Bootstrap for $l=0$}   \label{ss:bootcase0}

Set $l=0$. 
For definiteness consider $\Delta^{n}_{(n-2,(2))}$ for now, so there is a $(2)$ projector attached to the first two propagating legs of each basis diagram.

\mdef   \label{exa:l0p6la2-}
Example. 
Explicit  basis for  
$\Deltaa^{l=0,n=6}_{(4,(2))}$ is:  
\medskip 

\includegraphics[width=10.9cm]{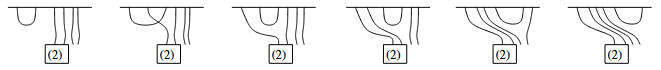}

\mdef  
(We will omit the projector from diagrams for now. And drop the example to $n=5$ for brevity.) 
A 1-d submodule must be spanned by an element of form:
\[ 
\rb{\;a\;} \begin{tikzpicture}[scale=0.68]
     \draw (0,0) -- (2.43,0) ;
     \draw (1.5,0) -- (1.5,-1);
     \draw (1.85,0) -- (1.85,-1);
     \draw (2.2385,0) -- (2.2385,-1);
     \draw (.5,0) .. controls (.5,-.5) and (1.0,-.5) .. (1,0) ;
 \end{tikzpicture}
 \rb{+\;b\;} \begin{tikzpicture}[scale=0.68]
     \draw (0,0) -- (2.43,0) ;
     \draw (1,0) -- (1,-1);
     \draw (1.85,0) -- (1.85,-1);
     \draw (2.2385,0) -- (2.2385,-1);
     \draw (.5,0) .. controls (.5,-.5) and (1.5,-.5) .. (1.5,0) ;
 \end{tikzpicture}
\rb{+\;c\;} \begin{tikzpicture}[scale=0.68]
     \draw (0,0) -- (2.43,0) ;
     \draw (.5,0) -- (.5,-1);
     \draw (1.85,0) -- (1.85,-1);
     \draw (2.2385,0) -- (2.2385,-1);
     \draw (1.,0) .. controls (1.,-.5) and (1.39,-.5) .. (1.39,0) ;
      \end{tikzpicture}
 \rb{+\;d_1\;} \begin{tikzpicture}[scale=0.68]
     \draw (0,0) -- (2.43,0) ;
     \draw (.5,0) -- (.5,-1);
     \draw (.85,0) -- (.85,-1);
     \draw (2.2385,0) -- (2.2385,-1);
     \draw (1.5,0) .. controls (1.5,-.5) and (1.89,-.5) .. (1.89,0) ;
 \end{tikzpicture}
  \rb{+\; d_2 \;...\;}
\]
(this being a general element of the space with basis as in (\ref{exa:l0p6la2-})).
We are looking for 1d submodules where all algebra defining-basis elements with any cup act as 0. 
Thus for example 
(action of elements with cup and cap in same position, say, taken in the natural order 
- noting that the position of the cup plays no role in determining the coefficients):
\[
\begin{bmatrix}
   x&1& 1 \\ 1&x&1 &1 \\ 1&1&x&1 \\ &1 &1&x&1 \\ &&& 1&x&1 \\ &&&& 1&x&1 \\  &&&&& 1&x&1 \\ &&&&&& 1&x&1 \\ &&&&&&&1&x
\end{bmatrix}
\begin{bmatrix}
    a \\ b \\ c \\ d_1 \\ d_2 \\ \vdots \\ \vdots \\ \vdots
\end{bmatrix}
=0
\]
(other entries 0; we write $x$ for the $\xx$ parameter for now). 
Observe that with this choice the matrix coincides with the gram matrix \cite{ShBr}. 
Evidently we need submaximal rank here, so this coincides, so far, 
with the Chebychev problem we already have (see \S\ref{ss:cheby0}). 
But now let us also require that the first elementary transposition 
$\sigma_1$ acts like +1 (we  consider -1 later, in \ref{case-1}). 

\mdef  
Aside: Observe that all these matrices have manifestly submaximal rank when $x=1$. 
And indeed rank 6 case has another (fairly manifest) linear dependence when $x=1$ (consider row-2 + row-6 vs row-1 + row-5). Is there a pattern? 
(An affine alcove-like effect would manifest as some kind of re-occurrence of special values, such as $x=1$ when $n$ cong. 0 mod. 3.) 
Yes.
... Rank 9 has row-1 + row-5 + row-8 = row-2 + row-6 + row-9. 
And the pattern will now be clear.

\mdef 
Requiring $\sigma_1$ to act like +1 on our subspace 
(requiring a $\Deltaaa{n}{(n,(2))}$ submodule) 
sets $b=c$. 
If $a=0$ then $b+c=0$ so $b=c=0$ and there is no nonzero solution, so here we can take $a=1$. Thus $x+2b=0$ and so $b=c=-x/2$. Thus 
$d_1=-(1-(1+x)x/2) = \frac{x^2 +x-2}{2} =\frac{(x-1)(x+2)}{2}$; then 
$d_2=-(xd_1 +2b) = -x(d_1 -1) =\frac{-x(x^2 +x-4)}{2}$; then 
$d_3=-(xd_2+d_1) = \frac{x^2(x^2+x-4) - (x^2+x-2)}{2} = \frac{x^4 +x^3-5x^2-x+2}{2}$ 
(confer \ref{ss:cheby0})
and so on until $d_{j} = -(xd_{j-1} +d_{j-2})$; 
and finally $xd_{j}=-d_{j-1}$, which is 
$d_{j+1} = -(xd_{j} +d_{j-1})$ provided that $d_{j+1} =0$. 
This last condition gives the $x$ values for which there is a map at each $n$.

\mdef  \label{case-1}
In case $\sigma_1$ acts like -1 
 then also $a=0$ and $b=-c\neq 0$ and $d_i =0$. Thus $xb-b = b-xb=0$ and $x=1$. 
Confer Fig.\ref{fig:Bratelli-l0-gramdets}. 

\mdef For completeness we can consider maps to $\Delta^{n}_{(n-2,(1^2))}$. 
The matrix part is 
\beq \label{eq:l0ev12}
\begin{bmatrix}
   x&1& 1 \\ 1&x&1 &-1 \\ 1&1&x&1 \\ &-1 &1&x&1 \\ &&& 1&x&1 \\ &&&& 1&x&1 \\  &&&&& 1&x&1 \\ &&&&&& 1&x&1 \\ &&&&&&&1&x
\end{bmatrix}
\begin{bmatrix}
    a \\ b \\ c \\ d_1 \\ d_2 \\ \vdots \\ \vdots \\ \vdots
\end{bmatrix}
=0
\eq 
For maps from $(n,(2))$ we require $\sigma_1$ to act as +1. 
Here $b=c$ and $d_i = 0$. 
If $a=0$ then $b+c=0$ from (\ref{eq:l0ev12}) and hence $b=c=0$, so we can assume 
$a=1$ so $x+2b=0$ and $b=-x/2$. 
The second row of  (\ref{eq:l0ev12}) gives 
$1-(x+1)x/2 =d_1 = 0 = -\frac{x^2 +x-2}{2} =-\frac{(x-1)(x+2)}{2}$. 
The rest are automatic. 

Requiring $\sigma_1$ to act as -1 we get $a=0$ and $b=-c$. 
We can put $b=1$.
The second row gives $x-1 = d_1$. The third is automatic. 
The fourth gives $-1-1+x(x-1) =-d_2 = x^2 -x -2 = (x-2)(x+1)$. 
The $d_i$s then satisfy the Chebyshev recurrence until the last step, and we get a solution provided that
$x$ is such that the last $d_i$ is 0.

\subsubsection{Bootstrap for \ignore{$l=1$ and}general $l$}   \label{ss:keybootl1}
Fix $l$, $\l\vdash l+2$ and $n\geq l+4$. In this subsection we prove that when $\a_0$ is a root of the polynomial $P_{n}^{(\l)}$, the $J_{l,n}(\a_0)$ standard module $\Delta^{n}_{(n-2,\l)}$ has a submodule isomorphic to $\Delta^{n-2}_{(n-2,\l)}$. We proceed as follows.

\ignore{Here the simplest indicative example is for $\Deltaaa{5}{3,(21)}$ given by \ref{exa:keyn5la21}. 
Recall a basis is:
\[
\hspace{-1cm} 
\onecuppppp{6}{1}{2}{3}{4}{5}  \; 
\onecuppppp{6}{1}{3}{2}{4}{5}  \;
\onecuppppp{6}{1}{4}{2}{3}{5}  \;
\onecuppppp{6}{2}{4}{1}{3}{5}  \;
\onecuppppp{6}{2}{3}{1}{4}{5}{}{}  \;
\onecuppppp{6}{3}{4}{1}{2}{5}{}{}  \;
\onecuppppp{6}{4}{5}{1}{2}{3}{}{}  
\]
%
\[
\hspace{1cm} 
\onecupppppx{6}{1}{2}{3}{4}{5}  \;
\onecupppppx{6}{1}{3}{2}{4}{5}    \;
\onecupppppx{6}{1}{4}{2}{3}{5}  \;
\onecupppppx{6}{2}{4}{1}{3}{5}  \;
\onecupppppx{6}{2}{3}{1}{4}{5}{}{}   \;  
\onecupppppx{6}{3}{4}{1}{2}{5}{}{}  \;
\onecupppppx{6}{4}{5}{1}{2}{3}{}{}  
\]
and so the action of one-cup (one-cap) elements on basis elements is given by, for example: 
\medskip 


\hspace{-2.8231cm} 
\flip{\onecupppppo{6}{1}{2}{3}{4}{5}} \hspace{.0031cm}
\flip{\onecupppppo{6}{1}{2}{3}{4}{5}} \hspace{.0041cm}
\flip{\onecupppppo{6}{1}{2}{3}{4}{5}} \hspace{-.004cm}
\flip{\onecupppppo{6}{1}{2}{3}{4}{5}} \hspace{.001cm} 
\flip{\onecupppppo{6}{1}{2}{3}{4}{5}} \hspace{.001cm}
\flip{\onecupppppo{6}{1}{2}{3}{4}{5}} \hspace{.001cm}
\flip{\onecupppppo{6}{1}{2}{3}{4}{5}}

\hspace{-2.791cm} 
\onecuppppp{6}{1}{2}{3}{4}{5}  \; 
\onecuppppp{6}{1}{3}{2}{4}{5}  \;
\onecuppppp{6}{1}{4}{2}{3}{5}  \;
\onecuppppp{6}{2}{4}{1}{3}{5}  \;
\onecuppppp{6}{2}{3}{1}{4}{5}{}{}  \;
\onecuppppp{6}{3}{4}{1}{2}{5}{}{}  \;
\onecuppppp{6}{4}{5}{1}{2}{3}{}{}  

\vspace{.321cm}

\hspace{-2.8231cm} 
\flip{\onecupppppo{6}{1}{2}{3}{4}{5}} \hspace{.0031cm}
\flip{\onecupppppo{6}{1}{2}{3}{4}{5}} \hspace{.0041cm}
\flip{\onecupppppo{6}{1}{2}{3}{4}{5}} \hspace{-.004cm}
\flip{\onecupppppo{6}{1}{2}{3}{4}{5}} \hspace{.001cm} 
\flip{\onecupppppo{6}{1}{2}{3}{4}{5}} \hspace{.001cm}
\flip{\onecupppppo{6}{1}{2}{3}{4}{5}} \hspace{.001cm}
\flip{\onecupppppo{6}{1}{2}{3}{4}{5}}

\hspace{-2.81cm} 
\onecupppppx{6}{1}{2}{3}{4}{5}  \;
\onecupppppx{6}{1}{3}{2}{4}{5}    \;
\onecupppppx{6}{1}{4}{2}{3}{5}  \;
\onecupppppx{6}{2}{4}{1}{3}{5}  \;
\onecupppppx{6}{2}{3}{1}{4}{5}{}{}   \;  
\onecupppppx{6}{3}{4}{1}{2}{5}{}{}  \;
\onecupppppx{6}{4}{5}{1}{2}{3}{}{}  


\medskip 

\noindent 
Observe here that one-cup elements can be written in the form $\uu_{kl} \uu_{ij}^*$; 
and the $\uu_{kl}$ plays essentially no role in the linear algebra of their action, so 
we omit it and our example above has $(i,j)=(1,2)$. 
\\ 
Observe also that 
the remainder of the expression is then encoded as an element of the Specht module 
- provided that we take account of the quotient that is in play. 
Specifically here then, writing $\{ b_1, b_2 ,..., b_{d_\l} \}$ as usual for our Specht module basis (ported into the standard module as per the construction):  
the first term is $\xx b_1$; 
then next is simply $b_1$; 
the next two are both $\sigma_1 b_1 = b_1$ in our chosen basis; 
the next is $b_1$; and 
the next two are zero. 
On the next row: the first is $\xx b_2$; 
the next is $b_2$; 
the next two are $\sigma_1 b_2$ (which can be rewritten as a linear combination); 
the next is $b_2$; and the final two are zero. 
That is, for a general element of the module, with coefficients $\kappa_{hij}$ say for basis elements $\uu_{ij} b_h$, we have  
\[
\uu_{kl} \uu_{12}^* \sum_{hij} \kappa_{hij} \uu_{ij} b_h \; 
        = \; \uu_{kl} ((\xx\kappa_{112} +\kappa_{113}+\kappa_{114} \sigma_1+\kappa_{124}\sigma_1+\kappa_{123}+0+ 0) b_1  + (\xx\kappa_{212} +\kappa_{213}+...+ ...) b_2 ) 
\]
Observe that we can write this as 
\[ ...\]
...

\vspace{.53cm} 


\hspace{-2.8231cm} 
\flip{\onecupppppo{6}{4}{5}{1}{2}{3}} \hspace{.0031cm}
\flip{\onecupppppo{6}{4}{5}{1}{2}{3}} \hspace{.0041cm}
\flip{\onecupppppo{6}{4}{5}{1}{2}{3}} \hspace{-.004cm}
\flip{\onecupppppo{6}{4}{5}{1}{2}{3}} \hspace{.001cm} 
\flip{\onecupppppo{6}{4}{5}{1}{2}{3}} \hspace{.001cm}
\flip{\onecupppppo{6}{4}{5}{1}{2}{3}} \hspace{.001cm}
\flip{\onecupppppo{6}{4}{5}{1}{2}{3}}

\hspace{-2.791cm} 
\onecuppppp{6}{1}{2}{3}{4}{5}  \; 
\onecuppppp{6}{1}{3}{2}{4}{5}  \;
\onecuppppp{6}{1}{4}{2}{3}{5}  \;
\onecuppppp{6}{2}{4}{1}{3}{5}  \;
\onecuppppp{6}{2}{3}{1}{4}{5}{}{}  \;
\onecuppppp{6}{3}{4}{1}{2}{5}{}{}  \;
\onecuppppp{6}{4}{5}{1}{2}{3}{}{}  

\vspace{.321cm}

\hspace{-2.8231cm} 
\flip{\onecupppppo{6}{1}{2}{3}{4}{5}} \hspace{.0031cm}
\flip{\onecupppppo{6}{1}{2}{3}{4}{5}} \hspace{.0041cm}
\flip{\onecupppppo{6}{1}{2}{3}{4}{5}} \hspace{-.004cm}
\flip{\onecupppppo{6}{1}{2}{3}{4}{5}} \hspace{.001cm} 
\flip{\onecupppppo{6}{1}{2}{3}{4}{5}} \hspace{.001cm}
\flip{\onecupppppo{6}{1}{2}{3}{4}{5}} \hspace{.001cm}
\flip{\onecupppppo{6}{1}{2}{3}{4}{5}}

\hspace{-2.81cm} 
\onecupppppx{6}{1}{2}{3}{4}{5}  \;
\onecupppppx{6}{1}{3}{2}{4}{5}    \;
\onecupppppx{6}{1}{4}{2}{3}{5}  \;
\onecupppppx{6}{2}{4}{1}{3}{5}  \;
\onecupppppx{6}{2}{3}{1}{4}{5}{}{}   \;  
\onecupppppx{6}{3}{4}{1}{2}{5}{}{}  \;
\onecupppppx{6}{4}{5}{1}{2}{3}{}{}  

\medskip

(compare also with \ref{lem:xi}). 
Thus the first row of... a condition is:
\[
(\xx, 1,1,1,1,0,0, \;\; 0,0,-,-,0,0,0) \; (a_{121},a_{131},a_{141},a_{241},a_{231},...)^t  \;= 0
\]


\bigskip 


\bajm{[I think I have solved the general bootstrap!!!!!!!!! See the following.... Loading ....! ]}\bajm{[I am not claiming the arguments below are complete its just a scaffold for me but I'm confident it works]} \ppm{[ :-) ] [certainly worth polishing!]}\bajm{[I realise general is not quite true, but I have shown at least that the roots of $P_n^{(\l)}(\a)$ are the conditions for a $\Delta^{(n)}_{(n,\l)}$ submodule.]}}

\begin{lemma}\label{le:xi2}
Here we take the ground ring to be $\C[\xx]$. 
    Fix $l$ and let $\l\vdash l+2$. For any $n\geq l+4$, there exists a unique (up to an overall scalar $\in \C[\a]$) element $\xi^{(n)}_\l\in \Delta^{(n)}_{(n-2,\l)}$ with coefficients in $\C[\a]$ 
    such that the following hold:
    \begin{enumerate}
        \item $\uu^*_{i,j} \xi^{(n)}_{\l}=0$ for all $i<j<n$ with $\uu_{i,j}\in \BBB_{(l,n)}$.
        \item $\uu^*_{n-1,n} \xi^{(n)}_{\l}=D_n(\a) c_\l $ for some $D_n(\a)\in \C[\a]$.  
    \end{enumerate}
\end{lemma}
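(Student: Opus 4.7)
The plan is to run the argument from Lemma~\ref{lem:xi} at general rank $n\geq l+4$. Fix an orthonormal basis $\BB=\{b_1,\dots,b_{d_\l}\}$ for $\SS_\l$ with $b_1=c_\l$, and expand $\xi^{(n)}_\l$ in the diagram basis $\BBB^n_{n-2,\l}$ of \eqref{eq:basis}. For any cup $\uu_{ij}\in\BBB_{(l,n)}$, the image $\uu_{ij}^*\xi^{(n)}_\l$ lies in $\Delta^{(n-2)}_{(n-2,\l)}$, and by contravariance of $\langle\_,\_\rangle$ together with orthogonality of $\BB$ the coefficient of $b_k$ in this image equals $\langle\uu_{ij}b_k,\xi^{(n)}_\l\rangle$.

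Next I would make the geometric observation that, for $n\geq l+4$, the description \eqref{cupbasis} shows $\uu_{n-1,n}$ is the unique cup in $\BBB_{(l,n)}$ involving site $n$, since the taller cups in $\BBB_k$ only exist for $k\leq l+3$. Hence conditions (1) and (2) are jointly equivalent to the single linear system
\[
\Gamma(\Delta^{(n)}_{(n-2,\l)})\cdot[\xi^{(n)}_\l]=D_n(\a)\,e_{n-1,n,1}, \tag{$\ast$}
\]
where $e_{n-1,n,1}$ is the elementary column vector labelled by $\uu_{n-1,n}b_1$. By \eqref{eq:adep}, $\Gamma(\Delta^{(n)}_{(n-2,\l)})=\a I+\Gamma'$ with $\Gamma'$ real symmetric, so $\Gamma$ is generically invertible over the field of rational functions $\C(\a)$; thus $(\ast)$ has a unique $\C(\a)$-valued solution for each $D_n(\a)$. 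Clearing a common denominator of the coefficients yields $\xi^{(n)}_\l$ with entries in $\C[\a]$ and a corresponding polynomial $D_n(\a)\in\C[\a]$, unique up to an overall $\C[\a]$-scalar inherited from the uniqueness in $\C(\a)$.

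The argument is algorithmic once this setup is in place; the main point to scrutinise is purely combinatorial, namely that in the range $n\geq l+4$ only one cup in $\BBB_{(l,n)}$ touches site $n$, which is what ensures that conditions (1) and (2) exactly exhaust the rows of the Gram matrix indexed by cups touching site $n$ (as opposed to several, which would over-constrain the system). With that observed, specialising to $n=l+4$ recovers Lemma~\ref{lem:xi}, so the present lemma is a direct extension and no essentially new input is required beyond the rank-$n$ analogue of the generic invertibility statement \eqref{eq:adep}, which in fact holds verbatim.
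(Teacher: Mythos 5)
Your proof is correct and takes essentially the same route as the paper: the paper's own proof of this lemma is a one-line deferral to the ``full rank linear problem'' of Lemma~\ref{lem:xi}, which is exactly the system $\Gamma(\Delta^{(n)}_{(n-2,\l)})\cdot[\xi^{(n)}_\l]=D_n(\a)\,e_{n-1,n,1}$ you write down, solved over $\C(\a)$ via generic invertibility from \eqref{eq:adep} and then cleared of denominators. Your explicit check that for $n\geq l+4$ the cup $\uu_{n-1,n}$ is the \emph{only} element of $\BBB_{(l,n)}$ touching site $n$ (so that conditions (1) and (2) together exhaust the rows of the Gram matrix and the solution space is generically one-dimensional) is the combinatorial point the paper leaves implicit, and it is correctly identified and verified.
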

\begin{proof}
    Existence and uniqueness follow by solving a (full rank) linear problem which is analogous to the proof of Lemma \ref{lem:xi}. 
\end{proof}
\begin{lemma}\label{lem:cxi} Fix $l$ and let $\l\vdash l+2$. For any $n\geq l+4$, let $\xi^{(n)}_\l\in \Delta^{(n)}_{(n-2,\l)}$ be as per Lemma \ref{le:xi2} (with a fixed normalisation, \textit{i.e.} choice of $D_n(\a)$). Then, with $\a$ generic (i.e. working over $\C[\a]$) we have $c_\l  \xi^{(n)}_\l=\xi^{(n)}_\l$, with $c_\l\in J_{l,n}$ as per \ref{de:cl}.   
\end{lemma}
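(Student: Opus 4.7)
The plan is to apply the uniqueness statement in Lemma~\ref{le:xi2} to the element $c_\l \xi^{(n)}_\l \in \Delta^{(n)}_{(n-2,\l)}$. Specifically, I would verify that $c_\l \xi^{(n)}_\l$ satisfies both conditions (1) and (2) of Lemma~\ref{le:xi2} with the \emph{same} scalar $D_n(\a)$ as $\xi^{(n)}_\l$; the uniqueness clause would then force $c_\l \xi^{(n)}_\l = \xi^{(n)}_\l$, as required. Note first that $c_\l \xi^{(n)}_\l$ does have coefficients in $\C[\a]$, since $\xi^{(n)}_\l$ does and $c_\l$ has purely rational coefficients.

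For condition (2) I would argue by direct diagrammatic inspection. Since $n\geq l+4$, both endpoints $n-1,n$ of the cap $\uu_{n-1,n}^*$ lie in the identity region of $c_\l=\nC_\l\otimes id_{n-l-2}$, so stacking diagrams yields the identity
\[
\uu_{n-1,n}^* \, c_\l \;=\; (\nC_\l \otimes id_{n-l-4}) \, \uu_{n-1,n}^*
\qquad \text{in } J_l(n-2,n).
\]
Applying this to $\xi^{(n)}_\l$, using $\uu_{n-1,n}^* \xi^{(n)}_\l = D_n(\a) c_\l$ (where here $c_\l=\nC_\l \otimes id_{n-l-4}$ is the generator of $\Delta^{(n-2)}_{(n-2,\l)}$) together with $\nC_\l^2=\nC_\l$, gives $\uu_{n-1,n}^* c_\l \xi^{(n)}_\l = D_n(\a) c_\l$, matching the normalisation of $\xi^{(n)}_\l$. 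For condition (1), I would fix $\uu_{i,j}\in \BBB_{(l,n)}$ with $i<j<n$, expand $c_\l$ as a $\C$-linear combination of diagrams $\sigma \otimes id_{n-l-2}$ with $\sigma\in\Sym_{l+2}$, and then pull the cap through each term, just as in the proof of Lemma~\ref{lem:niceelt}. This would produce a decomposition
\[
\uu_{i,j}^* \, c_\l \;=\; \sum_\sigma a_\sigma \, y_\sigma \, \uu_{i_\sigma,j_\sigma}^{*}
\]
with $y_\sigma \in J_{l,n-2}$, where each new cap position $(i_\sigma,j_\sigma)$ is obtained from $(i,j)$ by the natural action of $\sigma$ on the first $l+2$ positions. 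Condition (1) of $\xi^{(n)}_\l$ applied termwise would then make each summand vanish, giving $\uu_{i,j}^* c_\l \xi^{(n)}_\l = 0$.

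The hard part will be verifying the pull-through identity above carefully, and checking that every resulting summand is indeed covered by hypothesis (1) of Lemma~\ref{le:xi2} for $\xi^{(n)}_\l$. This amounts to two bookkeeping claims: first, that $i_\sigma,j_\sigma<n$, which is immediate because $\sigma$ fixes positions $l+3,\dots,n$; and second, that each $\uu_{i_\sigma,j_\sigma}$ lies in $\BBB_{(l,n)}$, which is a height check to carry out case-by-case according to whether the original endpoints of $\uu_{i,j}$ lie inside, outside, or straddle the region $\{1,\dots,l+2\}$. Both compatibilities follow from the fact that $\nC_\l$ is supported on the first $l+2$ strands. Once these are in place, Lemma~\ref{le:xi2} applied to $c_\l\xi^{(n)}_\l$ yields $c_\l\xi^{(n)}_\l = \xi^{(n)}_\l$, as desired.
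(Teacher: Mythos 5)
Your proposal is correct and follows essentially the same route as the paper: the paper's (one-line) proof likewise checks that $c_\l\,\xi^{(n)}_\l$ satisfies conditions (1) and (2) of Lemma~\ref{le:xi2} with the same factor $D_n(\a)$ --- invoking idempotency of $c_\l$ for condition (2) and the cap-pulling argument from the proof of Lemma~\ref{lem:niceelt} for condition (1) --- and then concludes by uniqueness. You have simply spelled out the bookkeeping (commuting $\uu_{n-1,n}^*$ past the identity part of $c_\l$, and the height/position checks for the relocated caps) that the paper leaves implicit.
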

\begin{proof} 
It will be clear from the proof of \ref{lem:niceelt}, and idempotency of $c_\l$, that $c_\l  \xi^{(n)}_\l$ also satisfies conditions 1. and 2. from the previous lemma (with the same factor $D_n(\a)$) thus the desired equality follows by uniqueness of $\xi_\l$.
\end{proof}

\mdef\label{pa:spechbas} For the remainder of this section, fix a basis $\BB=\{b_1,\dots, b_{d_\l}\}$ for $\SS_\l$, which is orthonormal wrt $\langle \_,\_ \rangle$ and for which $b_1=c_\l$, and $b_k=x_k c_\l$ for some $x_k\in \C \Sigma_{l+2}$. It follows from Lemma \ref{lem:cxi} that the coefficient of $u_{n-1,n}b_k$ in $\xi^{(n)}_{\l}$ is only non-zero when $k=1$, since 
\[ c_\l u_{n-1,n}b_k  =u_{n-1,n}c_\l x_k c_\l =\langle b_1, b_k \rangle u_{n-1,n}  c_\l,\]
and since $c_\l u_{ij}$ for $j<n$ can be written as a linear combination $\sum_{i'<j'<n} u_{i'j'} \sigma_{i'j'}$ for $\sigma_{i'j'}\in \C\Sigma_{l+2}$, by ``pulling" cups up through each permutation in $c_\l$.

\mdef \label{de:xinorm} Thus far, the element $\xi^{(n)}_\l\in \Delta^{(n)}_{(n-2,\l)}$ is only defined up to a polynomial factor. Let us fix a choice of ``normalisation" for $\xi^{(n)}_\l\in \Delta^{(n)}_{(n-2,\l)}$. Firstly, we will assume that there is no common polynomial factor of all coefficients in $\xi^{(n)}_\l$. Thus for all specialisations $\a\in \C$, we have that $\xi^{(n)}_\l\neq 0$. This determines $\xi^{(n)}_\l$ (and hence $D_n(\a)$) up to a complex scalar. By multiplying $\xi^{(n)}_\l$ by a complex scalar, we may assume that $D_n(\a)$ is monic. In what follows we will always take $\xi^{(n)}_\l$ to be normalised as such.

\begin{lemma} Fix $l$ and let $\l\vdash l+2$. For any $n > l+4$, the (normalised) $\xi^{(n)}_\l$ can be obtained inductively by
\begin{equation} \label{eq:ind2} \xi^{(n)}_\l = D_{n-1}(\a) \uu_{n-1,n}c_{\l}-\xi_{\l}^{(n-1)}\otimes id_1. \end{equation}
\end{lemma}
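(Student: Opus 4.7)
The plan is to verify that the right-hand side, call it $\eta := D_{n-1}(\a)\,\uu_{n-1,n}c_{\l} - \xi_{\l}^{(n-1)}\otimes id_1$, satisfies both defining conditions of $\xi_{\l}^{(n)}$ from Lemma~\ref{le:xi2}, and then invoke uniqueness together with the normalisation convention of~\ref{de:xinorm}. The key observation enabling the induction is that the assumption $n>l+4$ ensures the set $\BBB_{(l,n)}$ contains very few cups with top endpoint near $n$: only the adjacent cup $\uu_{n-2,n-1}$ has $j=n-1$, and only $\uu_{n-1,n}$ has $j=n$.

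For condition~1, fix $\uu_{ij}\in\BBB_{(l,n)}$ with $i<j<n$. When $j<n-1$, a direct diagrammatic computation shows $\uu_{ij}^*\uu_{n-1,n}$ decomposes into a cup at top positions $\{n-3,n-2\}$, a cap at bottom positions $\{i,j\}$, and $n-4$ propagating lines, so it lies in $J_l^{n-4}(n-2,n-2)$ and vanishes modulo the ideal. Since also $\uu_{ij}=\tilde\uu_{ij}\otimes id_1$ with $\tilde\uu_{ij}\in\BBB_{(l,n-1)}$, condition~1 applied inductively to $\xi_{\l}^{(n-1)}$ gives $\uu_{ij}^*(\xi_{\l}^{(n-1)}\otimes id_1)=(\tilde\uu_{ij}^*\xi_{\l}^{(n-1)})\otimes id_1=0$. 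For the adjacent case $\uu_{n-2,n-1}$, tracing the cap--cup chain (sharing vertex $n-1$) yields $\uu_{n-2,n-1}^*\uu_{n-1,n}=id_{n-2}$, so the first term of $\eta$ contributes $D_{n-1}(\a)c_\l$; this is exactly cancelled by $(\uu_{n-2,n-1}^*\xi_{\l}^{(n-1)})\otimes id_1 = D_{n-1}(\a)c_\l$ from condition~2 applied inductively.

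For condition~2, I compute $\uu_{n-1,n}^*\eta$. The first term gives $\a D_{n-1}(\a)c_\l$ since $\uu_{n-1,n}^*\uu_{n-1,n}=\a\cdot id_{n-2}$. For the second, expand $\xi_{\l}^{(n-1)}=\sum c_{ijk}\,\uu_{ij}b_k$ in the diagram basis and analyse $\uu_{n-1,n}^*(\uu_{ij}\otimes id_1)$ term by term. When $j<n-1$, the composition again has only $n-4$ propagating lines (middle vertex $n-1$ connects via top-piece cap to middle $n$ and then via the $id_1$-strand to bottom $n-2$, while the bottom piece also routes middle $n-1$ to bottom $n-3$, producing a cap at bottom positions $\{n-3,n-2\}$), so all such terms vanish. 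When $j=n-1$---which by the assumption $n>l+4$ forces $\uu_{ij}=\uu_{n-2,n-1}$---the composition equals the identity, and by~\ref{pa:spechbas} only the $k=1$ coefficient is nonzero, so $\uu_{n-1,n}^*(\xi_{\l}^{(n-1)}\otimes id_1) = \left[\xi_{\l}^{(n-1)}\right]_{n-2,n-1,1}\,c_\l$. Combining,
\[
\uu_{n-1,n}^*\eta \;=\; \bigl(\a D_{n-1}(\a) - \left[\xi_{\l}^{(n-1)}\right]_{n-2,n-1,1}\bigr)\,c_\l \;=:\; D_n(\a)\,c_\l,
\]
with $D_n$ monic since $D_{n-1}$ is monic by induction (the degree of the coefficient $[\xi_\l^{(n-1)}]_{n-2,n-1,1}$ being strictly less than $\deg(\a D_{n-1})$).

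By the uniqueness clause of Lemma~\ref{le:xi2}, $\eta = P(\a)\xi_{\l}^{(n)}$ for some $P(\a)\in\C[\a]$. The coefficient of the basis element $\uu_{n-1,n}c_\l$ in $\eta$ equals $D_{n-1}(\a)$ exactly, since diagrams appearing in $\xi_{\l}^{(n-1)}\otimes id_1$ all have cup at positions $\{i,j\}$ with $j\leq n-1$, never at $\{n-1,n\}$. If $\eta$ had a common polynomial factor $f$, then $f\mid D_{n-1}$ and $f$ would divide every coefficient of $\xi_{\l}^{(n-1)}$; inductive primitivity of $\xi_{\l}^{(n-1)}$ then forces $f\in\C^\times$, and monicity of $D_n$ pins down $P=1$, yielding the claimed equality. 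The main obstacle is the careful diagrammatic bookkeeping verifying the vanishing of $\uu_{n-1,n}^*(\uu_{ij}\otimes id_1)$ for $j<n-1$, in particular tracing how the $id_1$-strand interacts with the cap at middle $\{n-1,n\}$ to produce a bottom cap at $\{n-3,n-2\}$ and thereby reduce propagation from $n-2$ to $n-4$; a subsidiary challenge is confirming the degree inequality that makes $D_n$ monic.
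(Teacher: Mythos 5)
Your strategy is the same as the paper's: show the right-hand side satisfies the two conditions of Lemma~\ref{le:xi2}, compute the new scalar $D_n(\a)=\a D_{n-1}(\a)-[\xi^{(n-1)}_\l]_{n-2,n-1,1}$, and then use primitivity plus monicity to force the scalar arising from uniqueness to be $1$. The diagrammatic bookkeeping matches the paper's: vanishing of $\uu_{ij}^*$ on both terms for $j<n-1$, the cancellation at $\uu_{n-2,n-1}$ via $\uu_{n-2,n-1}^*\uu_{n-1,n}=id_{n-2}$ against condition 2 at level $n-1$, and the reduction of $\uu_{n-1,n}^*(\xi^{(n-1)}_\l\otimes id_1)$ to the single coefficient $[\xi^{(n-1)}_\l]_{n-2,n-1,1}$ using that $n>l+4$ forces $\uu_{n-2,n-1}$ to be the only cup with second index $n-1$ and that only the $k=1$ component survives.

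The one genuine gap --- which you flag yourself as a ``subsidiary challenge'' --- is the degree inequality $\deg\bigl([\xi^{(n-1)}_\l]_{n-2,n-1,1}\bigr)<\deg(\a D_{n-1})$ underlying monicity of $D_n$, and hence the conclusion that the uniqueness scalar is exactly $1$ rather than some other nonzero constant. For $n>l+5$ your own recursion applied at level $n-1$ gives $[\xi^{(n-1)}_\l]_{n-2,n-1,1}=D_{n-2}(\a)$, so the inequality follows once $\deg D_{n-1}=\deg D_{n-2}+1$ is known; but this reduces everything to the base case $n=l+5$, where $[\xi^{(l+4)}_\l]_{l+3,l+4,1}$ is an output of the linear system of Lemma~\ref{lem:xi} and is not controlled by the recursion. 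The paper closes this by invoking the identity $D_{l+5}(\a)/D_{l+4}(\a)=P^{(\l)}_{l+5}(\a)/P^{(\l)}_{l+4}(\a)$ established in the proof of Lemma~\ref{lem:pl}: since the $P^{(\l)}_n$ are monic with degree increasing by one for $n\geq l+4$, monicity of $D_{l+4}$ (the normalisation of \ref{de:xinorm}) forces monicity of $D_{l+5}$ and the degree increment, after which the recursion $D_n=\a D_{n-1}\pm D_{n-2}$ propagates both properties. You would need either to import that identity or to argue the base case directly, e.g.\ by observing that $[\xi^{(l+4)}_\l]$ is proportional to a column of the adjugate of $\Gamma=\a I+\Gamma'$, whose diagonal cofactor (giving the $(l+3,l+4,1)$ entry) has degree exactly one less than $\det\Gamma$ while the off-diagonal cofactors have strictly smaller degree.
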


\begin{proof} Fix a basis $\BB=\{b_1,\dots, b_{d_\l}\}$ for $\SS_\l$ as per \ref{pa:spechbas}. Let $\xi_{\l}^{(n-1)}$ be as per \ref{de:xinorm} with $\uu_{n-2,n-1}^*\xi_{\l}^{(n-1)}=D_{n-1}(\a)$. Then define
\[\xi:= D_{n-1}(\a) \uu_{n-1,n}c_{\l}-\xi_{\l}^{(n-1)}\otimes id_1.\]
Since $\xi_{\l}^{(n-1)}\otimes id_1$ has no components of the form $\uu_{n-1,n} b_k$ it follows that the coefficients in $\xi$ have no common non-trivial factor, since those in $\xi_{\l}^{(n-1)}\otimes id_1$ didn't.

Now observe for any $i<j<n-1$, for $u_{ij}\in \BBB_{(l,n)}$ we have
\[ \uu_{ij}^*\xi=D_{n-1}(\a) \uu_{ij}^*\uu_{n-1,n}c_{\l}-(\uu_{ij}^*\xi_{\l}^{(n-1)})\otimes id_1=0.\]
using the property 1 from from Lemma \ref{le:xi2}. Using property 2. we have 
\begin{align*}
    \uu_{n-2,n-1}^*\xi&=D_{n-1}(\a) \uu_{n-2,n-1}^*\uu_{n-1,n}c_{\l}-(\uu_{n-2,n-1}^*\xi_{\l}^{(n-1)})\otimes id_1\\
    &=D_{n-1}(\a)c_\l-D_{n-1}(\a)c_\l=0.
\end{align*}
Now note that 
\[\uu_{n-1,n}^* (\xi_{\l}^{(n-1)}\otimes id_1)=[\xi_{\l}^{(n-1)}]_{n-2,n-1,1 }\uu_{n-1,n}^*\uu_{n-2,n-1} c_\l=[\xi_{\l}^{(n-1)}]_{n-2,n-1,1 } c_\l,\]
since $\uu_{n-1,n}^*\uu_{i,j}$ has $n-4$ propagating lines for $j<n-1$, and the only non-zero components of the form $\uu_{n-2,n-1}b_{k}$ in $\xi_{\l}^{(n-1)}$ have $k=1$. Therefore, we compute that 
\[\uu_{n-2,n-1}^*\xi =(\a D_{n-1}(\a)-[\xi_{\l}^{(n-1)}]_{n-2,n-1,1 })c_\l,\]
so $\xi$ satisfies 1. and 2. from Lemma \ref{le:xi2}, with $D_n(\a)=(\a D_{n-1}(\a)-[\xi_{\l}^{(n-1)}]_{n-2,n-1,1 })$. Therefore, by uniqueness $\xi= a \xi^{(n)}_\l$ for some factor $a\in \C$. We will now verify that $a=1$, that is $D_n(\a)$ is monic.

Firstly, we treat the case $n=l+5$. For $\xi^{(l+5)}_\l$ as defined by the recursion \eqref{eq:ind2} we have that $D_{l+5}(\a)=\a D_{l+4}(\a) +[\xi_\l]_{l+3,l+4,1}$, where $\xi_\l^{(l+4)}=\xi_\l$ is as per Lemma \ref{lem:xi}. In the proof of Lemma \ref{lem:pl}, we saw that 
\[
\frac{D_{l+5}(\a)}{D_{l+4}(\a)}=\frac{\a D_{l+4}(\a)+[\xi_\l]_{l+3,l+4,1}}{D_{l+4}(\a)}=\frac{P_{l+5}^{(\l)}(\a)}{P_{l+4}^{(\l)}(\a)}.
\]
Since the $P_{n}^{(\l)}(\a)$ are monic for $n\geq l+4$, it follows that if $D_{l+4}(\a)$ is taken to be monic, then $D_{l+5}(\a)$ will be too. Thus $\xi^{(l+5)}_\l$ as defined by \eqref{eq:ind2} is normalised as per \ref{de:xinorm}. 

Now for $n>l+5$, by applying \eqref{eq:ind2} twice we find
\[D_{n}(\a)=\a D_{n-1}(\a)+[\xi_{\l}^{(n-1)}]_{n-2,n-1,1 }=\a D_{n-1}(\a)+D_{n-2}(\a),\]
which implies $D_{n}$ is monic, since $D_{l+4}$ and $D_{l+5}$ were, and satisfy $\deg(D_{l+5})=\deg(D_{l+4})+1$, in addition.\end{proof}

\begin{corollary}\label{co:PdivD} Fix $l$ and let $\l\vdash l+2$. For any $n > l+4$, the (normalised) element $\xi^{(n)}_\l\in \Delta^{(n)}_{(n-2,\l)}$ satisfies $u_{n-1,n}^* \xi^{(n)}_\l=D_n(\a) c_\l$ where $P_n^{(\l)}(\a)$ divides $D_n(\a)$.
\end{corollary}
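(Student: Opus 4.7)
The plan is to combine the Chebyshev recursion for $D_n(\a)$ already established in the preceding lemma with the ratio identity $D_{l+5}/D_{l+4} = P_{l+5}^{(\l)}/P_{l+4}^{(\l)}$ (implicit in the proof of Lemma \ref{lem:pl}) and the coprimality of consecutive terms in a reduced ramping Chebyshev series, to conclude that $D_n$ and $P_n^{(\l)}$ differ by a single $n$-independent monic polynomial factor $Q(\a)$ for all $n \geq l+4$; the required divisibility then follows immediately.

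First I would invoke the monic recursion $D_n(\a) = \a D_{n-1}(\a) + D_{n-2}(\a)$ for $n \geq l+6$, with matching Chebyshev recursion \eqref{eq:Cheby0} for $P_n^{(\l)}$. Next I would cross-multiply the ratio identity to obtain
\[
D_{l+5}(\a)\, P_{l+4}^{(\l)}(\a) \;=\; D_{l+4}(\a)\, P_{l+5}^{(\l)}(\a) \qquad \text{in } \C[\a].
\]
Since the series $P_n^{(\l)}$ is reduced by construction (cf. \ref{def:chebycond}), $\gcd(P_{l+4}^{(\l)}, P_{l+5}^{(\l)}) = 1$; hence $P_{l+4}^{(\l)} \mid D_{l+4}$ and $P_{l+5}^{(\l)} \mid D_{l+5}$, and by monicity of all four polynomials the two quotients coincide with a single monic $Q(\a) \in \C[\a]$, so $D_{l+4} = Q\cdot P_{l+4}^{(\l)}$ and $D_{l+5} = Q\cdot P_{l+5}^{(\l)}$.

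A routine induction on $n$ using the common Chebyshev recursion then gives $D_n(\a) = Q(\a)\, P_n^{(\l)}(\a)$ for all $n \geq l+4$, so in particular for $n > l+4$ one obtains the asserted divisibility $P_n^{(\l)} \mid D_n$. I do not anticipate any serious obstacle here: the substantive work (the Chebyshev recursion for $D_n$ and the ratio identity at $n = l+4, l+5$) has already been assembled in the preceding lemmas, and what remains is essentially a GCD-plus-induction step. The only points to watch are the consistency of the normalisation convention from \ref{de:xinorm} across all $n$ (which is built into the inductive definition of $\xi^{(n)}_\l$) and the tracking of monicity during cross-multiplication; the genuinely hard part of the overall programme was the preceding lemma rather than this corollary.
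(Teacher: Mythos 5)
Your proposal is correct and follows essentially the same route as the paper: the paper's proof likewise takes the ratio identity $D_{l+5}/D_{l+4}=P^{(\l)}_{l+5}/P^{(\l)}_{l+4}$ from the preceding lemma, uses the fact that the reduced series $P^{(\l)}_n$ has coprime consecutive terms to write $D_{l+4}=f\,P^{(\l)}_{l+4}$ and $D_{l+5}=f\,P^{(\l)}_{l+5}$ for a single $f\in\C[\a]$, and propagates this along the common Chebyshev recursion. Your version merely spells out the cross-multiplication and the induction in slightly more detail; no gap.
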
 

\begin{proof}
Since for $n>l+5$, the $D_n(\a)$ obey the Chebyshev recursion \eqref{eq:Cheby0}, the claim now follows from the previous proof, where we had
\[\frac{D_{l+5}(\a)}{D_{l+4}(\a)}=\frac{P_{l+5}^{(\l)}(\a)}{P_{l+4}^{(\l)}(\a)},\]
which implies $D_{l+4}(\a)=f(\a) P^{(\l)}_{l+4}(\a)$ and $D_{l+5}(\a)=f(\a) P^{(\l)}_{l+5}(\a)$ for some $f(\a)\in \C[\a]$, since the series $P^{(\l)}_{n}$ has no common non-trivial factor.
\end{proof}

\begin{theorem} Fix $l$ and let $\l\vdash l+2$. Let $n\geq l+4$, and suppose $\a_0\in \C$ is such that $P_n^{(\l)}(\a_0)=0$. 
Then 
\ppmm{when the algebra parameter $\xx=\xx_0$ we have that} 
$J_{l,n} \xi^{(n)}_\l$ is a submodule of $\Delta^{(n)}_{(n-2,\l)}$ isomorphic to $\Delta^{(n)}_{(n,\l)}$. 
\end{theorem}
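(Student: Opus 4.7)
The plan is to show that after specialisation at $\a = \a_0$, the element $\xi^{(n)}_\l$ is annihilated by the whole cellular ideal $J_l^{n-2}(n,n) \subset J_{l,n}$, so that the action of $J_{l,n}$ on the cyclic submodule $J_{l,n}\,\xi^{(n)}_\l$ reduces to that of the quotient $\C\Sym_{l+2}$ on the Specht module $\SS_\l$. The required non-triviality of $\xi^{(n)}_\l$ at $\a_0$ will follow from the no-common-factor normalisation fixed in \ref{de:xinorm}; combined with the cyclic description this will pin down the submodule as $\Delta^{(n)}_{(n,\l)}$.

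First I would combine Corollary \ref{co:PdivD} with the hypothesis $P^{(\l)}_n(\a_0) = 0$ to deduce $D_n(\a_0) = 0$; Lemma \ref{le:xi2} then upgrades (at $\a=\a_0$) to the simultaneous vanishing
\[
\uu^*_{i,j}\,\xi^{(n)}_\l \;=\; 0 \qquad \text{for every cup }\uu_{i,j} \in \BBB_{(l,n)}.
\]
Next I would use the factorisation of any diagram $d\in J_{l,n}$ with exactly $n-2$ propagating lines as $d = u\,\pi\,\uu^*_{i,j}$, with $u \in J_l(n,n-2)$, $\pi \in J_l(n-2,n-2)$ and $\uu_{i,j} \in \BBB_{(l,n)}$, reading off the unique top cup and bottom cap of $d$; any $d$ with strictly fewer propagating lines already acts trivially on $\xi^{(n)}_\l$, since $d\,\xi^{(n)}_\l$ then lives in $J_l^{n-4}(n,n-2)\,c_\l$, which is quotiented out of $\Delta^{(n)}_{(n-2,\l)}$. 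Combining with the vanishing above gives $d\,\xi^{(n)}_\l = u\pi\,(\uu^*_{i,j}\,\xi^{(n)}_\l) = 0$, so the whole ideal $J_l^{n-2}(n,n)$ annihilates $\xi^{(n)}_\l$, and the $J_{l,n}$-action on $J_{l,n}\,\xi^{(n)}_\l$ factors through $J_{l,n}/J_l^{n-2}(n,n) \cong \C\Sym_{l+2}$ (using \ref{de:globloc0} and $n \geq l+4$).

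Finally, Lemma \ref{lem:cxi} yields $c_\l\,\xi^{(n)}_\l = \xi^{(n)}_\l$, so
\[
J_{l,n}\,\xi^{(n)}_\l \;=\; (\C\Sym_{l+2})\,c_\l\,\xi^{(n)}_\l
\]
is a $\C\Sym_{l+2}$-quotient of the simple Specht module $(\C\Sym_{l+2})\,c_\l \cong \SS_\l$; non-triviality of $\xi^{(n)}_\l$ at $\a_0$ then forces this quotient map to be an isomorphism. Lifting through the surjection $J_{l,n} \twoheadrightarrow \C\Sym_{l+2}$, this submodule is precisely the no-cup standard module $\Delta^{(n)}_{(n,\l)}$, giving the desired isomorphism. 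The hard part will be the factorisation step: although every $(n{-}2)$-propagating Brauer diagram factors naturally through its top cup and bottom cap, one must verify that $\uu_{i,j}$ lies in $\BBB_{(l,n)}$ and that $u$ and $\pi$ lie in the appropriate subspaces of $J_l$, respecting the Kadar--Yu height bound $\leq l$; this follows from the definition of height but requires the same careful bookkeeping with half-diagrams used earlier in the paper.
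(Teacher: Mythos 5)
Your argument is correct and follows essentially the same route as the paper: all cup--caps annihilate $\xi^{(n)}_\l$ at $\a_0$ (via Corollary~\ref{co:PdivD} and Lemma~\ref{le:xi2}), hence the two-sided ideal $J_l^{n-2}(n,n)$ kills the cyclic module, which by Lemma~\ref{lem:cxi} is then $\C\Sym_{l+2}\,c_\l\,\xi^{(n)}_\l$ and identified with $\Delta^{(n)}_{(n,\l)}\cong\SS_\l$. The only (harmless) divergence is at the final step, where you invoke simplicity of $\SS_\l$ over $\C$ to get injectivity of $xc_\l\mapsto xc_\l\xi^{(n)}_\l$, whereas the paper verifies linear independence of the $b_i\,\xi^{(n)}_\l$ explicitly by pairing against $c_\l x_i^*$ and using orthonormality --- the same fact proved by hand.
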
 
\begin{proof} 
Firstly, we note that $\xi^{(n)}_\l$ is non-zero after specialisation at $\a=\a_0$ and satisfies $c_\l \xi^{(n)}_\l=\xi^{(n)}_\l$, and $u_{n-1,n}^* \xi^{(n)}_\l=D(\a_0) c_\l=0$ by Corollary \ref{co:PdivD}. In particular, combining with condition 1. from Lemma \ref{le:xi2} we have $J_{l,n}^{n-2} \xi^{(n)}_{\l}=0$ (where $J_{l,n}^{p}$ is as per \ref{de:Jln}). Since $J_{l,n}=\C \Sigma_{l+2}\oplus J_{l,n}^{n-2}$ we compute (as $\C$ vector spaces):
\begin{align*} J_{l,n}\xi^{(n)}_\l&=\C \Sigma_{l+2} \xi^{(n)}_\l= \C \Sigma_{l+2} c_\l \xi^{(n)}_\l,   
\end{align*}
    It follows that the $b_i \xi^{(n)}_\l= x_i \xi^{(n)}_\l $ span $J_{l,n}\xi^{(n)}_\l$, where $\BB=\{b_1,\dots, b_{d_{\l}}\}$ is a basis for $\C \Sigma_{l+2} c_\l \simeq \SS_\l$ as per \ref{pa:spechbas}. To see they are linearly independent we argue as follows: suppose that $\sum_k a_k b_k \xi^{(n)}_\l=0$ for some coefficients $a_k\in \C$. Multiplying by $c _\l x_{i}^*$, we obtain then
    \[
    0=\sum_k a_k c _\l x_{i}^* x_k c_\l \xi^{(n)}_\l=\sum_k a_k \langle b_k, b_i\rangle c_\l \xi^{(n)}_\l= a_i c_\l \xi^{(n)}_\l =a_i \, \xi^{(n)}_\l,
    \]
    which implies $a_i=0$ for all $i$. It therefore follows that $x c_\l \mapsto x c_\l \xi_\l^{(n)} $ for $x\in J_{l,n}$ is a \bajmm{monomorphism} $\Delta^{(n)}_{(n,\l)}\to \Delta^{(n)}_{(n-2,\l)} $ 
    of $J_{l,n}$ modules. 
    \end{proof}

\subsection{Case $l=-1$}

As ever, the first case to consider is $l=-1$.  See Fig.\ref{fig:gramdets03}. 

\medskip

Rollet diagram with all $\Delta$ maps, for a particular choice of $\xx$ 
(here $\xx$ a root of $P^{(1)}_4$):   

\[
\hspace{-2.1cm} 
\xymatrix{
   \;\bullet_0 \ar@{-}[r] &  
   \;\bullet_1 \ar@{-}[r] &  
   \;\bullet_2 \ar@{-}[r] &  
   \;\bullet_3 \ar@{-}[r] &  
   \;\bullet_4 \ar@{-}[r] &  
   \;\bullet_5 \ar@{-}[r] \ar@[red]@/_1pc/[ll] &  
   \;\bullet_6 \ar@{-}[r] \ar@[red]@/_1.3pc/[llll] &  
   \;\bullet_7 \ar@{-}[r] \ar@[red]@/_1.653pc/[llllll] &  
   \;\bullet_8 \ar@{-}[r] \ar@[red]@/_1.9653pc/[llllllll] &  
   \;\bullet_9 \ar@{-}[r] &  
   \;\bullet_{} \ar@{-}[r] \ar@[red]@/_1pc/[ll] &  
   }
\]
Note in particular the alcove geometry, with blocks being orbits of the affine reflection group action; and the separation of reflection hyperplanes being determined by $\xx$.
It will be very interesting to look for analogues of this geometry for higher $l$! 

\begin{figure} 
\input{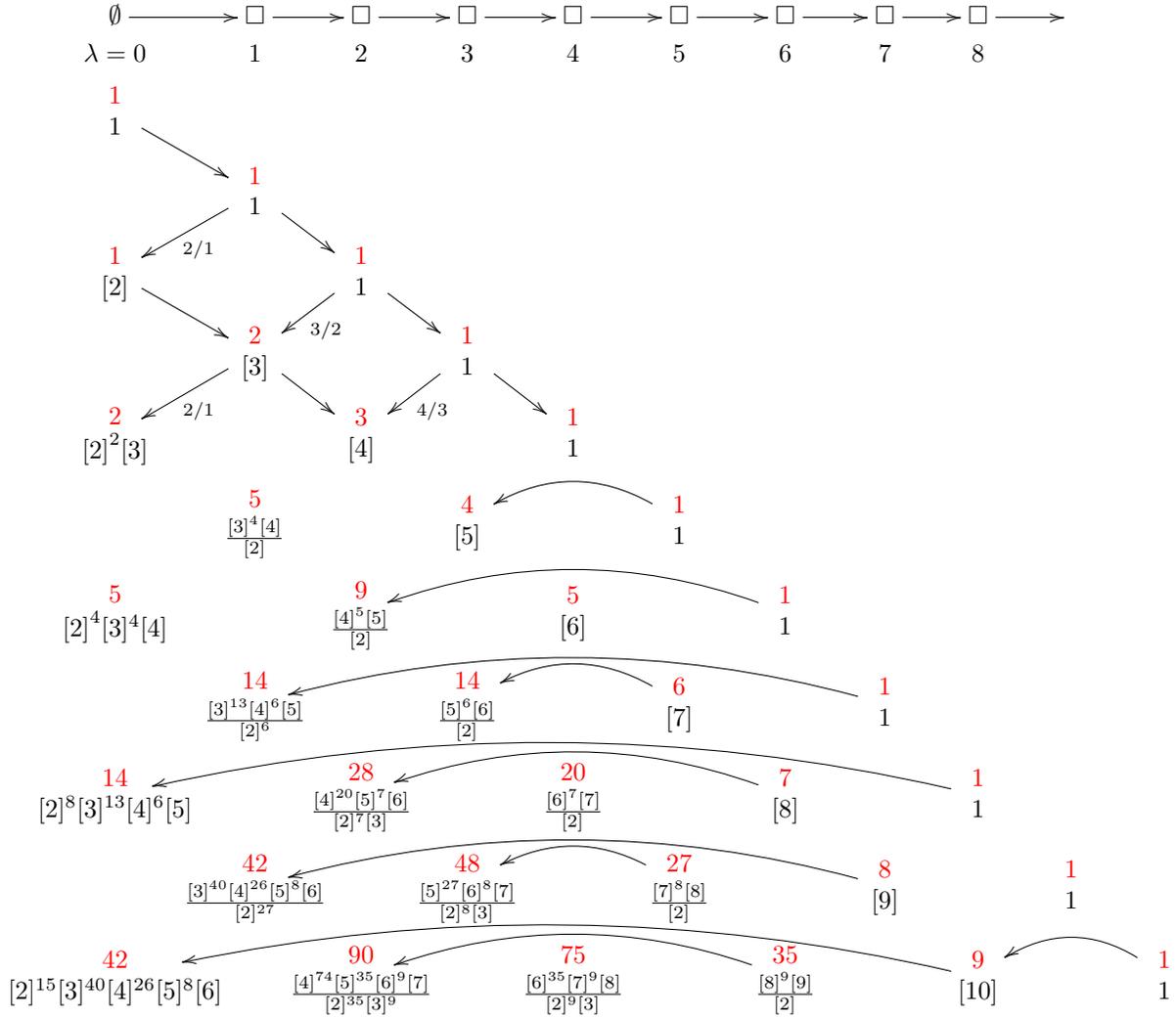}
\caption{The $l=-1$ Rollet graph augmented with many standard gram determinants (parametrised as $\a=[2]$ using the quantum number notation $[n]:=(q^n-q^{-n})/(q-q^{-1})$) --- 
and curved arrows indicating standard module morphisms in case $[5]=0$. 
\label{fig:gramdets03}}
\end{figure}

\newpage

\subsection{Representation theory - case $l=0$}

Here we investigate the maps between standard modules at special values of $\alpha$. 
Recall that for $l=-1$ we presented corresponding data in Fig.\ref{fig:gramdets03}.
Perhaps the first thing to do here is to find an embedding of the Rollet graph at $l=0$ in the plane (of the page) so that data can be presented in as uncluttered a way as possible. One possibility is:
\newcommand{\ver}[1]{{\!\!\!\!\!\!\begin{array}{c} { } \\ \bullet \\ \!\!\!\! \scriptsize{#1} \!\!\!\! \end{array}\!\!\!\!\!\!}}
\[
\hspace{-2.1cm} 
\xymatrix{
   &&&&&& \;\bullet_0 \ar@{-}[d] \\
\ver{(7,\!(2))} \ar@{-}[r] & 
\ver{(6,\!(2))} \ar@{-}[r] & 
\ver{(5,\!(2))} \ar@{-}[r] & 
\ver{(4,\!(2))} \ar@{-}[r] & 
\ver{(3,\!(2))} \ar@{-}[r] & 
\ver{(2,\!(2))} \ar@{-}[r] & 
\bullet_1 \ar@{-}[r] & 
\ver{(2,\!(1^2))} \ar@{-}[r] & 
\ver{(3,\!(1^2))} \ar@{-}[r] & 
\ver{(4,\!(1^2))} \ar@{-}[r] & 
\bullet \ar@{-}[r] & 
\bullet \ar@{-}[r] & 
\ver{(7,\!(1^2))} & 
}
\]

\subsubsection{Subcase $\alpha =1$}  \label{ss:alph1}

Observe from Fig.\ref{fig:Bratelli-l0-gramdets} (say) that 
the first singular gram matrix here is for  $\Delta^{3}_{(3,(1))}$. 
The only possible maps in are from the one-dimensional modules 
$\Delta^{3}_{(3,(3))}$ and/or $\Delta^{3}_{(3,(1^3))}$. 
Thus we are looking for subspaces of the form:

\newcommand{\IU}{
\raisebox{-.32cm}{
\begin{tikzpicture}
     \draw (0,0) -- (2,0) ;
     \draw (.5,0) -- (.5,-1);
     \draw (1,0) .. controls (1,-.5) and (1.5,-.5) .. (1.5,0) ;
     \draw (1,-1) .. controls (1,-.5) and (1.5,-.5) .. (1.5,-1) ;
 \draw (0,-1) -- (2,-1);
 \end{tikzpicture}
}}
\newcommand{\UI}{
\raisebox{-.32cm}{
\begin{tikzpicture}
     \draw (0,0) -- (2,0) ;
     \draw (1.5,0) -- (1.5,-1);
     \draw (.5,0) .. controls (.5,-.5) and (1,-.5) .. (1,0) ;
     \draw (.5,-1) .. controls (.5,-.5) and (1,-.5) .. (1,-1) ;
 \draw (0,-1) -- (2,-1);
 \end{tikzpicture}
}}
\newcommand{\SigI}{
\raisebox{-.32cm}{
\begin{tikzpicture}
     \draw (0,0) -- (2,0) ;
     \draw (1.5,0) -- (1.5,-1);
     \draw (.5,0) -- (1,-1) ;
     \draw (.5,-1) -- (1,0) ;
 \draw (0,-1) -- (2,-1);
 \end{tikzpicture}
}}

\[ 
\rb{c\;} \begin{tikzpicture}
     \draw (0,0) -- (2,0) ;
     \draw (.5,0) -- (.5,-1);
     \draw (1,0) .. controls (1,-.5) and (1.5,-.5) .. (1.5,0) ;
 \end{tikzpicture}
 \rb{+\;b\;} \begin{tikzpicture}
     \draw (0,0) -- (2,0) ;
     \draw (1,0) -- (1,-1);
     \draw (.5,0) .. controls (.5,-.5) and (1.5,-.5) .. (1.5,0) ;
 \end{tikzpicture}
 \rb{+\;a\;} \begin{tikzpicture}
     \draw (0,0) -- (2,0) ;
     \draw (1.5,0) -- (1.5,-1);
     \draw (.5,0) .. controls (.5,-.5) and (1.0,-.5) .. (1,0) ;
 \end{tikzpicture}
\]
\medskip \medskip

\noindent 
The action of 
\[
\UI \;\;\;\mbox{ and } \;\;\;  \IU 
\]
must be by 0, so in general 
$\xx a+b+c=0$ and $a+b+\xx c=0$. 
The action of the first elementary transposition
\[
\SigI  
\]
must be by 1 or -1, yielding $b=c$ and $a+2b=0$ in the + case; and $a=0$ and $b=-c$ in the - case, when $\xx=1$. Hence 1d solution spaces. 
(Note that here there is no second elementary transposition! If there were, the system would change radically.)
Thus we see that both one-dimensional $\Delta$-modules are submodules. 
By globalisation this means that we have 
\[
\Delta^{n}_{(3,(2))} \oplus \Delta^{n}_{(3,1^2)} \; \rightarrow\; \Delta^{n}_{(1,(1))}
\]
for all odd $n$, with both summands having nonzero image. 
We draw this as
\[
\hspace{-2.1cm} 
\xymatrix{
   &&&&&& \;\bullet_0 \ar@{-}[d] \\
\ver{(7,\!(2))} \ar@{-}[r] & 
\ver{(6,\!(2))} \ar@{-}[r] & 
\ver{(5,\!(2))} \ar@{-}[r] & 
\ver{(4,\!(2))} \ar@{-}[r]  & 
\ver{(3,\!(2))} \ar@{-}[r] \ar@[red]@/^1.3pc/[rr] & 
\ver{(2,\!(2))} \ar@{-}[r] & 
\bullet_1 \ar@{-}[r] & 
\ver{(2,\!(1^2))} \ar@{-}[r] & 
\ver{(3,\!(1^2))} \ar@{-}[r] \ar@[red]@/_1.3pc/[ll] & 
\ver{(4,\!(1^2))} \ar@{-}[r] & 
\bullet \ar@{-}[r] & 
\bullet \ar@{-}[r] & 
\ver{(7,\!(1^2))} & 
}
\] 

Looking at $n=4$ we see that by Frobenius reciprocity, 
with $A = \Delta^{n-1}_{(3,\lambda)}$ ($\lambda = (2), (1^2)$) and 
$B=\Delta^{n}_{(0,0)}$ in (\ref{eq:FR}), 
we have 
\[
\Delta^{n}_{(4,(2))} \oplus \Delta^{n}_{(4,1^2)} \; \rightarrow\; \Delta^{n}_{(0,(0))}
\]
On the other hand with $A=\Delta^{n-1}_{(3,(2))}$ and $B=\Delta^{n}_{(2,(1^2))}$ we have
\[
\hom(\Delta^{n}_{(4,(2))} \oplus \Delta^{n}_{(2,(2))} , \Delta^{n}_{(2,(1^2))} ) 
 \cong  
\hom(\Delta^{n-1}_{(3,(2))} , \Delta^{n-1}_{(3,(1^2))} + \Delta^{n-1}_{(1,(1))}) 
\]
The sum on the right is non-split, but both factors have $ \Delta^{n-1}_{(3,(2))} $ in the socle, so  the dimension on the right is at least 1. 
And hence also on the left - and by globalisation there is no map from 
$ \Delta^{n}_{(2,(2))}$ to $\Delta^{n}_{(2,(1^2))} $. So we deduce:
\[
\Delta^{n}_{(4,(2))}  \rightarrow   \Delta^{n}_{(2,(1^2))}
\]
(It is worth thinking about other choices for $A$ and $B$!... .) 
And similarly for ... by symmetry. 
Altogether that is:  
\[
\hspace{-2.1cm} 
\xymatrix{
   &&&&&& \;\bullet_0 \ar@{-}[d] \\
\ver{(7,\!(2))} \ar@{-}[r] & 
\ver{(6,\!(2))} \ar@{-}[r] & 
\ver{(5,\!(2))} \ar@{-}[r] & 
\ver{(4,\!(2))} \ar@{-}[r] \ar@[red]@/^1.3pc/[rrru] \ar@[red]@/^1.9863pc/[rrrr] & 
\ver{(3,\!(2))} \ar@{-}[r] \ar@[red]@/^1.3pc/[rr] & 
\ver{(2,\!(2))} \ar@{-}[r] & 
\bullet_1 \ar@{-}[r] & 
\ver{(2,\!(1^2))} \ar@{-}[r] & 
\ver{(3,\!(1^2))} \ar@{-}[r] \ar@[red]@/_1.3pc/[ll] & 
\ver{(4,\!(1^2))} \ar@{-}[r] \ar@[red]@/_1.3pc/[lllu] \ar@[red]@/_1.93pc/[llll] & 
\bullet \ar@{-}[r] & 
\bullet \ar@{-}[r] & 
\ver{(7,\!(1^2))} & 
}
\] 

Notice that this is consistent with the gram matrix results as in Fig.\ref{fig:Bratelli-l0-gramdets}. 
In particular there is not a morphism from $(4,(2)) \rightarrow (2,(2))$ 
(in the obvious shorthand); nor the symmetry image version. 

Passing on to $n=5$ we see, taking $A=(4,(2))$ and $B=(3,(1^2))$ that 
\[
\hom(\Delta^{n}_{(5,(2))} \oplus \Delta^{n}_{(3,(2))} , \Delta^{n}_{(3,(1^2))} ) 
 \cong  
\hom(\Delta^{n-1}_{(4,(2))} , \Delta^{n-1}_{(2,(1^2))} + \Delta^{n-1}_{(4,(1^2))}) 
\]
Obviously there is a map $(4,(2))\rightarrow (2,(1^2))$, so dim-hom $\geq 1$ on the right unless the $(4,(1^2))$ is glued under - but this would imply a map that we 
noted above  
is not there. Thus dim-hom on the left is nonzero. This forces 
$(5,(2))\rightarrow (3,(1^2))$; and its symmetry image. 
Altogether: 
\[
\hspace{-2.1cm} 
\xymatrix{
   &&&&&& \;\bullet_0 \ar@{-}[d] \\
\ver{(7,\!(2))} \ar@{-}[r] & 
\ver{(6,\!(2))} \ar@{-}[r] & 
\ver{(5,\!(2))} \ar@{-}[r] \ar@[green]@/^2.963pc/[rrrrrr] & 
\ver{(4,\!(2))} \ar@{-}[r] \ar@[red]@/^1.3pc/[rrru] \ar@[red]@/^1.9863pc/[rrrr] & 
\ver{(3,\!(2))} \ar@{-}[r] \ar@[red]@/^1.3pc/[rr] & 
\ver{(2,\!(2))} \ar@{-}[r] & 
\bullet_1 \ar@{-}[r] & 
\ver{(2,\!(1^2))} \ar@{-}[r] & 
\ver{(3,\!(1^2))} \ar@{-}[r] \ar@[red]@/_1.3pc/[ll] & 
\ver{(4,\!(1^2))} \ar@{-}[r] \ar@[red]@/_1.3pc/[lllu] \ar@[red]@/_1.93pc/[llll] & 
\bullet \ar@{-}[r] \ar@[green]@/_3.093pc/[llllll] & 
\bullet \ar@{-}[r] & 
\ver{(7,\!(1^2))} & 
}
\] 
Here there are the possibilities of composite maps:
\[
(5,(2)) \rightarrow (3,(1^2)) \rightarrow (1,(1)) 
\]
and symmetry image. Recall that in the $l=-1$ case all composite maps are zero 
(see \cite[Ch.6]{Martin91}).  
However here we deduce from the gram determinant that 
$ (3,(1^2)) \hookrightarrow (1,(1)) $, and hence the composite is non-zero. 

\subsubsection{Subcase  $(\alpha^2 + \alpha -4)=0$}

\mdef  
Let us have a look at $\alpha$ such that $(\alpha^2 + \alpha -4)=0$. 
Observe from Fig.\ref{fig:Bratelli-l0-gramdets} (say) that 
the first singular gram matrix is for  $\Delta^{4}_{(2,(2))}$. 
By globalisation the only possible maps in are from 
$\Delta^{4}_{(4,(2))}$ or from $\Delta^{4}_{(4,(1^2))}$. 
It is easy to check 
(see e.g. \S\ref{ss:bootcase0})
that there is no  $\Delta^4_{(4,(1^2))}$ subspace
(unless $\alpha=1$), 
so the subspace is $\Delta^4_{(4,(2))}$ here. 

By globalisation this means that we have module morphisms 
\beq \label{eq:morphismto22} 
\Delta^n_{(4,(2))}  \rightarrow \Delta^n_{(2,(2))}
\eq 
for all even $n\geq 4$.
Thus:
\[
\hspace{-2.1cm} 
\xymatrix{
   &&&&&& \;\bullet_0 \ar@{-}[d] \\
\ver{(7,\!(2))} \ar@{-}[r] & 
\ver{(6,\!(2))} \ar@{-}[r] & 
\ver{(5,\!(2))} \ar@{-}[r] & 
\ver{(4,\!(2))} \ar@{-}[r] \ar@[red]@/^1.3pc/[rr] & 
\ver{(3,\!(2))} \ar@{-}[r] & 
\ver{(2,\!(2))} \ar@{-}[r] & 
\bullet_1 \ar@{-}[r] & 
\ver{(2,\!(1^2))} \ar@{-}[r] & 
\ver{(3,\!(1^2))} \ar@{-}[r] & 
\ver{(4,\!(1^2))} \ar@{-}[r] & 
\bullet \ar@{-}[r] & 
\bullet \ar@{-}[r] & 
\ver{(7,\!(1^2))} & 
}
\]

The next singularity is for $\Delta^{5}_{(1,(1))}$. 
The map in here is from 
$\Delta^{5}_{(5,(2))}$ or from $\Delta^{5}_{(5,(1^2))}$. 
By Frobenius reciprocity 
- see (\ref{exa:henri1}) below for details - 
the $\lambda\in\{ (2),(1^2)\}$ will be the same as for the first map above. 
Thus we have 
\beq \label{eq:morphismto11} 
\Delta^n_{(5,\lambda)}  \rightarrow \Delta^n_{(1,(1))}
\eq 
with $\lambda=(2)$, 
for all odd $n \geq 5$. 
Thus: 
\[
\hspace{-2.1cm} 
\xymatrix{
   &&&&&& \;\bullet_0 \ar@{-}[d] \\
\ver{(7,\!(2))} \ar@{-}[r] & 
\ver{(6,\!(2))} \ar@{-}[r] & 
\ver{(5,\!(2))} \ar@{-}[r]  \ar@[red]@/^1.7pc/[rrrr] & 
\ver{(4,\!(2))} \ar@{-}[r] \ar@[red]@/^1.3pc/[rr] & 
\ver{(3,\!(2))} \ar@{-}[r] & 
\ver{(2,\!(2))} \ar@{-}[r] & 
\bullet_1 \ar@{-}[r] & 
\ver{(2,\!(1^2))} \ar@{-}[r] & 
\ver{(3,\!(1^2))} \ar@{-}[r] & 
\ver{(4,\!(1^2))} \ar@{-}[r] & 
\bullet \ar@{-}[r] & 
\bullet \ar@{-}[r] & 
\ver{(7,\!(1^2))} & 
}
\]

At $n=6$ we already have a map from  (\ref{eq:morphismto22}). 
Note that this explains the $(\alpha^2+\alpha-4)^6$ in the gram-det of 
$ \Delta^6_{(2,(2))}  $, since the map is   
from a 6-dimensional simple module. 
In addition we have two further singularities here, for 
$ \Delta^6_{(0,(0))}  $    and   $ \Delta^6_{(2,(1^2))}  $. 
By Frobenius reciprocity it seems that both should be from 
$ \Delta^6_{(6,\lambda=(2))}  $. 
Altogether we add to the figure as shown in red in Fig.\ref{fig:standardmapshenri}. 

\begin{figure}
\[
\hspace{-2.1cm} 
\xymatrix{
   &&&&&& \;\bullet_0 \ar@{-}[d] \\
\ver{(7,\!(2))} \ar@{-}[r] \ar@[green]@/^2.643pc/[rrrrrr] \ar@[green]@/^3.643pc/[rrrrrrrr] & 
\ver{(6,\!(2))} \ar@{-}[r] \ar@[red]@/^1.7pc/[rrrrru] \ar@[red]@/^2.1pc/[rrrrrr] & 
\ver{(5,\!(2))} \ar@{-}[r]  \ar@[red]@/^1.7pc/[rrrr] & 
\ver{(4,\!(2))} \ar@{-}[r] \ar@[red]@/^1.3pc/[rr] & 
\ver{(3,\!(2))} \ar@{-}[r] & 
\ver{(2,\!(2))} \ar@{-}[r] & 
\bullet_1 \ar@{-}[r] & 
\ver{(2,\!(1^2))} \ar@{-}[r] & 
\ver{(3,\!(1^2))} \ar@{-}[r] & 
\ver{(4,\!(1^2))} \ar@{-}[r] & 
\bullet \ar@{-}[r] & 
\bullet \ar@{-}[r] & 
\ver{(7,\!(1^2))} & 
}
\]
    \caption{ Standard module morphisms for $l=0$ for $(\alpha^2+\alpha-4)=0$. \label{fig:standardmapshenri} }
\end{figure}
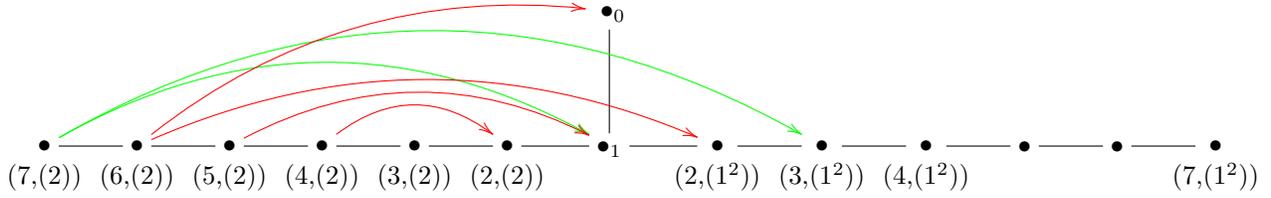

At $n=7$ we already have a map from (\ref{eq:morphismto11}). 
Note that this explains a factor $(\alpha^2+\alpha-4)^7$ in the gram-det of $\Delta^7_{(1,(1))}$. 
But in fact the exponent is 8. 
On the other hand we can consider if Frobenius reciprocity gives us a map in 
also from $ \Delta^7_{(7,\lambda)}$.  
In that case we would have two socle factors. 
The other singular case is $(3,(1^2))$, but for this just consider Frobenius reciprocity applied to 
$   \Delta^6_{(6,\lambda)} \rightarrow   \Delta^6_{(2,(1^2))}  $ 
from above. 
See the red and green arrows in Fig.\ref{fig:standardmapshenri}. 

\mdef  \label{pa:FR}   {\bf{Frobenius reciprocity}}. 
Here we briefly review and apply the {\em tower of recollement} method of \cite{ToR}. 
Recall that given an inclusion of algebras $ X \subset Y$ and an $X$-module $A$ and
a $Y$-module $B$, 
then Frobenius reciprocity is the isomorphism of hom-spaces
\beq \label{eq:FR} 
\hom_Y(Ind\;A,B) \cong \hom_X(A,Res\;B)
\eq 
where $Ind-$ and $Res-$ refer to the induction and restriction along $ X \subset Y$. 

\mdef   \label{exa:henri1}
For example consider $J_{0,n-1} \subset J_{0,n}$ with $(\alpha^2+\alpha-4)=0$. 
From (\ref{eq:morphismto22}) above we have the map  
$ \Delta^n_{(4,(2))}  \rightarrow \Delta^n_{(2,(2))} $ ($n$ even). 
Now note that when $n=5$ we have 
\[
Res \; \Delta^n_{(1,(1))} =  \Delta^{n-1}_{(2,(2))} \oplus \Delta^{n-1}_{(2,(1^2))} \oplus \Delta^{n-1}_{(0,(0))}
\]
(it is a general result for all $n$ odd that the factors on the right are filtration factors, but here they are in different blocks)
so we see that 
$$
\hom(\Delta^{n-1}_{(4,(2))}, Res \; \Delta^n_{(1,(1))}) \neq 0   .
$$ 
By reciprocity therefore 
$$
\hom_Y(Ind\; \Delta^{n-1}_{(4,(2))},  \Delta^{n}_{(1,(1))}) \neq 0   .
$$ 
We have 
\[
Ind\; \Delta^{n-1}_{(4,(2))} = \Delta^n_{(5,(2))} \oplus \Delta^n_{(3,(2))}
\]
- certainly a direct sum when $n=5$. There is no map 
$ \Delta^n_{(3,(2))} \rightarrow \Delta^n_{(1,(1))} $ 
- else it would have an image at lower $n$ by localisation. 
Thus we must have a map $ \Delta^n_{(5,(2))} \rightarrow \Delta^n_{(1,(1))} $. 

\medskip 

The above example  confirms completely what we already part-deduced from the gram-dets in our argument for $n=5$. 
Now let us try the same method for the $n=6$ cases. 
Consider
\[Res(\Delta^6_{(0,(0))})= \Delta^5_{(1,(1))}  \qquad  Res(\Delta^6_{(2,(1^2))})= \Delta^5_{(1,(1))}\oplus \Delta^5_{(3,(1^2))}\]
Therefore, we have 
\[  \hom(\Delta^5_{(5,(2))}, Res\Delta^6_{(0,(0))})\neq 0\]
hence
\[\hom(Ind\Delta^5_{(5,(2))}, \Delta^6_{(0,(0))})\neq 0\]
and $Ind\Delta^5_{(5,(2))}=\Delta^6_{(4,(2))} \oplus \Delta^6_{(6,(2))}$, we would have seen maps $\Delta^6_{(4,(2))}\to \Delta^6_{(0,(0))}$ at a lower stage so we must have $\Delta^6_{(6,(2))}\to \Delta^6_{(0,(0))}$ (and likewise $\Delta^6_{(6,(2))}\to \Delta^6_{(2,(1^2))}$ ).

\appendix
\newcommand{\texdd}{tex2}
\newcommand{\xfigdd}{tex2/xfig-bratelli5}
\newcommand{\xfigGST}{tex2/xfig-gramdetbyGST}
\vspace{1cm} \newpage 
\section*{Appendix}
The next bit is an extract from \cite{paulsonlinelecturenotes}, slightly adapted to our purpose. 
One should keep in mind a few local notations as follows. \\ 
Temperley--Lieb is our case $l=-1$ (with our loop parameter $\xx$ written as $\delta$). \\ 
$M_n(\lambda)$ is the gram matrix (in the integral/diagram basis), with 
$\lambda$ simply the number of propagating lines - 
$\lambda =p$, so that $\lambda=0$ below is our $(0,\emptyset)$ and $\lambda=p>0$ is $(p,(1))$. \\
The algebra element $U_i$ here is $U_i = \uu_{i,i+1} \uu_{i,i+1}^*$. \\
$E_m^{(r)} = 1_r \otimes \nE_{m-1}$ (the image of the symmetriser, cf. $\nE_n$ from (\ref{pa:Specht})). 


\section{Determinant computation: Generic structure theorem method}  \label{ss:gdTL}

Our next method for computing the
Temperley--Lieb standard 
gram determinant
$\det(M_n(\lambda ))$
uses the generic structure theorem \cite[Th.1]{Martin91}. 
(Let us reiterate that there is no need, in representation theory, 
to compute any but the
elementary $\lambda=n-2$ case. We continue merely as an exercise in
service of later generalisations.)

The theorem gives a basis of the TL algebra for generic parameter
over a suitable field in the form of a complete
orthonormal set for the multimatrix structure.
In this case we can extract a basis for each cell module such 
that, by orthonormality, the gram matrix is the unit matrix. 
Unlike the gram matrix over the defining `diagram' basis, this is of no
intrinsic use:  
manifestly since it contains no information; but implicitly
since we cannot specialise the 
parameter as we can if we work over the `integral' (polynomial) ring.
The trick is to keep track of the renormalisations of elements 
in passing from the integral basis, so that we can reconstruct
the determinant over the integral basis.  

The basis elements we need are defined as follows
(we extract directly from \cite[\S6.4]{Martin91}).

To start with, 
fix a standard/cell module $\Delta_n(\lambda)$ and consider the walk 
enumeration of the diagram basis. 
This consists of 
walks 
$s=(s_0,s_1, s_2, ..., s_{n})$ 
of length $n$ on the positive part of the integral line
that start at 1 and end at $\lambda+1$.
The `lowest' walk can be depicted:
\[
e_m \; = \; 12121212123...m \; = \; 
\includegraphics[width=4cm]{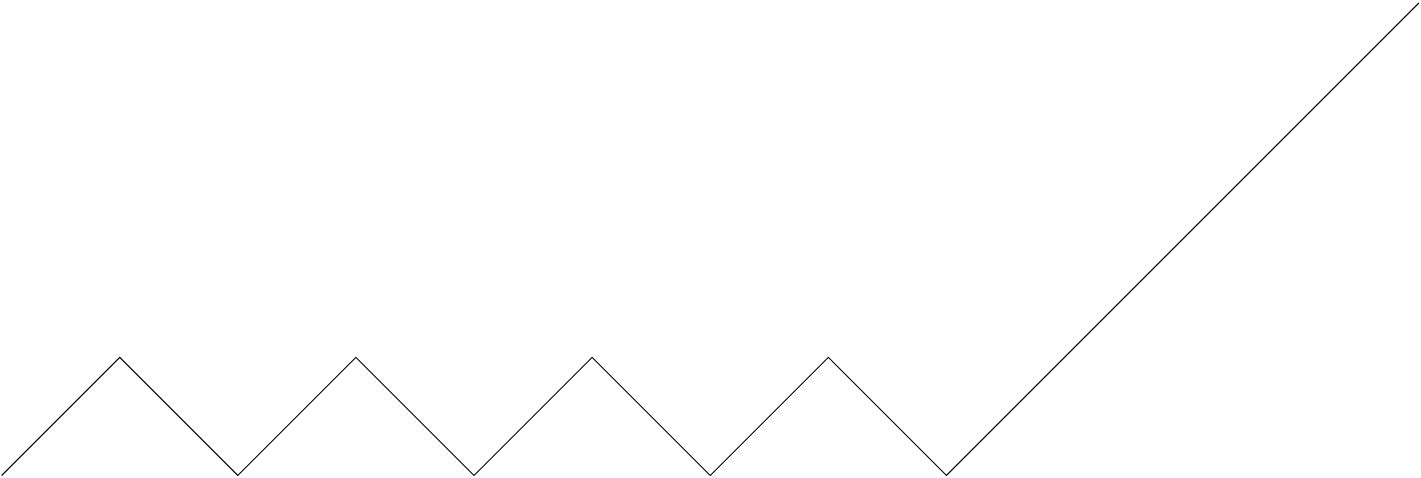}
\]
where $m=\lambda+1$, for example. 
We will recall the correspondence between diagrams and walks shortly. 

Algebra basis elements correspond to pairs of these walks to the same endpoint, 
for all possible endpoints. 
Thus
with the endpoints running over the  $ \lambda \mapsto m=\lambda+1$ shifted possible 
cell module indices $\lambda$. Recall the $\lambda$s are  $n,n-2,...,0/1$, 
so we have  these shifted to $n+1,n-1,...,1$ or $...,2$. 

The map from pairs of walks to algebra elements can be given iteratively. 
Firstly the lowest walk (pair) for each $\lambda$ is mapped to 
an element by, for example with $m=\lambda+1$: 
\[
(e_m, e_m) = (121212123...m,121212123...m) = U_1 U_3 U_5 E_{m}^{(6)}
\]
(see \cite[\S6.4]{Martin91} for notation) (note normalisation as
idempotent needed). 
Then if $s_i =g-1$ is a minimum of
sequence $s$, and $s^i$ denotes $s$ with $s_i$ replaced by $g+1$:
\[
(s^i,t) = \; \sqrt{k_g k_{g+1}} \left( 1-\frac{U_i}{k_g} \right) (s,t)
\]
where $k_g = \frac{[g-1]}{[g]}$. 
And similarly on the right. 

The generic structure theorem \cite[\S6.4]{Martin91} ensures that these elements are an orthonormal basis of the TL algebra. 

The integral/diagram basis can be considered to start with 
$(e_m,e_m)$ (up to quotients and id normalisation - again see \cite{Martin91}). 
The next diagram basis element, corresponding to the walk 
$123212123...m$, is
$U_2 (e_m,e_m)$; 
and more generally $(s^i,t)_{diagram} = U_i (s,t)_{diagram}$. 
Thus we see that the new orthonormal basis element $(s^i,t)$ in this
case is got by adding a scalar multiple of the previous  
(irrelevant for the determinant);
and rescaling   $U_i (s,t)$  
by 
\[
- \sqrt{\frac{[h-1]}{[h+1]}} \frac{[h]}{[h-1]}  \; =  \; - \sqrt{\frac{[h]^2}{[h+1][h-1]}}
\]
($h=g$).
This applies both to the bra and the ket, and both rescale the determinant. 
For a general walk there is a factor like this for each diamond added
to get to it from the $121212123...m$ walk.
Thus altogether we have the following. 

\begin{lem}
Schematically, 
\[
\det(M_n(\lambda)) = \prod_s \prod_d {\frac{[h]^2}{[h+1][h-1]}}
\]
where the products are over the set of walks and the diamonds in each
walk. 
\qed
\end{lem}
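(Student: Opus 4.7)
The plan is to combine the two ingredients already assembled in this section: on the one hand the generic structure theorem gives us an \emph{orthonormal} basis of the cell module $\Delta_n(\lambda)$ with trivial gram matrix; on the other, the iterative construction of this orthonormal basis from diagram-basis elements $U_i(s,t)_{\mathrm{diagram}}$ has an explicit scalar factor attached. So the only real task is to package the change-of-basis data as a triangular matrix and read off its determinant.

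First I would fix the index set for the standard module basis: walks $s=(s_0,s_1,\ldots,s_n)$ on $\mathbb{Z}_{\geq 1}$ with $s_0=1$, $s_n = \lambda+1$ and $|s_{i+1}-s_i|=1$, and choose a partial order on these walks in which the base walk $e_m$ (with $m=\lambda+1$) is minimal and adding a ``diamond'' at position $i$ (replacing a local minimum $s_i = g-1$ by $g+1$) strictly raises the walk. I would then argue that any walk $s$ is reached from $e_m$ by a unique \emph{unordered} multiset of diamonds (of course the actual sequence is not unique, but the multiset of heights $h=g$ is, and this is all that matters for the product).

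Next, iterating the recursion $(s^i,t) = \sqrt{k_g k_{g+1}}\bigl(1-U_i/k_g\bigr)(s,t)$ on the bra side (with $t=e_m$ fixed so that only $s$ varies over the cell module), I would expand each orthonormal basis element $(s,e_m)$ as a $\Z[\xx,\xx^{-1}]$-linear combination of diagram basis elements $(s',e_m)_{\mathrm{diagram}}$ with $s'\leq s$. The key point is that the leading (i.e.\ maximal) term is a scalar multiple of $(s,e_m)_{\mathrm{diagram}}$, and the scalar is precisely $\prod_d \bigl(-\sqrt{[h]^2/([h+1][h-1])}\bigr)$ with one factor per diamond $d$ in the chosen (any) sequence building $s$ from $e_m$. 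Since the change-of-basis matrix from the orthonormal basis to the diagram basis is therefore triangular with respect to this order, its determinant is simply the product of these diagonal entries over all walks $s$.

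Finally, since the gram matrix in the orthonormal basis is the identity, if $S$ denotes the triangular change-of-basis matrix then
\[
M_n(\lambda) \;=\; S^{T} S, \qquad \det M_n(\lambda) \;=\; (\det S)^{2},
\]
and squaring eliminates the sign, giving exactly the claimed formula
\[
\det M_n(\lambda) \;=\; \prod_s \prod_d \frac{[h]^{2}}{[h+1][h-1]}.
\]
The main obstacle I anticipate is a bookkeeping one rather than a conceptual one: one must check that the ``lower order terms'' produced at each step of the recursion are genuinely of the form $\sum_{s'<s} c_{s'}(s',e_m)_{\mathrm{diagram}}$, i.e.\ that the order on walks is compatible with all the sequences of diamond operations one might use to build $s$, and moreover that the product of diagonal factors depends only on the multiset of heights traversed (not on the chosen sequence). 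Both are straightforward given the local nature of the recursion, but they are what makes the statement well-defined; everything else is formal.
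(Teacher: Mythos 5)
Your strategy is precisely the paper's: the lemma carries an immediate \qed because its proof is the informal bookkeeping in the preceding paragraphs --- pass from the diagram basis to the orthonormal basis supplied by the generic structure theorem, note that the ``add a scalar multiple of the previous element'' steps are shears that leave the determinant alone, and collect the rescaling factor once for the bra and once for the ket. Your insistence on fixing a partial order compatible with all diamond-addition sequences, and on the order-independence of the multiset of heights, makes explicit exactly what the paper leaves implicit.

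Two concrete points would need repair before this becomes a proof rather than a schematic. First, the direction of the change of basis: you define $S$ so that the \emph{orthonormal} elements are expressed in the diagram basis, with diagonal entries $\prod_d\bigl(-\sqrt{[h]^2/([h+1][h-1])}\bigr)$; but then orthonormality reads $I = S\,M_n(\lambda)\,S^{T}$, so $\det M_n(\lambda) = (\det S)^{-2}$, not $(\det S)^{2}$ --- writing $M_n(\lambda)=S^{T}S$ silently replaces $S$ by its inverse. The reciprocal is not cosmetic: it turns each diamond factor into $[h+1][h-1]/[h]^{2}$, which is what the worked examples in this section actually use (e.g.\ the factor $\bigl([3]/[2]^{2}\bigr)^{28}$ in the computation of $\det M_7(1)$). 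Second, the base of the induction is not normalised for free: the diagram element $(e_m,e_m)_{\mathrm{diagram}}=U_1U_3\cdots$ is not idempotent, and its normalisation contributes a factor $[2]^{k}$ ($k$ the number of cups of the lowest walk) to the determinant for \emph{every} walk $s$; this is the $[2]^{3\times 14}$ prefactor in the $M_7(1)$ example, and it is invisible both in your argument and in the literal statement of the lemma. Since the lemma is only asserted ``schematically'', you have faithfully reproduced the paper's intent --- but an actual proof must invert the per-diamond factor and account for the normalisation of the lowest walk, and checking the result against $\det M_6(0)=[2]^4[3]^4[4]$ confirms that both corrections are needed.
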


Example: The factors for each diamond are (in abridged notation):
\[
\includegraphics[width=4.47cm]{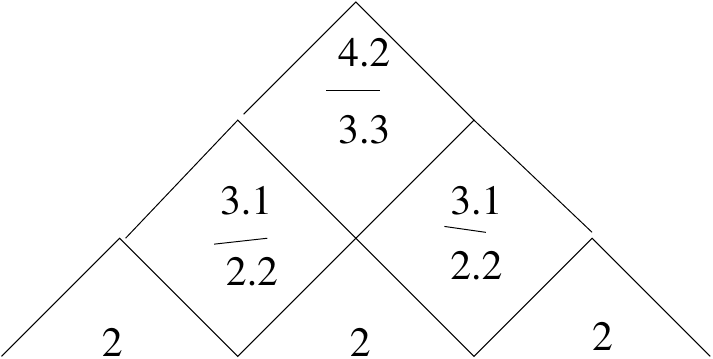}
\]
Thus in particular
\[
\includegraphics[width=4.47cm]{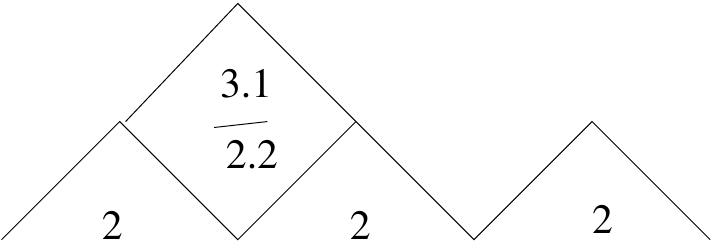}
\]
gives a factor $\frac{[3][2]}{1}$. Altogether we have
\[
\det( M_6(\lambda=0 )) = \frac{[2]^3}{1}
    \frac{[3][2]}{1} \frac{[3][2]}{1} \frac{[3]^2}{[2]^{}} \frac{[4]}{1} 
 = [2]^4 [3]^4 [4]
 = \delta^5 (\delta^2 -1)^4 (\delta^2 -2)
\]

Example II:
\[
\includegraphics[width=5.37cm]{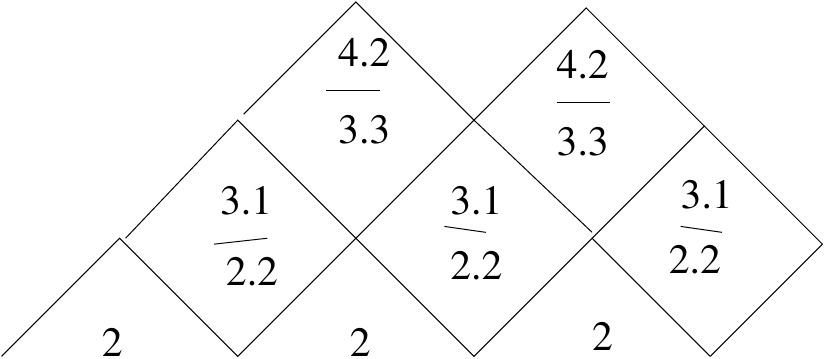}
\]
gives a factor $\frac{[4]^2 }{[2][3]}$. Altogether this det comes out as
\[
\det( M_7(\lambda=1 )) = \frac{[2]^{3\times 14}}{1}
    \left( \frac{[3]   }{[2]^2} \right)^{9+10+9} 
    \left( \frac{[4][2]}{[3]^2} \right)^{4+4}
    \left( \frac{[5][3]}{[4]^{2}} \right)^{1} 
 = [2]^{-6} [3]^{13} [4]^6 [5]
\]

Observe that the determinants can be computed recursively.
The walks that pass through $(\lambda-1,n-1)$ do not pick up another diamond in the final step, so contribute a factor $ det(M_{n-1}(\lambda-1)) $;
and those that pass $(\lambda+1,n-1)$ all pick up the same string of diamonds,
compared to their contribution to $ det(M_{n-1}(\lambda+1)) $. 
For example:
\[ { } \hspace{-.32in} 
\includegraphics[width=5.4cm]{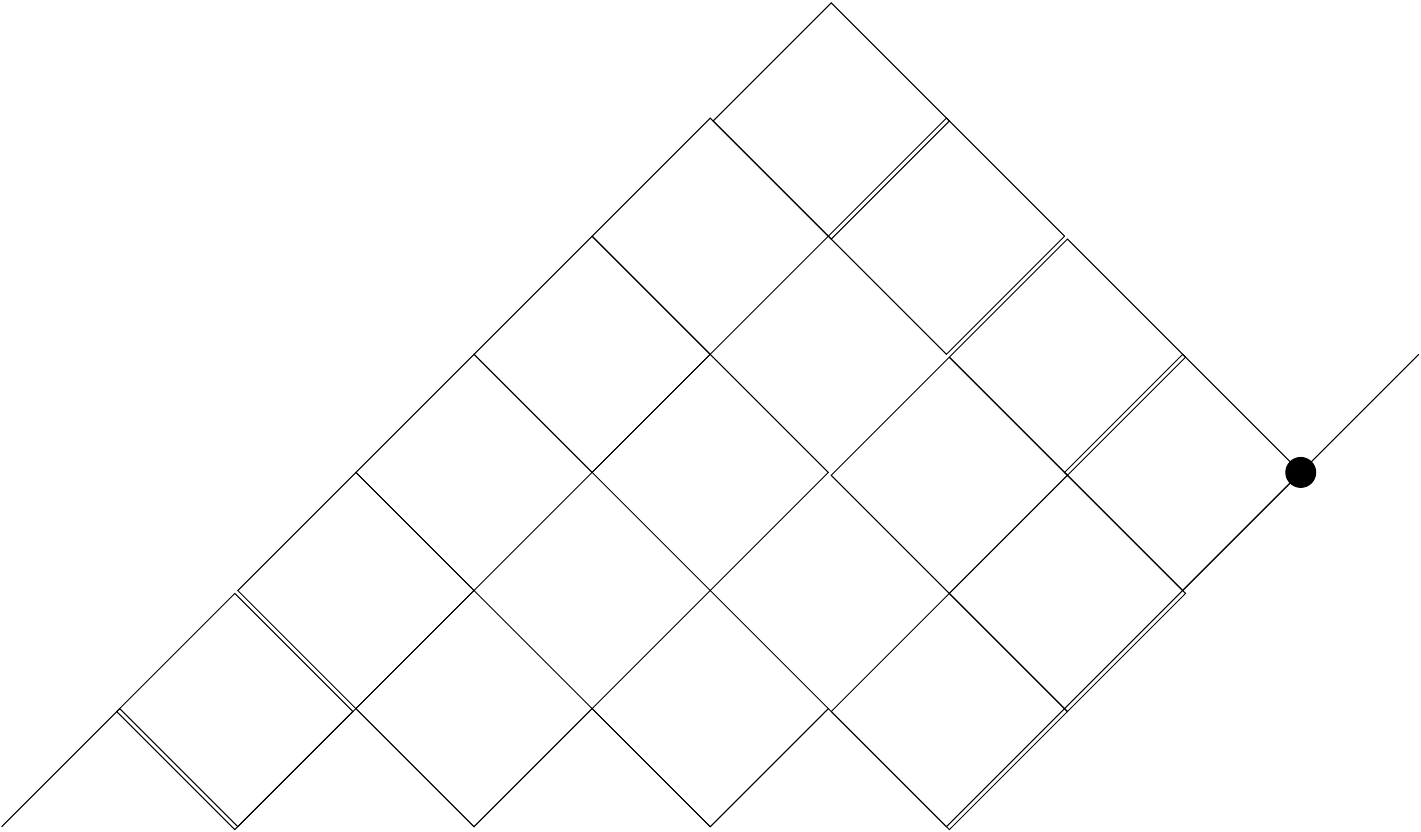}
\qquad
\includegraphics[width=5.4cm]{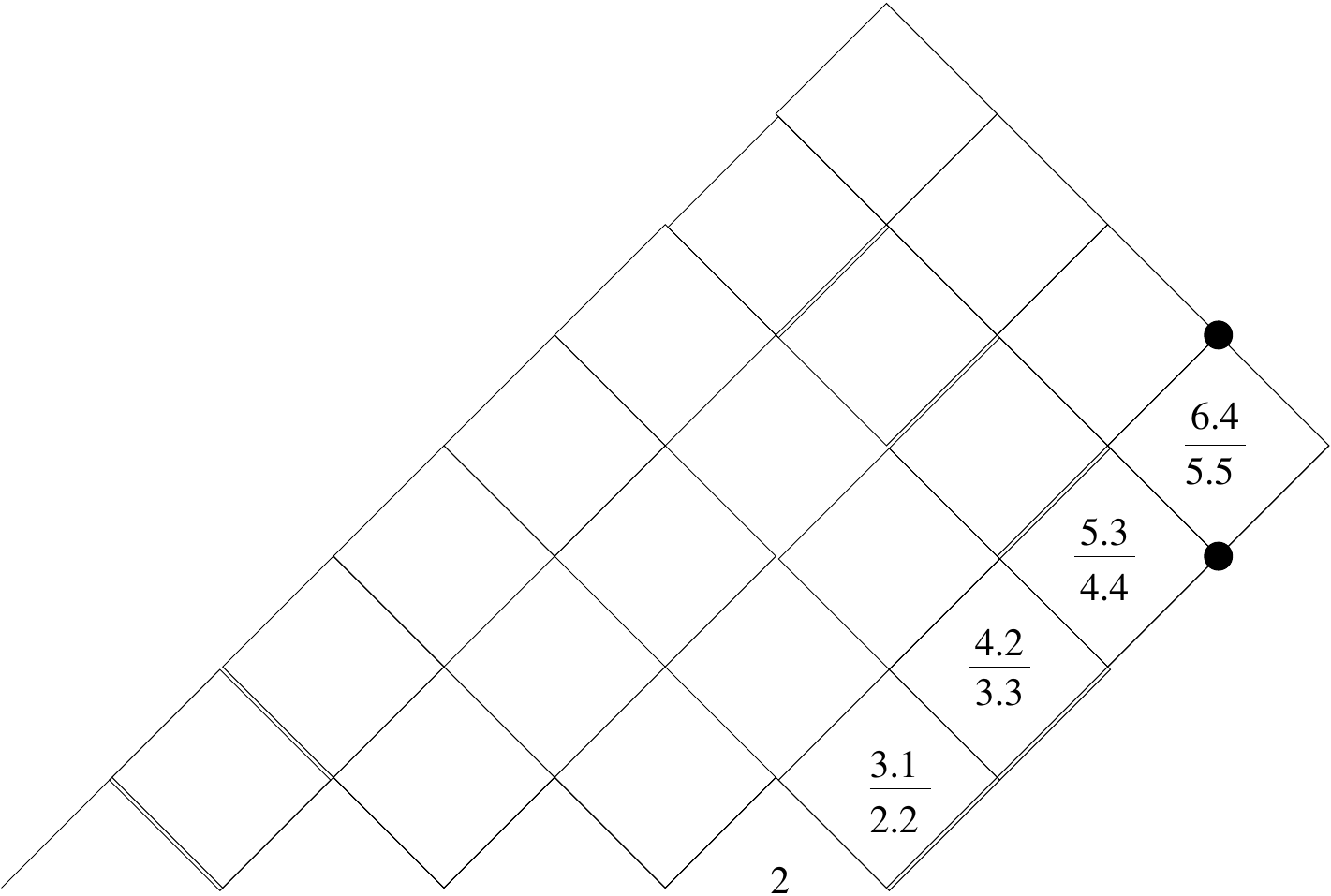}
\qquad
\includegraphics[width=5.04cm]{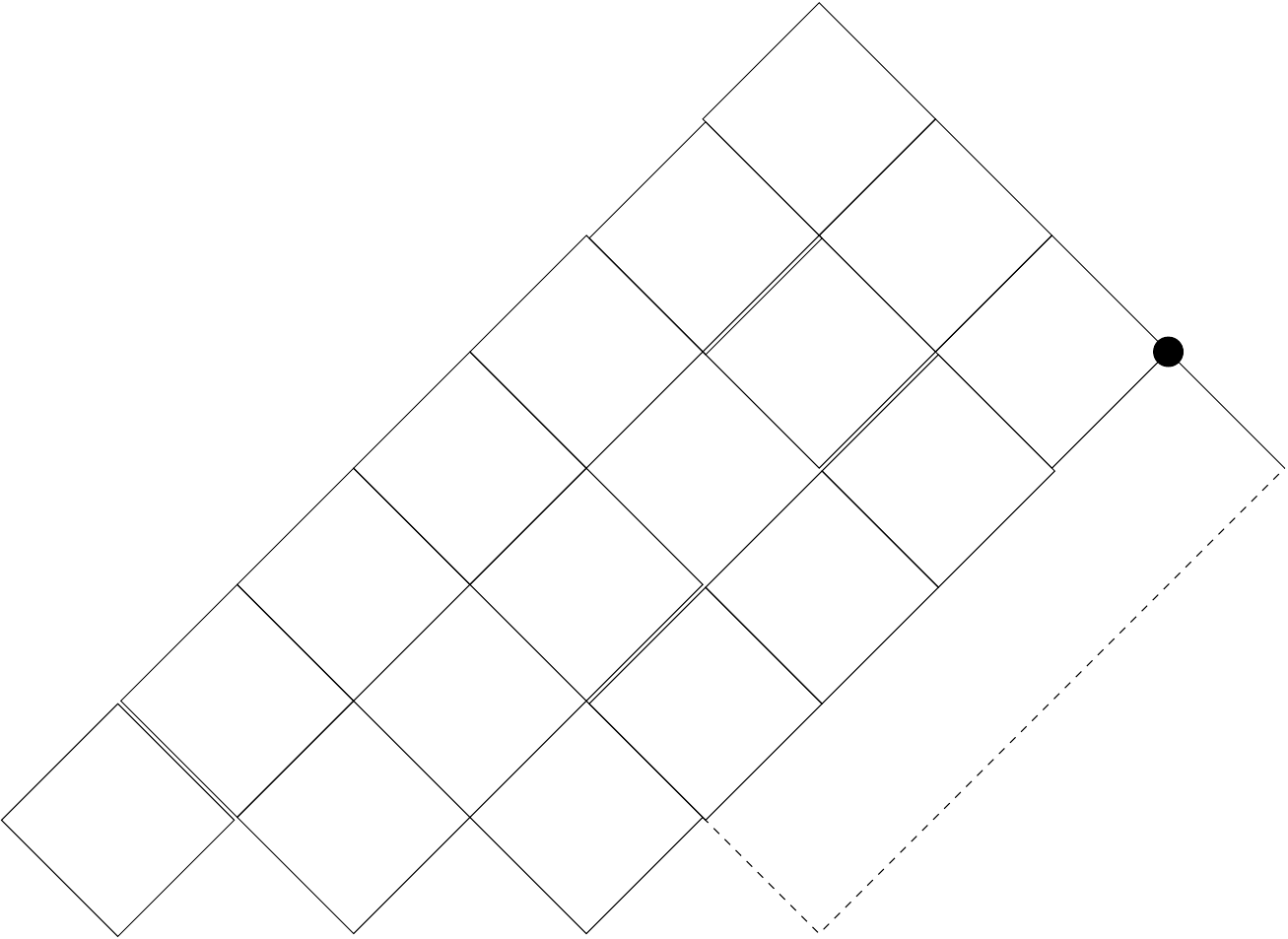}
\]
We have
\beq \label{eq:det-recurs}
det(M_n(\lambda)) \;\; = \;\;\;\;
   det(M_{n-1}(\lambda-1)) \;\; . \;\;\;
    F_n(\lambda+1)^{dim(M_{n-1}(\lambda+1))} . det(M_{n-1}(\lambda+1))
\eq 
where the diamond-string factor is given by 
$$
F_n(\lambda) \;=\; [2] \frac{[3].1}{[2].[2]} \frac{[4].[2]}{[3].[3]} \frac{[5].[3]}{[4].[4]}
                     ... \frac{[\lambda+1].[\lambda-1]}{[\lambda].[\lambda]}
             \;\; = \; \frac{[\lambda+1]}{[\lambda]}
$$
             - note that it does not actually depend on $n$.

Example:
\[
det(M_5(1)) \; =\;\; det(M_4(0))\;.F_5(2)^{dim(M_4(2))}.\; det(M_4(2))
            \; = \; [2]^2 [3] . \left( \frac{[3]}{[2]}\right)^{3} . [4]
            = \; \frac{[4][3]^4}{[2]}
\]


\def\qqt(#1,#2,#3){$q^{#1}[2]^{#2}[3]^{#3}$}
\def\qqf(#1,#2,#3,#4){$q^{#1}[2]^{#2}[3]^{#3}[4]^{#4}$}
\def\qqv(#1,#2,#3,#4,#5){$q^{#1}[2]^{#2}[3]^{#3}[4]^{#4}[5]^{#5}$}

\ignore{{ 
\begin{figure}
\input \xfigdd/bratelli5r2new2.latex
\caption{Specht Gram determinant data on Young graph
  (unfinished - reproduced from paper).
  NB this fig may work in dvi (latex ... compile)
  and not perfectly in pdf (pdflatex ...);
  but can run dvipdf or dvipdfmx on the dvi for a pdf file.
\label{fig:bratelli5r2new}}
\end{figure}
}}

\ignore{\begin{figure} \vspace{-1cm} 
  \input \xfigdd/bratelli5r2newa.pdf_t
  \caption{Specht Gram determinant data on Young graph
    (unfinished - reproduced from paper; some later $q^i$ factors omitted,
    but $q$ is a unit here so these are not relevant;
    the hook lengths shown are also not particularly relevant here).
    \label{fig:stray1}}
\end{figure}}

%

\ignore{\begin{figure} \vspace{-1cm} 
  \input \xfigdd/bratelli5r2new3.pdf_t
  \caption{Coloured streaks indicate fibres projected to single point
    in TL global limit. \label{fig:streaky1}}
\end{figure}}


%


Now let us arrange the TL part of the result so that
the fibres are in vertical array under their $sl_2$ label
(and hence under the appropriate label in the projected Bratelli diagram): 
\[
\begin{array}{ccccccccccc}
  \lambda=0         &  1             &  2      &  3  &  4  &  5   
  \\ \hline
  1 \\ 
                   & 1  \\ 
   {[2]}           &                    & 1 \\ 
                  & [3]                &     & 1   \\ 
  {[ 2]}^2 [3]    &                     & [4] &      &  1  \\
                  & {[2]}^{-1} [3]^4 [4] &     & [5]   &    & 1 \\ 
{[2]}^4 [3]^4 [4]  &                    & {[2]}^{-1}[4]^5[5] &    & [6] & & 1 
\end{array}
\]
It is convenient to augment to also show the module dimensions (in red):
Fig.\ref{fig:gramdets01}.
And in Fig.\ref{fig:gramdets031} we include a schematic indication of the
recursive calculations from (\ref{eq:det-recurs}). 

\newcommand{\du}[2]{\begin{array}{c} \textcolor{red}{{#1}} \\ \hspace{-.321in}{#2}\hspace{-.321in} \end{array}}

\begin{figure}
  \input{tex2/gramdets01-}
  \caption{Standard Gram determinants restricting to the TL weights and then organising by $sl_2$ labels.
    \label{fig:gramdets01}}
\end{figure}

\begin{figure}
  \input{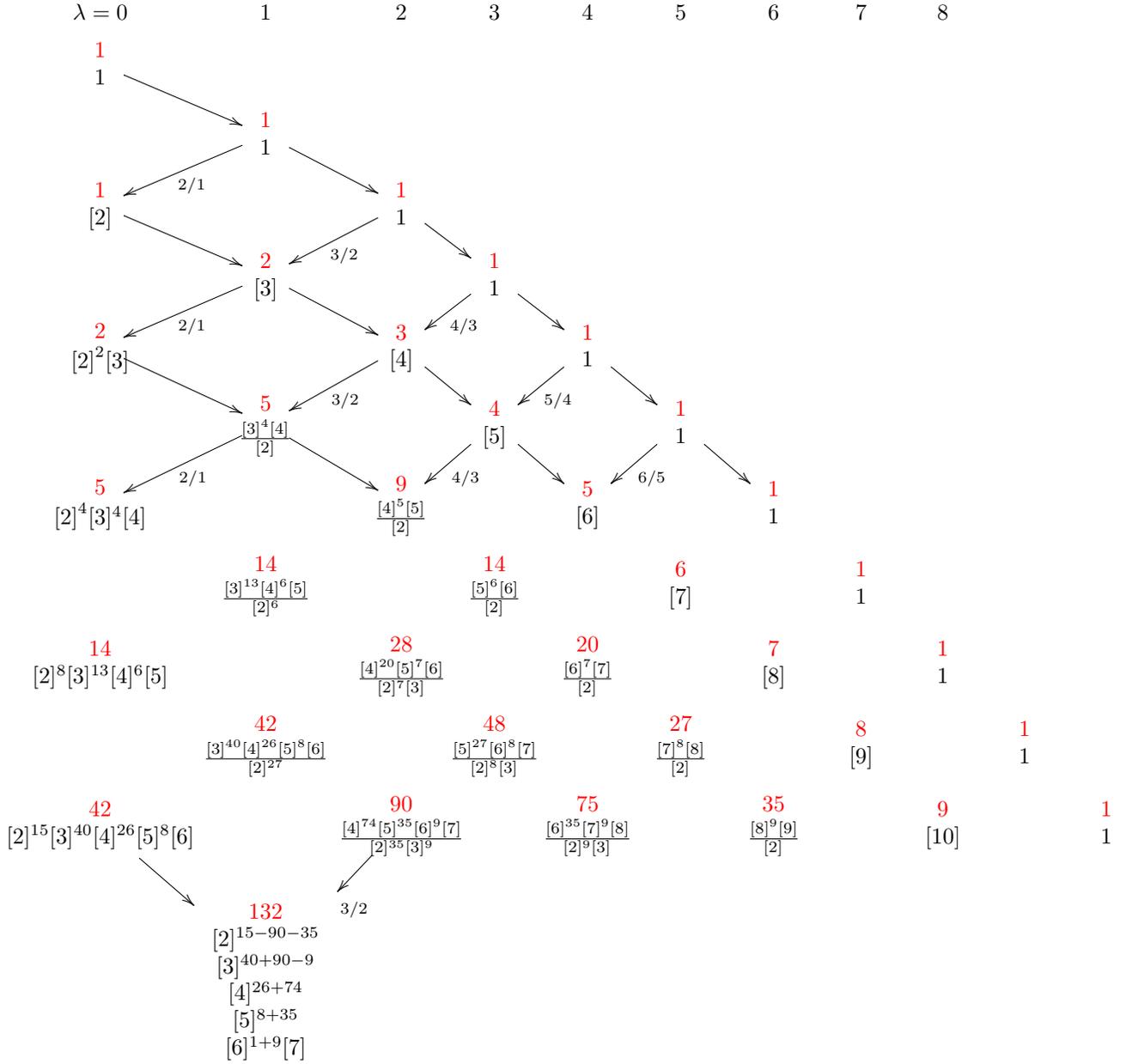}
  \caption{The recursive construction for gram determinants, as in (\ref{eq:det-recurs}).
    The weightings
    $ ( [\lambda\!+\!1]/[\lambda] )^{dim(M_{n-1}(\lambda+1))} $
    are drawn simply as
    $\lambda\!+\!1/\lambda$. 
    We omit drawing edges after the various patterns are established,
    apart from the explicit calculation shown at the bottom. 
    \label{fig:gramdets031}}
\end{figure}

\ignore{\begin{figure}
  \input{tex2/gramdets03-}
  \caption{\ppm{[NB this is a duplicate since we already copied in this fig earlier as \ref{fig:gramdets03--}!]}
  Standard to standard module maps when $[5]=0$.
    This is perhaps best viewed alcove-geometrically - the ambient space is $\R$;
    the origin is at $\lambda=-1$ (it is `$\rho$-shifted');
    and the first affine wall at $\lambda=-1+5=4$.  \label{fig:gramdets03}}
\end{figure}}

At its most succinct this scheme can be reduced to:
\[
\xymatrix@C=45pt{
  0  \ar@<.31ex>[r]^1   &
  1 \ar@<.31ex>[r]^1 \ar@<.21ex>[l]^{{{ } \;\; [2]/1}}    &
  2 \ar@<.31ex>[r]^1 \ar@<.21ex>[l]^{[3]/[2]}   &
  3 \ar@<.31ex>[r]^1 \ar@<.21ex>[l]^{[4]/[3]}  &
  4  \ar@<.21ex>[l]^{[5]/[4]}  &
  \cdots
}
\]
meaning that $det(M_n(\lambda))$ is the product of the $det(M_{n-1}(\mu))$'s
corresponding to the incoming arrows
(the $\lambda,\;\mu$ labels are on the vertices);
together with the factors {\em on}
the incoming arrows raised to the power of $dim(M_{n-1}(\lambda+1))$.

\medskip \vspace{.5in} 


\noindent 
To see how these determinants check against representation theory,
consider for example Fig.\ref{fig:gramdets03}.
Here we focus on the parameter value such that $[5]=0$.
The first determinant that vanishes is then indicated in the figure.
This vanishing indicates that there is a non-isomorphism map from
the corresponding costandard module to the standard module;
and hence that the standard module has a socle.
In this case it is one-dimensional and it is straightforward to deduce that
this corresponds to a map in from the trivial module.
--- And this is what the curved arrow indicates.

In the next layer we see a similar vanishing; and deduce a similar map.

In the next layer we have another. But we also have a globalisation image of
the very first map - where the module mapping in is
the globalisation image of the trivial, and so 
not the trivial.
Note that this has a direct affect on the degree of vanishing of the determinant (recall we have connected this to the Smith form of the gram matrix). 

Hopefully the subsequent pattern will be clear.
Perhaps the next interesting observation is in the last rank shown.
Here we have a map from the trivial module (of course $[5]=0$ implies $[10]=0$);
and we already have an on-map, due to the globalisation property. 
By inspecting the degrees of vanishing of the various gram matrices we can
deduce that the composite map vanishes here.

\vspace{.5cm}
\subsection{What about the Smith form?} 
More than the determinant evaluated over the ground field of interest,
$K$ say,
one cares about the rank of the gram matrix
(evaluated over the ground field of interest).
This is because the rank
 determines the rank of the contravariant form, and hence
the dimension of the simple head of the cell module over $K$. 
Elementary row and column operations expressible as multiplication by 
invertible matrices do not change the rank. Thus, if we are working
over a PID (as we are in the indeterminate-$\delta$ calculations 
if the coefficient ring is a field)
we are interested in the Smith normal form (SNF).

Reduced to the ground field of interest the SNF will give the rank
directly.
(Even if the ground ring is not a PID there may be a form which
directly reveals the rank.)
(The SNF may reveal even more than the rank in general. 
The power of vanishing of
invariant factors vanishing   
at the $\delta$-value 
of interest may reveal details of factor modules deeper in the
Jordan--Holder series of the cell module.) 

Example:
Consider the gram matrix
\[
\mat{cccc} \delta & 1 & 0 \\ 1 & \delta & 1 \\ 0 & 1 & \delta
\tam
\stackrel{R1:R1+R3}{\leadsto}
\mat{cccc} \delta & 2 & \delta \\ 1 & \delta & 1 \\ 0 & 1 & \delta
\tam
\stackrel{R1:R1-\delta R2}{\leadsto}
\mat{cccc} 0 & 2-\delta^2 & 0 \\ 1 & \delta & 1 \\ 0 & 1 & \delta
\tam
\stackrel{C2:C2-\delta C1;...}{\leadsto}
\]\[
\mat{cccc} 0 & 2-\delta^2 & 0 \\ 1 & 0 & 0 \\ 0 & 1 & \delta
\tam
\stackrel{R1:R1+(\delta^2 -2) R3}{\leadsto}
\mat{cccc} 0 & 0 & -\delta(2-\delta^2 )  \\ 1 & 0 & 0 \\ 0 & 1 & \delta
\tam
...
\]
We can compute the SNF in various ways. It is
\[
\mat{cccc} \delta (\delta^2 -2) && \\ &1& \\ &&1 \tam
\]
Method 1: elementary row and column operations.
\\
Method 2: compute minors and take the HCF of all minors at each rank. 
For example the HCF of $1\times 1$ minors in our example is clearly 1;
as is that of $2\times 2$ minors. The final invariant factor is then
forced.





\ignore{{
\vspace{1cm} \ppm{[--------- FIRST DRAFT OF PAPER COULD END HERE! ------]}
\newpage 

\section{\ppm{[Ignore everything from here!]}Towards a proof of \ournotmain\ conjecture \ref{conj:onm0}}

The conjecture is proved in case $l=-1$. One proof uses (the quantum version of) Young's seminormal form, and hence hook lengths, 
so one strategy in general is to try to generalise this - giving more `geometric' 
bases for standard modules. 

\subsection{Formulation of a 
Walk-Diagram Correspondence}
Here we aim to describe the emerging correspondence between walks in the Rollet graph and half-diagrams, describing a basis for a certain $J_{l,n}$ standard module. We will do this in baby steps, starting with the simple 1-cup, 1-D specht case:

\subsubsection{1-cup, 1-D Specht Case}
Here we will treat the case of the $J:=J_{l,n+2}$-module $\Delta:=\Delta_{n,(n)}$ (take $l+1\leq n$ WLOG), knowing that the case $\Delta:=\Delta_{n,(1^n)}$ is obtained by means of symmetry.

As noted in (\ref{de:Rollet}):
Vertices in the Rollet graph  $\Roll_l$ are labelled by a pair $(\l, p)$, where $\l\vdash \min(p,l+2)$ for $p\in \N$, and there is an edge $(\l, p)-(\l', p')$ if and only if (up to interchanging the pairs) $p'=p+1$, and either $\l'$ can be obtained from $\l$ by adding a box, or $\l'=\l$.

\mdef Claim.  
We claim that there exists a correspondence between walks of length $n+2$ between $(\emptyset,0)$ and $((1^n),n)$ (in the Rollet graph of height $l$), and a basis of half diagrams for $\Delta$. Observe that for $\Delta$ a basis is simply described by the position of a single cup, since the corresponding specht module is 1D. We write $e_{i,j}$ for the basis element with a cup joining position $i$ and $j$. 

A walk is then described by a sequence of vertices $(\l_k, p_k)$, $k=-1, \dots, n,$ with $(\l_{-1}, p_{-1})=(\emptyset,0)$, $(\l_0, p_0)=((1),1), (\l_{n}, p_{n})=((1^n), n)$. It seems good to divide the walks into two types: 

A walk $(\l_k, p_k)$ is \textbf{ideal} if for all $k$, $\l_k \subset (1^n)$. We claim that \textbf{ideal} walks correspond to the height -1 diagrams as follows: Such walks are determined by the sequence $(p_l)$ and contain at least one repeated integer. Let $j$ be the maximal repeated integer which may take values $j=1,\dots, n$. The corresponding half diagram is $e_{j,j+1}$.

Now let $(\l_k,p_k)$ be a non-ideal walk. Since the sequence $p_k$ decreases exactly once, it follows that any $\l_k\not\subset (1^n)$, must be of the form $(l,1)$ for some $l$, and furthermore such a $\l_k$ is a partition of $p_k$ (i.e. $p_k$ does not exceed $l+2$). Therefore, a non-ideal sequence is uniquely determined by the following two integers:
\[i=\min\{k \ | \ \l_k\not\subset (1^n)\}, \quad j=\max\{k \ | \ \l_k\not\subset (1^n)\}.\]
Note that $1\leq i \leq j \leq l+1\leq n$. We associate to this walk the diagram $e_{i,j+2}$, which we note has height $p_j-2=j-1$.

Why is this a bijection? even a heuristic argument? 

Examples:

Consider the $J_{2,8}$-module $\Delta_{6,(6)}$, we give the following examples:
\begin{align*}
    0&-(1)-(2)-(3)-(4)-(3)-(4)-(5)-(6), &&\sim e_{4,5} &(ht&=-1)\\
        0&-(1)-(2)-(2,1)-(3,1)-(3)-(4)-(5)-(6), &&\sim e_{2,5} &(ht&=2).\end{align*}

\subsubsection{Many-cup, 1-D Specht Case}

Try two cup first? In the previous case, we made use of the fact that the sequence $p_k$ decreased exactly once. Here it will decrease more than once, will be hard to figure out the config of cups! 

\subsubsection{1-cup, non-1-D Specht Case}

In the 1D specht case, there is a unique ideal walk. Here, given a walk, we will need a way to determine the underlying ideal walk.

\subsubsection{Full Generality}
Hopefully can combine the previous two points

\section{Outpatients} 
Here is nice stuff that might or might not be in the first paper. 

\subsection{On walk bases for $\Delta$-modules for $l\geq 0$}
\label{ss:walkbases0}

We can go further than the combinatorial correspondence in (\ref{pr:walks0}). 
\ppm{[-can we actually?!  there is a lot of colour here.  might minimize this section for the first submission???]}

\mdef   Example. 
Formally, 
the walk-basis for 
$\Delta^{l=1}_{5,((21),0)}$    
is:
\begin{equation}  \label{eq:walkbasis521}
0-1-0-1-(2)-(21),
\;\;\; \;\;\; \hspace{.421cm}
0-1-0-1-(11)-(21),  \hspace{2.29762cm} 
\end{equation}
\[ 
0-1-(2)-1-(2)-(21),
\;\;\; \hspace{.31cm} 
0-1-(2)-1-(11)-(21),   \hspace{.53cm} 
\;
0-1-(11)-1-(2)-(21),
\] 
\[ 
0-1-(11)-1-(11)-(21),  \;\;\;
0-1-(2)-(21)-(2)-(21), \;\;\;
0-1-(2)-(21)-(11)-(21),
\]
\[ 
0-1-(2)-(21)-(21)'-(21), \hspace{1.9753185cm} 
0-1-(11)-(21)-(2)-(21), \hspace{.2cm} 
\]
\[ 
0-1-(11)-(21)-(11)-(21),  \;\;\; \hspace{1.95cm} 
0-1-(11)-(21)-(21)'-(21),
\]
\[ 
0-1-(2)-(3)-(2)-(21), \;\;\; \hspace{.96cm} 
0-1-(11)-(111)-(11)-(21)
\]

\mdef   \label{exa:l0p6la2}
Example. 
Explicit partial bases for  
$\Deltaa^{l=0,n=6}_{(4,(2))}$ is:  
\medskip 

\includegraphics[width=10.9cm]{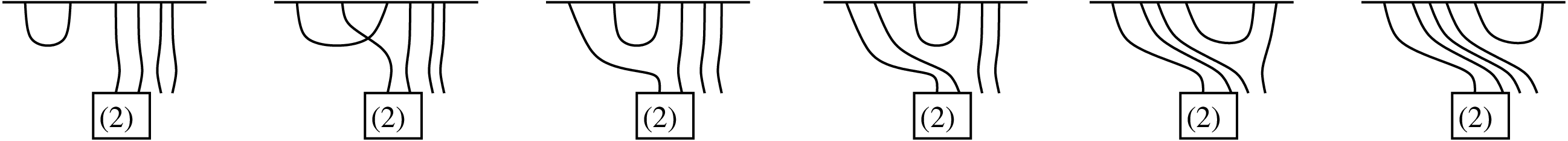}

\ppm{[So far the following is nonsense, but at least there are pics of diagrams. I'll fix it!...]}
The elements expressed as walks (writing $\lambda'''$ to mean $(\lambda,3)$ and so on):
\begin{align*}
    &0-1-0-1-(2)-(2)'-(2)''& &\sim e_{1,2}, \\
&0-1-(2)-1-(2)-(2)'-(2)'' &&\sim e_{2,3}, \\
&0-1-(11)-1-(2)-(2)'-(2)'' &&\sim e_{1,3}, \\
&0-1-(2)-(2)'-(2)-(2)'-(2)''&&\sim e_{3,4}, \\
&0-1-(2)-(2)'-(2)''-(2)'-(2)''&&\sim e_{4,5}, \\
& 0-1-(2)-(2)'-(2)''-(2)'''-(2)'' &&\sim e_{5,6},\end{align*}
(finish me!... \bajm{right? I am also suggesting the above correspondence with diagrams} \ppm{[-interesting. does that represent the application of an algorithm, or so far just a one-off?
For $l=-1$ of course the Rollet graph is just a simple chain $0-1-1'-1''-...$ which we could also label as $0-1-2-3-...$. There is a natural partial order on walks of length $n$ from 0 to (say) $s$, with $0-1-0-1-...-0-1-2-3-...-s$ the lowest and $0-1-2-...-max-...-(s+1)-s$ the highest. One can assign the diagram cup-cup-...-cup-line-line-...line to the lowest; and then construct the others iteratively so that if walk $w'$ is obtained from $w$ by $j-(j-1)-j \;\leadsto\; j-(j+1)-j$ (for some $j$) in position $i$ then the diagram $d'$ is given by $U_i d$. Does that make sense? (There is a more complicated version for Brauer. If we are lucky the general $l$ case (or at least some intermediate $l$ case) might be easy to interpolate.  - In which case this might even make the germ of a neat little paper.)]}\bajm{I think this represents the application of an algorithm which I don't quite understand yet :)... but seems quite accesible}).

Aside. Here is another set of walks (for a different module! \bajm{for $\Delta^{l\geq 0}_{6,((2),0)}$ ?}):
\begin{align*} 
&0-1-0-1-0-1-(2), &&
0-1-0-1-(2)-1-(2),\\
&0-1-0-1-(11)-1-(2),
&&0-1-(2)-1-0-1-(2),
\\ &0-1-(2)-1-(2)-1-(2),
&&0-1-(2)-1-(11)-1-(2),
\\ &0-1-(11)-1-0-1-(2),
&&0-1-(11)-1-(2)-1-(2),
\\&0-1-(11)-1-(11)-1-(2).\end{align*}
(Is this correct?! \bajm{There should be more walks no? None of these walks go past $(2)$ in the rollet graph} \ppm{[-yes agreed. (I'm working in spurts as pain levels allow, so there'll be some mud in my tex-ing waters. hopefully more good than bad overall.  :-) )]})

\medskip  \noindent  Another example is
$\Delta^{l=1}_{4,(2)}$:  \medskip 

\includegraphics[width=10cm]{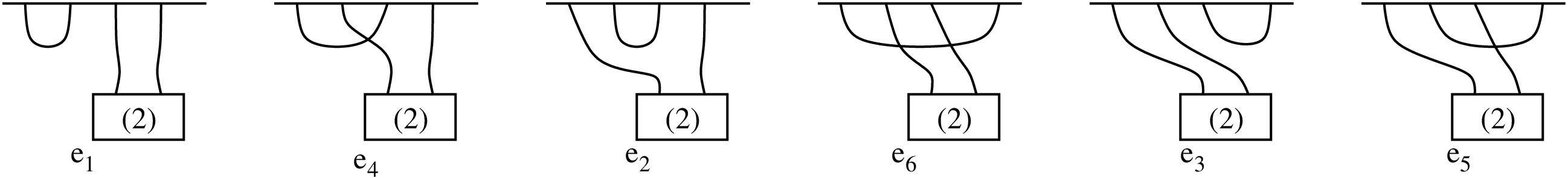}

The elements expressed as walks:  
\begin{align*}
    &0-1-0-1-(2), &&\sim e_{1,2}, &&0-1-(2)-1-(2),  &&\sim e_{2,3}, \\
&0-1-(2)-(3)-(2), &&\sim e_{3,4}, &&0-1-(2)-(21)-(2), &&\sim e_{2,4},\\
&0-1-(11)-1-(2), &&\sim e_{1,3}, &&0-1-(11)-(21)-(2), &&\sim e_{1,4}.
\end{align*}

\bajm{Next look at $\Delta^{l=1}_{6,((3),1)}$, $\Delta^{l=2}_{6,((4),0)}$... See if this sheds light on walk $\to$ diagram map:}
For $\Delta^{l=1}_{6,((3),1)}$, in addition to the walks included in the $l=0$ cases (now replacing $(2)^{'}$ with $(3)$, $(2)''$ with $(3)'$  etc) we also include:
\begin{align*}
    &0-1-(11)-(21)-(2)-(3)-(3)' &&\sim e_{1,4}, \\
&0-1-(2)-(21)-(2)-(3)-(3)'&&\sim e_{2,4},
\end{align*}

For $\Delta^{l=2}_{6,((2),2)}$ we include the previous walks (replacing $(3)'$ with $(4)$) and we include:
\begin{align*}
    &0-1-(11)-(21)-(31)-(3)-(4) &&\sim e_{1,5}, \\
&0-1-(2)-(21)-(31)-(3)-(4)&&\sim e_{2,5},\\
&0-1-(2)-(3)-(31)-(3)-(4)&&\sim e_{3,5},
\end{align*}
\bajm{I seem to be on the verge of being able to formulate a correspondence between walks and diagrams, at least in the case of 1D Specht-modules, by considering the positions of ``turnbacks" and ``deviations" from an ``ideal walk" (ideal walks i am tentatively defining as those which consist only of subdiagrams of the final diagram (need to modify to deal with going to right of final diagram...) ).   }


\section{Young Graph by Alcove Geometry}
This section is a small note, devoted to the idea of realising the Young Graph by alcove geometry. Firstly, consider the type $B_{\infty}$ Weyl group, $G$:
\[ G=\langle s_0, s_1,\dots \ | \ s_k^2=(s_0 s_1)^4=(s_{l}s_{l+1})^3=(s_i s_j)^2=e \ \text{for }l>0, |i-j|>1 \rangle,\]
acting on $\R^\infty$ (the space of finite real sequences) as follows:
\begin{align*}
    s_0(x_1,x_2,\dots,x_n)&=(-x_1,x_2,\dots,x_n),\\
    s_i(x_1,\dots,x_i,x_{i+1},\dots,x_n)&=(x_1,\dots,x_{i+1},x_i,\dots,x_n),
\end{align*}
where we use $(x_1,x_2,\dots,x_n)$ as short hand for the same infinite sequence, padded by 0s on the right. By allowing the $x_k$ to possibly be 0, we can ensure the last definition is always sensible. We can consider the $s_0$ as acting by reflection in the hyperplane defined by $x_1=0$, and the $s_i$ as acting by reflection in the hyperplane defined by $x_i=x_{i+1}$.

Given an element $(x)\in \R^\infty$ we can apply $s_0$ (conjugated by permutations to act in each component) so that all non-zero elements are positive, and then apply a permutation to order all entries in descending order. Thus a fundamental domain, $\mathcal{D}$, for $G\circlearrowright \R^\infty$, is the set of finite, descending, positive sequences:
\[\mathcal{D}= \left\{(x_1,x_2,\dots, x_n,0,\dots) \in \R^\infty \ \Bigg| \ \begin{matrix}\text{ for some } n\in \N \text{ such that }\\ x_1\geq x_2\geq \dots \geq x_n>0\end{matrix} \right\}.\]
Now, let us augment our group to be $\hat{G}$ where we add another generator $r_0$ to $G$ which satisfies the exact same relations as $r_0$ and has no ``braiding" relation with $s_0$. We give a Dynkin diagram for the group $\hat{G}$ below:
\[ \begin{tikzcd}[column sep=tiny,row sep = tiny]
    s_0\ar[dr,equal]\\
    &s_1\ar[r,dash]&s_2\ar[r,dash]&s_3\ar[r,dash]&\dots\\
    r_0\ar[ur,equal]\ar[uu,dash,"\infty"]
\end{tikzcd}\]
Now let $r_0$ act on $\R^\infty$ by the (affine) reflection in the hyperplane defined by $x_1=1$, that is,
\[r_0(x_1,x_2,\dots,x_n)=(2-x_1,x_2,\dots,x_n).\]
We then observe that $r_0 s_0$ acts by $x_1\mapsto x_1+2$ (fixing all other components). For a partition, $\l\vdash n$, we may view $\l=(\l_1,\l_2,\dots, \l_r)\in \mathcal{D}$ with $\l_i\geq \l_{i+1}$ for all $i$. Now define
\[\mathcal{D}_{\l}:=\{x\in \R^\infty \ | \ x-\l \in [0,1)^\infty  \}.\]
Clearly $\mathcal{D}=\sqcup_{\l} \mathcal{D}_{\l}$ and furthermore, we claim that each $\mathcal{D}_{\l}$ is a fundamental domain for $\hat{G}\circlearrowright \R^\infty$ (or an alcove) which shares a face with $\mathcal{D}_{\l'}$, if and only if, $\l'$ is a partition obtained from $\l$ by ``adding or removing a box". Suppose that $\l$ and $\l'=(\l_1,\dots,\l_i+1,\dots ,\l_r)$ are both partitions. Then $\mathcal{D}_{\l}$ and $\mathcal{D}_{\l'}$ (or rather, their closures) share the face
\[\overline{\mathcal{D}_{\l}}\cap\overline{\mathcal{D}_{\l'}}=\left\{\begin{matrix}(\l_1+\xi_1,\dots,\l_i+1,\dots,\l_r+\xi_r,\xi_{r+1},\dots,\xi_k)\in \mathcal{D}  \\ :\text{for some } k \in \N, \text{ where } \xi_l\in [0,1]\end{matrix} \right\}.\]
Therefore, by identifying each alcove $\mathcal{D}_{\l}$ with the underlying partition $\l$, we obtain the Young-graph as the dual graph of this alcove geometry.

}} 

\bibliographystyle{abbrv}
\bibliography{local.bib}
\end{document}